\newtheorem{dummy}{dummy}[section]
\newtheorem{lem}[dummy]{Lemma}
\newtheorem{thm}[dummy]{Theorem}
\newtheorem*{theorem*}{Theorem}
\newtheorem{cor}[dummy]{Corollary}
\newtheorem{prop}[dummy]{Proposition}
\theoremstyle{definition}
\newtheorem{dfn}[dummy]{Definition}
\newtheorem{ex}[dummy]{Example}
\newtheorem{rmk}[dummy]{Remark}
\newtheorem{warning}[dummy]{Warning}
\newcommand\incircbin
\newcommand\@incircbin[2]
\newcommand{\oinfty}{\incircbin{\mkern-13.73mu\smalli}\!}
\newcommand{\smalli}{\resizebox{.7cm}{0.29cm}{$\phantom{x}_{\phantom{.}^\i}$}}
\newcommand{\bA}{\mathbb{A}}
\newcommand{\bR}{\mathbb{R}}
\newcommand{\bZ}{\mathbb{Z}}
\newcommand{\bT}{\mathbb{T}}
\newcommand{\iCat}{\mathrm{Cat}_{\i}}
\newcommand{\cB}{\mathcal{B}}
\newcommand{\cE}{\mathcal{E}}
\newcommand{\cF}{\mathcal{F}}
\newcommand{\cM}{\mathcal{M}}
\newcommand{\cO}{\mathcal{O}}
\renewcommand{\O}{\cO}
\newcommand{\cS}{\mathcal{S}}
\newcommand{\cZ}{\mathcal{Z}}
\newcommand{\sA}{\mathscr{A}}
\newcommand{\sB}{\mathscr{B}}
\newcommand{\sC}{\mathscr{C}}
\newcommand{\sD}{\mathscr{D}}
\newcommand{\Loc}{\operatorname{Loc}_{\Cart}}
\newcommand{\fp}{\mathbf{fp}}
\newcommand{\lex}{\mathbf{lex}}
\newcommand{\Spec}{\mathrm{Spec}\,}
\newcommand{\Sh}{\operatorname{Shv}}
\newcommand{\Hom}{\mathrm{Hom}}
\newcommand{\Spc}{\cS\!\operatorname{pc}}
\newcommand{\Map}{\mathbb{M}\mathrm{ap}}
\def\Top{\mathbf{Top}}
\def\Alg{\textbf{Alg}}
\def\Comk{\textbf{Com}_k}
\def\ComR{\textbf{Com}_\bR}
\renewcommand{\i}{\infty}
\def\Ind{\operatorname{Ind}}
\def\Fun{\operatorname{Fun}}
\DeclareMathOperator{\Psh}{Psh}
\def\colim{\underrightarrow{\mbox{colim}\vspace{0.5pt}}\mspace{4mu}}
\renewcommand{\lim}{\varprojlim\mspace{3mu}}
\def\blank{\mspace{3mu}\cdot\mspace{3mu}}
\def\Lan{\operatorname{Lan}}
\def\Ran{\operatorname{Ran}}
\def\Top{\mathbf{Top}}
\def\Set{\mathrm{Set}}
\def\Mfd{\mathbf{Mfd}}
\def\Cart{\mathbf{C}^\infty}
\def\DMfd{\mathbf{DMfd}}
\def\longlongrightarrow{-\!\!\!-\!\!\!-\!\!\!-\!\!\!-\!\!\!-\!\!\!\longrightarrow}
\def\longlonglongrightarrow{-\!\!\!-\!\!\!-\!\!\!-\!\!\!-\!\!\!-\!\!\!\longlongrightarrow}
\newcommand*{\longhookrightarrow}{\ensuremath{\lhook\joinrel\relbar\joinrel\rightarrow}}
\def\Speci{\Spec_{\Cart}}
\def\Mod{\mathbf{Mod}}
\begin{document}

\title{On the Universal Property of Derived Manifolds}
\author{David Carchedi}
\thanks{The first author was supported by the National Science Foundation under Grant No. 1811864}
\author{Pelle Steffens}
\thanks{The second author has received funding from the European Research Council (ERC) under the European Union’s Horizon 2020 research and innovation programme (grant agreement No 768679)}
\maketitle

\begin{abstract}
It is well known that any model for derived manifolds must form a higher category. In this paper, we propose a universal property for this higher category, classifying it up to equivalence. Namely, the $\i$-category $\DMfd$ of derived manifolds has finite limits, is idempotent complete, and receives a functor from the category of manifolds which preserves transverse pullbacks and the terminal object, and moreover is \emph{universal} with respect to these properties. We then show this universal property is equivalent to another one, intimately linking the $\i$-category of derived manifolds to the theory of $C^\i$-rings. More precisely, $\mathbb{R}$ is a $C^\i$-ring object in $\DMfd$, and the pair $\left(\DMfd,\mathbb{R}\right)$ is universal among idempotent complete $\i$-categories with finite limits and a $C^\i$-ring object. We then show that (a slight extension beyond the quasi-smooth setting of) Spivak's original model satisfies our universal property.
%The manifold $\mathbb{R}$ can also be regarded as a $C^\i$-ring object in simplicial $C^\i$-ringed spaces, and since $\left(\DMfd,\mathbb{R}\right)$ is a universal pair, this automatically produces a comparison functor with (a slight extension beyond the quasi-smooth case of) Spivak's original model, which we show is in fact an equivalence of $\i$-categories.
\end{abstract}

\tableofcontents

\section{Introduction}
The role of  transversality in differential topology has a rich history. It is well known that non-transverse pullbacks need not exist in the category of manifolds, and when they do, they do not have the same cohomological properties one would expect of a good intersection. Derived manifolds generalize the concept of smooth manifolds to allow arbitrary (iterative) intersections to exist as smooth objects, regardless of transversality. An argument going back to Spivak \cite{spivak} shows that in order to have the expected cohomological properties with respect to cobordism theory, derived manifolds must form a higher category. There exists to date several approaches to derived geometry in the $C^\i$-setting, all of them higher categorical in nature e.g. \cite{spivak,joyce,derivjustden,dg2}. Despite the wealth of different approaches, there has been as of yet little attention given to characterizing the resulting theory by a universal property, with the notable exception of recent work of Macpherson \cite{andrew}. In contrast, in Lurie's foundations on derived algebraic geometry (\cite{dagv}), the role of the universal property of derived geometry is very explicit in the passage from smooth affine spaces (of finite type) to `fully derived' spaces (of finite type). While differential geometry and the existence of a theory of derived manifolds are mentioned in \cite{dagv}, no tractable construction of the $\i$-category of derived manifolds is given in that paper.

In this paper, we rectify this situation and formulate a precise universal property for the $\i$-category of derived manifolds $\DMfd,$ thus characterizing it once and for all up to equivalence. Firstly, whatever the $\i$-category $\DMfd$ is, it certainly should receive a fully faithful  functor from the category of manifolds $$i:\Mfd \hookrightarrow \DMfd,$$ and this functor should preserve transverse pullback and the terminal object (this is part of Spivak's necessary conditions \cite{spivak} for an $\i$-category to be ``good for doing intersection theory.''). Moreover, since we will be concerned with general not necessarily quasi-smooth derived manifolds, $\DMfd$ should have finite limits and be idempotent complete (the former implies that later in the setting of $n$-categories for finite $n,$ but not for $\i$-categories). We propose the following as the universal property of the $\i$-category $\DMfd$:\\

\textbf{Universal Property 1}: \emph{For any idempotent complete $\i$-category $\sC$ with finite limits, composition with $i$ induces an equivalence of $\i$-categories
$$\Fun^{\lex}\left(\DMfd,\sC\right) \stackrel{\sim}{\longlongrightarrow} \Fun^{\pitchfork}\left(\Mfd,\sC\right)$$
between the $\i$-category of functors from derived manifolds to $\sC$ which preserve finite limits, and the $\i$-category of functors from manifolds to $\sC$ which preserve transverse pullbacks and the terminal object.}\\

In this paper, we argue that this is the ``correct'' universal property, since, in particular, it reproduces, almost for free, an extension of Spivak's model to beyond the quasi-smooth setting. In particular, Spivak's $\i$-category of quasi-smooth manifolds (of finite type) embed fully faithfully into any $\i$-category satisfying the above universal property. The proposed characterization of $\DMfd$ should also be compared with Lurie's notions of `pregeometries' and `geometric envelopes', introduced in \cite{dagv}. Roughly, a \emph{pregeometry} is an $\i$-category $\mathcal{T}$ equipped with a collection of distinguished pullbacks, which should be thought of as singling out the `geometrically correct' intersections. A \emph{geometric envelope} of a pregeometry is then an $\i$-category $\mathcal{G}$ obtained by freely adding finite limits and retracts to $\mathcal{T}$, while preserving the pullbacks we had deemed to be correct. This procedure defines a functor $\mathcal{T}\rightarrow \mathcal{G}$ which satisfies a universal property that is exactly analogous to the one above.    

We furthermore show that the above universal property is equivalent to the following one:\\

\textbf{Universal Property 2}: \emph{For any idempotent complete $\i$-category $\sC$ with finite limits, there is an equivalence of $\i$-categories
$$\Fun^{\lex}\left(\DMfd,\sC\right) \stackrel{\sim}{\longlongrightarrow} \Alg_{\Cart}\left(\sC\right)$$
between the $\i$-category of functors from derived manifolds to $\sC$ which preserve finite limits, and the $\i$-category of $C^\i$-ring objects in $\sC.$}\\

The equivalence of these two universal properties is the content of Theorem \ref{thm:univ2}.

As a corollary, we deduce the following:

\begin{cor}
There is a canonical equivalence of $\i$-categories $$\DMfd \simeq \left(\Alg_{\Cart}\left(\Spc\right)^{\fp}\right)^{op}$$
between the $\i$-category of derived manifolds, and the opposite of the $\i$-category of homotopically finitely presented simplicial $C^\i$-rings.
\end{cor}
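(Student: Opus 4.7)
The plan is to show that $(\Alg_{\Cart}(\Spc)^{\fp})^{op}$ satisfies the same universal property as $\DMfd$ identified by Universal Property 2 (Theorem \ref{thm:univ2}), and to conclude via the $\i$-categorical Yoneda lemma. Recall that Theorem \ref{thm:univ2} furnishes, for every idempotent complete $\i$-category $\sC$ with finite limits, a natural equivalence
$$\Fun^{\lex}(\DMfd,\sC) \;\simeq\; \Alg_{\Cart}(\sC).$$
The problem thus reduces to establishing the analogous equivalence
$$\Fun^{\lex}\bigl((\Alg_{\Cart}(\Spc)^{\fp})^{op}, \sC\bigr) \;\simeq\; \Alg_{\Cart}(\sC).$$

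This is an $\i$-categorical Gabriel--Ulmer duality for $C^\i$-rings. First I would note that $\Alg_{\Cart}(\Spc)$, defined as the $\i$-category of product-preserving presheaves on $\Cart$ with values in spaces, is presentable and compactly generated, and its compact objects coincide with the homotopically finitely presented $C^\i$-rings, so with $\Alg_{\Cart}(\Spc)^{\fp}$. The Yoneda embedding $\Cart^{op}\hookrightarrow \Alg_{\Cart}(\Spc)$ sends $\mathbb{R}^n$ to the free $C^\i$-ring $C^\i(\mathbb{R}^n)$ and preserves finite coproducts, so on passing to opposites we obtain a finite-product-preserving fully faithful embedding $\Cart \hookrightarrow (\Alg_{\Cart}(\Spc)^{\fp})^{op}$. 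The central claim is that this embedding exhibits the target as the free idempotent complete $\i$-category with finite limits receiving a finite-product-preserving functor out of $\Cart$; granted this, restriction along the embedding implements the desired equivalence with $\Alg_{\Cart}(\sC) = \Fun^{\times}(\Cart,\sC)$.

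Chaining the two equivalences produces a natural equivalence $\Fun^{\lex}(\DMfd,\sC) \simeq \Fun^{\lex}((\Alg_{\Cart}(\Spc)^{\fp})^{op},\sC)$. Tracking the tautological $C^\i$-ring object through the equivalences shows that it is implemented by restriction along a canonical lex functor $(\Alg_{\Cart}(\Spc)^{\fp})^{op}\to \DMfd$ sending $C^\i(\mathbb{R}^n)$ to $\mathbb{R}^n$. Applying the $\i$-categorical Yoneda lemma --- both sides being essentially small --- promotes this to the claimed equivalence $\DMfd \simeq (\Alg_{\Cart}(\Spc)^{\fp})^{op}$.

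I expect the hardest step to be the Gabriel--Ulmer duality: specifically, identifying compact objects of $\Alg_{\Cart}(\Spc)$ with homotopically finitely presented objects, and verifying that $(\Alg_{\Cart}(\Spc)^{\fp})^{op}$ is indeed free as a lex idempotent complete cocompletion preserving finite products, both requiring careful bookkeeping of filtered versus finite colimits in the presence of homotopy coherence. The remainder of the deduction is formal.
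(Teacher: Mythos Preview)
Your approach is correct and is essentially the paper's own argument: both $\DMfd$ and $\left(\Alg_{\Cart}(\Spc)^{\fp}\right)^{op}$ represent the same functor $\sC \mapsto \Alg_{\Cart}(\sC)$ on idempotent complete $\i$-categories with finite limits, hence are equivalent. The step you flag as hardest---the ``Gabriel--Ulmer duality'' asserting that $\left(\Alg_{\Cart}(\Spc)^{\fp}\right)^{op}$ is the free finitely complete idempotent complete $\i$-category on a $C^\i$-ring object---is precisely Theorem~\ref{thm:finenv} (the finite limit envelope), already established in Section~\ref{sec:algthy} for an arbitrary algebraic theory $\bT$; so at the point where this corollary appears, nothing further needs to be proven beyond invoking that theorem together with Theorem~\ref{thm:univ2}.
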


This result is closely related to the fact that quasi-smooth derived manifolds of finite type are ``affine,'' which is the main result of \cite{derivjustden}. The above corollary shows that all finite type derived manifolds are affine, and they are precisely the $C^\i$-spectra of homotopically finitely presented simplicial $C^\i$-rings.

\subsection{Organization}
In Section \ref{sec:univ}, we describe in detail the universal property of the $\i$-category $\DMfd$ of derived manifolds, and deduce some of the basic consequences of this universal property.

In Section \ref{sec:algthy} we lay down the basic foundations for algebraic theories in the context of $\i$-categories. This includes some new results, for example, the theory of unramified transformations of algebraic theories, based on work of Lurie in \cite{dagix} in a more general setting. While the results we present on unramified transformations can be formally deduced from the results in \cite{dagix}, our focus on algebraic theories allows us to significantly streamline the arguments.  

In Section \ref{sec:prop}, we delve into the study of simplicial $C^\i$-rings, and establish their basic properties. The main results of this section are the following:

\begin{thm}
The functor $\sC^\i:\Mfd \hookrightarrow \Alg_{\Cart}\left(\Spc\right)^{op}$ from the category of smooth manifolds to the opposite $\i$-category of simplicial $C^\i$-rings, sending a manifold $M$ to the discrete simplicial $C^{\infty}$-ring of smooth functions on $M,$ is fully faithful and preserves transverse pullbacks and the terminal object. 
\end{thm}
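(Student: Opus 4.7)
The statement packages three assertions---preservation of the terminal object, full faithfulness, and preservation of transverse pullbacks---and I would dispatch the first two quickly before turning to the third. For the terminal object, the point is that $\bR = C^\i(\mathrm{pt})$ is the free $C^\i$-ring on no generators and hence the initial object of $\Alg_{\Cart}(\Spc)$, so it becomes terminal in the opposite $\i$-category. For full faithfulness, both $C^\i(M)$ and $C^\i(N)$ are discrete as simplicial $C^\i$-rings, so the mapping space
\[
\Map_{\Alg_{\Cart}(\Spc)}\bigl(C^\i(N),\,C^\i(M)\bigr)
\]
is equivalent to the hom-set in the underlying $1$-category of $C^\i$-rings. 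The classical theorem of Moerdijk--Reyes (an upgrade of Milnor's observation that every $\bR$-algebra map $C^\i(N)\to C^\i(M)$ is automatically a $C^\i$-ring homomorphism induced by a unique smooth map) then identifies this set with $C^\i(M,N)=\Hom_{\Mfd}(M,N)$.

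For the transverse pullback claim, I would consider a transverse cospan $M\to P\leftarrow N$ with, without loss of generality, $N\to P$ a submersion. The corresponding $1$-categorical statement, that $C^\i(M\times_P N)$ is the pushout of $C^\i(M)\leftarrow C^\i(P)\to C^\i(N)$ in ordinary $C^\i$-rings, is again due to Moerdijk--Reyes; what must be added is that this is \emph{also} the pushout in the $\i$-category $\Alg_{\Cart}(\Spc)$, i.e.\ that no higher derived information appears. The strategy is: first, by the implicit function theorem, reduce locally on $N$ to the case of a trivial fibration $P\times\bR^k\to P$; then, in this trivial case, the map $C^\i(P)\to C^\i(P\times\bR^k)\simeq C^\i(P)\{x_1,\ldots,x_k\}$ is literally the free $C^\i$-algebra on $k$ generators over $C^\i(P)$, so the derived pushout along any $C^\i(P)\to C^\i(M)$ coincides with the $1$-categorical pushout $C^\i(M)\{x_1,\ldots,x_k\}\simeq C^\i(M\times\bR^k)$.

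The main obstacle is globalization: upgrading this local freeness computation to an arbitrary submersion $N\to P$. Cleanly formulated, what one wants is that $C^\i(P)\to C^\i(N)$ is \emph{smooth} in the derived-algebraic sense---its relative cotangent complex concentrated in degree zero and projective---so that derived base change along it agrees with classical base change. I would establish this by descent: package $\sC^\i$ as a sheaf of simplicial $C^\i$-rings on a site of manifolds with \v{C}ech covers by submersion charts, then resolve $C^\i(N)$ over $C^\i(P)$ by a simplicial diagram whose terms are free $C^\i(P)$-algebras coming from a good open atlas of $N$. Since free extensions and filtered colimits commute with pushouts in $\Alg_{\Cart}(\Spc)$, this reduces the derived pushout statement to the trivial case handled above; verifying that such a resolution exists and is compatible with the simplicial $C^\i$-ring enhancement is the technically demanding step, and is where the theory of unramified transformations of algebraic theories developed earlier in the paper should be brought to bear.
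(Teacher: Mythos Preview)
Your treatment of the terminal object and full faithfulness is correct and matches the paper. The substantive issue is in the transverse pullback argument, specifically in the globalization step.

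Your local reduction is sensible and is in fact close to what the paper does (though the paper localizes on the base $L$ rather than on the total space $N$, reducing to the case $L\cong\bR^l$ via Lemma~\ref{opentransversal}). Either route leads to the same local input: Lemma~\ref{preservecoproduct} for products and Proposition~\ref{localization} for restriction to opens.

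The gap is in your globalization. You propose to ``resolve $C^\i(N)$ over $C^\i(P)$ by a simplicial diagram whose terms are free $C^\i(P)$-algebras coming from a good open atlas of $N$,'' and then use that pushouts commute with colimits. But an open cover of $N$ gives a \emph{cosimplicial} diagram in $C^\i$-rings whose \emph{limit} is $C^\i(N)$, not a simplicial diagram whose colimit is $C^\i(N)$. Pushouts do not commute with limits, so the reduction to the local case does not go through this way. There is no evident way to write $C^\i(N)$ as a colimit of free $C^\i(P)$-algebras using only atlas data; the unramifiedness machinery does produce simplicial resolutions, but by free algebras on infinite generating sets (bar constructions), which do not visibly reduce to the transverse case.

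The paper's globalization is quite different and is the real content of the proof. One first proves (Theorem~\ref{thm:complpin}) that for any finitely presented $A\in\Alg_{\Cart}(\Spc)$, each $\pi_n(A)$ is a \emph{complete} $\pi_0(A)$-module: it agrees with global sections of its associated sheaf on $Sp(\pi_0(A))$. Applying this to $A=\sC^\i(M)\oinfty_{\sC^\i(L)}\sC^\i(N)$, whose $\pi_0$ is $\sC^\i(P)$ by the classical result, one reduces the vanishing of $\pi_n(A)$ for $n\ge 1$ to the vanishing of the sheaf $\mathrm{M}\Speci(\pi_n(A))$ on $P$. This sheaf is then checked to vanish on the opens $\psi^{-1}(U_\alpha)$ coming from a Euclidean cover of $L$, using the local case and the description of localization in Proposition~\ref{localization}. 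Completeness is exactly the mechanism that converts local vanishing into global vanishing, and it is what replaces the descent argument you were reaching for.
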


\begin{thm}
There exists a fully faithful $C^\i$-spectrum functor
$$\Speci:\Alg_{\Cart}\left(\Spc\right)^{op} \hookrightarrow \Loc$$ from there $\i$-category of homotopically finitely presented simplicial $C^\i$-rings and the $\i$-category of spaces locally ringed in simplicial $C^\i$-rings.
\end{thm}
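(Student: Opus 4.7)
The plan is to construct $\Speci$ within Lurie's pregeometry formalism of \cite{dagv}, specialized to the $C^\infty$ setting, and then to verify fully faithfulness by reducing to a classical statement about discrete $C^\infty$-rings. Concretely, for a homotopically finitely presented simplicial $C^\infty$-ring $A$, I would set the underlying topological space $|\Speci A|$ to be the set of $C^\infty$-ring morphisms $A \to \bR$ (equivalently $\pi_0 A \to \bR$, since $\bR$ is discrete), with the Zariski-type topology whose basic opens are $D(f) := \{x : x(f) \neq 0\}$ for $f \in \pi_0 A$. The structure sheaf $\O_{\Speci A}$ is defined by the sheafification of the assignment $D(f) \mapsto A\{f^{-1}\}$, where $A\{f^{-1}\}$ denotes the derived $C^\infty$-localization of $A$ inverting $f$; the morphism $A \to A\{f^{-1}\}$ is unramified in the sense developed in Section \ref{sec:algthy}. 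One then verifies the sheaf condition using exactness properties of these localizations and identifies the stalks as local simplicial $C^\infty$-rings, so that the result lands in $\Loc$.

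For fully faithfulness, the key is the adjunction between $\Speci$ and a global sections functor $\Gamma$: I would show that for homotopically finitely presented $A$, the unit
\[
 A \longrightarrow \Gamma\bigl(\Speci A, \O_{\Speci A}\bigr)
\]
is an equivalence. For $A$ discrete, this reduces to the classical theorem of Dubuc and Moerdijk--Reyes that $C^\infty$-Spec is fully faithful on finitely presented $C^\infty$-rings (which are automatically germ-determined). For general simplicial $A$, the argument proceeds by induction along the Postnikov tower, recovering $A$ as the limit of its truncations $A_{\le n}$: each step $A_{\le n+1} \to A_{\le n}$ is a square-zero extension classified by a map of $C^\infty$-modules, and provided $\Speci$ is compatible with such extensions, the discrete base case together with induction yields the global sections equivalence. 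Fully faithfulness on mapping spaces then follows by an adjunction calculation, since the locality of the induced morphism of structure sheaves is automatic for maps between finitely presented objects: any $C^\infty$-ring homomorphism pulls maximal $\bR$-valued ideals back to maximal $\bR$-valued ideals.

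The main obstacle is the Postnikov step --- one must show that $\Speci$ commutes with square-zero extensions in a manner compatible with the structure sheaf. This relies on the unramified-morphism techniques of Section \ref{sec:algthy}: since $A \to A\{f^{-1}\}$ is unramified, there is a clean base-change formula $\pi_0(A\{f^{-1}\}) \simeq (\pi_0 A)\{f^{-1}\}$, and more generally the higher homotopy groups of the localization are obtained by tensoring those of $A$ along the unramified map. These formulas reduce the computation of $\Gamma(\Speci A, \O_{\Speci A})$ to the classical $\pi_0$-case, completing the inductive argument and yielding fully faithfulness.
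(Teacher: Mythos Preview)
Your overall strategy---establish an adjunction-type formula and show the unit $A \to \Gamma(\cO_{\Speci A})$ is an equivalence---is indeed how the paper concludes. But the route differs in two places, and the second hides a real gap.

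On the construction: the paper does not build $\Speci$ by hand via sheafifying $D(f) \mapsto A\{f^{-1}\}$. It first observes that $(\bR,\sC^\i_\bR)$ is a $C^\i$-ring object $\underline{\bR}$ in $\Loc$ (Proposition~\ref{prop:prodok}) and \emph{defines} $\Speci := \Ran_{j^{\fp}}(\underline{\bR})$; left-exactness is then automatic from the finite-limit-envelope universal property (Theorem~\ref{thm:finenv}). Only afterwards is the structure sheaf identified concretely: one proves that the presheaf $W_a \mapsto A[a^{-1}]$ is already a sheaf (Corollary~\ref{cor:tildeshf}) and then that it agrees with $\cO_A$ (Corollary~\ref{cor:tildeok}), the latter by showing both induce left-exact functors with the same restriction to $\Cart$. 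The adjunction formula (Proposition~\ref{prop:specadj}) and fully faithfulness (Theorem~\ref{thm:speciff}) then follow in a few lines.

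On the unit equivalence: your Postnikov-tower induction is problematic. The truncations $\tau_{\le n}A$ of a homotopically finitely presented $A$ are in general \emph{not} homotopically finitely presented (this is exactly the phenomenon flagged in the Warning preceding Proposition~\ref{prop:nsame}), so your inductive hypothesis leaves the domain of $\Speci$. More fundamentally, even granting an extension of $\Speci$, the square-zero step boils down to showing that $\Gamma$ applied to the sheafification of $W_a \mapsto \pi_n(A)\otimes_{\pi_0 A}\pi_0(A)[a^{-1}]$ returns $\pi_n(A)$. That is precisely the statement that $\pi_n(A)$ is a \emph{complete} $\pi_0(A)$-module, and it is the main technical input (Theorem~\ref{thm:complpin}). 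The paper does not prove this by Postnikov induction: instead it shows every finitely presented $A$ is a retract of a finite cell object (Proposition~\ref{afpcell}), uses unramifiedness of $\ComR \to \Cart$ to present $A^{alg}$ as a quasi-free cdga over some $\sC^\i(\bR^m)$ with finitely many generators in each degree, and then appeals to fineness of $\cO_{\pi_0 A}$-module sheaves to collapse the hypercohomology spectral sequence. Your last paragraph correctly records the localization formula $\pi_n(A[a^{-1}]) \cong \pi_n(A)\otimes_{\pi_0 A}\pi_0(A)[a^{-1}]$ (Proposition~\ref{localization}), but that only computes the \emph{presheaf} of homotopy groups; without the completeness theorem and the fineness argument there is no reason global sections of its sheafification recover $\pi_n(A)$, and your sketch does not supply a substitute.
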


In Section \ref{sec:univ2}, we put the theory of simplicial $C^\i$-rings developed in the previous section to use and use it to explore the universal property of derived manifolds. The first main result is the following:

\begin{thm}
For all idempotent complete $\i$-categories $\sC$ with finite limits, composition with $$i\circ q:\Cart \to \DMfd$$ induces an equivalence of $\i$-categories
$$\Fun^{\lex}\left(\DMfd,\sC\right) \stackrel{\sim}{\longlongrightarrow} \Fun^{\pi}\left(\Cart,\sC\right)=\Alg_{\Cart}\left(\sC\right)$$
between the $\i$-category of left exact functors from derived manifolds to $\sC,$ and the $\i$-category of $C^\i$-rings in $\sC.$
\end{thm}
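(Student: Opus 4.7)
My plan is to factor the restriction $(i\circ q)^* = q^* \circ i^*$ and handle each factor separately. The factor $i^*: \Fun^{\lex}(\DMfd,\sC) \to \Fun^{\pitchfork}(\Mfd,\sC)$ is an equivalence by Universal Property 1, stated earlier in the introduction. The remaining task is therefore to show that restriction along $q: \Cart \hookrightarrow \Mfd$ induces an equivalence
$$q^*: \Fun^{\pitchfork}(\Mfd,\sC) \xrightarrow{\sim} \Fun^{\pi}(\Cart,\sC) = \Alg_{\Cart}(\sC).$$
Well-definedness is immediate: finite products in $\Cart$ such as $\bR^n\times\bR^m = \bR^{n+m}$ are transverse pullbacks over the terminal object $\bR^0 \in \Mfd$, so their images under any transverse-pullback-preserving $F$ that also preserves $\bR^0$ are products in $\sC$.

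The essential geometric input for constructing an inverse is that every manifold arises as a transverse intersection of Cartesian spaces. By Whitney's embedding theorem combined with a tubular neighborhood construction, each $M$ admits a smooth map $f:\bR^N\to\bR^k$ transverse to $0$ with $M = f^{-1}(0)$; this exhibits $M$ as a transverse pullback in $\Mfd$ of a diagram with three corners in $\Cart$. Consequently, any transverse-pullback-preserving extension $\tilde F$ of $A = q^*\tilde F \in \Alg_{\Cart}(\sC)$ is forced to satisfy $\tilde F(M) \simeq A(\bR^N)\times_{A(\bR^k)}\mathbf{1}_\sC$, which both establishes full faithfulness of $q^*$ and motivates the candidate formula $\tilde A(M) := A(\bR^N)\times_{A(\bR^k)}\mathbf{1}_\sC$ for the inverse.

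The main obstacle is coherence: verifying that this candidate is well-defined (independent of Whitney presentation and of the extensions of morphisms), functorial in $M$ at the level of $\infty$-categories, and indeed preserves arbitrary transverse pullbacks. The cleanest route is to invoke the fully faithful embedding $C^\infty:\Mfd \hookrightarrow \Alg_{\Cart}(\Spc)^{op}$ from the previous section, which preserves transverse pullbacks and the terminal object. Under $C^\infty$, any two Whitney presentations of $M$ produce finite-limit diagrams in $\Alg_{\Cart}(\Spc)^{op}$ with a common limit $C^\infty(M)$, and every smooth map $\phi:M\to M'$ corresponds canonically to a morphism between such limit diagrams, compatible with the projections to ambient Cartesian spaces. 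Since $A$ is product-preserving on $\Cart$, applying it levelwise to these diagrams yields coherent limit data in $\sC$ assembling into the desired $\infty$-functor $\tilde A:\Mfd\to\sC$. Preservation of transverse pullbacks by $\tilde A$ then follows from preservation by $C^\infty$ combined with left-exactness of finite limits in $\sC$, and the triangle identities $q^*\tilde A \simeq A$ and $\widetilde{q^*F}\simeq F$ follow formally from the forcing observation in the previous paragraph.
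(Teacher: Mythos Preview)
Your factorization $(i\circ q)^* = q^* \circ i^*$ and the reduction to showing that $q^*: \Fun^{\pitchfork}(\Mfd,\sC) \to \Fun^{\pi}(\Cart,\sC)$ is an equivalence is exactly the paper's approach. However, your geometric input contains an error: it is \emph{not} true that every manifold $M$ can be written as $f^{-1}(0)$ for a smooth map $f:\bR^N \to \bR^k$ transverse to $0$. Such a presentation would force the normal bundle of $M \hookrightarrow \bR^N$ to be trivial, equivalently $TM$ to be stably trivial; $\bC P^2$ is a counterexample since $p_1(T\bC P^2) \neq 0$. The correct statement, proved in the paper as Corollary \ref{cor:retractrans}, is that every manifold is a \emph{retract} of such a transverse pullback (a tubular neighborhood of a Whitney embedding is open in $\bR^N$, and open subsets of $\bR^N$ are transverse zero loci by Proposition \ref{prop:fiberopen}). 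This is precisely why the hypothesis that $\sC$ be idempotent complete is essential rather than decorative; your candidate formula $\tilde A(M) := A(\bR^N)\times_{A(\bR^k)}\mathbf{1}_\sC$ simply has no meaning for a general $M$.

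Your coherence paragraph is also not an argument as written: ``applying $A$ levelwise to these diagrams yields coherent limit data assembling into the desired $\infty$-functor'' begs exactly the question of how to choose such diagrams and their connecting morphisms coherently. The paper sidesteps this by constructing the inverse to $q^*$ as a right Kan extension. One first shows (Lemma \ref{lem:rkn}) that for product-preserving $F:\Cart \to \sC$ the extension $\Ran_q F$ exists and preserves transverse pullbacks, by factoring through the finite limit envelope $\left(\Alg_{\Cart}(\Spc)^{\fp}\right)^{op}$ via $\sC^\i$ and invoking Theorem \ref{thm:finenv} together with Lemma \ref{lem:crucial} and Theorem \ref{manifoldssmoothring}. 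The unit and counit of $q^* \dashv \Ran_q$ are then checked to be equivalences using full faithfulness of $q$ and the retract-of-transverse-pullback statement. The Kan-extension machinery handles all coherence automatically; your appeal to the embedding $\sC^\i$ is the right instinct, but the paper uses it to compute $\Ran_q F$ rather than to hand-build a functor from presentations.
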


along with the following corollary:

\begin{cor}
There is a canonical equivalence of $\i$-categories $$\DMfd \simeq \left(\Alg_{\Cart}\left(\Spc\right)^{\fp}\right)^{op}.$$
\end{cor}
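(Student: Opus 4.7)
My plan is to derive the corollary from the preceding theorem together with a standard universal property of the compact objects in algebras over an algebraic theory. The theorem characterizes the pair $(\DMfd, i\circ q)$ by the property that, for every idempotent-complete $\i$-category $\sC$ with finite limits, restriction along $i\circ q$ induces an equivalence $\Fun^{\lex}(\DMfd,\sC)\simeq\Alg_{\Cart}(\sC)$. By the $(\i,2)$-Yoneda lemma, it suffices to exhibit $\sD:=\left(\Alg_{\Cart}(\Spc)^{\fp}\right)^{op}$, equipped with the evident map $\Cart\to\sD$ sending $\bR^n\mapsto C^\i(\bR^n)$, as a second pair with the same universal property; then the canonical comparison functor will automatically be an equivalence.

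The first task is to check that $\sD$ lies in the class of $\i$-categories over which we are testing. Since $\Alg_{\Cart}(\Spc)=\Fun^{\pi}(\Cart,\Spc)$ is presentable and compactly generated by the free algebras $C^\i(\bR^n)$, the subcategory $\Alg_{\Cart}(\Spc)^{\fp}$ of compact objects is small, closed under finite colimits, and idempotent complete; dualizing, $\sD$ is small, has finite limits, and is idempotent complete. The functor $\Cart\to\sD$ preserves finite products because coproducts of free algebras are computed by $C^\i(\bR^n)\amalg C^\i(\bR^m)\simeq C^\i(\bR^{n+m})$.

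It remains to establish, for every idempotent-complete $\sC$ with finite limits, an equivalence
$$\Fun^{\lex}(\sD,\sC)\stackrel{\sim}{\longrightarrow}\Alg_{\Cart}(\sC).$$
Restriction along $\Cart\to\sD$ provides one direction. For the inverse, given product-preserving $A\colon\Cart\to\sC$, I would pass to opposites and left Kan extend $A^{op}\colon\Cart^{op}\to\sC^{op}$ along the Yoneda embedding $\Cart^{op}\hookrightarrow\Alg_{\Cart}(\Spc)$ to a sifted-cocontinuous functor, invoking the identification of $\Alg_{\Cart}(\Spc)$ as the free sifted-cocompletion $P_\Sigma(\Cart^{op})$. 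Restricting this extension to the compact subcategory and dualizing yields the desired lex functor $\sD\to\sC$. The main technical step, which I expect to be the heart of the argument, is verifying that this construction really lands in finite-limit-preserving functors, equivalently, that the sifted-cocontinuous extension restricts to a functor preserving finite colimits between compact subcategories. This is a version of $\i$-categorical Gabriel–Ulmer duality for algebraic theories, and fits cleanly into the formalism developed in Section~\ref{sec:algthy}; in particular it rests on the fact that sifted colimits commute with finite products in $\Spc$ and on the identification $\Alg_{\Cart}(\Spc)\simeq\Ind(\Alg_{\Cart}(\Spc)^{\fp})$.

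Having verified both pieces, the two pairs $(\DMfd,i\circ q)$ and $(\sD,\Cart\to\sD)$ corepresent the same $(\i,2)$-functor on idempotent-complete $\i$-categories with finite limits, and the resulting canonical equivalence $\DMfd\simeq\sD$ yields the corollary.
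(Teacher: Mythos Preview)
Your proposal is correct and follows essentially the same approach as the paper: both identify $\DMfd$ and $\left(\Alg_{\Cart}(\Spc)^{\fp}\right)^{op}$ by showing they satisfy the same universal property with respect to $\Alg_{\Cart}(\sC)$. The only difference is that the universal property you set out to establish for $\left(\Alg_{\Cart}(\Spc)^{\fp}\right)^{op}$ is exactly Theorem~\ref{thm:finenv}, already proven in Section~\ref{sec:algthy}; the paper simply cites it rather than re-deriving it, so your sketch of the ``main technical step'' can be replaced by a direct reference to that result.
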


Along the way we prove the following interesting facts about functors out of the category of smooth manifolds:

\begin{lem}
Let $q:\Cart \hookrightarrow \Mfd$ be the inclusion of the full subcategory of smooth manifolds on those of the form $\bR^n,$ for some $n.$ Let $\sC$ be any idempotent complete $\i$-category with finite limits, and let $$F:\Cart \to \sC$$ be an arbitrary functor. Then the following are equivalent

\begin{itemize}
\item[1.] $F$ preserves finite products
\item[2.] The right Kan extension $\Ran_q F$ exists, and moreover preserves transverse pullbacks and the terminal object.
\end{itemize}
\end{lem}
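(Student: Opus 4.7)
The direction $(2)\Rightarrow(1)$ is essentially immediate. Since $q$ is fully faithful, the counit of the adjunction $\Ran_q \dashv (-\circ q)$ gives $F \simeq (\Ran_q F)\circ q$. The finite product $\bR^m\times \bR^n = \bR^{m+n}$ in $\Cart$ agrees, as a cone, with the transverse pullback $\bR^m\times_{\bR^0}\bR^n$ in $\Mfd$, so preservation of transverse pullbacks and the terminal object by $\Ran_q F$ implies preservation of finite products by $F$.

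For $(1)\Rightarrow(2)$, the plan is to construct $\Ran_q F$ explicitly via the theory of finitely presented simplicial $C^\i$-rings. The algebraic-theory machinery developed in Section \ref{sec:algthy} gives an equivalence
$$\Fun^{\lex}\!\left(\Alg_{\Cart}(\Spc)^{op,\fp},\,\sC\right)\;\stackrel{\sim}{\longrightarrow}\;\Fun^{\Pi}(\Cart,\sC)=\Alg_{\Cart}(\sC)$$
exhibiting $\Alg_{\Cart}(\Spc)^{op,\fp}$ as the free idempotent-complete finite-limit completion of $\Cart$ that turns finite products into finite products. Given $F$ product-preserving, let $\wt F:\Alg_{\Cart}(\Spc)^{op,\fp}\to \sC$ be the corresponding left-exact extension, and set $G:=\wt F\circ \sC^\i:\Mfd\to \sC$. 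By the stated theorem, $\sC^\i$ is fully faithful and preserves transverse pullbacks and the terminal object, and $\wt F$ preserves all finite limits, so $G$ inherits these preservation properties; moreover $G\circ q\simeq F$ essentially by construction since $\sC^\i\circ q$ is the canonical inclusion $\Cart\hookrightarrow \Alg_{\Cart}(\Spc)^{op,\fp}$.

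The heart of the argument is to identify $G$ with $\Ran_q F$, i.e.~to verify the pointwise formula $G(M)\simeq \lim_{(\bR^n,\, M\to \bR^n)\in \cI_M} F(\bR^n)$. The indexing category $\cI_M = M/q$ is sifted but infinite, so the right-hand side is a priori not computable in an $\i$-category with only finite limits. The key device I would use is a finite-limit-plus-retract presentation of $\sC^\i(M)$: by the Whitney embedding and tubular neighborhood theorems, every manifold $M$ is a retract in $\Mfd$ of an open submanifold $U\subseteq \bR^N$; and any such $U$ is itself a transverse pullback of Euclidean spaces, since writing $U=\{f>0\}$ for a suitable smooth $f$ yields $U\simeq \bR^N\times_{\bR}\bR$ via a diffeomorphism $\bR\simeq \bR_{>0}$ (whose composition with the inclusion into $\bR$ is a submersion, hence transverse to $f$). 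Applying $\wt F$ then computes $G(M)$ concretely as a retract of a finite limit of $F(\bR^{n_i})$'s, which exists in $\sC$ by finite completeness and idempotent completeness. A cofinality argument within $\cI_M$, together with sifted-ness of $\cI_M$ and the product-preservation of $F$, promotes this finite computation to a universal cone, identifying $G(M)$ with the pointwise right Kan extension.

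The step I expect to be the main obstacle is this last identification: even once $G(M)$ has been computed as a finite limit in $\sC$ via the chosen presentation, one must check that it satisfies the universal property with respect to \emph{all} of $\cI_M$, not just the particular finite subdiagram coming from an embedding $M\hookrightarrow U\hookrightarrow \bR^N$. Concretely, the issue is to show that every compatible family of maps $X\to F(\bR^n)$ indexed by $\cI_M$ factors uniquely through the candidate $G(M)$. Sifted-ness of $\cI_M$ combined with $F$ preserving products is exactly what is needed to reduce the apparently-infinite coherence data to a finite amount, but pushing this through cleanly is the non-formal part of the argument.
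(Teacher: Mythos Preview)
Your overall strategy---construct the candidate extension as $G=\wt F\circ \sC^\i$ with $\wt F$ the left-exact extension of $F$ to $(\Alg_{\Cart}(\Spc)^{\fp})^{op}$, observe that $G$ preserves transverse pullbacks because $\sC^\i$ does and $\wt F$ is left exact, and then identify $G$ with $\Ran_q F$---is exactly the paper's route. The difference lies entirely in how the identification $G\simeq\Ran_q F$ is carried out, and here your plan has a real gap.

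You propose to verify the pointwise formula for $\Ran_q F$ directly via a cofinality argument in $\cI_M=M/q$, using sifted-ness to collapse the infinite limit to the finite one coming from a Whitney embedding. This is the hard way, and it is not clear your sketch closes: sifted-ness of $\cI_M$ tells you that limits over $\cI_M$ commute with finite coproducts, but it does not by itself let you replace an infinite limit by a finite one. You would need to exhibit an explicit cofinal finite subdiagram, and the fact that $M$ is a retract of a transverse pullback of $\bR^{n_i}$'s does not immediately produce one---retracts are not sub-cones of $\cI_M$ in any obvious sense. You correctly flag this as the non-formal step, but you do not have a mechanism to carry it out.

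The paper sidesteps this entirely with two clean moves. First, it invokes a general lemma (Lemma~\ref{lem:crucial}): for a factorization $\Cart\stackrel{q}{\hookrightarrow}\Mfd\stackrel{\sC^\i}{\hookrightarrow}(\Alg_{\Cart}(\Spc)^{\fp})^{op}$ with $\sC^\i$ fully faithful, one has $\Ran_q F\simeq (\Ran_{j^{\fp}}F)\circ\sC^\i$ whenever the outer Kan extension exists---no cofinality needed, just the adjunction $\Lan_{\sC^\i}\dashv(\sC^\i)^*$ and full faithfulness. This requires $\sC$ to be cocomplete (for $\Lan_{\sC^\i}$ to exist globally), so second, the paper reduces the general case to the cocomplete one by post-composing with the Yoneda embedding $y:\sC\hookrightarrow\Psh(\sC)$: apply the above to $y\circ F$, then observe that $\Ran_q(y\circ F)$ lands in representables because every manifold is a retract of a transverse pullback of Cartesian spaces and $\sC$ is idempotent complete with finite limits. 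Since $y$ reflects limits, this gives the pointwise $\Ran_q F$ in $\sC$. This Yoneda trick is exactly what converts your ``finite-limit-plus-retract presentation'' observation into an actual proof of the universal property, and it is the piece your argument is missing.
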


\begin{cor}
Denote by $\mathbf{Dom} \subset \Mfd$ the full subcategory on the open domains. Let $\sC$ be any idempotent complete $\i$-category with finite limits, and let $$G:\Mfd \to \sC$$ be an arbitrary functor. Then the following are equivalent

\begin{itemize}
\item[1.] $G$ preserves transverse pullbacks and the terminal object
\item[2.] The restriction of $G$ to $\mathbf{Dom}$ preserves transverse pullbacks and the terminal object
\end{itemize}
\end{cor}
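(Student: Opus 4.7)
The direction $(1) \Rightarrow (2)$ is immediate since $\mathbf{Dom} \hookrightarrow \Mfd$ is fully faithful and any transverse pullback diagram in $\mathbf{Dom}$ is, in particular, a transverse pullback diagram in $\Mfd$. We focus on $(2) \Rightarrow (1)$. The plan is to exhibit $G$ as naturally equivalent to the right Kan extension of its restriction to $\Cart$, and then invoke the preceding Lemma. Cartesian spaces lie in $\mathbf{Dom}$ and their finite products are examples of transverse pullbacks, so the hypothesis forces $G|_\Cart$ to preserve finite products; hence by the preceding Lemma the right Kan extension $\tilde{G} := \Ran_q(G|_\Cart):\Mfd \to \sC$ exists and preserves transverse pullbacks and the terminal object. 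The unit of the adjunction $q^* \dashv \Ran_q$ supplies a natural transformation $\eta: G \Rightarrow \tilde{G}$, and its restriction to $\Cart$ is an equivalence: $q$ being fully faithful, the counit $q^* \Ran_q \Rightarrow \id$ is an equivalence, which via the triangle identity makes $\eta|_\Cart = q^* \eta$ an equivalence as well. It will suffice to prove that $\eta_M$ is an equivalence for every $M \in \Mfd$.

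The crucial pointwise case is when $M$ is an open domain $U \subseteq \bR^n$. A smooth Urysohn / partition-of-unity construction produces a smooth function $f:\bR^n \to \bR$ with $U = f^{-1}\!\left((0,\infty)\right)$. Paired with the submersion $\exp:\bR \to \bR$ onto $(0,\infty)$, this presents $U$ as a transverse pullback
\[ U \;\simeq\; \bR^n \times_{\bR} \bR \]
all of whose vertices lie in $\mathbf{Dom}$ (with three of them in $\Cart$). Both $G|_{\mathbf{Dom}}$ and $\tilde{G}|_{\mathbf{Dom}}$ preserve this square, and $\eta$ is already a levelwise equivalence on the three Cartesian vertices; naturality then identifies $\eta_U$ as the induced map between these pullbacks in $\sC$, which is necessarily an equivalence.

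For a general manifold $M$, the Whitney embedding theorem together with the tubular neighborhood theorem realizes $M$ as a smooth retract of an open domain $T_M \subset \bR^N$, via a closed embedding $i:M \hookrightarrow T_M$ and a smooth retraction $\pi:T_M \to M$. The composite $e := i \circ \pi$ is an idempotent endomorphism of $T_M$ in $\Mfd$ whose splitting is $M$, so $G(M)$ and $\tilde{G}(M)$ are the splittings in $\sC$ of the idempotents $G(e)$ and $\tilde{G}(e)$, which exist by idempotent completeness. The equivalence $\eta_{T_M}$ established above intertwines these idempotents by naturality, and therefore descends to the desired equivalence $\eta_M:G(M) \stackrel{\sim}{\to} \tilde{G}(M)$.

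The most delicate ingredient is the transverse-pullback presentation of an arbitrary open domain in terms of Cartesian spaces, which hinges on smooth Urysohn together with the diffeomorphism $\bR \cong (0,\infty)$; the rest of the argument is formal manipulation with Kan extensions, idempotent completeness, and Whitney embedding.
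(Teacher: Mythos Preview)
Your proof is correct and follows the same underlying strategy as the paper: identify $G$ with $\Ran_q(G|_{\Cart})$ and then invoke the preceding Lemma. The paper's argument is more compressed: it first observes that $\Mfd$ is the Karoubi completion of $\mathbf{Dom}$, so that restriction $\Fun(\Mfd,\sC) \to \Fun(\mathbf{Dom},\sC)$ is an equivalence and hence $G \simeq \Ran_\varphi(G|_{\mathbf{Dom}})$; it then asserts that the analogues of Lemma~\ref{lem:rkn} and Corollary~\ref{cor:eqtrans} go through verbatim with $\mathbf{Dom}$ replacing $\Mfd$, which yields $G|_{\mathbf{Dom}} \simeq \Ran_{q'}(G|_{\Cart})$ and thus $G \simeq \Ran_q(G|_{\Cart})$. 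Your version unpacks exactly what that appeal to ``analogous results'' amounts to: you directly check the unit $G \Rightarrow \Ran_q(G|_{\Cart})$ is an equivalence, first on open domains using a transverse-pullback presentation with Cartesian legs (your $\exp$ trick is a variant of the paper's Proposition~\ref{prop:fiberopen}), and then on general manifolds via the retract argument (the paper's Corollary~\ref{cor:retractrans}). The two proofs are thus the same at the level of ideas; yours is simply more explicit about the pointwise verification.
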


Finally, we end the paper by establishing the equivalence between (a minor extension beyond the quasi-smooth setting of) Spivak's model and our $\i$-category $\DMfd$ defined via its universal property:

\begin{thm}
There is a canonical equivalence of $\i$-categories between the homotopy coherent nerve of $\mathbf{dMan}_{\mathit{Spivak}}$ of (a minor extension of) Spivak's model of derived manifolds and $\DMfd.$
\end{thm}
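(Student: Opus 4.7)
The plan is to construct a comparison functor using Universal Property 1 and then to identify both $\i$-categories with the same essential image inside the $\i$-category $\Loc$ of spaces locally ringed in simplicial $C^\i$-rings.

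First I would verify that the homotopy coherent nerve $\sD := N(\mathbf{dMan}_{\mathit{Spivak}})$ is idempotent complete and admits finite limits, and that the natural inclusion $j:\Mfd \hookrightarrow \sD$ preserves transverse pullbacks and the terminal object. The existence and preservation of transverse pullbacks is the motivating feature of Spivak's original construction; the ``minor extension'' beyond the quasi-smooth setting is precisely what supplies the remaining finite limits and idempotent completeness. Applying Universal Property 1 to $\sD$ then produces a canonical left exact functor
\[
\Phi:\DMfd \longrightarrow \sD,
\]
unique up to contractible choice subject to a natural equivalence $\Phi \circ i \simeq j$.

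Next I would compare both sides by embedding them into $\Loc$. The fully faithful $C^\i$-spectrum functor from the earlier theorem, combined with the corollary $\DMfd \simeq \left(\Alg_{\Cart}(\Spc)^{\fp}\right)^{op}$, supplies a fully faithful embedding $\DMfd \hookrightarrow \Loc$ whose essential image is precisely the $C^\i$-spectra of homotopically finitely presented simplicial $C^\i$-rings. By its very definition, Spivak's $\sD$ also embeds fully faithfully in $\Loc$, with essential image the locally $C^\i$-ringed spaces that are \emph{locally} equivalent to such affine $C^\i$-spectra. Both embeddings agree on $\Mfd$ via the structure sheaf functor and both are left exact, so by the uniqueness clause of the universal property $\Phi$ is identified, under these embeddings, with the tautological inclusion of the globally affine objects into the locally affine ones. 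Fully faithfulness of $\Phi$ is then immediate from fully faithfulness of $\Speci$.

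The main obstacle is therefore essential surjectivity: one must show that every object of $\sD$ is in fact globally affine in $\Loc$, that is, lies in the essential image of $\Speci$. This is the derived analogue of the classical fact $M \simeq \Speci(C^\i(M))$ for a smooth manifold, and I would prove it via a derived Whitney embedding: every $X \in \sD$ admits a closed immersion into some $\bR^N$. Given such an embedding, $X$ is recovered as the derived zero locus of a smooth section of a trivial bundle on $\bR^N$, that is, as a pullback in $\Loc$ of objects already in the essential image of $\Speci$; one then checks, using that such pullbacks are computed by homotopy pushouts of homotopically finitely presented simplicial $C^\i$-rings, that $X$ itself lies in this essential image. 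Establishing the derived Whitney embedding beyond the quasi-smooth setting---via a partition-of-unity glueing of local charts combined with a derived tubular neighborhood construction---is the technical heart of the argument.
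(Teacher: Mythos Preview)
Your approach has a conceptual misstep in the description of the essential image of $\mathbf{dMan}_{\mathit{Spivak}}$ inside $\Loc$. You describe this image as the locally $C^\infty$-ringed spaces that are \emph{locally} equivalent to affine $C^\infty$-spectra, and then propose to bridge ``locally affine'' and ``globally affine'' via a derived Whitney embedding. But in the paper $\mathbf{dMan}_{\mathit{Spivak}}$ is \emph{defined} as the smallest full subcategory of $\mathbf{LRS}$ (and hence of $\Loc$) containing the image of $\Mfd$ and closed under finite homotopy limits and retracts; it is not a category of locally affine objects. With this definition essential surjectivity is nearly automatic, and the Whitney-type argument is superfluous.

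Indeed, since $\Speci$ is fully faithful and left exact, its essential image in $\Loc$ is closed under finite limits and retracts and contains the image of $\Mfd$, hence contains $\mathbf{dMan}_{\mathit{Spivak}}$. Conversely, every finitely presented simplicial $C^\infty$-ring is a retract of a finite colimit of free ones, so every $\Speci(A)$ is a retract of a finite limit of objects $(\bR^n,\sC^\i_{\bR^n})$ and thus already lies in $\mathbf{dMan}_{\mathit{Spivak}}$. The two images therefore coincide, and this is exactly the paper's argument (which also produces the comparison functor by applying the universal property to the ambient $\Loc$ via its $C^\infty$-ring object $(\bR,\sC^\i_{\bR})$, rather than directly to $\sD$, but that is a minor difference). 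The harder statement you set out to prove---that a locally affine object is globally affine---is treated separately in the paper under additional paracompactness and finite-presentation hypotheses, but only as an a posteriori justification that the chosen variant captures reasonable Spivak-style objects, not as an ingredient of the equivalence itself.
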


\subsection{Conventions}
All manifolds will be assumed smooth, finite dimensional, $2^{nd}$-countable and Hausdorff. By an $\i$-category we mean a quasicategory or inner-Kan complex, which is one particular model for the theory of $\left(\i,1\right)$-categories. We will follow the notational conventions and terminology of \cite{htt}.\\
Our grading conventions are \emph{cohomological}, so that the differential on a complex \emph{raises} the degree.

\subsection*{Acknowledgments}

We are grateful for discussions with Damien Calaque, Andrew Macpherson and Joost Nuiten. We would also like to thank the Max Planck Institute for Mathematics for their hospitality, and D.C. would like to additionally thank them for support during his sabbatical, during which this research was conducted.

\section{The Universal Property}\label{sec:univ}
%There is a rich history of transversality issues in differential topology. It is well known that non-transverse pullbacks need not exist in the category of manifolds, and when they do, they do not have the expected cohomological properties one would have of a good intersection. Derived manifolds generalize the concept of smooth manifolds to allow arbitrary (iterative) intersections to exist as smooth objects, regardless of transversality. An argument going back to Spivak \cite{spivak} shows that in order to have the expected cohomological properties with respect to cobordism theory, derived manifolds must form a higher category. 

Spivak proposes properties that an $\i$-category $\sC$ should have to be ``good for doing intersection theory on manifolds.'' In particular, such  an $\i$-category should be come with additional data, such as a finite limit preserving functor to the category of topological spaces. Recent work of Macpherson \cite{andrew} has proposed a universal property for derived manifolds, in terms of $\i$-categories equipped with extra geometric structure, closely related to the concept of \emph{geometry} developed by Lurie in \cite{dagv}.
Indeed, Macpherson's ideas were highly influential on this paper. However, we propose a simpler universal property in the context of bare $\i$-categories (without need to appeal to the concept of a geometry). We then show that the extra structure, such as a natural underlying space functor, are direct consequences of the universal property. 
  
One of the axioms for an $\i$-category $\sC$ to be ``good for doing intersection theory on manifolds'' is that there should exist a fully faithful functor $$i:\Mfd \hookrightarrow \sC$$ from the category of smooth manifolds which preserves transverse pullbacks and the terminal object. We take the point of view that in fact, the $\i$-category of derived manifolds should be \emph{universal} with respect to this property. More precisely:

\begin{dfn}\label{dfn:DMfd}
Let $\sD$ denote the subcategory of the $\i$-category $\iCat$ of small $\i$-categories consisting of those which have finite limits and are idempotent complete, and left exact functors between them. Denote by $F:\sD \to \iCat$ the functor $$\sC \mapsto \Fun^{\pitchfork}\left(\Mfd,\sC\right),$$ where $\Fun^{\pitchfork}\left(\Mfd,\sC\right)$ is the full subcategory of the functor category $\Fun\left(\Mfd,\sC\right)$ on those functors preserving transverse pullbacks and the terminal object.
Denote by $\cE \to \sD$ the Grothendieck construction of this functor, i.e. the pullback of the universal coCartesian fibration $\cZ \to \iCat$ along $F$:
$$\xymatrix{\cE \ar[d] \ar[r] & \cZ \ar[d]\\ \sD \ar[r]^-{F} & \iCat,}$$
i.e., $\cE$ is the $\i$-category of finitely complete idempotent complete $\i$-categories equipped with a functor from the category of $\Mfd$ which preserves transverse pullbacks and the terminal object, and left exact functors between them respecting the functor from $\Mfd.$

The $\i$-category $\DMfd$ of \textbf{derived manifolds} is the initial object in $\cE.$ Unwinding this, this means that the $\i$-category $\DMfd$ has finite limits, is idempotent complete, and there is a functor $$i:\Mfd \to \DMfd$$ from the category of smooth manifolds to the $\i$-category of derived manifolds which preserves transverse pullbacks and the terminal object. Moreover, for any idempotent complete $\i$-category $\sC$ which has finite limits, composition with $i$ induces an equivalence of $\i$-categories 

$$\Fun^{\lex}\left(\DMfd,\sC\right) \stackrel{\sim}{\longlongrightarrow} \Fun^{\pitchfork}\left(\Mfd,\sC\right)$$
between functors from derived manifolds to $\sC$ which preserve finite limits, to functors from manifolds which preserve transverse pullbacks and the terminal object. 
\end{dfn}
As with any universal property, this pins down the $\i$-category $\DMfd$ up to equivalence. Moreover, such an $\i$-category can be shown to exist by standard and purely formal abstract reasoning. Rather than appealing to such an opaque construction, in this paper, we will investigate this universal property and find many equivalent characterizations of it, and ultimately arise at concrete presentations for this $\i$-category.
\begin{rmk}
As we shall see, the functor $i$ will moreover be fully faithful.
\end{rmk}

\subsection{$C^\infty$-rings}.
All existing models for derived manifolds use the concept of a \emph{$C^\infty$-ring} \cite{spivak,joyce,derivjustden,dg2}. As we shall see later, this is not a coincidence, as the universal property of the $\i$-category $\DMfd$ of derived manifolds is intimately linked with concept of a $C^\infty$-ring. Roughly speaking, a $C^\infty$-ring is a commutative $\mathbb{R}$-algebra $A$ with the extra structure of, for each smooth map $f:\mathbb{R}^n \to \mathbb{R}^m,$ an operation $$A\left(f\right):A^n \to A^m,$$ subject to natural compatibility axioms. The prototypical example of a $C^\i$-ring is the ring of smooth functions $\sC^\i\left(M\right)$ on a smooth manifold $M$, where
\begin{eqnarray*}
\sC^\i\left(M\right)\left(f\right):\sC^\i\left(M\right)^n &\to& \sC^\i\left(M\right)^m\\
\left(\varphi_1\left(x\right),\ldots,\varphi_n\left(x\right)\right) &\mapsto& \left( f_1\left(\varphi_1\left(x\right),\ldots,\varphi_n\left(x\right)\right),\ldots,f_m\left(\varphi_1\left(x\right),\ldots,\varphi_n\left(x\right)\right)\right).
\end{eqnarray*}
Denote by $\Cart$ the full subcategory of the category of smooth manifolds on those of the form $\mathbb{R}^n,$ for some $n.$ Then, the formal definition is as follows:
\begin{dfn}
A \emph{$C^\infty$-ring} is a finite product preserving functor $\mathcal{A}:\Cart \to \Set$ 
\end{dfn}
Since $\mathbb{R}$ is a commutative ring with addition and multiplication being smooth maps, and since $\mathcal{A}$ preserves finite products, $A:=\mathcal{A}\left(\mathbb{R}\right)$ is a commutative ring with extra structure maps as above. They key fact making
 $C^\i$-rings so useful for derived geometry in the smooth setting is that the functor
$$\sC^\i:\Mfd \to \Alg_{\Cart}\left(\Set\right)$$ is fully faithful and preserves transverse pullbacks \cite{MSIA}.

The theory of $C^\i$-rings is \emph{algebraic}. We present some generalities about the theory of algebraic theories in Section \ref{sec:algthy}, in the context of $\i$-categories. In particular, one may consider $\Cart$-algebras in any $\i$-category with finite products. If $\sC$ is the $\i$-category $\Spc$ of spaces, the $\i$-category $\Alg_{\Cart}\left(\Spc\right)$ of algebras in $\Spc$ is equivalent to the $\i$-category underlying the model category of simplicial $\Cart$-algebras (in sets), by \cite[Corollary 5.5.9.2]{htt}.

\subsection{Consequences of the Universal Property}
We note a couple immediate consequences of the universal property.

\begin{ex}
Let $u:\Mfd \to \Top$ be the forgetful functor to topological spaces. Then as $u$ preserves transverse pullbacks and the terminal object, there is a unique left exact functor $U$ making the diagram commute
$$\xymatrix{\Mfd \ar[r]^-{i} \ar[d]_-{u} & \DMfd. \ar[ld]^-{U}\\ \Top &}$$ I.e., $\DMfd$ is \emph{geometric} in the sense of \cite[Definition 1.3]{spivak}. We define $U$ to be the \textbf{underlying space functor}. 
\end{ex}

\begin{rmk}
Spivak proposes more axioms for an $\i$-category to be \emph{good for doing intersection theory on manifolds} (\cite[Definition 2.1]{spivak}--- phrased in terms of simplicial categories), and proves the model he constructs for quasi-smooth derived manifolds satisfies these axioms. Rather than proving directly that the subcategory of quasi-smooth objects of $\DMfd$ satisfies these axioms, we instead show that it is equivalent to Spivak's model (after removing the restriction of being quasi-smooth and adding the condition of being finite type), which is shown to satisfy these axioms in op. cit.
\end{rmk}
\begin{dfn}\label{dfn:JDMfd}
Using the underlying space functor $U,$ one can define a natural Grothendieck topology $J_{\DMfd}$ on $\DMfd$ by declaring a collection of maps $$\left(f_\alpha:U_\alpha \to \cM\right)_\alpha$$ to be a cover of a derived manifold $\cM$ if and only if $$\left(U\left(f_\alpha\right):U\left(U_\alpha\right) \to U\left(\cM\right)\right)_\alpha$$ is an open cover of topological spaces. 
\end{dfn}

\begin{rmk}
We will see later (Proposition \ref{prop:subcan}) that this Grothendieck topology is moreover subcanonical. 
\end{rmk}

\begin{ex}\label{ex:sheaf}
Let $\Cart$ be the full subcategory of the category of smooth manifolds on those of the form $\mathbb{R}^n,$ for some $n.$
Since $i:\Mfd \to \DMfd$ preserves transverse pullbacks and the terminal object, the composite $\Cart \to \DMfd$ preserves finite products. So, for $\cM$ any derived manifold, the functor
$$\Map_{\DMfd}\left(\cM,i\left(\blank\right)\right):\Cart \to \Spc$$ also preserves finite products, and hence defines a $\Cart$-algebra (in spaces) in the sense of Definition \ref{dfn:algebra}. Therefore there is a canonically induced functor
$$\cO_{\DMfd}:\DMfd^{op} \to \Alg_{\Cart}\left(\Spc\right).$$ We will see later that $\cO_{\DMfd}$ is a sheaf for the Grothendieck topology $J_{\DMfd}$ (Proposition \ref{prop:Osheaf}), and even more interestingly, the functor $\cO_{\DMfd}$ is fully faithful, and the essential image is precisely the (homotopically) finitely presented algebras. In particular, $$\DMfd \simeq \left(\Alg_{\Cart}\left(\Spc\right)^{\fp}\right)^{op}.$$ See Corollary \ref{cor:fparedms}.
\end{ex}

\section{Algebraic Theories}\label{sec:algthy}
In this section we outline the theory of \emph{algebraic theories} in the context of $\i$-categories. It will provide the correct language to study $C^\i$-rings in depth in Section \ref{sec:prop}.

\subsection{General Theory}
\begin{dfn}
An \textbf{algebraic theory}, or, \textbf{Lawvere theory}, is an $\infty$-category $\bT$ with finite products, together with a chosen object $r \in \bT_0,$ called the \textbf{generator} such that every object in $\bT$ is equivalent to $r^n,$ for some $n \ge 0.$ A morphism of algebraic theories $\left(\bT,r\right) \to \left(\bT',r'\right)$ is a finite product preserving functor $f: \bT \to \bT'$ carrying $r$ to $r'.$
\end{dfn}

\begin{ex}
The category $\Cart$ is an algebraic theory with $\bR$ as a generator.
\end{ex}

\begin{ex}
Let $k$ be a commutative ring and let $\Comk$ be the full subcategory of affine $k$-schemes on those of the form $\bA_k^n,$ for $n \ge 0.$ Then, $\Comk$ is an algebraic theory with $\bA_k^1$ as a generator.

When $k=\bR,$ $\ComR$ may also be seen as the category of manifolds of the form $\bR^n$ whose morphisms are given by polynomials. There is an evident functor $\ComR \to \Cart$ which is a morphism of algebraic theories.
\end{ex}

\begin{rmk}
The single object $r$ can be replaced with a set $S$ of objects, which leads to the definition of a multi-sorted algebraic theory. The theory of such algebraic theories is completely analogous.  Of particular relevance is the $2$-sorted algebraic theory $\mathbf{SC}^\infty,$ which is the full subcategory of supermanifolds on those of the form $\bR^{p|q}$, which has $\bR$ and $\bR^{0|1}$ as generators (see \cite{dg1}). Using this algebraic theory in place of $\Cart,$ one may extend the results of this paper to the setting of derived supermanifolds. We leave the details to the reader.
\end{rmk}

\begin{dfn}\label{dfn:algebra}
Let $\sC$ be an $\i$-category with finite products. The $\i$-category $\Alg_{\bT}\left(\sC\right)$ of \textbf{$\bT$-algebras in $\sC$} is the full subcategory of $\Fun\left(\bT,\sC\right)$ on those functors which preserve finite products.

Of particular importance is the cases when $\sC=\Set$ and when $\sC=\Spc$. We will call an object of $\Alg_{\bT}\left(\Spc\right)$ a $\bT$-algebra, and an object of $\Alg_{\bT}\left(\Set\right)$ a $0$-truncated $\bT$-algebra.
\end{dfn}

\begin{prop}
Let $\sC$ be a presentable $\i$-category. Then $\Alg_{\bT}\left(\sC\right)$ is also presentable.
\end{prop}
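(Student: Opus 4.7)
The plan is to realize $\Alg_{\bT}(\sC)$ as an accessible localization of a presentable functor category and then apply the standard closure properties of presentable $\i$-categories from \cite{htt}.

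First I would observe that $\bT$ is essentially small. Indeed, by definition every object of $\bT$ is equivalent to $r^n$ for some $n \ge 0$, so the collection of equivalence classes of objects is a set, and each mapping space $\Map_{\bT}(r^m, r^n)$ is a space (not a proper class), so $\bT$ is essentially small. Combined with the presentability of $\sC$, this gives us that the functor $\i$-category $\Fun(\bT,\sC)$ is presentable by \cite[Proposition 5.5.3.6]{htt}.

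Next, I would realize $\Alg_{\bT}(\sC)$ as the full subcategory of $\Fun(\bT,\sC)$ cut out by a small collection of limit-preservation conditions. Fix, for each $n \ge 0$, a product diagram $p_n \colon \{1,\dots,n\}^{\triangleleft} \to \bT$ whose cone point is $r^n$ and whose legs are the projections $r^n \to r$ (for $n=0$ this is simply the choice of a terminal object of $\bT$). A functor $F \colon \bT \to \sC$ lies in $\Alg_{\bT}(\sC)$ if and only if $F \circ p_n$ is a limit diagram in $\sC$ for each $n \ge 0$, since every object of $\bT$ is a power of $r$. This is a small family of limit-preservation conditions, so by \cite[Proposition 5.5.4.15 and Example 5.4.7.9]{htt} the inclusion
\[
\Alg_{\bT}(\sC) \hookrightarrow \Fun(\bT,\sC)
\]
is an accessible localization (in particular, $\Alg_{\bT}(\sC)$ is a reflective subcategory closed under sufficiently filtered colimits, and the reflector is accessible). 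Being an accessible localization of a presentable $\i$-category, $\Alg_{\bT}(\sC)$ is presentable by \cite[Proposition 5.5.1.1]{htt} (or directly \cite[Remark 5.5.1.6]{htt}).

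There is no real obstacle here; the only subtlety worth flagging is to make sure the finite-product-preservation condition really is captured by a \emph{small} family of cones, which uses the generator axiom in the definition of an algebraic theory in an essential way. One could alternatively package this via the formalism of $\i$-sketches, but the direct appeal to the limit-preserving-functor localization in \cite{htt} is the shortest route.
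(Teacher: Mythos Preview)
Your argument is correct and follows the same approach as the paper: realize $\Alg_{\bT}(\sC)$ as an accessible localization of the presentable functor category $\Fun(\bT,\sC)$ cut out by finite-product preservation. The paper simply cites \cite[Lemmas 5.5.4.17, 5.5.4.18, 5.5.4.19]{htt}, which package precisely the ``small family of limit-cones gives an accessible reflective subcategory'' step you spell out; your more explicit unpacking (including the observation that the generator axiom reduces everything to the countable family $\{p_n\}$) is fine and arguably clearer.
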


\begin{proof}
This follows immediately from \cite[Lemmas 5.5.4.17, 5.5.4.18, 5.5.4.19]{htt}.
\end{proof}

\begin{ex}
The category of $\Cart$-algebras in $\Set$ is the category of $C^\infty$-rings.
\end{ex}

\begin{ex}
The category of $\Comk$-algebras in $\Set$ is equivalent to the category of commutative $k$-algebras.
\end{ex}

When $\bT$ is a $1$-category, then the projective model structure on $\Fun\left(\bT,\Set^{\Delta^{op}}\right)$ restricts to the subcategory of simplicial $\bT$-algebras
$$\Alg_{\bT}\left(\Set\right)^{\Delta^{op}}\cong \Alg_{\bT}\left(\Set^{\Delta^{op}}\right).$$

We have the following result, originally due to Bergner:

\begin{thm}\cite[Corollary 5.5.9.2]{htt}
There is an equivalence of $\i$-categories $$N_{hc}\left(\Alg_{\bT}\left(\Set\right)^{\Delta^{op}}_{proj.}\right) \simeq \Alg_{\bT}\left(\Spc\right)$$
between the homotopy coherent nerve of the category of simplicial $\bT$-algebras, endowed with the projective model structure, and the $\i$-category of $\bT$-algebras in $\Spc.$
\end{thm}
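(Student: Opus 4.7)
The plan is to exhibit both $\i$-categories as the underlying $\i$-category of a common simplicial model category, and then compare. The ambient category is $\Fun\left(\bT,\Set^{\Delta^{op}}\right)$ equipped with the projective model structure inherited objectwise from the Kan--Quillen structure on simplicial sets. By HTT Proposition 4.2.4.4, its homotopy coherent nerve (of fibrant-cofibrant objects) is equivalent to $\Fun\left(N\left(\bT\right),\Spc\right)$, inside which $\Alg_{\bT}\left(\Spc\right)$ sits by definition as the full subcategory of finite-product-preserving functors.

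On the strict side, I would first transfer the projective model structure along the forgetful functor from $\Alg_{\bT}\left(\Set\right)^{\Delta^{op}}$ to $\Fun\left(\bT,\Set^{\Delta^{op}}\right)$; existence is Quillen's standard transfer criterion applied to a Lawvere theory (Bergner), using that the free $\bT$-algebra functor preserves filtered colimits and that there are functorial path objects in $\Set^{\Delta^{op}}$. Any strict, levelwise-fibrant simplicial $\bT$-algebra preserves finite products up to weak equivalence, because a strict product of Kan complexes already computes the homotopy product. Therefore the homotopy coherent nerve of the fibrant-cofibrant simplicial $\bT$-algebras maps into $\Alg_{\bT}\left(\Spc\right) \subseteq \Fun\left(N\left(\bT\right),\Spc\right)$; full faithfulness follows because the forgetful functor to $\Fun\left(\bT,\Set^{\Delta^{op}}\right)$ is a right Quillen functor that both preserves and reflects weak equivalences.

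The main obstacle is essential surjectivity --- every finite-product-preserving functor $F : N\left(\bT\right) \to \Spc$ must be equivalent to one represented by a strict simplicial $\bT$-algebra. I would prove this as a rectification statement by exhibiting $\Alg_{\bT}\left(\Set\right)^{\Delta^{op}}_{proj.}$ as the left Bousfield localization of $\Fun\left(\bT,\Set^{\Delta^{op}}\right)_{proj.}$ at the set of maps
\[
y_{r^n} \sqcup y_{r^m} \longrightarrow y_{r^{n+m}}, \qquad \emptyset \longrightarrow y_{r^0},
\]
where $y_{r^k} = \bT\left(r^k,-\right)$ denotes the corepresentable functor; local objects are precisely the homotopy-product-preserving functors. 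The key claim is that the fibrant objects of this localization are, up to weak equivalence, exactly the strict simplicial $\bT$-algebras --- this is the content of the Badzioch--Bergner rectification theorem for algebras over algebraic theories, proved via a small-object argument exploiting the fact that $\bT$ is a $1$-category with finite products. On the $\i$-categorical side, the parallel localization of $\Fun\left(N\left(\bT\right),\Spc\right)$ at these same maps is, by definition, $\Alg_{\bT}\left(\Spc\right)$. Matching the two localizations under the equivalence of the first paragraph completes the proof; the rectification step is where all the real work takes place, and everything else is formal once it is in hand.
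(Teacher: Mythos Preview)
The paper does not supply its own proof of this statement: it is quoted verbatim as \cite[Corollary 5.5.9.2]{htt} and attributed to Bergner, with no argument given. Your outline is essentially the Badzioch--Bergner rectification strategy that underlies Lurie's proof in HTT (see \cite[\S5.5.9]{htt}, especially Proposition 5.5.9.1 and Lemma 5.5.9.5): identify both sides as localizations of $\Fun\left(\bT,\Set^{\Delta^{op}}\right)_{proj.}$ at the maps forcing corepresentables to take products to homotopy products, and use the rectification theorem to show every homotopy-product-preserving functor is equivalent to a strict one. Your sketch is correct and matches the cited source; there is nothing further to compare against in the paper itself.
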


In light of the above theorem, we refer to the $\i$-category $\Alg_{\Comk}\left(\Spc\right)$ as the $\i$-category of \textbf{simplicial commutative $k$-algebras}, and to $\Alg_{\Cart}\left(\Spc\right)$ as the $\i$-category of \textbf{simplicial $C^\infty$-rings}.

Let $\bT$ be an algebraic theory. Then as any co-representable functor preserves finite products, the Yoneda embedding restricts to a fully faithful $j:\bT \hookrightarrow \Alg_{\bT}\left(\Spc\right)^{op}.$ 
\begin{thm} \cite[Proposition 5.5.8.10]{htt} \label{thm:5.5.8.10}
\begin{itemize}
\item [a)] $\Alg_{\bT}\left(\Spc\right)$ is a localization of $\Fun\left(\bT,\Spc\right).$
 \item[b)] The $\i$-category $\Alg_{\bT}\left(\Spc\right)$ is compactly generated. In particular, $$\Ind\left(\Alg_{\bT}\left(\Spc\right)^{\fp}\right)\simeq \Alg_{\bT}\left(\Spc\right).$$
 \item[c)] The inclusion $j:\bT \hookrightarrow \Alg_{\bT}\left(\Spc\right)^{op}$ preserves finite products.
\end{itemize}
\end{thm}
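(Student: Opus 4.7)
The plan is to handle the three parts in order, since later parts build on earlier ones. The overall strategy is to exhibit $\Alg_{\bT}(\Spc)$ as an accessible reflective subcategory of $\Fun(\bT,\Spc)$ cut out by a small set of locality conditions, and then exploit general features of such localizations.

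For part (a), I would identify an explicit small set $S$ of maps in $\Fun(\bT,\Spc)$ whose local objects are precisely the $\bT$-algebras. For each finite tuple $(t_1,\dots,t_n)$ of objects of $\bT$ with product $t$, consider the map
\[
\coprod_{i=1}^{n} \Map_{\bT}(t_i,-) \longrightarrow \Map_{\bT}(t,-)
\]
in $\Fun(\bT,\Spc)$ induced by the projections $t \to t_i$. By the Yoneda lemma, a functor $F \colon \bT \to \Spc$ is local with respect to this map iff the canonical comparison $F(t) \to \prod_i F(t_i)$ is an equivalence; taking $S$ to be the collection of all such maps (including the empty tuple, which handles the terminal object), $S$-local objects coincide with $\bT$-algebras. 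Since $\bT$ is essentially small and $\Fun(\bT,\Spc)$ is presentable, Lurie's general results on accessible localizations (cf.\ \cite[\S5.5.4]{htt}) immediately give that $\Alg_{\bT}(\Spc)$ is a reflective subcategory, i.e.\ a localization.

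For part (b), the load-bearing fact is that in $\Spc$ filtered colimits commute with finite products. From this one immediately deduces that if $F = \colim F_\alpha$ is a filtered colimit in $\Fun(\bT,\Spc)$ with each $F_\alpha$ an algebra, then
\[
F(\textstyle\prod_i t_i) \simeq \colim_\alpha F_\alpha(\prod_i t_i) \simeq \colim_\alpha \prod_i F_\alpha(t_i) \simeq \prod_i \colim_\alpha F_\alpha(t_i) \simeq \prod_i F(t_i),
\]
so $F$ is again an algebra and the inclusion $\Alg_{\bT}(\Spc) \hookrightarrow \Fun(\bT,\Spc)$ preserves filtered colimits. Each corepresentable $\Map_{\bT}(t,-)$ already lies in $\Alg_{\bT}(\Spc)$ (representable functors preserve all limits, in particular finite products) and is compact in $\Fun(\bT,\Spc)$ by Yoneda, hence remains compact in $\Alg_{\bT}(\Spc)$. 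Since these corepresentables generate $\Fun(\bT,\Spc)$ under colimits, their images under the reflector (which is the identity on them) generate $\Alg_{\bT}(\Spc)$ under colimits, making $\Alg_{\bT}(\Spc)$ compactly generated and furnishing the identification $\Ind\bigl(\Alg_{\bT}(\Spc)^{\fp}\bigr) \simeq \Alg_{\bT}(\Spc)$.

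Part (c) is a direct universal-property calculation. The embedding $j$ sends $t$ to the corepresentable $\Map_{\bT}(t,-)$, so preserving finite products in $\Alg_{\bT}(\Spc)^{op}$ amounts to sending products in $\bT$ to coproducts in $\Alg_{\bT}(\Spc)$. For any algebra $A$, Yoneda (applicable since the corepresentables lie in $\Alg_{\bT}(\Spc)$) combined with the defining product-preservation of $A$ yields
\[
\Map_{\Alg_{\bT}}\bigl(\Map_{\bT}(\textstyle\prod_i t_i,-), A\bigr) \simeq A(\textstyle\prod_i t_i) \simeq \prod_i A(t_i) \simeq \prod_i \Map_{\Alg_{\bT}}\bigl(\Map_{\bT}(t_i,-), A\bigr),
\]
exhibiting $\Map_{\bT}(\prod_i t_i,-)$ as the coproduct of the $\Map_{\bT}(t_i,-)$ in $\Alg_{\bT}(\Spc)$. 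The principal technical obstacle in the whole argument is really the $\i$-categorical input that filtered colimits commute with finite products in $\Spc$: this is what makes the inclusion sufficiently exact to transport compact generation down from the ambient presheaf $\i$-category to $\Alg_{\bT}(\Spc)$. Once that fact is in hand, every other step is either a Yoneda manipulation or a direct application of Lurie's machinery of accessible localizations of presentable $\i$-categories.
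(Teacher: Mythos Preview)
The paper does not supply its own proof of this theorem; it is simply quoted from \cite[Proposition 5.5.8.10]{htt}. Your argument is correct and is essentially the standard one that underlies Lurie's treatment: exhibit $\Alg_{\bT}(\Spc)$ as the $S$-local objects for the small set of canonical coproduct-to-product comparison maps, use that filtered colimits commute with finite products in $\Spc$ to see that the inclusion preserves filtered colimits (so the corepresentables stay compact), and deduce (c) by Yoneda together with the product-preservation built into the definition of an algebra. There is nothing to compare against in the paper itself, and no gap in your reasoning.
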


The functor $j$ also has the following universal property:

\begin{thm} \cite[Proposition 5.5.8.15]{htt} \label{thm:5.5.8.15}
Let $\sC$ be an $\i$-category which admits sifted limits. Then, composition with $j$ induces an equivalence of $\i$-categories
$$\Fun^{sift}\left(\Alg_{\bT}\left(\Spc\right)^{op},\sC\right) \stackrel{\sim}{\longlongrightarrow} \Fun\left(\bT,\sC\right),$$
between the $\i$-category of functors $\Alg_{\bT}\left(\Spc\right)^{op} \to \sC$ which preserves sifted limits, to the $\i$-category $\Fun\left(\bT,\sC\right),$ and if $\sC$ is complete, it also induces an equivalence of $\i$-categories
$$\Fun^{R}\left(\Alg_{\bT}\left(\Spc\right)^{op},\sC\right) \stackrel{\sim}{\longlongrightarrow} \Fun^{\Pi\!\!}\left(\bT,\sC\right)=\Alg_{\bT}\left(\sC\right)$$
between the $\i$-category of functors $\Alg_{\bT}\left(\Spc\right)^{op} \to \sC$ which preserves small limits, to the $\i$-category of $\bT$-algebras in $\sC.$ The inverse of both equivalences is given by right Kan extension.
\end{thm}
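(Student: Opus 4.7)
The plan is to reduce both equivalences to a single structural claim: that $j: \bT \hookrightarrow \Alg_{\bT}\left(\Spc\right)^{op}$ exhibits the target as the free sifted-limit completion of $\bT$. Granting this, the first equivalence becomes a formal consequence of the universal property of free completions, with the inverse furnished by right Kan extension along $j$; the required pointwise limits exist in any $\sC$ admitting sifted limits because the relevant comma categories are sifted by construction.

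To establish this sifted-limit-completion property, I would work inside the presentable $\i$-category $\Fun\left(\bT,\Spc\right)$, which by the $\i$-categorical Yoneda lemma is the free colimit completion of $\bT^{op}$. By Theorem \ref{thm:5.5.8.10}(a), $\Alg_{\bT}\left(\Spc\right) \subset \Fun\left(\bT,\Spc\right)$ is a reflective localization whose local objects are precisely the finite-product-preserving functors. The crucial input is that sifted colimits in $\Spc$ commute with finite products; consequently, $\Alg_{\bT}\left(\Spc\right)$ is closed under sifted colimits inside $\Fun\left(\bT,\Spc\right)$. Dualizing, $\Alg_{\bT}\left(\Spc\right)^{op}$ is closed under sifted limits, and the image $j\left(\bT\right)$ generates it under such limits, which is exactly the sifted-limit-completion statement we seek.

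For the second equivalence, suppose $F: \bT \to \sC$ preserves finite products, and let $\widetilde{F} := \Ran_j F$. The first equivalence shows $\widetilde{F}$ preserves sifted limits, so to deduce preservation of all small limits it suffices to verify preservation of finite products, because sifted limits together with finite products generate all small limits (the dual of the analogous statement for colimits). On $j\left(\bT\right)$ this is automatic, since $j$ preserves finite products by Theorem \ref{thm:5.5.8.10}(c) and $\widetilde{F}|_{j\left(\bT\right)} \simeq F$; the extension to arbitrary objects follows from the commutation of finite products with sifted limits in any $\i$-category, together with the generation statement established above. Conversely, the restriction of any small-limit-preserving functor along $j$ is automatically finite-product-preserving, supplying the reverse direction and identifying $\Fun^R$ with $\Alg_{\bT}\left(\sC\right)$.

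The main obstacle is the structural fact underpinning the whole argument: that sifted colimits in $\Spc$ commute with finite products. This is not purely formal and requires genuine input from the theory of presentable $\i$-categories developed in \cite{htt}. Once this commutation and the decomposition of arbitrary limits into finite products and sifted limits are in hand, the remainder reduces to routine manipulation of the right Kan extension formula and verification of preservation properties on a generating subcategory.
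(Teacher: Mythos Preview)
The paper does not supply its own proof of this statement: it is quoted verbatim as \cite[Proposition 5.5.8.15]{htt} and used as a black box. There is therefore nothing in the paper to compare your argument against.

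That said, your outline is essentially the argument Lurie gives in \cite{htt}. The identification of $\Alg_{\bT}\left(\Spc\right)$ with the sifted-colimit completion $P_\Sigma\left(\bT^{op}\right)$, the key input that sifted colimits commute with finite products in $\Spc$, and the decomposition of arbitrary colimits into sifted colimits and finite coproducts are exactly the ingredients used there. One small point you gloss over: saying that $\Alg_{\bT}\left(\Spc\right)$ is closed under sifted colimits in $\Fun\left(\bT,\Spc\right)$ and generated by $j\left(\bT\right)$ is not by itself the same as being the \emph{free} sifted-colimit completion; the latter requires that the inclusion into presheaves already have the relevant universal property on the generated subcategory, which is what \cite[Proposition 5.5.8.10]{htt} and the surrounding discussion of $P_\Sigma$ establish. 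With that caveat, your sketch is sound.
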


In view of the above theorem, we call $\Alg_{\bT}\left(\Spc\right)^{op}$ the \textbf{small limit envelope} of the algebraic theory $\bT.$

\begin{rmk}
Note that by Theorem \ref{thm:5.5.8.10}, $c),$ $j$ is a $\bT$-algebra in $\Alg_{\bT}\left(\Spc\right)^{op}.$ Unwinding the definitions, this is the $\bT$-algebra corresponding to the identity functor under the equivalence
$$\Fun^{sift}\left(\Alg_{\bT}\left(\Spc\right)^{op},\Alg_{\bT}\left(\Spc\right)^{op}\right) \stackrel{\sim}{\longlongrightarrow} \Fun\left(\bT,\Alg_{\bT}\left(\Spc\right)^{op}\right).$$
\end{rmk}

Before going further, we introduce a categorical lemma which will be of crucial importance:

\begin{lem}\label{lem:crucial}
Let $\sA \stackrel{j}{\to} \sB \stackrel{\omega}{\longhookrightarrow} \sC$ be functors of $\i$-categories with $\omega$ fully faithful and $\sB$ small, and let $F:\sA \to \sD$ be an arbitrary functor with $\sD$ cocomplete. Then if the right Kan extension $\Ran_{\left(\omega \circ j\right)}F$ exists, so does $\Ran_j F$ and moreover
$$\Ran_j F \simeq \Ran_{\left(\omega \circ j\right)}F \circ \omega.$$
\end{lem}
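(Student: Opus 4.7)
I would reduce the lemma to a comparison of the slice $\i$-categories appearing in the pointwise formula for right Kan extensions, as in \cite[\S 4.3.2]{htt}. Recall that for a functor $g\colon\sA\to\sX$ and an object $x\in\sX$, the pointwise value $(\Ran_g F)(x)$, when it exists, is the limit of the composite $\sA\times_{\sX}\sX_{x/}\to\sA\xrightarrow{F}\sD$. Applying this with $g = \omega\circ j$ at $\omega(b)\in\sC$ and with $g = j$ at $b\in\sB$, the entire lemma reduces to understanding the relationship between the two slice $\i$-categories $\sA\times_\sB\sB_{b/}$ and $\sA\times_\sC\sC_{\omega(b)/}$.

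The key step is to show that the canonical comparison map
$$\sA\times_\sB \sB_{b/}\longrightarrow \sA\times_\sC \sC_{\omega(b)/}$$
is an equivalence of $\i$-categories for every $b\in\sB$. Since $\omega$ is fully faithful, the induced functor $\sB_{b/}\to \sB\times_\sC\sC_{\omega(b)/}$ is an equivalence: on objects it amounts to identifying morphisms $b\to b'$ in $\sB$ with morphisms $\omega(b)\to\omega(b')$ in $\sC$, and on mapping spaces it comes down again to full faithfulness of $\omega$. Pulling back along $\sA\to\sB$ (which preserves equivalences of $\i$-categories) then yields the equivalence displayed above. This is the $\i$-categorical shadow of the classical observation that the comma category $b\downarrow j$ is insensitive to replacing $\sB$ by a fully faithful enlargement.

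With this identification in hand, the hypothesis that $\Ran_{\omega\circ j}F$ exists guarantees that the limit of $F$ over $\sA\times_\sC\sC_{\omega(b)/}$ exists for every $b\in\sB$, and hence so does the limit over $\sA\times_\sB\sB_{b/}$, with the same value in $\sD$. By the pointwise criterion for Kan extensions, this means $\Ran_j F$ exists and satisfies $(\Ran_j F)(b)\simeq(\Ran_{\omega\circ j}F)(\omega(b))$ naturally in $b$, which is exactly the claimed equivalence $\Ran_j F\simeq\Ran_{\omega\circ j}F\circ\omega$. The main technical obstacle is the slice-category manipulation in the $\i$-categorical setting, but it is routine given HTT's machinery for overcategories and left/right fibrations; note that the cocompleteness of $\sD$ is not actually needed in this argument beyond ensuring that the relevant limits land somewhere, since all limits in question already exist by hypothesis.
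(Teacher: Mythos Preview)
Your argument is correct but follows a genuinely different route from the paper's. The paper verifies the universal property of $\Ran_j F$ directly: since $\sB$ is small and $\sD$ cocomplete, the global left Kan extension functor $\Lan_\omega$ exists and is left adjoint to $\omega^*$, with the unit $id\Rightarrow\omega^*\Lan_\omega$ an equivalence by full faithfulness of $\omega$; one then computes
\[
\Map\bigl(\psi,\omega^*\Ran_{\omega\circ j}F\bigr)\simeq\Map\bigl(\Lan_\omega\psi,\Ran_{\omega\circ j}F\bigr)\simeq\Map\bigl((\omega\circ j)^*\Lan_\omega\psi,F\bigr)\simeq\Map\bigl(j^*\psi,F\bigr)
\]
for all $\psi\colon\sB\to\sD$, which is exactly the universal property defining $\Ran_j F$. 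Your approach instead works pointwise, identifying the slice $\i$-categories $\sA\times_\sB\sB_{b/}\simeq\sA\times_\sC\sC_{\omega(b)/}$ via full faithfulness of $\omega$ and then invoking the pointwise formula. Your method is arguably cleaner and, as you observe, does not use the hypotheses that $\sB$ be small and $\sD$ cocomplete; these are present in the paper precisely to guarantee the existence of $\Lan_\omega$. The trade-off is that the paper's argument operates purely at the level of the absolute universal property and so would apply even to a right Kan extension that is not known a priori to be pointwise, whereas your argument presupposes the pointwise description of $\Ran_{\omega\circ j}F$---a harmless assumption in the HTT framework, where Kan extensions are pointwise by convention, but worth noting.
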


\begin{proof}
The smallness of $\sB$ and the cocompleteness of $\sD$ guarantees the existence of a global left Kan extension functor $$\Lan_{\omega}:\Fun\left(\sB,\sD\right) \to \Fun\left(\sC,\sD\right)$$ which is left adjoint to the restriction functor $\omega^*,$ and the fully faithfulness of $\omega$ is equivalent to the assertion that the unit $$\eta:id \Rightarrow \omega^*\Lan_{\omega}$$ is an equivalence (which in turn is equivalent to $\Lan_{\omega}$ being fully faithful).

To establish the claim, we need to show that  $\left(\Ran_{\left(\omega \circ j\right)}F\right) \circ \omega$ satisfies the universal property for the right Kan extension $\Ran_j F,$ namely, for all $\psi:\sB \to \sD,$ 
$$\Map_{\Fun\left(\sB,\sD\right)}\left(\psi,\left(\Ran_{\left(\omega \circ j\right)}F \circ \omega\right)\right) \simeq \Map_{\Fun\left(\sA,\sD\right)}\left(j^*\psi,F\right).$$
Indeed, we have, by universal properties, the following string of natural equivalences:
\begin{eqnarray*}
\Map_{\Fun\left(\sB,\sD\right)}\left(\psi,\left(\Ran_{\left(\omega \circ j\right)}F \circ \omega\right)\right) &=& \Map_{\Fun\left(\sB,\sD\right)}\left(\psi,\omega^*\Ran_{\left(\omega \circ j\right)}F\right)\\
&\simeq& \Map_{\Fun\left(\sC,\sD\right)}\left(\Lan_\omega\psi,\Ran_{\left(\omega \circ j\right)}F \right)\\
&\simeq& \Map_{\Fun\left(\sA,\sD\right)}\left(\left(\omega \circ j\right)^*\Lan_\omega\psi,F \right)\\
&\simeq& \Map_{\Fun\left(\sA,\sD\right)}\left(j^*\left(\omega ^*\Lan_\omega\psi\right),F \right)\\
&\simeq& \Map_{\Fun\left(\sA,\sD\right)}\left(j^*\psi,F \right),
\end{eqnarray*}
the last equivalence coming from the unit $\eta$ being an equivalence.
\end{proof}
\begin{dfn}\label{defn:effectiveepi}
Let $\bT$ be an algebraic theory. We say that a morphism $f:A\rightarrow B$ is an \textbf{effective epimorphism} if $f$ is an effective epimorphism in the $\infty$-topos $\Psh\left(\bT^{op}\right)$, i.e. the induced map $\colim \text{\v{C}}\left(f\right)\rightarrow A$ from the colimit of the \v{C}ech nerve of $f$ to $A$ is an equivalence. 
\end{dfn}
Let $\bT$ be an algebraic theory. As limits and sifted colimits are computed in the $\infty$-topos $\Psh(\bT^{op})$ and geometric realizations are sifted colimits, we see that the $\infty$-category $\Alg_{\bT}\left(\Spc\right)$ inherits the following properties from $\Psh(\bT^{op})$.
\begin{prop}
Let $\bT$ be a algebraic theory.
\begin{enumerate}
    \item Sifted colimits are universal in $\Alg_{\bT}\left(\Spc\right)$.
    \item Every groupoid object in $\Alg_{\bT}\left(\Spc\right)$ is effective.
    \item $\Alg_{\bT}\left(\Spc\right)$ has an \emph{epi-mono factorization system}: There exists a factorization system $(S_L,S_R)$ on $\Alg_{\bT}\left(\Spc\right)$ such that $S_L$ consists of effective epimorphisms and $S_R$ consists of monomorphisms.
\end{enumerate}
\end{prop}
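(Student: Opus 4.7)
The plan is to bootstrap each of the three statements from the corresponding property of the $\i$-topos $\Psh(\bT^{op})$ via the fully faithful inclusion $\iota:\Alg_{\bT}(\Spc) \hookrightarrow \Psh(\bT^{op})$. By Theorem \ref{thm:5.5.8.10} (a) this $\iota$ is a localization, hence preserves all small limits. The decisive additional input, and really the only nontrivial point in the argument, is that $\iota$ also preserves sifted colimits: this is because sifted colimits commute with finite products in $\Spc$, so a sifted colimit of functors $\bT \to \Spc$ each preserving finite products again preserves finite products. Consequently, \v{C}ech nerves (which are limit diagrams) and geometric realizations (which are sifted colimits) of objects in $\Alg_{\bT}(\Spc)$ agree with those computed in $\Psh(\bT^{op})$.

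With this in hand, (1) follows at once: for $g:A\to B$ in $\Alg_{\bT}(\Spc)$ and a sifted diagram $X_\bullet$ in $\Alg_{\bT}(\Spc)_{/B}$, both the base changes $g^*X_i$ and the colimit $\colim X_\bullet$ coincide with the corresponding constructions in $\Psh(\bT^{op})$, so the canonical comparison $\colim g^*X_\bullet \to g^*\colim X_\bullet$ is an equivalence by universality of colimits in the $\i$-topos. Similarly, (2) follows because a groupoid object $X_\bullet$ in $\Alg_{\bT}(\Spc)$ is, via $\iota$, a groupoid object in $\Psh(\bT^{op})$, where it is effective; the identification of $X_\bullet$ with the \v{C}ech nerve of its geometric realization holds in $\Psh(\bT^{op})$, and by the preservation properties above, also in $\Alg_{\bT}(\Spc)$.

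For (3), I would factor any morphism $f:A\to B$ of $\bT$-algebras using the epi-mono factorization of $\iota(f)$ in $\Psh(\bT^{op})$, giving $\iota(f) = m\circ p$ with $p:\iota A \twoheadrightarrow C$, $m:C\hookrightarrow \iota B$, and $C \simeq \colim\text{\v{C}}(\iota f)$. The \v{C}ech nerve $\text{\v{C}}(f)$ of $f$, built from finite limits in $\Alg_{\bT}(\Spc)$, is carried by $\iota$ to $\text{\v{C}}(\iota f)$, and its geometric realization, formed in $\Alg_{\bT}(\Spc)$, is sifted and hence preserved by $\iota$; thus $C$ lies in the essential image of $\iota$ and the factorization takes place inside $\Alg_{\bT}(\Spc)$. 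By Definition \ref{defn:effectiveepi}, $p$ is an effective epimorphism in $\Alg_{\bT}(\Spc)$, while $m$ is a monomorphism since being a monomorphism is a finite-limit condition on the diagonal, and is therefore reflected by the fully faithful, limit-preserving $\iota$. The orthogonality axiom for the factorization system descends from the one on $\Psh(\bT^{op})$, completing the argument.
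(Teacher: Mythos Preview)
Your proposal is correct and follows exactly the approach the paper indicates: the paper's entire argument is the sentence preceding the proposition, namely that limits and sifted colimits in $\Alg_{\bT}(\Spc)$ are computed in the $\i$-topos $\Psh(\bT^{op})$, so these properties are inherited. You have simply spelled out the details of this inheritance for each of the three items.
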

\begin{rmk}
It is observed in \cite[Remark 5.5.8.26]{htt} that the $n^{th}$ truncation $\tau_{\leq n}\Alg_{\bT}\left(\Spc\right)$ is precisely the full subcategory of functors $\bT\rightarrow \Spc$ taking values in $n$-truncated objects. In particular, if $\bT$ is an ordinary 1-category, the $1$-category $\tau_{\leq 0}\Alg_{\bT}\left(\Spc\right)$ can be identified with $\Alg_{\bT}\left(\Set\right)$, the category of ordinary $\bT$-algebras in sets and we have a fully faithful inclusion $\Alg_{\bT}\left(\Set\right)\hookrightarrow \Alg_{\bT}\left(\Spc\right)$.
\end{rmk}
\begin{rmk}
We may associate to any $\bT$-algebra $A$ a collection of homotopy sets as follows: let $r$ be the generator of $\bT$, then there is a functor $\theta_r:\Alg_{\bT}\left(\Spc\right)\rightarrow \Spc$ given by evaluating at $r$. It is customary to identify a $\bT$-algebra with the space $\theta_r\left(A\right) $. For each $n\geq 0$ we denote by $\pi_n\left(A\right)$ the $n^{th}$ homotopy set of $\theta_r\left(A\right)$ which is an abelian group for $n\geq 1$. By the previous remark, the homotopy set $\pi_0\left(A\right)$ can be identified with $\tau_{\leq 0}A\left(r\right)$ and if $\bT$ is a 1-category, the set $\pi_0\left(A\right)$ carries the structure of an ordinary $\bT$-algebra; we will use both notations in the sequel. 
\end{rmk}
\begin{rmk}\label{effepilawvere}
From the generating properties of the object $r\in \bT,$ we deduce immediately that the functor 
\[\theta_r:\Alg_{\bT}\left(\Spc\right){\longrightarrow} \Spc \]
is conservative. Combining this observation with the fact that $\theta_r$ preserves limits and geometric realizations of simplicial objects and \cite[Corollary 7.1.2.15]{htt}, we see that a morphism $A\rightarrow B$ of $\bT$-algebras is an effective epimorphism if and only if the induced map $\pi_0(A)\rightarrow \pi_0(B)$ on sets is surjective.
\end{rmk}

\begin{dfn}
\begin{enumerate}
     \item A $\bT$-algebra $X \in \Alg_{\bT}\left(\Spc\right)$ is \textbf{finitely generated} if the functor  $\Alg_{\bT}\left(\Spc\right)\rightarrow \Spc$ corepresented by $X$ preserves small filtered colimits consisting only of monomorphisms. Denote the full subcategory on the finitely generated algebras by $\Alg_{\bT}\left(\Spc\right)^{\mathbf{fg}}$.
    \item A $\bT$-algebra $X \in \Alg_{\bT}\left(\Spc\right)$ is \textbf{finitely presented} if the functor  $\Alg_{\bT}\left(\Spc\right)^{\fp}\rightarrow \Spc$ corepresented by $X$ preserves small filtered colimits; that is, if $X$ is a compact object. Denote the full subcategory on the finitely presented algebras by $\Alg_{\bT}\left(\Spc\right)^{\fp}$.
\end{enumerate}
\end{dfn}
 Any free $\bT$-algebra, i.e. one in the essential image of $j,$ is finitely presented. This follows by the Yoneda lemma and Theorem \ref{thm:5.5.8.10}. It follows, that there is a fully faithful functor $$j^{\fp}:\bT \hookrightarrow \left(\Alg_{\bT}\left(\Spc\right)^{\fp}\right)^{op}.$$ In the classical theory of $0$-truncated algebraic theories, a $\bT$-algebra $A$ is finitely generated if and only if there exists some free algebra $F$ on finitely many generators and a quotient map $F\rightarrow A$. The same principle holds for $\bT$-algebras, with the caveat that an effective equivalence relation must be replaced by an effective groupoid.   
\begin{prop}\label{prop:fgeffectiveepi}
Let $A$ be a $\bT$-algebra. The following are equivalent. 
\begin{enumerate}
    \item $A$ is finitely generated.
    \item There exists an object $t\in \bT$ and an effective epimorphism $q:j(t)\rightarrow A$.
\end{enumerate}
\end{prop}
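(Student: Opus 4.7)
My plan is to prove both implications using the epi-mono factorization system on $\Alg_{\bT}\left(\Spc\right)$ together with the characterization of effective epimorphisms as $\pi_0$-surjections from Remark \ref{effepilawvere}.

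For $(2) \Rightarrow (1)$, I would fix an effective epimorphism $q : j(t) \twoheadrightarrow A$ and a filtered system $\{X_i\}_{i \in I}$ of monomorphisms in $\Alg_{\bT}\left(\Spc\right)$ with colimit $X$, aiming to show that $\colim_i \Map(A, X_i) \to \Map(A, X)$ is an equivalence of spaces. Monomorphy of this map is automatic, since $\Spc$ is an $\i$-topos in which monomorphisms are stable under filtered colimits. For essential surjectivity on $\pi_0$, given $f : A \to X$, the compactness of the finitely presented object $j(t)$ lets me factor $f \circ q$ through some $X_{i_0}$ via a map $g : j(t) \to X_{i_0}$. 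Epi-mono factorizing $f$ (and observing, via Remark \ref{effepilawvere}, that $q$ composed with the effective epi $A \twoheadrightarrow \mathrm{Im}(f)$ is an effective epi), and separately epi-mono factorizing $g$ and composing with $X_{i_0} \hookrightarrow X$, yields two epi-mono factorizations of the same map $f \circ q : j(t) \to X$. By uniqueness of epi-mono factorizations, the subobject $\mathrm{Im}(f) \hookrightarrow X$ agrees with the subobject $\mathrm{Im}(g) \hookrightarrow X_{i_0} \hookrightarrow X$; in particular $\mathrm{Im}(f) \hookrightarrow X$ factors through $X_{i_0}$, and hence so does $f$.

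For $(1) \Rightarrow (2)$, I would first exhibit $A$ as a filtered colimit of monomorphisms $\mathrm{Im}(\phi) \hookrightarrow A$ indexed by the poset of all such subobjects arising from maps $\phi : j(s) \to A$ with $s \in \bT$. Filteredness uses that $j$ preserves finite products (Theorem \ref{thm:5.5.8.10}(c)): a pair $\phi_i : j(s_i) \to A$ combines into a map $j(s_1 \times s_2) \simeq j(s_1) \sqcup j(s_2) \to A$ whose image dominates both $\mathrm{Im}(\phi_1)$ and $\mathrm{Im}(\phi_2)$. That this colimit recovers $A$ can be checked after applying the conservative, filtered-colimit-preserving functor $\theta_r$, where it reduces to the observation that every $\psi : j(r) \to A$ visibly factors through its own image, combined with filtered stability of monomorphisms in $\Spc$. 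The finite-generation hypothesis, applied to this filtered colimit of monomorphisms, then forces $\mathrm{id}_A$ to factor through some $\mathrm{Im}(\phi_0) \hookrightarrow A$. The resulting section exhibits $\mathrm{Im}(\phi_0) \hookrightarrow A$ as both a monomorphism and a split (hence effective, again by Remark \ref{effepilawvere}) epimorphism; such a map is necessarily an equivalence, since its \v{C}ech nerve is constant. The composite $j(s_0) \twoheadrightarrow \mathrm{Im}(\phi_0) \simeq A$ is then the required effective epimorphism.

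The main technical step, on which both directions rest, is the uniqueness of the epi-mono factorization in $\Alg_{\bT}\left(\Spc\right)$; once that is in place, the rest of the argument is essentially subobject bookkeeping and a one-step application of the universal property defining finite generation.
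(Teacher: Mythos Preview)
Your proposal is correct and follows essentially the same strategy as the paper's proof: both directions hinge on the epi-mono factorization system, exhibiting $A$ as a filtered colimit of subobjects receiving effective epimorphisms from free algebras for $(1)\Rightarrow(2)$, and using compactness of $j(t)$ together with orthogonality for $(2)\Rightarrow(1)$. The only notable difference is that for $(2)\Rightarrow(1)$ the paper invokes the lifting property of effective epimorphisms against monomorphisms directly to produce the dotted arrow, whereas you obtain the same lift by comparing two epi-mono factorizations of $f\circ q$; you are also slightly more explicit than the paper in checking that $\colim_i\Map(A,X_i)\to\Map(A,X)$ is a monomorphism, not merely $\pi_0$-surjective.
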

\begin{proof}
We start by proving that $\left(1\right)\Rightarrow \left(2\right)$. Let $A$ be finitely generated, and let $\mathrm{Sub}\left(A\right)$ be the filtered poset of subobjects of $A$. Let $\mathrm{Sub}'\left(A\right)$ be the subposet of $\mathrm{Sub}\left(A\right)$ of subobjects of $A$ that satisfy condition $\left(2\right)$, which is nonempty (because every map $j\left(t\right)\rightarrow A$ factors as an epimorphism followed by a monomorphism) and is easily seen to be filtered. We claim that $A$ is the colimit of the diagram $\mathcal{J}:\mathrm{Sub}'\left(A\right)\rightarrow \Alg_{\bT}\left(\Spc\right)$: Since the $\infty$-categories $\tau_{\leq k}\Alg_{\bT}\left(\Spc\right)_{/_A}$ are stable under filtered colimits for $k\geq -2$, the map $$\underset{{A_i\in\mathrm{Sub}'\left(A\right)}}\colim A_i\rightarrow A$$ is a monomorphism, so the map of spaces $$\underset{{A_i\in\mathrm{Sub}'\left(A\right)}}\colim A_i\left(r\right)\rightarrow A\left(r\right)$$ is an inclusion of connected components (here we use that the evaluation functors preserve filtered colimits). The evaluation functor $$\theta_r:\Alg_{\bT}\left(\Spc\right)\rightarrow \Spc$$ of Remark \ref{effepilawvere} is conservative, so it suffices to show that the morphism $$\underset{{A_i\in\mathrm{Sub}'\left(A\right)}}\colim A_i\left(r\right)\rightarrow A\left(r\right)$$ is an equivalence. To see this, we only have to check that this morphism induces a surjection on connected components, meaning that every morphism $j\left(r\right)\rightarrow A$ factors through some $B\in \mathrm{Sub}'\left(A\right)$. This is the case as $j\left(r\right)\rightarrow A$ factors as an effective epimorphism followed by a monomorphism. Because $A$ is finitely generated, we have $$\Map_{\Alg_{\bT}\left(\Spc\right)}\left(A,A\right)\simeq \underset{{A_i\in \mathrm{Sub}'\left(A\right)}}\colim\left(A,A_i\right),$$ so the identity map $A\rightarrow A$ factors as $$A\overset{f}{\rightarrow} A_i\rightarrow A$$ for some $A_i\in \mathrm{Sub}'\left(A\right)$. The map $A_i\rightarrow A$ is a monomorphism for any $A_i$, so the map $f:A\overset{\simeq}{\longrightarrow}A_i$ is an equivalence.

Now we show that $\left(2\right)\Rightarrow \left(1\right).$ Let $$Y=\underset{i\in I}\colim Y_i$$ be a colimit of a filtered diagram consisting only of monomorphisms. A map $A\rightarrow Y$ induces a map $j\left(t\right)\rightarrow Y$ which must factor through one of the $Y_i$'s as $j\left(t\right)$ is a compact projective object in $\Alg_{\bT}\left(\Spc\right)$. Because $Y_i\rightarrow Y$ is a monomorphism and the class of effective epimorphisms is left orthogonal to the class of monomorphisms, we can find a dotted arrow that makes the diagram
\begin{equation*}
\begin{tikzcd}
j(t)\ar[d]\ar[r] & Y_i\ar[d]\\
A\ar[r]\ar[ur,dotted] & Y
\end{tikzcd}    
\end{equation*}
commute, which proves that $A$ is finitely generated. 
\end{proof}
Now we characterize the class of finitely presented $\bT$-algebras.

\begin{lem}\label{lem:retcol}
The subcategory $\Alg_{\bT}\left(\Spc\right)^{\fp}$ is the smallest subcategory of $\Alg_{\bT}\left(\Spc\right)$ containing the finitely generated free algebras and closed under finite colimits and retracts. Moreover, any finitely presented algebra is a retract of a finite colimit of finitely generated free algebras.
\end{lem}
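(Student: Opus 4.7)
I would argue the two inclusions giving the claimed equality, obtaining the retract characterization as a byproduct.

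\emph{Easy direction.} Each free algebra $j(t) \in \Alg_{\bT}(\Spc)$, $t \in \bT$, is compact: by the Yoneda lemma it corepresents evaluation at $t$, which commutes with filtered colimits since these are sifted and are therefore computed pointwise in $\Fun(\bT,\Spc) \supset \Alg_{\bT}(\Spc)$. The class of compact objects in any presentable $\i$-category is closed under finite colimits and retracts, so every subcategory as described in the statement is contained in $\Alg_{\bT}(\Spc)^{\fp}$.

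\emph{The retract characterization.} Let $\sE \subset \Alg_{\bT}(\Spc)$ be the smallest full subcategory containing every $j(t)$ and closed under finite colimits. By Theorem \ref{thm:5.5.8.10}(c), $j$ carries finite products in $\bT$ to finite products in $\Alg_{\bT}(\Spc)^{op}$, i.e.\ to finite coproducts in $\Alg_{\bT}(\Spc)$, so the free algebras are already closed under finite coproducts; in particular $\sE$ is essentially small, and by the first step its objects are compact. The universal property of Ind-completions then produces a canonical, fully faithful, filtered-colimit-preserving functor
\[
\Ind(\sE) \longhookrightarrow \Alg_{\bT}(\Spc),
\]
and I claim it is an equivalence. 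Essential surjectivity amounts to showing that $\sE$ generates $\Alg_{\bT}(\Spc)$ under filtered colimits, which reduces to the fact that any sifted colimit in a cocomplete $\i$-category can be rewritten as a filtered colimit of finite colimits (express the indexing $\i$-category as the filtered union of its finite sub-simplicial-sets). Since Theorem \ref{thm:5.5.8.15} implies that every $A \in \Alg_{\bT}(\Spc)$ is a sifted colimit of free algebras $j(t)$, the claim follows.

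\emph{Conclusion.} Standard properties of Ind-completions identify the compact objects of $\Ind(\sE)$ as precisely the retracts of objects of $\sE$. Combining this with the equivalence $\Ind(\sE) \simeq \Alg_{\bT}(\Spc)$ yields that every finitely presented $\bT$-algebra is a retract of a finite colimit of finitely generated free algebras, which simultaneously proves the minimality and the retract characterization. The one delicate point is the essential surjectivity of $\Ind(\sE) \hookrightarrow \Alg_{\bT}(\Spc)$, i.e.\ the sifted-to-filtered-of-finite reduction; everything else is a direct consequence of results already recorded in this section.
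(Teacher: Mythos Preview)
Your argument is correct and rests on the same mechanism as the paper's: write an arbitrary algebra as a filtered colimit of finite colimits of free algebras, then exploit compactness. The paper does this more directly---having written a finitely presented $A$ as $\colim_\alpha A_\alpha$ with each $A_\alpha$ a finite colimit of frees, it simply factors $\id_A$ through some $A_\alpha$ by compactness---whereas you repackage the same idea as an equivalence $\Ind(\sE)\simeq\Alg_{\bT}(\Spc)$ and then invoke the identification of compact objects in an Ind-completion with retracts of $\sE$. Two small remarks: your appeal to Theorem~\ref{thm:5.5.8.15} for ``every $A$ is a sifted colimit of free algebras'' is a step removed from what that theorem literally states (it is a universal property, from which the claim can be extracted), and the paper's Theorem~\ref{thm:5.5.8.10}(a) gives the needed ``colimit of frees'' statement more directly; also, the decomposition into a filtered colimit of finite colimits works for \emph{any} small indexing simplicial set, so the siftedness is not actually used.
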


\begin{proof}
Let $\sC$ be the smallest full subcategory of $\Alg_{\bT}\left(\Spc\right)$ that contains the free algebras and is stable under finite colimits and retracts. Since $\Alg_{\bT}\left(\Spc\right)^{\fp}$ is stable under finite colimits and retracts and contains the free algebras, we have $\sC$ is contained in $\Alg_{\bT}\left(\Spc\right)^{\fp}$. To establish the other inclusion, we show that every finitely presented $\bT$-algebra is a retract of a finite colimit of free algebras. By Theorem \ref{thm:5.5.8.10} a), it follows that any $\bT$-algebra is a small colimit of free $\bT$-algebras. By decomposing the index simplicial set $K$ of a small colimit into the partially ordered set of finite subsimplices of $K$, we may write the colimit of $K$ as a filtered colimit of
finite colimits \cite[Corollary 4.2.3.11]{htt}. Applying this to a finitely presented simplicial $\bT$-algebra $A$, we have a filtered colimit $$A = \underset{\alpha} \colim A _\alpha,$$ where each $A_\alpha$ is a finite colimit of free $\bT$-algebras. Because $A$ is finitely presented, the identity map $id:A \to A$ factors trough some $A_\alpha \to A,$ which shows that the desired retraction exists.
\end{proof}

\begin{thm}\label{thm:finenv}
Let $\sC$ be any idempotent complete $\i$-category with finite limits. Then composition with $j^{\fp}$ induces an equivalence of $\i$-categories

$$\Fun^{\lex}\left(\left(\Alg_{\bT}\left(\Spc\right)^{\fp}\right)^{op},\sC\right) \stackrel{\sim}{\longlongrightarrow} \Fun^{\Pi\!\!}\left(\bT,\sC\right)=\Alg_{\bT}\left(\sC\right)$$
whose inverse is given by right Kan extension.
\end{thm}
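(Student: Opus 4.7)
The plan is to leverage Theorem \ref{thm:5.5.8.15}, which establishes the analogous universal property with respect to all small limits when $\sC$ is complete, and reduce the present statement to that setting via the Yoneda embedding $y:\sC\hookrightarrow \sC^\wedge:=\Psh(\sC)$. The embedding $y$ is fully faithful and preserves all small limits, so testing functors against $\sC^\wedge$ loses no information; the assumptions of finite completeness and idempotent completeness of $\sC$ will enter precisely at the stage where we cut the presheaf-valued right Kan extension back down to a $\sC$-valued functor. Lemma \ref{lem:retcol} will provide the combinatorial bridge, characterizing the objects of $\left(\Alg_{\bT}(\Spc)^{\fp}\right)^{op}$ as retracts of finite limits of objects of the form $j^{\fp}(t)$.

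For essential surjectivity, I would take $F\in\Alg_{\bT}(\sC)$ and form $y\circ F:\bT\to\sC^\wedge$. Theorem \ref{thm:5.5.8.15} produces a limit-preserving extension $\overline F \simeq \Ran_j(y\circ F):\Alg_{\bT}(\Spc)^{op}\to\sC^\wedge$ along the full Yoneda inclusion $j:\bT\hookrightarrow \Alg_{\bT}(\Spc)^{op}$. Writing $j = \omega\circ j^{\fp}$, where $\omega:\left(\Alg_{\bT}(\Spc)^{\fp}\right)^{op}\hookrightarrow \Alg_{\bT}(\Spc)^{op}$ is the fully faithful inclusion, Lemma \ref{lem:crucial} yields $\Ran_{j^{\fp}}(y\circ F)\simeq \overline F\circ\omega$. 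I would then argue that $\overline F\circ\omega$ factors through $y(\sC)\subset\sC^\wedge$: every $A\in \left(\Alg_{\bT}(\Spc)^{\fp}\right)^{op}$ is a retract of some finite limit $\lim_I j^{\fp}(t_i)$, on which $\overline F\circ\omega$ evaluates to $\lim_I y F(t_i) \simeq y\left(\lim_I F(t_i)\right)$ using that $\sC$ has finite limits and $y$ preserves them; idempotent completeness of $\sC$ then ensures that the retract also lies in $y(\sC)$. The resulting factorization gives a left exact $\hat F:\left(\Alg_{\bT}(\Spc)^{\fp}\right)^{op}\to\sC$ with $\hat F\circ j^{\fp}\simeq F$.

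For fully faithfulness, I would show that the unit $H\to \Ran_{j^{\fp}}((j^{\fp})^*H)$ of the Kan-extension adjunction is an equivalence for every left-exact $H:\left(\Alg_{\bT}(\Spc)^{\fp}\right)^{op}\to\sC$. Setting $F := H \circ j^{\fp}$, the unit is an equivalence on objects of the form $j^{\fp}(t)$ by fully faithfulness of $j^{\fp}$. Because $\Alg_{\bT}(\Spc)^{\fp}\subset \Alg_{\bT}(\Spc)$ is closed under finite colimits, $\omega$ preserves finite limits, so both $H$ and $\hat F := \Ran_{j^{\fp}}(F)$ send the finite limits $\lim_I j^{\fp}(t_i)$ in $\left(\Alg_{\bT}(\Spc)^{\fp}\right)^{op}$ to canonically equivalent objects of $\sC$; and both automatically preserve retracts. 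By Lemma \ref{lem:retcol}, this exhausts all objects of $\left(\Alg_{\bT}(\Spc)^{\fp}\right)^{op}$, so $H \simeq \hat F$.

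The principal obstacle is the factorization claim that $\overline F \circ \omega$ lands in $y(\sC)$: this is the one point at which all three hypotheses on $\sC$ (finite completeness, idempotent completeness, and Lemma \ref{lem:retcol}'s description of finitely presented algebras) must be invoked together. Without finite limits in $\sC$, the expressions $\lim_I F(t_i)$ would fail to be representable; without idempotent completeness, retracts could drop out of $y(\sC)$; and without Lemma \ref{lem:retcol} there would be no a priori list of objects to test.
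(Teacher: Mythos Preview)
Your proposal is correct and follows essentially the same route as the paper's proof: both reduce to the complete target $\Psh(\sC)$ via Yoneda, invoke Theorem~\ref{thm:5.5.8.15} to obtain a limit-preserving extension along $j$, use Lemma~\ref{lem:crucial} to identify $\Ran_{j^{\fp}}$ with the restriction along $\omega$, and then use Lemma~\ref{lem:retcol} together with finite completeness and idempotent completeness of $\sC$ to cut the presheaf-valued extension back down to $\sC$. The one organizational difference is that the paper first treats the case of a complete and cocomplete $\sC$ separately, using the identification $\Ind\bigl(\Alg_{\bT}(\Spc)^{\fp}\bigr)\simeq \Alg_{\bT}(\Spc)$ to conclude that $\omega^*$ is an equivalence, whereas you go straight to $\Psh(\sC)$ and avoid the $\Ind$-step entirely; your streamlining is harmless since the left exactness of $\overline F\circ\omega$ already follows from $\overline F$ preserving small limits and $\omega$ preserving finite ones.
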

\begin{proof}
The proof starts similarly to that of \cite[Remark 3.4.6]{dagv}. Consider the fully faithful inclusions
$$\bT \stackrel{j^{\fp}}{\longhookrightarrow} \left(\Alg_{\bT}\left(\Spc\right)^{\fp}\right)^{op} \stackrel{\omega}{\longhookrightarrow} \left(\Alg_{\bT}\left(\Spc\right)\right)^{op}.$$
By Theorem \ref{thm:5.5.8.10} c), $j^{\fp}$ preserves finite products, and furthermore, $\omega$ preserves finite limits, so we have a commutative diagram of restriction functors
$$\xymatrix@R=2cm{& \Fun^{\lex}\left(\left(\Alg_{\bT}\left(\Spc\right)^{\fp}\right)^{op},\sC\right) \ar[dl]_-{\omega^*} \ar[dr]^-{\left(j^{\fp}\right)^*} &\\ \Fun^{R}\left(\Alg_{\bT}\left(\Spc\right)^{op},\sC\right) \ar[rr]^-{\left(\omega \circ j^{\fp}\right)^*=j^*} & & \Fun^{\Pi\!\!}\left(\bT,\sC\right).}$$
We wish to show that $\left(j^{\fp}\right)^*$ is an equivalence. First let us establish the claim in the case that $\sC$ has all small limits and colimits. Then $\left(\omega \circ j^{\fp}\right)^*=j^*$ is an equivalence by Theorem \ref{thm:5.5.8.15}. Also, by Theorem \ref{thm:5.5.8.10}, we have $$\Ind\left(\Alg_{\bT}\left(\Spc\right)^{\fp}\right)\simeq \Alg_{\bT}\left(\Spc\right),$$ and hence, it follows from  \cite[Propositions 5.3.5.10 and 5.5.1.9]{htt}, that $\omega^*$ is an equivalence, and hence $\left(j^{\fp}\right)^*$ is as well. We claim the inverse to $\left(j^{\fp}\right)^*$ is given by $\Ran_{j^{\fp}}.$ Since global right Kan extension $$\Ran_{j^{\fp}}:\Fun\left(\bT,\sC\right) \to \Fun\left(\left(\Alg_{\bT}\left(\Spc\right)^{\fp}\right)^{op},\sC\right)$$ is right adjoint to $\left(j^{\fp}\right)^*,$ and $\left(j^{\fp}\right)^*$ restricts to an equivalence as in the above diagram, it suffices to show that if $$F:\bT \to \sC$$ preserves finite products, $\Ran_{j^{\fp}} F$ is left exact. By Lemma \ref{lem:crucial}, $$\Ran_{j^{\fp}} F \simeq \left(\Ran_{\omega \circ j^{\fp}} F\right) \circ \omega$$ and by Theorem \ref{thm:5.5.8.15}, $\Ran_{\omega \circ j^{\fp}} F=\Ran_j F$ preserves small limits. Since $\omega$ preserves finite limits, we are done.

Now suppose that $\sC$ is idempotent complete and has finite limits, but not necessarily small limits or colimits. We claim that if $F:\bT \to \sC$ preserves finite products, then $\Ran_{j^{\fp}} F$ exists and is left exact. Indeed, denote by $y:\sC \hookrightarrow \Psh\left(\sC\right)$ the Yoneda embedding, and consider $y \circ F.$ Then $\Ran_{j^{\fp}}\left(y\circ F\right)$ and can be computed by the standard pointwise formula. Moreover, since $y\circ F$ preserves finite products, by the above paragraph, $\Ran_{j^{\fp}}\left(y\circ F\right)$ is left exact. Moreover, since $j^{\fp}$ is fully faithful, for any $t \in \bT,$ $$\Ran_{j^{\fp}}\left(y\circ F\right)\left(j^{\fp}\left(t\right)\right) \simeq F\left(t\right).$$ Now, since any $X \in \left(\Alg_{\bT}\left(\Spc\right)^{\fp}\right)^{op}$ is a retract of a finite limit of objects in the essential image of $j^{\fp}$ by Lemma \ref{lem:retcol}, and $\sC$ is stable under finite limits and retracts in $\Psh\left(\sC\right)$ (the latter since $\sC$ has retracts), it follows that $\Ran_{j^{\fp}}\left(y\circ F\right)\left(X\right)$ is representable for all $X.$ Since $y$ preserves all limits, we conclude that the pointwise right Kan extension $\Ran_{j^{\fp}} F$ exists $$y \circ \Ran_{j^{\fp}} F \simeq \Ran_{j^{\fp}}\left(y \circ F\right),$$ and $\Ran_{j^{\fp}} F$ is left exact. By the universal property of Kan extensions, it follows that the functor $$\Ran_{j^{\fp}}:\Fun\left(\bT,\sC\right) \to \Fun\left(\left(\Alg_{\bT}\left(\Spc\right)^{\fp}\right)^{op},\sC\right)$$ exists and is right adjoint to $\left(j^{\fp}\right)^*.$ We claim that the unit $\eta:id \Rightarrow \Ran_{j^{\fp}} \left(j^{\fp}\right)^*$ is an equivalence. Since $y$ is conservative, it suffices to check that for all $$H:\left(\Alg_{\bT}\left(\Spc\right)^{\fp}\right)^{op} \to \sC$$
the induced morphism $y \circ H \to y \circ \Ran_{j^{\fp}} \left(j^{\fp}\right)^*H$ is an equivalence. But we have that $$y \circ \Ran_{j^{\fp}} \left(j^{\fp}\right)^*H \simeq \Ran_{j^{\fp}} \left(y \circ \left(j^{\fp}\right)^*H\right)\simeq \Ran_{j^{\fp}} \left(j^{\fp}\right)^*\left(y \circ H\right),$$ for all $F:\bT \to \sC.$ Now, since $\Psh\left(\sC\right)$ is complete and cocomplete, we conclude that for all $$G:\left(\Alg_{\bT}\left(\Spc\right)^{\fp}\right)^{op} \to \Psh\left(\sC\right),$$ $G \to \Ran_{j^{\fp}} \left(j^{\fp}\right)^*G$ is an equivalence, by the above paragraph. Applying this to $G=y \circ H,$ for $$H:\left(\Alg_{\bT}\left(\Spc\right)^{\fp}\right)^{op} \to \sC$$ then finishes the proof.
\end{proof}

In view of the above theorem, we call $\Alg_{\bT}\left(\Spc\right)^{op}$ the \textbf{finite limit envelope} of the algebraic theory $\bT.$

\begin{rmk}
Unwinding the definitions, we see that the identity functor corresponds to $j^{\fp}$ under the equivalence
$$\Fun^{\lex}\left(\left(\Alg_{\bT}\left(\Spc\right)^{\fp}\right)^{op},\left(\Alg_{\bT}\left(\Spc\right)^{\fp}\right)^{op}\right) \stackrel{\sim}{\longlongrightarrow} \Alg_{\bT}\left(\left(\Alg_{\bT}\left(\Spc\right)^{\fp}\right)^{op}\right).$$
\end{rmk}

\begin{prop}
Let $\cE$ be an $\i$-topos. Then there is a canonical equivalence of $\i$-categories
$$\Alg_{\bT}\left(\cE\right) \simeq \mathbf{Geom}\left(\cE,\Psh\left(\left(\Alg_{\bT}\left(\Spc\right)^{\fp}\right)^{op}\right)\right),$$ between the $\i$-category of $\bT$-algebras in $\cE$ and the $\i$-category of geometric morphisms from $\cE$ to $\Psh\left(\left(\Alg_{\bT}\left(\Spc\right)^{\fp}\right)^{op}\right).$
\end{prop}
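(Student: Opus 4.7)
The plan is to combine two universal properties: the universal property of presheaf $\infty$-topoi classifying geometric morphisms, and Theorem \ref{thm:finenv}, which identifies left exact functors out of $\left(\Alg_{\bT}\left(\Spc\right)^{\fp}\right)^{op}$ with $\bT$-algebras. Once these two equivalences are chained together, the result should drop out.

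More precisely, first I would invoke the universal property of the presheaf $\infty$-topos (\cite[Proposition 6.1.5.2]{htt}): for any $\infty$-topos $\cE$ and any small $\infty$-category $\sD$, a geometric morphism $\cE \to \Psh\left(\sD\right)$ is the same datum as a left exact left adjoint $\Psh\left(\sD\right) \to \cE$. Using that $\Psh\left(\sD\right)$ is the free cocompletion of $\sD,$ left adjoints $\Psh\left(\sD\right) \to \cE$ correspond by restriction (along the Yoneda embedding) to arbitrary functors $\sD \to \cE,$ and the left exactness condition on the left adjoint translates precisely to left exactness of the restriction. Specializing to $\sD = \left(\Alg_{\bT}\left(\Spc\right)^{\fp}\right)^{op},$ which is small and has finite limits, this yields a canonical equivalence
$$\mathbf{Geom}\!\left(\cE,\Psh\left(\left(\Alg_{\bT}\left(\Spc\right)^{\fp}\right)^{op}\right)\right) \stackrel{\sim}{\longrightarrow} \Fun^{\lex}\!\left(\left(\Alg_{\bT}\left(\Spc\right)^{\fp}\right)^{op},\cE\right).$$

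Next I would apply Theorem \ref{thm:finenv}. Since $\cE$ is an $\infty$-topos, it is in particular presentable, hence complete and cocomplete, and therefore certainly idempotent complete with finite limits. The theorem then supplies the equivalence
$$\Fun^{\lex}\!\left(\left(\Alg_{\bT}\left(\Spc\right)^{\fp}\right)^{op},\cE\right) \stackrel{\sim}{\longrightarrow} \Alg_{\bT}\left(\cE\right),$$
given by restriction along $j^{\fp}:\bT \hookrightarrow \left(\Alg_{\bT}\left(\Spc\right)^{\fp}\right)^{op}.$ Composing the two equivalences produces the desired canonical equivalence.

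I do not anticipate a serious obstacle, since both inputs are already in the paper or are standard facts about $\infty$-topoi; the main care needed is in setting up the first equivalence cleanly, in particular checking naturality in $\cE$ (so that the statement lands in the $\infty$-category of geometric morphisms rather than just the underlying space), and verifying that ``left exact functor $\sD \to \cE$'' really is what one reads off from a geometric morphism via the Yoneda/cocompletion adjunction. With these bookkeeping points in hand, the proof reduces to a two-step composition of universal properties.
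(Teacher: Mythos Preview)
Your proposal is correct and follows essentially the same route as the paper: the paper's proof also first identifies $\mathbf{Geom}\left(\cE,\Psh\left(\left(\Alg_{\bT}\left(\Spc\right)^{\fp}\right)^{op}\right)\right)$ with $\Fun^{\lex}\left(\left(\Alg_{\bT}\left(\Spc\right)^{\fp}\right)^{op},\cE\right)$ via \cite[Proposition 6.1.5.2]{htt}, and then invokes Theorem \ref{thm:finenv}. Your additional remarks about naturality and bookkeeping are reasonable cautions but are not elaborated on in the paper's version.
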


\begin{proof}
The $\i$-category $\mathbf{Geom}\left(\cE,\Psh\left(\left(\Alg_{\bT}\left(\Spc\right)^{\fp}\right)^{op}\right)\right)$ is equivalent to the full subcategory of $\Fun\left(\Psh\left(\left(\Alg_{\bT}\left(\Spc\right)^{\fp}\right)^{op}\right),\cE\right)$ on those functors which preserve small colimits and finite limits. By \cite[Proposition 6.1.5.2]{htt}, it follows that this $\i$-category is in turn equivalent to
$$\Fun^{\lex}\left(\left(\Alg_{\bT}\left(\Spc\right)^{\fp}\right)^{op},\cE\right),$$ so we are done by Theorem \ref{thm:finenv}.
\end{proof}

\begin{rmk}\label{rmk:t}
In view of the above proposition, we call the $\i$-topos $\Psh\left(\left(\Alg_{\bT}\left(\Spc\right)^{\fp}\right)^{op}\right)$ the \textbf{classifying $\i$-topos of $\bT$}, and denote by $\cB\bT.$
\end{rmk}

\begin{rmk}\label{rmk:ncatv}
Fix an integer $n \ge 0$ and suppose $\bT$ is an $\left(n+1\right)$-categorical algebraic theory. Let $\tau_{\le n}\Spc \simeq \mathbf{Gpd}_n$ be the $\left(n+1\right)$-category of $n$-truncated spaces / $n$-groupoids. Then $j^{\fp}$ factors through the canonical inclusion $$\Alg_{\bT}\left(\tau_{\le n}\Spc\right)^{op} \hookrightarrow \Alg_{\bT}\left(\Spc\right)^{op}.$$ Moreover, as any finitely generated free algebra is finitely presented, by abuse of notation we have $$j^{\fp}:\bT \hookrightarrow \left(\Alg_{\bT}\left(\tau_{\le n}\Spc\right)^{\fp}\right)^{op}.$$ Let $\sC$ be an $\left(n+1\right)$-category with finite limits. (Recall that any $\left(n+1\right)$-category with finite limits is automatically idempotent complete.) Then by a completely analogous proof to that of Theorem \ref{thm:finenv}, composition with $j^{\fp}$ induces an equivalence of $\left(n+1\right)$-categories
$$\Fun^{\lex}\left(\left(\Alg_{\bT}\left(\tau_{\le n}\Spc\right)^{\fp}\right)^{op},\sC\right) \stackrel{\sim}{\longlongrightarrow} \Fun^{\Pi\!\!}\left(\bT,\sC\right)=\Alg_{\bT}\left(\sC\right)$$
whose inverse is given by right Kan extension.
\end{rmk}

Fix an integer $n \ge 0.$ Consider the functor $\tau_{\le n}:\Spc \to \tau_{\le n}\Spc,$ which is left adjoint to the canonical inclusion. Since $\tau_{\le n}$ preserves finite products by \cite[Lemma 6.5.1.2]{htt}, it induces a left adjoint to the canonical inclusion
$$\Alg_{\bT}\left(\tau_{\le n}\Spc\right) \hookrightarrow \Alg_{\bT}\left(\Spc\right),$$ sending an algebra $A$ to $\tau_{\le n}\left(A\right).$ Note that in the special case that $n=0,$ this is the functor
$$\pi_0:\Alg_{\bT}\left(\Spc\right) \to \Alg_{\bT}\left(\Set\right).$$

\begin{prop}
If $\bT$ is an $\left(n+1\right)$-categorical algebraic theory, the functor $$\tau_{\le n}:\Alg_{\bT}\left(\Spc\right) \to \Alg_{\bT}\left(\tau_{\le n}\Spc\right)$$ restricts to a finite colimit preserving functor $$\tau_{\le n}:\Alg_{\bT}\left(\Spc\right)^{\fp} \to \Alg_{\bT}\left(\tau_{\le n} \Spc\right)^{\fp}.$$
\end{prop}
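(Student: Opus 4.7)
The plan is to split the claim into two independent assertions: first, that $\tau_{\le n}$ sends finitely presented $\bT$-algebras in $\Spc$ to finitely presented $\bT$-algebras in $\tau_{\le n}\Spc$; second, that the restricted functor preserves finite colimits. The second assertion is immediate, since $\tau_{\le n}$ is left adjoint to the canonical inclusion $\Alg_{\bT}\left(\tau_{\le n}\Spc\right)\hookrightarrow \Alg_{\bT}\left(\Spc\right)$ and therefore preserves all small colimits. Consequently, the whole argument reduces to showing that $\tau_{\le n}$ preserves finite presentability.

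To achieve this, I would invoke Lemma \ref{lem:retcol}: every finitely presented $\bT$-algebra in $\Spc$ is a retract of a finite colimit of finitely generated free algebras $j(r^k)$. Because compact objects in any presentable $\i$-category are closed under finite colimits and retracts, and because $\tau_{\le n}$ preserves finite colimits and retracts, it suffices to verify the claim on the free algebras $j(r^k)$.

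Here the key observation is that since $\bT$ is an $(n+1)$-category, every mapping space $\Map_{\bT}\left(r^k,s\right)$ is already $n$-truncated, so $j(r^k)(-)=\Map_{\bT}\left(r^k,-\right)$ factors through $\tau_{\le n}\Spc$. Hence $\tau_{\le n}\left(j(r^k)\right)\simeq j(r^k)$, viewed inside $\Alg_{\bT}\left(\tau_{\le n}\Spc\right)$. To finish, I would apply the $(n+1)$-categorical Yoneda lemma of Remark \ref{rmk:ncatv} to identify the functor on $\Alg_{\bT}\left(\tau_{\le n}\Spc\right)$ corepresented by $j(r^k)$ with evaluation at $r^k$; this evaluation functor preserves filtered colimits because filtered colimits in $\Alg_{\bT}\left(\tau_{\le n}\Spc\right)$ (being sifted) are computed pointwise in $\tau_{\le n}\Spc$, so $j(r^k)$ is compact.

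The argument is largely formal, so I do not anticipate a genuine obstacle. The only point requiring some care is the verification that truncation fixes the free algebras $j(r^k)$ on the nose; once this is granted, the retract-and-finite-colimit reduction of Lemma \ref{lem:retcol} handles the rest automatically.
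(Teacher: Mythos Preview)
Your proposal is correct and follows essentially the same route as the paper: reduce via Lemma~\ref{lem:retcol} to the free algebras, observe that the $(n+1)$-categoricity of $\bT$ forces $j(r^k)$ to already lie in $\Alg_{\bT}(\tau_{\le n}\Spc)$, and use that $\tau_{\le n}$ is a left adjoint for the colimit preservation. The only minor point you glide over is that, for the restricted functor to preserve finite colimits, one needs that the subcategories of finitely presented objects are closed under finite colimits in the ambient categories (so that finite colimits there agree with the ambient ones); the paper states this explicitly, and you should too.
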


\begin{proof}
Let $A$ be in $\Alg_{\bT}\left(\Spc\right)^{\fp}.$ Since $A$ is finitely presented, it is a retract of a finite colimit of finitely generated free algebras. Moreover, since $\bT$ is a $\left(n+1\right)$-category, every finitely generated free $\bT$-algebra is in $\Alg_{\bT}\left(\tau_{\le n}\Spc\right).$ Since $\tau_{\le n}$ preserves colimits, this implies that $A$ is a retract of a finite colimit of finitely generated free algebras in $\Alg_{\bT}\left(\tau_{\le n}\Spc\right),$ and hence finitely presented. Moreover, as the category of finitely generated algebras is closed under finite colimits in all algebras, since $\tau_{\le n}$ preserves small colimits, its restriction preserves finite ones.
\end{proof}

\begin{warning}
A finitely presented $\bT$-algebra in $\tau_{\le n}\Spc$ need not be finitely presented when regarded as an object in $\Alg_{\bT}\left(\Spc\right).$ For this to be the case, it must be \emph{homotopically finitely presented}, which is a stronger condition. For example, if $n=1$ and $A$ is a finitely presented $k$-algebra (in $\Set$), for $k$ a field, if $A$ admits a singularity which is not a local complete intersection, then $A$ is not homotopically finitely presented \cite{Toenshv}.
\end{warning}

\begin{prop}\label{prop:nsame}
Let $\bT$ be an $\left(n+1\right)$-categorical algebraic theory and $\sC$ an $\left(n+1\right)$-category with finite limits. Then the induced functor $$\left(\tau_{\le n}\right)^*:\Fun^{\lex}\left(\left(\Alg_{\bT}\left(\Spc\right)^{\fp}\right)^{op},\sC\right) \to \Fun^{\lex}\left(\left(\Alg_{\bT}\left(\tau_{\le n}\Spc\right)^{\fp}\right)^{op},\sC\right)$$ is an equivalence of $\left(n+1\right)$-categories.
\end{prop}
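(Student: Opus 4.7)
The plan is to reduce the statement to Theorem \ref{thm:finenv} and the $(n+1)$-categorical variant stated in Remark \ref{rmk:ncatv}. Both the source and target of $(\tau_{\le n})^*$ are equivalent to $\Alg_\bT(\sC)$: the source by Theorem \ref{thm:finenv} applied to $\bT$ and $\sC$, and the target by Remark \ref{rmk:ncatv}, using the hypothesis that $\sC$ is an $(n+1)$-category with finite limits. In both cases the equivalence is implemented by restriction along (the appropriate incarnation of) $j^{\fp}$.

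The heart of the argument is to exhibit a commuting triangle
$$
\xymatrix@R=1cm{
\Fun^{\lex}\!\left(\left(\Alg_{\bT}(\Spc)^{\fp}\right)^{op},\sC\right) \ar[rr]^-{(\tau_{\le n})^*} \ar[dr]_-{(j^{\fp})^*} & & \Fun^{\lex}\!\left(\left(\Alg_{\bT}(\tau_{\le n}\Spc)^{\fp}\right)^{op},\sC\right) \ar[dl]^-{(j^{\fp})^*}\\
& \Alg_{\bT}(\sC) &
}
$$
whose slanted legs are the two equivalences just cited. The key observation making this triangle commute is that for every $t \in \bT$, the corepresented $\bT$-algebra $j(t) = \Map_\bT(t, \blank)$ is already $n$-truncated-valued, since $\bT$ is an $(n+1)$-category and its mapping spaces are therefore $n$-truncated. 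Consequently, $j(t) \in \Alg_\bT(\tau_{\le n}\Spc)$ and the unit $j(t) \to \tau_{\le n}(j(t))$ is an equivalence, so that $\tau_{\le n}^{op}\circ j^{\fp} \simeq j^{\fp}$ on $\bT$. Unwinding the definition of $(\tau_{\le n})^*$ (induced by postcomposition, via the universal property of the finite limit envelope, with $\tau_{\le n}^{op}$), this identification gives the required commutativity of the triangle up to natural equivalence.

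Once the triangle commutes, two-out-of-three yields immediately that $(\tau_{\le n})^*$ is an equivalence. The main conceptual obstacle is the first step: interpreting $(\tau_{\le n})^*$ cleanly as a functor on the finite-limit envelopes, since, as the warning following the previous proposition recalls, finitely presented $\bT$-algebras in $\tau_{\le n}\Spc$ need not remain finitely presented when viewed in $\Spc$, so the truncation and inclusion do not assemble into a literal adjunction between the $\fp$-subcategories. What makes the plan work is that, via Theorem \ref{thm:finenv} and Remark \ref{rmk:ncatv}, one may simply define $(\tau_{\le n})^*$ through its effect on algebras in $\sC$ and verify, using the triangle above, that this is compatible with any reasonable direct construction on the level of functor categories.
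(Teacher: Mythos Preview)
Your argument is correct and essentially the same as the paper's: both identify source and target with $\Alg_\bT(\sC)$ via Theorem~\ref{thm:finenv} and Remark~\ref{rmk:ncatv}, verify compatibility with $(\tau_{\le n})^*$ (the paper phrases this as $\overline{F}\simeq\overline{F}_n\circ\tau_{\le n}$ by uniqueness, using that $\tau_{\le n}$ is left exact on the opposite categories; you phrase it via $\tau_{\le n}\circ j^{\fp}\simeq j^{\fp}$), and conclude by two-out-of-three. Your last paragraph is unnecessary: the preceding proposition already establishes that $\tau_{\le n}$ restricts to a finite-colimit-preserving functor between the $\fp$-subcategories, so $(\tau_{\le n})^*$ is well-defined as precomposition and no adjunction between $\fp$-subcategories is required.
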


\begin{proof}
Given any finite product preserving functor $F:\bT \to \sC,$ consider the induced left exact functors
$$\overline{F}_n:\left(\Alg_{\bT}\left(\tau_{\le n} \Spc\right)^{\fp}\right)^{op} \to \sC$$
and
$$\overline{F}:\left(\Alg_{\bT}\left(\Spc\right)^{\fp}\right)^{op} \to \sC.$$ By uniqueness, since $\tau_{\le n}$ preserves finite limits (since we have taken the opposite categories), $$\overline{F} \simeq \overline{F}_n \circ \tau_{\le n}.$$ The result now follows from Theorem \ref{thm:finenv} and Remark \ref{rmk:ncatv}
\end{proof}

\begin{rmk}\label{rmk:nsame}
Since $$\tau_{\le n}:\Alg_{\bT}\left(\Spc\right)^{op} \to \Alg_{\bT}\left(\tau_{\le n}\Spc\right)^{op}$$ preserves small limits, by an analogous proof to above, it follows using Theorem \ref{thm:5.5.8.15} that if $\bT$ is a $\left(n+1\right)$-categorical algebraic theory and $\sC$ an $\left(n+1\right)$-category with small limits, then the induced functor $$\left(\tau_{\le n}\right)^*:\Fun^{R}\left(\left(\Alg_{\bT}\left(\Spc\right)^{\fp}\right)^{op},\sC\right) \to \Fun^{R}\left(\left(\Alg_{\bT}\left(\tau_{\le n}\Spc\right)^{\fp}\right)^{op},\sC\right)$$ between functors preserving small limits, is an equivalence of $\left(n+1\right)$-categories.
\end{rmk}

\subsection{Unramified Transformations of Algebraic Theories}
Given a morphism of algebraic theories $f:\bT\rightarrow \bT'$, we have an induced functor $f^*:\Alg_{\bT'}\left(\Spc\right)\rightarrow \Alg_{\bT}\left(\Spc\right)$ which preserves small limits and small sifted colimits. A natural question that now arises is the following:
\begin{itemize}
    \item \emph{Under what conditions on the morphism $f:\bT\rightarrow \bT'$ does the induced functor $f^*$ preserve (a certain class of) pushouts?}
\end{itemize}
To answer this question, we propose the following definition:
\begin{dfn}\label{dfn:unramifiedtransformation}
Let $\bT$ and $\bT'$ be algebraic theories. A morphism of algebraic theories $f:\bT\rightarrow \bT'$ is \textbf{unramified} if for each morphism $g:X\rightarrow Y$ in $\bT'$ and each $Z\in \bT'$, the diagram
\[
\begin{tikzcd}
f^*j\left(X\times Y\right) \ar[d]\ar[r]& f^*j\left(X\times Y \times Z\right) \ar[d]\\
f^*j\left(X\right) \ar[r] & f^*j\left(X\times Z\right)
\end{tikzcd}
\]
is a pushout in $\Alg_{\bT}\left(\Spc\right)$.
\end{dfn}
The significance of this definition is explained by the following theorem.
\begin{thm}\label{unramifiedpreserveseffepi}
Let $f:\bT\rightarrow \bT'$ be an unramified transformation of algebraic theories. Then $f^*:\Alg_{\bT'}\left(\Spc\right)\rightarrow \Alg_{\bT}\left(\Spc\right)$ preserves pushouts along effective epimorphisms. 
\end{thm}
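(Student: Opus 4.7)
The plan is a two-stage reduction: first pass through the Čech nerve of $p$ and the sifted-colimit preservation of $f^{*}$ to reduce to levelwise pushouts along iterated diagonals, then exploit a second bar-type resolution into free $\bT'$-algebras to bring the problem into the reach of the unramified hypothesis.

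Given a pushout $B'=B\sqcup_A A'$ with $p\colon A\to A'$ an effective epimorphism, Definition \ref{defn:effectiveepi} gives $A'\simeq\lvert\check{C}(p)_\bullet\rvert$, where $\check{C}(p)_n = A\times_{A'}\cdots\times_{A'}A$ ($n+1$ factors). The base-change $B\sqcup_A(-)$ is a left adjoint and so preserves all colimits; in particular $B'\simeq\lvert B\sqcup_A\check{C}(p)_\bullet\rvert$. The functor $f^{*}$ preserves all limits (it is precomposition with $f$) and all sifted colimits (on both sides the evaluation-at-generator functor $\theta_r$ of Remark \ref{effepilawvere} is conservative and commutes with sifted colimits, which in algebras are computed levelwise). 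Consequently $f^{*}$ commutes with Čech nerves and geometric realizations, and the parallel computation of the pushout on the $\bT$-side shows that the comparison morphism
\[f^{*}(B)\sqcup_{f^{*}(A)}f^{*}(A')\;\longrightarrow\;f^{*}(B\sqcup_A A')\]
is a map of geometric realizations. It therefore suffices to verify, for every $n\ge 0$, that
\[f^{*}(B)\sqcup_{f^{*}(A)}f^{*}\bigl(A\times_{A'}\cdots\times_{A'}A\bigr)\;\xrightarrow{\;\sim\;}\;f^{*}\bigl(B\sqcup_A(A\times_{A'}\cdots\times_{A'}A)\bigr),\]
the structure map being the iterated $n$-fold diagonal.

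The case $n=0$ is immediate. For $n\ge 1$, the plan is to resolve $A,B$, and the fibre product $A\times_{A'}\cdots\times_{A'}A$ by simplicial diagrams of free $\bT'$-algebras (the standard monadic bar resolution, available since $\theta_r$ is monadic). Once more invoking universality of sifted colimits and sifted-colimit preservation of $f^{*}$, preservation of the levelwise pushout reduces to preservation of elementary pushouts $j(X_1)\sqcup_{j(Y)}j(X_2)$ between free algebras. By the Yoneda lemma, every map $j(Y)\to j(X)$ is of the form $j(h)$ for $h\colon X\to Y$ in $\bT'$, and every such $h$ factors canonically as $X\xrightarrow{(\mathrm{id}_X,h)}X\times Y\xrightarrow{\mathrm{pr}_Y}Y$, namely as a section of the first projection followed by the second projection. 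Substituting this factorization into each elementary pushout and applying pushout-associativity decomposes it into an iterated composite of squares of precisely the shape appearing in Definition \ref{dfn:unramifiedtransformation}, to which the unramified hypothesis applies; this then propagates upward to yield the desired levelwise equivalence.

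The main obstacle lies in the second stage: the coherent bookkeeping of the double simplicial resolution, and the verification that every elementary pushout between free algebras indeed decomposes, in the $\i$-categorical sense rather than merely at the level of $\pi_0$, as a \emph{finite} composite of unramified squares. In particular, when both legs of an elementary pushout are factored through section-style maps, the resulting iterated pushout may involve two different sections of the same projection, which is not immediately of unramified shape; one must then decompose further using pushout-associativity and verify inductively that the procedure terminates in a finite sequence of unramified squares. Once this combinatorial reduction is in place, the unramified hypothesis and the standard sifted-colimit manipulations assembled in the first stage close out the proof.
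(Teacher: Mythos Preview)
There is a genuine and fatal gap in your final decomposition step, and the \v{C}ech-nerve stage is actually working against you.

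Your first reduction is correct: $f^{*}$ preserves limits and sifted colimits, so it commutes with both the \v{C}ech nerve and its realization. But notice what you have traded: the effective epimorphism $A\to A'$ has been replaced by the diagonal $A\to \check C(p)_n$, a split monomorphism. In the paper's argument the effective-epi leg is kept intact through the bar resolution (Lemma~\ref{lem:barresolutioneffepi}), and at the level of simplicial sets it yields a degreewise \emph{surjection}; combined with a cofibrant replacement on the other leg (degreewise \emph{injection}), the resulting spans are, after one more filtered-colimit reduction, precisely single elementary diagrams in the sense of Definition~\ref{dfn:elementarydiagram}. No recursive decomposition is needed. Your reduction throws away exactly this structure.

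The second stage then fails for an essential reason, not merely a bookkeeping one. Your claimed endgame is that every span $j(X_1)\leftarrow j(Y)\to j(X_2)$ of finitely generated free $\bT'$-algebras decomposes, via the graph factorization $X\to X\times Y\to Y$, into finitely many unramified squares. This is false. Carrying out the factorization on both legs, the composite pushout always contains a sub-square of the form
\[
j(W\times P')\;\sqcup_{\,j(W)}\;j(W\times R')
\]
with both legs free extensions (equivalently, a binary coproduct of free algebras). An unramified $f^{*}$ need not preserve such pushouts: for the transformation $\ComR\to\Cart$ of Lemma~\ref{diffdiscunramified}, the functor $(\blank)^{alg}$ is unramified, yet
\[
\sC^{\i}(\bR)^{alg}\underset{\bR}{\otimes}\sC^{\i}(\bR)^{alg}\;\subsetneq\;\sC^{\i}(\bR^{2})^{alg},
\]
so the coproduct $\sC^{\i}(\bR)\oinfty\sC^{\i}(\bR)=\sC^{\i}(\bR^{2})$ is not preserved. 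Any attempt to further decompose the ``both free extension'' square via your factorization or via the rewriting $B\sqcup_A C\simeq (B\sqcup C)\sqcup_{A\sqcup C}C$ reintroduces a binary coproduct of free algebras, so the procedure is circular rather than inductive. (You also omit the passage from free-on-a-space to free-on-an-object-of-$\bT'$, which requires a second simplicial resolution and a filtered-colimit argument as in the paper's proof; but even granting that, the decomposition above still fails.)

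In short: the unramified hypothesis controls only spans with one graph leg and one free-extension leg, and the paper's proof is organised so that only such spans arise; your route produces spans with two free-extension legs, which are genuinely outside the reach of the hypothesis.
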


The proof of this theorem requires some preparation: the argument depends on the existence of a convenient resolution of a pushout diagram where one of the maps is an effective epimorphism as diagrams of the type appearing in Definition \ref{dfn:unramifiedtransformation}. Such resolutions are constructed \cite[Sections 2-3]{dagix} for Lurie's \emph{pregeometries} 
(these are algebraic theories with extra structure specifying a class of `geometrically correct' pullbacks), but we have no need of that generality, so we construct them only for algebraic theories.
\begin{dfn}\label{dfn:elementarydiagram}
A diagram $\tau:\Lambda^2_0\rightarrow \Alg_{\bT}\left(\Spc\right)$ in the $\infty$-category of $\bT$-algebras is \textbf{elementary} if it is equivalent to a diagram of the form 
\[
\begin{tikzcd}
j\left(X\times Y\right) \ar[d]\ar[r]& j\left(X\times Y \times Z\right) \\
j\left(X\right)
\end{tikzcd}
\]
for some morphism $X\rightarrow Y$ in $\bT$ and some $Z\in \bT$.
\end{dfn}
\begin{prop}\label{prop:resolveelementary}
Let $f:\bT\rightarrow \bT'$ be a transformation of algebraic theories, and let 
\[
\begin{tikzcd}
A\ar[d,"\alpha"]\ar[r,"\beta"]& C\\
B
\end{tikzcd}
\]
be a diagram $\sigma:\Lambda^2_0\rightarrow \Alg_{\bT}\left(\Spc\right)$. Suppose there exists a diagram $\overline{\sigma}:\mathcal{C}^{\rhd}\times \Lambda^{2}_0\rightarrow \Alg_{\bT}\left(\Spc\right)$ satisfying the following properties:
\begin{enumerate}
    \item For each vertex $v$ of $\Lambda^2_0$, the diagram $\mathcal{C}^{\rhd}\rightarrow \mathcal{C}^{\rhd}\times \{v\}\rightarrow  \Alg_{\bT}\left(\Spc\right)$ is a colimit diagram. 
    \item For each object $c\in \mathcal{C}$, $f^*$ preserves colimits of the diagram $$\Lambda^{2}_0\rightarrow \{c\}\times \Lambda^{2}_0\rightarrow \Alg_{\bT}\left(\Spc\right).$$
    \item On the cocone point, the diagram $\Lambda^{2}_0\rightarrow \{-\infty\}\times \Lambda^{2}_0\rightarrow \Alg_{\bT}\left(\Spc\right)$ is equivalent to the diagram $\sigma$.
    \item The $\infty$-category $\mathcal{C}$ is sifted. 
\end{enumerate}
Then $f^*$ preserves colimits of the diagram $\sigma$.
\end{prop}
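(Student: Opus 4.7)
The plan is to reduce preservation of $\colim\sigma$ by $f^*$ to the pointwise hypothesis (2) by interchanging the two colimits implicit in the resolution $\overline{\sigma}$. The essential ingredient that makes this strategy work is that the restriction functor $f^*:\Alg_{\bT'}(\Spc)\to\Alg_{\bT}(\Spc)$ preserves sifted colimits. This holds because sifted colimits of $\bT'$- and $\bT$-algebras are computed pointwise in $\Spc$ (since finite products commute with sifted colimits in $\Spc$) and $f^*$ is given by precomposition with $f$, which commutes with evaluation.

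View $\overline{\sigma}$ as a functor $\mathcal{C}^{\rhd}\to\Fun(\Lambda^2_0,\Alg_{\bT'}(\Spc))$. Condition (1) together with the pointwise computation of colimits in a functor category shows that $\overline{\sigma}$ is a colimit cone in $\Fun(\Lambda^2_0,\Alg_{\bT'}(\Spc))$, and condition (3) identifies its value at the cocone point with $\sigma$, so
\[
\sigma \;\simeq\; \colim_{c\in\mathcal{C}}\,\overline{\sigma}(c,-) \qquad \text{in } \Fun(\Lambda^2_0,\Alg_{\bT'}(\Spc)).
\]
Taking $\colim_{\Lambda^2_0}$ and interchanging the two colimits (formal, since colimits always commute with colimits) gives
\[
\colim_{\Lambda^2_0}\sigma \;\simeq\; \colim_{c\in\mathcal{C}}\,\colim_{\Lambda^2_0}\overline{\sigma}(c,-).
\]
Now I apply $f^*$. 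By siftedness of $\mathcal{C}$ (condition (4)) and the preservation property above, $f^*$ commutes with $\colim_{c\in\mathcal{C}}$; by condition (2) it commutes with each $\colim_{\Lambda^2_0}\overline{\sigma}(c,-)$. Hence
\[
f^*\colim_{\Lambda^2_0}\sigma \;\simeq\; \colim_{c\in\mathcal{C}}\,\colim_{\Lambda^2_0} f^*\overline{\sigma}(c,-).
\]
Reversing the interchange and invoking sifted preservation of $f^*$ one final time identifies the right-hand side with $\colim_{\Lambda^2_0} f^*\sigma$, which is the desired conclusion.

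The only step that uses anything specific to algebraic theories is the preservation of sifted colimits by $f^*$; everything else is formal colimit bookkeeping valid in any $\i$-category admitting the relevant colimits. I do not foresee a genuine obstacle here---this proposition is the \emph{soft} formal step in the theory, while the substantive work lies ahead in Theorem \ref{unramifiedpreserveseffepi}, where for each pushout along an effective epimorphism one must construct an explicit sifted resolution $\overline{\sigma}$ whose elementary pieces are diagrams of the shape appearing in Definition \ref{dfn:unramifiedtransformation}.
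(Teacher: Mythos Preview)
Your proposal is correct and is essentially the same argument as the paper's: both express $\colim_{\Lambda^2_0}\sigma$ as a double colimit via properties (1) and (3), then use Fubini together with sifted-colimit preservation of $f^*$ (property (4)) and the pointwise hypothesis (2) to pass $f^*$ through. Your packaging via the functor $\mathcal{C}^{\rhd}\to\Fun(\Lambda^2_0,\Alg_{\bT'}(\Spc))$ is slightly cleaner notationally, but the mathematical content is identical.
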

\begin{proof}
We are given a diagram $\sigma:\Lambda^2_0\rightarrow \Alg_{\bT'}\left(\Spc\right)$ 
\[
\begin{tikzcd}
A\ar[d,"\alpha"]\ar[r,"\beta"]& C\\
B
\end{tikzcd}
\]
and a diagram $\overline{\sigma}:\mathcal{C}^{\rhd} \times \Lambda^2_0\rightarrow \Alg_{\bT'}\left(\Spc\right)$ satisfying conditions $\left(1\right)$ through $\left(4\right)$ in the statement of that proposition. Let $-\infty$ be the cocone vertex of $\left(\Lambda^2_0\right)^{\rhd}$, then by properties $\left(1\right)$ and $\left(3\right)$ we have $$\colim\, \sigma\simeq \underset{{D\in \mathcal{C}}}\colim\, \overline{\sigma}|_{\mathcal{C}\times \{-\infty\}}\left(D\right).$$ By property $\left(2\right)$ we have for each $C\in \mathcal{C}$ an equivalence $$\underset{{v\in \Lambda^2_0}}\colim\, f^*\left(\overline{\sigma}|_{\{C\}\times \Lambda^2_0}\left(v\right)\right)\simeq f^*\underset{{v\in \Lambda^2_0}}\colim\,\overline{\sigma}|_{\{C\}\times \Lambda^2_0}\left(v\right).$$ Any transformation of algebraic theories preserves sifted colimits, so by property $\left(4\right)$, we have for each vertex $v\in \Lambda^2_0$ an equivalence $$\underset{{D\in \mathcal{C}}}\colim \,f^*\left(\overline{\sigma}|_{\mathcal{C}\times \{v\}}\left(D\right)\right)\simeq f^*\underset{{D\in \mathcal{C}}}\colim \,\overline{\sigma}|_{\mathcal{C}\times \{v\}}\left(D\right).$$ 
Hence, we get a chain of equivalences
\begin{align*}f^* \colim\, \sigma &\simeq f^*\underset{{D\in \mathcal{C}}}\colim \,\underset{{v\in \Lambda^2_0}}\colim\, \overline{\sigma}|_{\mathcal{C}\times \Lambda^2_0}\left(D\times v\right)\\ &\simeq \underset{{D\in \mathcal{C}}}\colim \,\underset{{v\in \Lambda^2_0}}\colim\, f^*\overline{\sigma}|_{\mathcal{C}\times \Lambda^2_0}\left(D\times v\right) \\
&\simeq \underset{{v\in \Lambda^2_0}}\colim\,\underset{{D\in \mathcal{C}}}\colim \, f^*\overline{\sigma}|_{\mathcal{C}\times \Lambda^2_0}\left(D\times v\right) \\
&\simeq \underset{{v\in \Lambda^2_0}}\colim\,f^*\underset{{D\in \mathcal{C}}}\colim \, \overline{\sigma}|_{\mathcal{C}\times \Lambda^2_0}\left(D\times v\right) \simeq \underset{{v\in \Lambda^2_0}}\colim\,f^*\sigma\left(v\right)
\end{align*}
which proves the proposition.
\end{proof}
We now construct the desired resolution, and complete the proof of Theorem \ref{unramifiedpreserveseffepi} by applying Proposition \ref{prop:resolveelementary} repeatedly. 
\begin{dfn}\label{monadofalawvere}
Let $\bT$ be an algebraic theory. We have seen that evaluating on the generating object $r\in \bT$ induces a conservative functor
\[\theta_r:\Alg_{\bT}\left(\Spc\right)\longrightarrow \Spc \]
which commutes with limits and sifted colimits. It follows from the adjoint functor theorem that $\theta_r$ admits a left adjoint $F$, and from the Barr-Beck theorem \cite{LurieHA} that the adjunction $F\dashv \theta_r$ is monadic, with $\mathfrak{T}=\theta_r \circ F$ the associated endomorphism monad of $\theta_r$. The left action of $\mathfrak{T}$ on $\theta_r$ induces a functor $\theta':\Alg_{\bT}\left(\Spc\right)\rightarrow \mathbf{LMod}_{\mathfrak{T}}\left(\Spc\right)$ which is an equivalence, where $\mathbf{LMod}_{\mathfrak{T}}\left(\Spc\right)$ denotes the $\i$-category of left modules for the monad $\mathfrak{T}$. 
\end{dfn}
\begin{rmk}\label{rmk:barresolution}
In the situation of Definition \ref{monadofalawvere}, any left module $M$ for $\mathfrak{T}$ comes with a resolution given by the \textbf{Bar construction} $\mathrm{Bar}_{\mathfrak{T}}\left(\mathfrak{T},M\right)_{\bullet}$, the simplicial object given by 
\[
 \begin{tikzcd} \ldots\ar[r,shift left=3]\ar[r,shift left=1]\ar[r,shift left=-1]\ar[r,shift left=-3]&  \mathfrak{T}^3M \ar[r,shift left=2]\ar[r]\ar[r,shift left=-2]&  \mathfrak{T}^2M \ar[r,shift left=1]\ar[r,shift left=-1]& \mathfrak{T}M.
 \end{tikzcd} 
\]
The face maps are induced by the multiplicative structure of the monad $\mathfrak{T}$. Given a simplicial $\bT$-algebra $A$, the Bar construction of $\theta'\left(A\right)$ is obtained by applying $\theta'$ to the simplicial object 
\[
 \begin{tikzcd} \ldots\ar[r,shift left=3]\ar[r,shift left=1]\ar[r,shift left=-1]\ar[r,shift left=-3]&  \left(F\theta_r\right)^3A \ar[r,shift left=2]\ar[r]\ar[r,shift left=-2]&   \left(F\theta_r\right)^2A \ar[r,shift left=1]\ar[r,shift left=-1]& \left(F\theta_r\right)A.
 \end{tikzcd} 
\]
Since $\theta'$ is an equivalence, it follows that the geometric realization of the above diagram is equivalent to $A$. The simplicial resolution of $A$ described above is functorial in $A$, so given a diagram $\sigma:\Lambda^2_0\rightarrow \Alg_{\bT}\left(\Spc\right)$, we obtain a diagram $\overline{\sigma}:\Delta_+^{op}\times\Lambda^{2}_0\rightarrow \Alg_{\bT}\left(\Spc\right)$, where for each vertex $v\in \Lambda^2_0$, the diagram $\Delta^{op}_+\times\{v\}\rightarrow \Alg_{\bT}\left(\Spc\right)$ is the simplicial diagram above, augmented by $\sigma\left(v\right)$. 
\end{rmk}
\begin{lem}\label{lem:barresolutioneffepi}
Let $A\rightarrow B$ be an effective epimorphism of simplicial $\bT$-algebras. Then the induced map $\left(F\theta_r\right)A\rightarrow \left(F\theta_r\right)B$ is an effective epimorphism. 
\end{lem}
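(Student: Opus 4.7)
My plan is to reduce to the $\pi_0$-surjectivity criterion for effective epimorphisms established in Remark \ref{effepilawvere}: it will suffice to prove that the map on connected components $\pi_0\bigl((F\theta_r)A\bigr) \to \pi_0\bigl((F\theta_r)B\bigr)$ is surjective as a map of sets, the hypothesis on $A \to B$ being precisely the analogous surjectivity of $\pi_0 A \to \pi_0 B$.

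The core of the argument, which I expect to be the main technical obstacle, is an adjoint-uniqueness identification $\pi_0^{\Alg} \circ F \simeq F^{\Set} \circ \pi_0$, where $F^{\Set} : \Set \to \Alg_{\bT}(\Set)$ denotes the set-level free $\bT$-algebra functor (left adjoint to evaluation $\theta_r^{\Set}$ at the generator) and $\pi_0^{\Alg}$ is the left adjoint to the fully faithful inclusion $\iota : \Alg_{\bT}(\Set) \hookrightarrow \Alg_{\bT}(\Spc)$. The point is the evident compatibility $\theta_r \circ \iota \simeq u \circ \theta_r^{\Set}$, where $u : \Set \hookrightarrow \Spc$; uniqueness of left adjoints then forces the composites $\pi_0^{\Alg} \circ F$ and $F^{\Set} \circ \pi_0$ to agree, since both are left adjoint to $\theta_r \circ \iota \simeq u \circ \theta_r^{\Set}$. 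Combined with the obvious identity $\pi_0 \circ \theta_r \simeq \theta_r^{\Set} \circ \pi_0^{\Alg}$, this rewrites the map we need to control as $\theta_r^{\Set} F^{\Set}(\pi_0 A) \to \theta_r^{\Set} F^{\Set}(\pi_0 B)$.

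At this stage the problem is purely $1$-categorical: since $\Set$ is $1$-truncated, $\Alg_{\bT}(\Set)$ coincides with algebras over the ordinary Lawvere theory $h_1\bT$, and the underlying set of $F^{\Set}(X)$ consists of equivalence classes $[\phi; x_1, \ldots, x_n]$ with $\phi: r^n \to r$ a formal operation and $x_i \in X$. Lifting the inputs along the surjection $\pi_0 A \twoheadrightarrow \pi_0 B$ produces preimages in $F^{\Set}(\pi_0 A)$ for all such expressions in $F^{\Set}(\pi_0 B)$, yielding the desired surjectivity and hence the claim.
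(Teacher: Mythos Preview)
Your proof is correct and takes a genuinely different route from the paper's. You argue purely categorically: the commuting square of left adjoints $\pi_0^{\Alg}\circ F \simeq F^{\Set}\circ\pi_0$ (from uniqueness of adjoints, since $\theta_r\circ\iota \simeq u\circ\theta_r^{\Set}$) together with $\pi_0\circ\theta_r \simeq \theta_r^{\Set}\circ\pi_0^{\Alg}$ reduces the question to whether the classical monad $\theta_r^{\Set}F^{\Set}$ on $\Set$ preserves surjections, which is immediate. The paper instead chooses a simplicial model: it replaces $A(r)\to B(r)$ by a Kan fibration so that it is a degreewise surjection of simplicial sets, then writes $F(\theta_r A)$ as the geometric realization of $[n]\mapsto A(r)_n\otimes j(r)$ and observes that the induced map of simplicial objects is a levelwise effective epimorphism, hence so is its realization.

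Your argument is cleaner and model-independent for the lemma as stated. The paper's approach, however, is doing extra work that is reused immediately afterward: the explicit levelwise description $A(r)_\bullet\otimes j(r)\to B(r)_\bullet\otimes j(r)$, with each level a codiagonal indexed by a surjection of sets, is exactly what the proof of Theorem~\ref{unramifiedpreserveseffepi} invokes to resolve a general pushout along an effective epimorphism by elementary diagrams. So while your proof suffices for the lemma, the paper's concrete resolution is not incidental---it is the scaffolding for the next step.
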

\begin{proof}
By assumption, the map of spaces $A\left(r\right)\rightarrow B\left(r\right)$ is surjective on connected components. Since we can also assume that the map $A\left(r\right)\rightarrow B\left(r\right)$ is a Kan fibration of simplicial sets, it follows that we may assume that the map $A\left(r\right)\rightarrow B\left(r\right)$ is a map of simplicial sets that is a surjection in each simplicial degree. Now the object $F\left(A\left(r\right)\right)$ is itself obtained as a geometric realization; it is the colimit $$\underset{{\Delta^{op}}} \colim A\left(r\right)_n\otimes j\left(r\right),$$ where $A\left(r\right)_n\otimes j\left(r\right)$ denotes the coproduct of $j\left(r\right)$ indexed by the set $A\left(r\right)_n$. It is also clear that the map $F\left(A\left(r\right)\right)\rightarrow F\left(B\left(r\right)\right)$ is induced by a map of simplicial objects $$A\left(r\right)_{\bullet}\otimes j\left(r\right)\rightarrow B\left(r\right)_{\bullet}\otimes j\left(r\right).$$ Moreover, for each simplicial degree $n$, this map is induced by the map of sets $$A\left(r\right)_n\rightarrow B\left(r\right)_n,$$ which, as we have just argued, may be assumed to be surjective. It follows that the map $\left(F\theta_r\right)A\rightarrow \left(F\theta_r\right)B$ is an effective epimorphism as well. 
\end{proof}
\begin{rmk}
Note that if $A\left(r\right)_n$ (and thus also $B\left(r\right)_n$) is a \emph{finite} set, then the induced map $A\left(r\right)_{n}\otimes j\left(r\right)\rightarrow B\left(r\right)_{n}\otimes j\left(r\right)$ is of the form required of the vertical map in an elementary pushout diagram. 
\end{rmk}
\begin{proof}[Proof of Theorem \ref{unramifiedpreserveseffepi}]
Let $f:\bT\rightarrow \bT'$ be an unramified transformation of algebraic theories, and let $\sigma:\Lambda^2_0\rightarrow \Alg_{\bT}\left(\Spc\right)$ be a diagram
\[
\begin{tikzcd}
A\ar[d,"\alpha"]\ar[r,"\beta"]& C\\
B
\end{tikzcd}
\]
where $\alpha$ is an effective epimorphism. Let $\Delta^{op}_+\times \Lambda^{2}_0\rightarrow \Alg_{\bT}\left(\Spc\right)$ be the diagram provided by Remark \ref{rmk:barresolution}. By repeatedly applying Lemma \ref{lem:barresolutioneffepi}, we find that for every totally ordered set $[n]$, the morphism $\Delta^1\rightarrow \{[n]\}\times \Lambda^2_0\rightarrow  \Alg_{\bT}\left(\Spc\right)$ corresponding to $\left(F\theta_r\right)^{n+1}A\rightarrow \left(F\theta_r\right)^{n+1}B$ is an effective epimorphism. Applying Proposition \ref{prop:resolveelementary} we are reduced to proving that $f^*$ preserves pushout diagrams of the form 
\[
\begin{tikzcd}
\underset{{n \in\Delta^{op}}}\colim A\left(r\right)_n\otimes j\left(r\right)\ar[d]\ar[r]& \underset{{n \in\Delta^{op}}}\colim C\left(r\right)_n\otimes j\left(r\right)\\
\underset{{n \in\Delta^{op}}}\colim B\left(r\right)_n\otimes j\left(r\right).
\end{tikzcd}
\]
The vertical map is induced by a map on simplicial objects $A\left(r\right)_{\bullet}\otimes j\left(r\right)\rightarrow B\left(r\right)_{\bullet}\otimes j\left(r\right)$, and the proof of Lemma \ref{lem:barresolutioneffepi} tells us that we may assume that in each simplicial degree, this map is given by a certain codiagonal determined by the surjective map $A\left(r\right)_n\rightarrow B_n\left(r\right)$. Also, we may replace the simplicial set $C\left(r\right)$ by an equivalent simplicial set so that the map $A\left(r\right)\rightarrow C\left(r\right)$ is a cofibration. Applying Proposition \ref{prop:resolveelementary} again, we only have to prove that $f^*$ preserves pushout diagrams of the form   
\[
\begin{tikzcd}
A'\otimes j\left(r\right)\ar[d]\ar[r]& C'\otimes j\left(r\right)\\
B'\otimes j\left(r\right)
\end{tikzcd}
\]
where $A'$, $B'$ and $C'$ are maps of sets such that $A'\rightarrow B'$ is surjective and $A'\rightarrow C'$ is injective. We note that this last diagram is a small coproduct of elementary pushout diagrams; applying Proposition \ref{prop:resolveelementary} one last time to the decomposition of this coproduct diagram as a filtered colimit of finite coproducts, we see that we should show that $f^*$ preserves colimits of elementary pushout diagrams, but this is the case by assumption.   
\end{proof}

\subsection{Characterizing the finite limit envelope}
Given a pair $\left(\sC,S\right)$ with $\sC$ an idempotent complete $\i$-category with finite limits, and $S$ a $\bT$-algebra in $\sC,$ i.e. $S:\bT \to \sC$ a functor which preserves finite products, we would like to be able to identify when this is equivalent to the finite limit envelope of $\bT$.

\begin{dfn}
Let $\left(\sC,S\right)$ be a pair with $\sC$ an $\i$-category, and $S$ a $\bT$-algebra in $\sC.$ Given an object $C$ of $\sC,$ the functor $$\Map_{\sC}\left(C,S\left(\blank\right)\right):\bT \to \Spc$$ preserves finite products, and hence is a $\bT$-algebra. This assembles into a functor $$\Gamma_S:\sC\to \Alg_{\bT}\left(\Spc\right)^{op}.$$
\end{dfn}

With the hypothesis that $\sC$ additionally is idempotent complete and has finite limits, by Theorem \ref{thm:finenv}, $$\Ran_{j^{\fp}} S:\left(\Alg_{\bT}\left(\Spc\right)^{\fp}\right)^{op} \to \sC$$ exists and is left exact. Moreover, there is a canonical equivalence $$S \simeq \left(j^{\fp}\right)^*\Ran_{j^{\fp}} S.$$
The functor $\Ran_{j^{\fp}} S$ canonically induces a natural transformation
$$\eta:\omega \Rightarrow \Gamma_S \circ \Ran_{j^{\fp}} S,$$ where $$\left(\Alg_{\bT}\left(\Spc\right)^{\fp}\right)^{op} \stackrel{\omega}{\longhookrightarrow} \left(\Alg_{\bT}\left(\Spc\right)\right)^{op},$$ is the canonical inclusion.
Namely, by the Yoneda lemma, for all finitely presented $\bT$-algebras $A,$ $$A\simeq \Map_{\Alg_{\bT}\left(\Spc\right)}\left(j\left(\blank\right),A\right)=\Map_{\left(\Alg_{\bT}\left(\Spc\right)^{\fp}\right)^{op}}\left(A,j^{\fp}\left(\blank\right)\right),$$ and $$\Gamma_S \circ \Ran_{j^{\fp}} S\left(A\right)\simeq\Map_{\sC}\left(\Ran_{j^{\fp}}\left(S\right)\left(A\right),\Ran_{j^{\fp}}\left(S\right)\left(j^{\fp}\left(\blank\right)\right)\right),$$ and under these equivalences, $\eta$ is just the map on mapping spaces induced by $\Ran_{j^{\fp}}\left(S\right).$

\begin{dfn}\label{dfn:universal}
A $\bT$-algebra $S$ in $\sC$ is \textbf{versal} if 
\begin{itemize}
\item[i)] $\Gamma_S \circ \Ran_{j^{\fp}} S$ is left exact
\item[ii)] The component $\eta_S\left(r\right):\omega\left(r\right) \to \Gamma_S S\left(r\right)$ is an equivalence, where $r$ is the generator of $\bT.$
\end{itemize}
\end{dfn}

\begin{dfn}
A $\bT$-algebra $S$ in $\sC$ is \textbf{generating} if every object in $\sC$ is a retract of a finite limit of $S\left(r\right).$
\end{dfn}

\begin{dfn}
A $\bT$-algebra $S$ in $\sC$ is \textbf{universal} if and only if it is both versal and generating.
\end{dfn}

\begin{prop}
The following are equivalent for a $\bT$-algebra $S$ in $\sC$:
\begin{enumerate}
 \item $S$ is versal
 \item $\eta_S:\omega \Rightarrow \Gamma_S \circ \Ran_{j^{\fp}}S$ is an equivalence
 %\item The following diagram commutes
 %$$\xymatrix{\bT \ar[r]^{S} \ar[d]_-{j} & \sC \ar@{^{(}->}[r]^-{y} & \Psh\left(\sC\right) \ar[d]^-{\Ran_{\Gamma_S}}\\
 %\left(\Alg_{\bT}\left(\Spc\right)^{\fp}\right)^{op} \ar@{^{(}->}[rr]^-{y} && \Psh\left(\left(\Alg_{\bT}\left(\Spc\right)^{\fp}\right)^{op}\right)}$$
\end{enumerate}

\end{prop}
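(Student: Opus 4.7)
The equivalence $(1)\Leftrightarrow (2)$ splits into two implications; one is essentially trivial and the other is the substantive one.

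For $(2)\Rightarrow (1)$, suppose $\eta_S$ is a natural equivalence. The inclusion $\omega:\left(\Alg_{\bT}\left(\Spc\right)^{\fp}\right)^{op}\hookrightarrow \left(\Alg_{\bT}\left(\Spc\right)\right)^{op}$ preserves all small limits (as already used in the proof of Theorem \ref{thm:finenv}), so in particular it is left exact. If $\eta_S$ is an equivalence, then $\Gamma_S\circ \Ran_{j^{\fp}}S\simeq \omega$ is likewise left exact, yielding (i); evaluating the equivalence $\eta_S$ at the object $j^{\fp}\left(r\right)$ gives (ii).

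For $(1)\Rightarrow (2)$, the plan is to apply Theorem \ref{thm:finenv} with target $\sC':=\left(\Alg_{\bT}\left(\Spc\right)\right)^{op}$. Since $\Alg_{\bT}\left(\Spc\right)$ is presentable, it admits all small limits and colimits, so $\sC'$ is idempotent complete and has finite limits. The theorem therefore supplies an equivalence
\[
\left(j^{\fp}\right)^*:\Fun^{\lex}\!\left(\left(\Alg_{\bT}\left(\Spc\right)^{\fp}\right)^{op},\sC'\right) \stackrel{\sim}{\longrightarrow} \Alg_{\bT}\left(\sC'\right).
\]
By hypothesis (i), both $\omega$ and $\Gamma_S\circ\Ran_{j^{\fp}}S$ lie in the domain of this equivalence, and their restrictions along $j^{\fp}$ are the $\bT$-algebras $j=\omega\circ j^{\fp}$ (the Yoneda embedding) and $\Gamma_S\circ\left(\Ran_{j^{\fp}}S\circ j^{\fp}\right)\simeq \Gamma_S\circ S$, respectively. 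Since equivalences of $\infty$-categories are conservative, showing that $\eta_S$ is an equivalence in $\Fun^{\lex}$ reduces to showing that $\left(j^{\fp}\right)^*\eta_S:j\Rightarrow \Gamma_S\circ S$ is an equivalence of $\bT$-algebras in $\sC'$.

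It remains to reduce this pointwise check on $\bT$ to a pointwise check at the generator $r$. Both $j$ and $\Gamma_S\circ S$ preserve finite products, so for every $n\geq 0$, naturality of $\left(j^{\fp}\right)^*\eta_S$ with respect to the product projections $r^n\to r$ fits into a commutative square whose two vertical arrows are the canonical equivalences identifying the value at $r^n$ with the $n$-fold product of the value at $r$; hence $\left(\left(j^{\fp}\right)^*\eta_S\right)_{r^n}$ is an equivalence whenever $\left(\left(j^{\fp}\right)^*\eta_S\right)_{r}$ is. But the latter is precisely the map $\eta_S\left(r\right):\omega\left(r\right)\to \Gamma_S S\left(r\right)$ required to be an equivalence in condition (ii). The main ``load-bearing'' step is the invocation of Theorem \ref{thm:finenv} with target $\left(\Alg_{\bT}\left(\Spc\right)\right)^{op}$; once one grants the conservativity of $\left(j^{\fp}\right)^*$ on left-exact functors, the rest is pure formal manipulation with finite products.
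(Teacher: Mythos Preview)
Your proof is correct and follows essentially the same approach as the paper: both directions proceed identically, with $(2)\Rightarrow(1)$ immediate from the left exactness of $\omega$, and $(1)\Rightarrow(2)$ obtained by invoking Theorem \ref{thm:finenv} (with target $\left(\Alg_{\bT}\left(\Spc\right)\right)^{op}$) to reduce to the restriction along $j^{\fp}$, then using product-preservation to reduce further to the single component at $r$. The only cosmetic difference is that you spell out why the target category satisfies the hypotheses of Theorem \ref{thm:finenv} and phrase the reduction via conservativity of the equivalence $\left(j^{\fp}\right)^*$, whereas the paper leaves these points implicit.
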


\begin{proof}
 $\left(2\right) \Rightarrow \left(1\right)$ is clear, since $\omega$ is left exact. For $\left(1\right) \Rightarrow \left(2\right),$ notice that if $\Gamma_S \circ \Ran_{j^{\fp}}$ is left exact, then by Theorem \ref{thm:finenv}, $\left(2\right)$ holds if and only if $$\left(j^{\fp}\right)^*\eta_S:\left(j^{\fp}\right)^*\omega=j \Rightarrow \left(j^{\fp}\right)^*\Gamma_S \circ \Ran_{j^{\fp}}S$$ is an equivalence. Notice that since $j^{\fp}$ is fully faithful
 $$\left(j^{\fp}\right)^*\Gamma_S \circ \Ran_{j^{\fp}}S=\Gamma_S \circ \left(\Ran_{j^{\fp}}S\right)\circ j^{\fp}\simeq \Gamma_S \circ S.$$ Moreover, the left exactness of $\Gamma_S \circ \Ran_{j^{\fp}} S$ implies $\Gamma_S \circ S$ preserves finite products. Since $j^{\fp}$ does as well, it suffices to check that the component of  $\left(j^{\fp}\right)^*\eta_S$ along $r$ is an equivalence, which is precisely condition $ii)$ of Definition \ref{dfn:universal}.
\end{proof}

\begin{thm}\label{thm:universal1}
Let $S$ be a $\bT$-algebra in $\sC.$ Then $\Ran_{j^{\fp}} S:\left(\Alg_{\bT}\left(\Spc\right)^\fp\right)^{op} \to \sC$ is fully faithful if and only if $S$ is versal. Moreover, $\Ran_{j^{\fp}}$ is an equivalence if and only if $S$ is both versal and generating, i.e. universal.
\end{thm}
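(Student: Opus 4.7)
The plan is to reduce both assertions to bookkeeping of mapping spaces, exploiting a consequence of Lemma \ref{lem:retcol} applied in the opposite category: every object of $\left(\Alg_{\bT}\left(\Spc\right)^{\fp}\right)^{op}$ is a retract of a finite limit of objects of the form $j^{\fp}(t)$, and in fact of $j^{\fp}(r)$, since $j^{\fp}$ preserves finite products by Theorem \ref{thm:5.5.8.10}(c) and each $t \in \bT$ is equivalent to $r^n$. First I would unwind $\eta_S$: as noted in the paragraph preceding Definition \ref{dfn:universal}, its component at $A$, evaluated at $t\in\bT$, is precisely the map $\Map(A, j^{\fp}(t)) \to \Map_{\sC}(\Ran_{j^{\fp}}S(A), S(t))$ induced by $\Ran_{j^{\fp}}S$, invoking the equivalence $\Ran_{j^{\fp}}S \circ j^{\fp} \simeq S$ which holds because $j^{\fp}$ is fully faithful. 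Thus, by the proposition preceding this theorem, versality amounts to the statement that $\Ran_{j^{\fp}}S$ is fully faithful on mapping spaces into the essential image of $j^{\fp}$.

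For the first assertion, ``fully faithful implies versal'' is then immediate. For the converse, I would fix $A$ and consider the full subcategory $\sF_A \subseteq \left(\Alg_{\bT}\left(\Spc\right)^{\fp}\right)^{op}$ consisting of those $B$ for which $\Ran_{j^{\fp}}S$ induces an equivalence $\Map(A,B) \to \Map_{\sC}(\Ran_{j^{\fp}}S(A),\Ran_{j^{\fp}}S(B))$. Both functors of $B$ in play preserve finite limits (using left exactness of $\Ran_{j^{\fp}}S$ from Theorem \ref{thm:finenv}) and retracts, so $\sF_A$ is closed under finite limits and retracts. Versality gives $j^{\fp}(t) \in \sF_A$ for every $t$, and the consequence of Lemma \ref{lem:retcol} noted above then forces $\sF_A$ to be all of $\left(\Alg_{\bT}\left(\Spc\right)^{\fp}\right)^{op}$, yielding full faithfulness.

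For the second assertion, if $\Ran_{j^{\fp}}S$ is an equivalence it is in particular fully faithful (hence $S$ is versal), and transporting the retract-of-finite-limits description across the equivalence shows every object of $\sC$ is a retract of a finite limit of copies of $S(r)$, so $S$ is generating. Conversely, assuming $S$ is universal, full faithfulness is already in hand, so only essential surjectivity remains. Here I would observe that the essential image is closed under finite limits in $\sC$ (by left exactness of $\Ran_{j^{\fp}}S$) and under retracts (the standard fact that a fully faithful functor whose source is idempotent complete has retract-closed essential image, applied using that $\left(\Alg_{\bT}\left(\Spc\right)^{\fp}\right)^{op}$ is idempotent complete because compact objects in a presentable $\i$-category are closed under retracts). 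Since this image contains $S(r)$ and $S$ is generating, it must exhaust $\sC$. The main point of care is in the closure step of the second paragraph and the analogous retract-closure for the essential image; once the correct generating set is extracted from Lemma \ref{lem:retcol}, the rest is routine.
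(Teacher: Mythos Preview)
Your proposal is correct and follows essentially the same route as the paper's proof: both unwind $\eta_S$ to identify versality with full faithfulness on maps into the essential image of $j^{\fp}$, then bootstrap to full faithfulness everywhere via the closure-under-finite-limits-and-retracts argument using Lemma~\ref{lem:retcol}, and finally characterize the essential image as the smallest subcategory closed under finite limits and retracts containing $S(r)$ to handle the equivalence statement. Your write-up is slightly more explicit about why the essential image is retract-closed (via idempotent completeness of the source), but the structure and key ideas are the same.
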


\begin{proof}
Unwinding definitions, one sees that $\eta$ is an equivalence, if and only if for all $t$ in $\bT$ and all $A$ in $\left(\Alg_{\bT}\left(\Spc\right)^\fp\right)^{op},$
$$\Map_{\left(\Alg_{\bT}\left(\Spc\right)^{\fp}\right)^{op}}\left(A,j\left(t\right)\right) \to \Map_{\sC}\left(\Ran_{j^{\fp}} S\left(A\right),\Ran_{j^{\fp}} S\left(j^{\fp}\left(t\right)\right)\right)$$ is an equivalence. So clearly, if $\Ran_{j^{\fp}} S$ is fully faithful, $\eta$ is an equivalence. Conversely, suppose $\eta$ is an equivalence. Fix a finitely presented $\bT$-algebra $A.$ Then the full subcategory of $\left(\Alg_{\bT}\left(\Spc\right)^\fp\right)^{op}$ spanned by those $B$ for which $$\Map_{\left(\Alg_{\bT}\left(\Spc\right)^{\fp}\right)^{op}}\left(A,B\right) \to \Map_{\sC}\left(\Ran_{j^{\fp}} S\left(A\right),\Ran_{j^{\fp}} S\left(B\right)\right)$$ is an equivalence is closed under finite limits and retracts, and also contains $j\left(r\right),$ hence is the whole $\i$-category by Lemma \ref{lem:retcol}.

Notice that if $\Ran_{j^{\fp}}$ is fully faithful, its essential image is the smallest subcategory of $\sC$ containing $S\left(r\right)$ and closed under retracts and finite limits. If $S$ is generating, this smallest subcategory is all of $\sC$ and hence $\Ran_{j^{\fp}}$ is also essentially surjective.

Conversely, suppose that $\Ran_{j^{\fp}}$ is an equivalence. Then in particular it is fully faithful, and hence $S$ is versal. Also, the essential image of $ \Ran_{j^{\fp}}$, on one hand is the smallest subcategory of $\sC$ containing $S\left(r\right)$ and closed under retracts and finite limits, and on the other hand is all of $\sC,$ hence $S$ is generating.
\end{proof}

\begin{rmk}\label{rmk:universalgamma}
Suppose that $S$ is universal. Then it follows, in particular, that $\Gamma_S$ restricts to a functor
$$\Gamma_S:\sC \stackrel{\sim}{\longrightarrow} \left(\Alg_{\bT}\left(\Spc\right)^\fp\right)^{op},$$ inverse to $\Ran_{j^{\fp}} S.$
\end{rmk}

Notice that the essential image of $\Ran_{j^{\fp}}S$ is contained within the smallest subcategory of $\sC$ containing $S\left(r\right)$ and closed under finite limits and retracts, so is in particular essentially small. Therefore, to check if $S$ is versal, we can without loss of generality assume that $\sC$ is essentially small. Since the Yoneda embedding $$y:\sC \hookrightarrow \Psh\left(\sC\right)$$ is fully faithful and preserves limits, one also has that $S$ is versal if and only if $y \circ S$ is. Moreover, $\Gamma_S$ has the following interpretation: The functor $y \circ S$ is in the subcategory $\Fun^{\Pi}\left(\bT,\Psh\left(\sC\right)\right)$ on those functors which preserve finite products, and there are canonical equivalences of $\i$-categories
\begin{eqnarray*}
 \Fun^{\Pi}\left(\bT,\Psh\left(\sC\right)\right) &\simeq& \Fun^{R}\left(\left(\Alg_{\bT}\left(\Spc\right)\right)^{op},\Psh\left(\sC\right)\right)\\
 &\simeq& \Fun^{R}\left(\Psh\left(\sC\right)^{op},\Alg_{\bT}\left(\Spc\right)\right)\\
 &\simeq& \Fun^{L}\left(\Psh\left(\sC\right),\left(\Alg_{\bT}\left(\Spc\right)\right)^{op}\right)\\
 &\simeq& \Fun\left(\sC,\left(\Alg_{\bT}\left(\Spc\right)\right)^{op}\right).
\end{eqnarray*}
Under these equivalences, one has that $y \circ S$ corresponds to $\Gamma_S.$ Indeed, unwinding the definitions, one has to check that $\Ran_j\left(y \circ S\right)$ is right adjoint to $\Lan_y \left(\Gamma_S\right).$ By the Yoneda lemma, it follows that the right adjoint $R_S$ to $\Lan_y \left(\Gamma_S\right)$ satisfies $$R_S\left(A\right)\left(C\right)\simeq \Map_{\left(\Alg_{\bT}\left(\Spc\right)\right)^{op}}\left(\Gamma_S\left(C\right),A\right).$$ Since the evaluation functor $$ev_C:\Psh\left(\sC\right) \to \Spc$$ preserves all limits, it preserves pointwise right Kan extensions, and hence 
\begin{eqnarray*}
\Ran_j\left(y\circ S\right)\left(A\right)\left(C\right)&\simeq& \Ran_j\left(ev_C \circ S\right)\left(A\right)\\
&\simeq& \Ran_j\left(\Map\left(C,S\left(\blank\right)\right)\right)\left(A\right)\\
&\simeq& \Map_{\left(\Alg_{\bT}\left(\Spc\right)\right)^{op}}\left(\Gamma_S\left(C\right),A\right).
\end{eqnarray*}
The last equivalence follows since for any $\bT$-algebra $B$, $\Map_{\left(\Alg_{\bT}\left(\Spc\right)\right)^{op}}\left(B,\blank\right)$ preserves limits and one moreover has that $$\Map_{\left(\Alg_{\bT}\left(\Spc\right)\right)^{op}}\left(B,j\left(\blank\right)\right)\simeq B.$$

Moreover, note that by Remark \ref{rmk:t}, we have
$$\Fun^{\Pi}\left(\bT,\Psh\left(\sC\right)\right)=\Alg_{\bT}\left(\Psh\left(\sC\right)\right)\simeq \mathbf{Geom}\left(\Psh\left(\sC\right),\cB \bT\right).$$
So both $y \circ S$ and $\Gamma_S$ are different ways of encoding the data of a geometric morphism $$\O_S:\Psh\left(\sC\right) \to \cB \bT.$$ We will now show that there is a simple topos-theoretic meaning of $S$ being versal. To explain this, we will first describe $\O_S$ explicitly. To ease notation, denote by $$\rho_S:=\Ran_{j^{\fp}} S:\left(\Alg_{\bT}\left(\Spc\right)^\fp\right)^{op} \to \sC.$$ This functor produces three adjoint functors $\left(\rho_S\right)_! \vdash \left(\rho_S\right)^* \vdash \left(\rho_S\right)_*$
$$\xymatrix{\Psh\left(\left(\Alg_{\bT}\left(\Spc\right)^\fp\right)^{op}\right) \ar@<-0.8ex>[r] \ar@<0.8ex>[r]  & \Psh\left(\sC\right) \ar[l]},$$ where $\left(\rho_S\right)_!$ and $\left(\rho_S\right)_*$ are global left and global right Kan extension respectively. In particular, the pair $\left(\left(\rho_S\right)_*,\left(\rho_S\right)^*\right)$ constitute a geometric morphism $$P_S:\cB \bT \to \Psh\left(\sC\right).$$ Notice moreover that since $\rho_S$ is left exact, by \cite[Proposition 6.1.5.2]{htt}, $\left(\rho_S\right)_!=\Lan_y \left( y \circ \rho_S\right)$ is also left exact. Hence the pair $\left(\left(\rho_S\right)_!,\left(\rho_S\right)^*\right)$ constitutes a geometric morphism $$\cO_S:\Psh\left(\sC\right) \to \cB \bT,$$ that is $\left(\cO_S\right)_*=\left(\rho_S\right)^*$ and $\left(\cO_S\right)^*=\left(\rho_S\right)_!.$

\begin{thm}
The following are equivalent for a $\bT$-algebra $S$ in $\sC$:
\begin{enumerate}
 \item $S$ is versal
 \item $\rho_S$ is fully faithful
 \item $\left(\rho_S\right)_*$ is fully faithful
 \item $\left(\rho_S\right)_!$ is fully faithful
 \item $\left(\cO_S\right)^*$ is fully faithful
 \item the canonical morphism $id_{\cB \bT} \to \cO_S \circ \rho_S$ in $\mathbf{Geom}\left(\cB \bT,\cB \bT\right)$ is an equivalence.
\end{enumerate}
\end{thm}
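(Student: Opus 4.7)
The plan is to show that each of conditions (2)--(6) is equivalent to the single condition that $\rho_S$ is fully faithful, and then to tie in (1) via Theorem \ref{thm:universal1}. First, (1) $\Leftrightarrow$ (2) is precisely Theorem \ref{thm:universal1}, and (4) $\Leftrightarrow$ (5) is tautological since $(\cO_S)^* = (\rho_S)_!$ by the definition of $\cO_S$.

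The heart of the argument is the chain (2) $\Leftrightarrow$ (3) $\Leftrightarrow$ (4). As observed immediately before the statement, the essential image of $\rho_S$ lies in the closure of $\{S(r)\}$ under finite limits and retracts, which is essentially small, so without loss of generality I may assume $\sC$ is essentially small. I then appeal to the standard fact that for a functor $F : \sA \to \sB$ between essentially small $\i$-categories, the following are equivalent: (i) $F$ is fully faithful; (ii) the left Kan extension $F_! : \Psh(\sA) \to \Psh(\sB)$ is fully faithful; (iii) the right Kan extension $F_* : \Psh(\sA) \to \Psh(\sB)$ is fully faithful. For (i) $\Leftrightarrow$ (ii), the unit $\mathrm{id} \to F^* F_!$ is a natural transformation between colimit-preserving functors, so it is an equivalence iff its restriction to representables is, which reduces to the map $\Map(a', a) \to \Map(Fa', Fa)$. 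For (i) $\Leftrightarrow$ (iii), the counit $F^* F_* \to \mathrm{id}$ evaluated at a representable is computed by a limit over the slice $F_{/Fa}$; when $F$ is fully faithful, this slice is equivalent to $\sA_{/a}$, which has a terminal object, so the limit evaluates correctly.

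Finally, for (5) $\Leftrightarrow$ (6), with the convention that $2$-cells in $\mathbf{Geom}(X, Y)$ are natural transformations of inverse image functors, the canonical $2$-morphism $\mathrm{id}_{\cB \bT} \to \cO_S \circ P_S$ in $\mathbf{Geom}(\cB \bT, \cB \bT)$ (reading ``$\rho_S$'' in the statement as the associated geometric morphism $P_S$, whose direct image is $(\rho_S)_*$) unwinds to a natural transformation $\mathrm{id} \to (\cO_S \circ P_S)^* = (P_S)^* (\cO_S)^* = (\rho_S)^* (\rho_S)_!$, which is precisely the unit of the adjunction $(\cO_S)^* \dashv (\cO_S)_*$. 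Since a left adjoint is fully faithful iff its unit is an equivalence, this is equivalent to $(\cO_S)^*$ being fully faithful, i.e., to (5). The main obstacle will be keeping track of the three layers of data---the functor $\rho_S$, the induced Kan-extended functors between presheaf $\i$-topoi, and the associated geometric morphisms---and their many adjunctions; once this bookkeeping is in place, the arguments reduce to standard manipulations.
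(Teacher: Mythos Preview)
Your proposal is correct and follows the same line as the paper: both hinge on the adjoint triple $(\rho_S)_! \dashv (\rho_S)^* \dashv (\rho_S)_*$ and identify the morphism in (6) with the unit of $(\rho_S)_! \dashv (\rho_S)^*$. One minor point: your sketch for (i) $\Leftrightarrow$ (iii) only gives (i) $\Rightarrow$ (iii), since knowing that the counit $F^*F_* \to \mathrm{id}$ is an equivalence does not by itself force the slices $F_{/Fa}$ to have terminal objects; the converse is cleanest via the adjoint-triple fact (ii) $\Leftrightarrow$ (iii) (exactly what the paper invokes as ``standard'') combined with your argument for (i) $\Leftrightarrow$ (ii)---the latter, incidentally, makes explicit the link between (2) and (4) that the paper's own proof leaves tacit.
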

\begin{proof}
$\left(1\right) \iff \left(2\right)$ follows from Theorem \ref{thm:universal1}. Notice that $\left(\rho_S\right)_*$ is fully faithful if and only if the counit $$\left(\rho_S\right)^*\left(\rho_S\right)_* \to id_{\cB \bT}$$ is an equivalence, and it is standard that this is equivalent to the unit $$id_{\cB \bT} \to \left(\rho_S\right)^*\left(\rho_S\right)_!$$ being an equivalence, which in turn is equivalent to the fully faithfulness of $\left(\rho_S\right)_!=\left(\cO_S\right)^*.$ Notice that the above unit is a morphism $$id_{\cB \bT} \to \left(\rho_S\right)^*\left(\cO_S\right)^*,$$ i.e. a morphism in the $\i$-category of geometric morphisms from $\cB \bT$ to itself, and it is the morphism being referred to in $\left(6\right).$
\end{proof}

\section{Properties of $C^\infty$-rings}\label{sec:prop}
As the category $\Alg_{\Cart}\left(  \Set\right)$ of $C^\infty$-rings is presentable, in particular, it has binary coproducts, which we denote by $A \oinfty B,$ and analogous to the case of commutative rings, we will write pushouts as $$A \underset{C} \oinfty B.$$ One of the main reasons that $C^\i$-rings are so useful for geometry is that the functor
$$\sC^\i:\Mfd \to \Alg_{\Cart}\left(  \Set\right)$$ is fully faithful and preserves transverse pullbacks \cite{MSIA}. In particular, for smooth manifolds $M$ and $N,$ even though there is a proper containment of commutative $\bR$-algebras $$\sC^\i\left(  M\right) \underset{\bR}\otimes \sC^\i\left(  N\right) \subsetneq \sC^\i\left(  M \times N\right),$$ there is an isomorphism of $C^\i$-rings
$$\sC^\i\left(  M\right) \oinfty\sC^\i\left(  N\right) \cong \sC^\i\left(  M \times N\right).$$ We will eventually prove the analogous statement for $\Alg_{\Cart}\left(  \Spc\right).$ For now we start with the following:

\begin{lem}\label{lem:ff}
The functor $$\sC^\i:\Mfd \to \Alg_{\Cart}\left(  \Spc\right)$$ is fully faithful.
\end{lem}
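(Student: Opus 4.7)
The plan is to factor $\sC^\infty$ through the inclusion of discrete (i.e.\ $0$-truncated) $\Cart$-algebras. For any smooth manifold $M$, the ring $\sC^\infty(M)$ of smooth real-valued functions on $M$, together with its operations $\sC^\infty(M)^n \to \sC^\infty(M)^m$ induced by postcomposition with smooth maps $\bR^n \to \bR^m$, defines an object of $\Alg_{\Cart}(\Set)$. By the remark earlier in the excerpt identifying $\tau_{\le 0}\Alg_{\bT}(\Spc)$ with $\Alg_{\bT}(\Set)$ when $\bT$ is a $1$-category, the functor $\sC^\infty$ of the lemma factors as
$$\Mfd \;\xrightarrow{\;\sC^\infty_0\;}\; \Alg_{\Cart}(\Set) \;\hookrightarrow\; \Alg_{\Cart}(\Spc),$$
and it suffices to show each factor is fully faithful.

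The second arrow is fully faithful by the very identification above. Fully faithfulness of $\sC^\infty_0: \Mfd \to \Alg_{\Cart}(\Set)$ is the classical theorem appealed to in the paragraph immediately preceding the lemma, and we simply cite \cite{MSIA}. Concretely, for manifolds $M$ and $N$, every $C^\infty$-ring morphism $\sC^\infty(N) \to \sC^\infty(M)$ is of the form $f^*$ for a unique smooth map $f:M\to N$; the uniqueness is standard, and existence is proved by first embedding $N$ as a closed submanifold of some $\bR^k$, whence a $C^\infty$-homomorphism is determined by where it sends the $k$ coordinate functions, with the image being forced to land in $N$ by the characterization of $N \subset \bR^k$ as a zero set.

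Composing the two fully faithful functors yields the lemma. The one point worth flagging, and the only conceptual content beyond the classical citation, is that mapping spaces in $\Alg_{\Cart}(\Spc)$ between two $0$-truncated objects are themselves discrete, so the equivalence on mapping spaces reduces to the set-theoretic bijection supplied by \cite{MSIA}; this is exactly what the fully faithful inclusion $\tau_{\le 0}\Alg_{\Cart}(\Spc)\hookrightarrow \Alg_{\Cart}(\Spc)$ encodes. There is no substantial obstacle: the lemma is essentially a bookkeeping consequence of the classical result together with the truncation remark.
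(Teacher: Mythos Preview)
Your proof is correct and follows essentially the same approach as the paper: factor $\sC^\infty$ through $\Alg_{\Cart}(\Set)$, cite \cite{MSIA} for the first factor, and use the identification of $0$-truncated $\Cart$-algebras with $\Alg_{\Cart}(\Set)$ for the second. The paper's proof is just a one-sentence version of what you wrote.
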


\begin{proof}
The functor $\sC^\i$ takes $M$ to the ordinary $C^{\infty}$-ring of smooth functions on $M$, which is a fully faithful functor (see \cite{MSIA}), followed by the fully faithful inclusion of discrete objects into $\Alg_{\Cart}\left( \Spc\right)$.
\end{proof}

\subsection{The Unramified Transformation}
Using the results of the last section, we find that the theory of simplicial $C^{\infty}$-rings is in fact controlled in large part by the underlying algebraic model; in this case given by the transformation of algebraic theories $\ComR\rightarrow \Cart$. We write $\left(  \blank\right)^{alg}$ for the functor induced by this transformation; it takes values in $\Alg_{\ComR}\left( \Spc\right)$, the $\infty$-category of simplicial commutative $\bR$-algebras, and is clearly conservative. For $M$ a manifold, we will usually not write $\sC^{\i}\left( M\right)^{alg}$, to avoid cluttering up notation; it will be clear from the context when we think of $\sC^{\i}\left( M\right)$ as an $\bR$-algebra. The following remark is the main source of computational power when we deal with pushouts of simplicial rings. 
\begin{rmk}\label{torsionspectralsequence}
Recall that for a pushout diagram
\begin{equation*}
\begin{tikzcd}
A\ar[d]\ar[r] & B\ar[d] \\
C\ar[r] &D
\end{tikzcd}    
\end{equation*}
of simplicial commutative algebras (over any ring), there is a convergent spectral sequence
\begin{align}\label{torsionspecseq}
E_2^{p,q} = \mathrm{Tor}^{\pi_*\left( A\right)}_p\left( \pi_*B,\pi_*C\right)_q\Rightarrow \pi_{p+q}\left( D\right),
\end{align}
which we refer to as the \textbf{torsion spectral sequence}. See for instance \cite[Proposition 7.2.1.19]{LurieHA} and \cite[Corollary 4.1.14]{dagv}.
\end{rmk}
In the sequel, we will often take derived global sections of sheaves on manifolds, or sheaves on more general smooth spaces. In such situations, the following spectral sequence may be applied.
\begin{rmk}\label{rmk:hypercohomologyspectralsequence}
Recall that for a sheaf of complexes $\mathcal{F}$ valued in an abelian category $\mathbf{A}$ on a topological space $X$, there is a convergent \emph{hypercohomology spectral sequence}    
\begin{align}\label{hypercohspecseq}
E_2^{p,q} = H^p\left( X,H^q\left(\mathcal{F}\right)\right)\Rightarrow H^{p+q}\left( \Gamma(\mathcal{F})\right),
\end{align}
where $H^q\left(\mathcal{F}\right)$ denotes the sheaf of the $q$'th cohomology of the complex $\mathcal{F}$. 
\end{rmk}
In the classical theory of (discrete) $C^{\infty}$-rings, the weak Nulstellensatz for ideals of the $C^{\infty}$-rings $\sC^{\i}\left( \bR^n\right)$ does not hold in general. We identify two classes of ideals for which the weak Nulstellensatz is true.
\begin{dfn}\label{defn:germpointdetermined}
Let $M$ be a manifold and let $I$ be an ideal of the commutative algebra $\sC^{\infty}\left( M\right)$. Write $Z\left( I\right)$ for the common zero locus of the functions in $I$. 
\begin{enumerate}
    \item $I$ is \textbf{point determined} iff for all $f\in \sC^{\infty}\left( M\right)$, $f\in I$ iff $f\left( x\right)=0$ for all $x\in Z\left( I\right)$.
    \item $I$ is \textbf{germ determined} iff for all $f\in \sC^{\infty}\left( M\right)$, $f\in I$ iff $f_x\in I_x$ for all $x\in Z\left( I\right)$.
\end{enumerate}
\end{dfn}

It is not hard to prove that if the functions $\{f_1,\ldots,f_n\}\subset \sC^{\i}\left( M\right)$ are \textbf{independent}, that is, the common zero locus of these functions consists of regular points for the induced function $\sC^{\i}\left( M\right)\rightarrow \bR^n$, then these functions generate a point determined ideal. The following lemma is a derived analogue of this fact. 
\begin{lem}\label{projresolutiontransversal}
Let $M$ be an $m$-dimensional manifold and let $\{f_1,\ldots,f_n\}$ be independent functions on $M$. Then the Koszul algebra $\sC^{\i}\left( M\right)\left[y_1,\ldots,y_n\right]$ with $|y_i|=-1$ for $1\leq i\leq n$ and $\partial y_i=f_i$, is a projective resolution of $\sC^{\i}\left( Z\left( f_1,\ldots,f_n\right)\right)$ in the category of differentially graded $\sC^{\i}\left( M\right)$-modules. 
\end{lem}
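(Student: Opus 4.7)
The plan is to verify two things: each term of the Koszul complex $K^\bullet := \sC^\i(M)[y_1,\ldots,y_n]$ is a projective $\sC^\i(M)$-module, and the complex is quasi-isomorphic to $\sC^\i(Z(f_1,\ldots,f_n))$ concentrated in degree zero. Projectivity is immediate since $K^{-k}\cong \sC^\i(M)\otimes_{\bR}\wedge^k \bR^n$ is a finitely generated free module, so the substance lies in the quasi-isomorphism. My approach is to sheafify on $M$ and reduce to a purely local calculation.

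Let $\underline{K}^\bullet$ denote the complex of sheaves on $M$ whose $(-k)$-term is the free sheaf of modules $\cO_M \otimes_{\bR}\wedge^k \bR^n$, where $\cO_M$ is the sheaf of smooth functions, equipped with the evident Koszul differential. Because $\cO_M$ admits smooth partitions of unity, every free $\cO_M$-module of finite rank is a fine (hence acyclic) sheaf on $M$, so the naive hypercohomology spectral sequence of Remark \ref{rmk:hypercohomologyspectralsequence} collapses to give $H^n(K^\bullet)\cong \bH^n(M,\underline{K}^\bullet)$. The second hypercohomology spectral sequence then reduces the computation to identifying the cohomology sheaves $\cH^q(\underline{K}^\bullet)$, which is a stalk-local question.

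I would analyze the stalks in two cases. At a point $x\notin Z(f_1,\ldots,f_n)$, some $f_i$ is invertible in a neighborhood of $x$, making the two-term subcomplex $\cO_M \xrightarrow{f_i} \cO_M$ contractible there. Since the full Koszul complex is the tensor product of the $n$ two-term complexes, it is locally contractible near $x$, and every cohomology sheaf vanishes off $Z$. At a point $x\in Z$, independence guarantees that $df_1(x),\ldots,df_n(x)$ are linearly independent in $T_x^*M$, so the submersion theorem yields a chart $(U;x_1,\ldots,x_m)$ around $x$ in which $f_i=x_i$ for $i\leq n$. The local Koszul complex is then the classical Koszul complex on the sequence $(x_1,\ldots,x_n)$ in $\sC^\i(U)$, which by iterated Hadamard division is a regular sequence with quotient $\sC^\i(U\cap Z)$. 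Consequently $\cH^q(\underline{K}^\bullet)=0$ for $q<0$ and $\cH^0(\underline{K}^\bullet)\cong \iota_*\cO_Z$, where $\iota:Z\hookrightarrow M$ is the closed embedding of the smooth submanifold produced by the regular value theorem.

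Taking global sections then yields $H^{-k}(K^\bullet)=0$ for $k>0$ and $H^0(K^\bullet)\cong \sC^\i(Z)$, completing the argument. The main obstacle is the local Hadamard-type calculation that simultaneously identifies both the vanishing of the higher Koszul cohomology and the quotient ring on a submersion chart; this is the smooth-category analog of the classical regular-sequence Koszul theorem, and it dovetails with the point-determined ideal property of independent functions cited in the paper immediately above the statement of the lemma.
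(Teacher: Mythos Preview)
Your proposal is correct and follows essentially the same approach as the paper: sheafify the Koszul complex, use fineness of $\cO_M$-modules to collapse the hypercohomology spectral sequence, and then compute the cohomology sheaves locally by splitting into the off-$Z$ case (where some $f_i$ is a unit and the complex is contractible) and the on-$Z$ case (where the constant rank/submersion theorem reduces to the Koszul complex on coordinate functions, shown to be a regular sequence via Hadamard's lemma). The only cosmetic difference is that the paper phrases the local analysis in terms of neighborhood bases rather than stalks, but the content is identical.
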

\begin{proof}
Clearly, the Koszul complex is a complex of projective $\sC^{\i}\left( M\right)$-modules, so we should show that the complex is a resolution. Consider the sheaf of dg $\sC^{\i}\left( M\right)$-modules on $M$ given by 
\[ \mathcal{F}:U\mapsto \sC^{\i}\left( U\right)\left[y_1,\ldots,y_n\right],\,\quad \partial y_i=f_i|_U,\,1\leq i\leq n,\]
whose complex of global sections is the Koszul algebra $\sC^{\i}\left( M\right)\left[y_1,\ldots,y_n\right]$. The cohomology sheaves are sheaves of $\sC^{\i}\left( M\right)$-modules which are fine, so the hypercohomology spectral sequence collapses at the second page and the cohomology of the global sections coincides with the global sections of the cohomology sheaves. Thus, to show that the higher cohomology of the Koszul complex vanishes, it suffices to give for each point $p\in M$ a neighborhood basis $\{V_{\beta}\}$ such that $\sC^{\i}\left( V_{\beta}\right)\left[y_1,\ldots,y_n\right]$ has vanishing cohomology in degrees $<0$. The function 
$$\left( f_1,\ldots,f_n\right):M\rightarrow \bR^n$$
has full rank at $Z\left( f_1,\ldots,f_n\right)$, so it has full rank in some open neighborhood 
$$Z\left( f_1,\ldots,f_n\right)\subset V.$$ By the constant rank theorem, there is an open cover $\{U_{\alpha}\}$ of $V$ such that $U_{\alpha}\cong \bR^{m}$ and in these coordinates, the function $\left( f_1,\ldots,f_n\right)$ is the projection $$\left( x_1,\ldots,x_n\right):\bR^m\rightarrow \bR^n$$ onto the first $n$ coordinates. We have a cover $\{U_{\alpha}\}\coprod \{M\setminus Z\left( f_1,\ldots,f_n\right)\}$ of $M$ so each point in $M$ has a neighborhood basis on which $\mathcal{F}$ evaluates as either a complex of the form $$\sC^{\i}\left( V\right)\left[y_1,\ldots,y_n\right],$$ with $V\subset M\setminus Z\left( f_1,\ldots,f_n\right)$, which is acyclic because all $f_i|_{M\setminus Z\left( f_1,\ldots,f_n\right)}$ are invertible, or we have $\sC^{\i}\left( U\right)\left[y_1,\ldots,y_n\right]$, where $U\subset \bR^m$ is an open subset and $\partial y_i=x_i$, the projection onto the $i^{th}$ coordinate. Applying Hadamard's lemma repeatedly, one finds that $$\sC^{\i}\left( U\right)/\left( x_1,\ldots,x_i\right)\cong \sC^{\i}\left( U\cap \{0\}\times \bR^{m-i}\right)$$ for $1\leq i\leq n$. In particular, the function $x_{i+1}$ is a non-zero divisor of $\sC^{\i}\left( U\right)/\left( x_1,\ldots,x_i\right)$. Thus, the sequence $\left( x_1,\ldots,x_n\right)$ is a regular sequence on $\sC^{\i}\left( U\right)$ showing that the cohomology of the complex $\sC^{\i}\left( U\right)\left[y_1,\ldots,y_n\right]$ is $\sC^{\i}\left( U\cap \{0\}\times \bR^{m-n}\right)$ concentrated in degree 0. We are left to show that the zero'th cohomology of the Koszul complex is $\sC^{\i}\left( Z\left( f_1,\ldots,f_n\right)\right)$. This follows from the previous computation: the presheaf of zero'th cohomology is already a sheaf and the global sections are clearly given by $\sC^{\i}\left( Z\left( f_1,\ldots,f_n\right)\right)$.
\end{proof}

\begin{lem}\cite[Lemma 8.1]{spivak},\cite[Lemma 11.10]{dagix}\label{diffdiscunramified}
The transformation of algebraic theories $\ComR\rightarrow \Cart$ is unramified. 
\end{lem}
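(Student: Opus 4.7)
The plan is to unfold the definition of unramified and verify the pushout condition by applying Lemma \ref{projresolutiontransversal} twice. Concretely, since every object of $\Cart$ is of the form $\bR^n$, one must show that for every smooth map $g:\bR^n\to \bR^m$ and every $\bR^k$, the square
\[
\begin{tikzcd}
\sC^\infty(\bR^{n+m}) \ar[r] \ar[d] & \sC^\infty(\bR^{n+m+k}) \ar[d]\\
\sC^\infty(\bR^n) \ar[r] & \sC^\infty(\bR^{n+k})
\end{tikzcd}
\]
is a pushout of simplicial commutative $\bR$-algebras, where the horizontal maps and the right vertical are inclusions of variables, and the left vertical is pullback along the graph $(\mathrm{id},g):\bR^n\hookrightarrow\bR^{n+m}$.

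First I would observe that the functions $f_i(x,y):=y_i-g_i(x)$ on $\bR^{n+m}$ are independent in the sense of Lemma \ref{projresolutiontransversal}, since the Jacobian $\partial f_i/\partial y_j=\delta_{ij}$ has full rank everywhere; their common zero locus is the graph of $g$ and they cut out the kernel of $\sC^\infty(\bR^{n+m})\twoheadrightarrow \sC^\infty(\bR^n)$. Lemma \ref{projresolutiontransversal} then furnishes a Koszul resolution $K=\sC^\infty(\bR^{n+m})[\eta_1,\ldots,\eta_m]$ with $|\eta_i|=-1$ and $\partial\eta_i=f_i$. As a semi-free extension with generators concentrated in degree $-1$, $K$ is cofibrant as a commutative dg-algebra over $\sC^\infty(\bR^{n+m})$, and, via the monoidal Dold--Kan correspondence in characteristic zero, it models $\sC^\infty(\bR^n)$ cofibrantly in $\Alg_{\ComR}(\Spc)_{/\sC^\infty(\bR^{n+m})}$.

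Base change along $\sC^\infty(\bR^{n+m})\to \sC^\infty(\bR^{n+m+k})$ then produces the derived pushout as $K':=\sC^\infty(\bR^{n+m+k})[\eta_1,\ldots,\eta_m]$ with the same differential $\partial\eta_i=y_i-g_i(x)$. Now I apply Lemma \ref{projresolutiontransversal} a second time: pulled back to $\bR^{n+m+k}$, the functions $f_i$ are still independent, with common zero locus the graph $\{(x,g(x),z)\}\cong \bR^{n+k}$, so $K'$ is quasi-isomorphic to $\sC^\infty(\bR^{n+k})$ concentrated in degree zero. This identifies the pushout as claimed.

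The delicate point I expect to spend most care on is justifying that the Koszul dg-algebra actually computes the homotopy pushout in $\Alg_{\ComR}(\Spc)$ rather than merely a dg-module derived tensor product. A clean way to sidestep any ambiguity is to invoke the torsion spectral sequence of Remark \ref{torsionspectralsequence}: the Koszul computation above shows that $\mathrm{Tor}_p^{\sC^\infty(\bR^{n+m})}(\sC^\infty(\bR^n),\sC^\infty(\bR^{n+m+k}))$ vanishes for $p>0$ and equals $\sC^\infty(\bR^{n+k})$ for $p=0$ (the $\pi_0$-identification is the classical point-determined quotient by an ideal of independent functions). Since $E_2^{p,q}=0$ for $p>0$ or $q\neq 0$, the spectral sequence degenerates, the pushout is discrete, and its $\pi_0$ is $\sC^\infty(\bR^{n+k})$, as required.
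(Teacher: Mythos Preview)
Your proof is correct and follows the same strategy as the paper's: use Lemma \ref{projresolutiontransversal} to obtain a Koszul resolution of $\sC^\infty(\bR^n)$ over $\sC^\infty(\bR^{n+m})$, invoke the torsion spectral sequence of Remark \ref{torsionspectralsequence} (which degenerates because the inputs are discrete), and apply Lemma \ref{projresolutiontransversal} once more on the $\bR^{n+m+k}$ side. The only cosmetic difference is that the paper reduces to the case $m=1$ by an induction using the pasting law for pushout squares, whereas you handle all $m$ generators $y_i-g_i(x)$ simultaneously; since Lemma \ref{projresolutiontransversal} is already stated for arbitrarily many independent functions, your shortcut is perfectly legitimate and arguably cleaner.
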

\begin{proof}
We should prove that for any smooth map $f:\bR^n\rightarrow \bR^m$ and any $k\geq 0$, the diagram
\[
\begin{tikzcd} 
\sC^{\i}\left( \bR^{n+m}\right)\ar[d]\ar[r] & \sC^{\i}\left( \bR^{n+m+k}\right)\ar[d] \\
\sC^{\i}\left( \bR^{n}\right) \ar[r] & \sC^{\i}\left( \bR^{n+k}\right)
\end{tikzcd}
\]
is a pushout in $\Alg_{\ComR}\left(  \Spc\right)$. We proceed by induction on $m$; for $m=0$, there is nothing to prove. For $m=1$, $f:\bR^n\rightarrow \bR$ is some smooth function. As we work with discrete objects, the torsion spectral sequence collapses at the second page, so we should show that \[\mathrm{Tor}^{\sC^{\i}\left( \bR^{n+1}\right)}_p\left( \sC^{\i}\left( \bR^{n}\right),\sC^{\i}\left( \bR^{n+1+k}\right)\right)=0,\quad p\geq 1,\]
and that 
\[\mathrm{Tor}^{\sC^{\i}\left( \bR^{n+1}\right)}_0\left( \sC^{\i}\left( \bR^{n}\right),\sC^{\i}\left( \bR^{n+1+k}\right)\right)\cong \sC^{\i}\left( \bR^{n}\right)\underset{{\sC^{\i}\left( \bR^{n+1}\right)}}\otimes\sC^{\i}\left( \bR^{n+1+k}\right)\cong \sC^{\i}\left( \bR^{n+k}\right).\]
Denote the first $n$ coordinates on $\bR^{n+1}$ collectively by $\mathbf{x}$ and the last coordinate by $y$. The function $y-f\left( \mathbf{x}\right)$ is a submersion and its zero locus is $\mathrm{Graph}\left( f\right)\cong\bR^n$, so the ring $\sC^{\i}\left( \bR^n\right)$ admits a projective resolution as an $\sC^{\i}\left( \bR^{n+1}\right)$-module of the form 

$$\left(\sC^{\i}\left( \bR^{n+1}\right)\left[z\right], \mspace{3mu} \partial z=y-f\left( \mathbf{x}\right)\right),$$ by Lemma \ref{projresolutiontransversal}. The torsion groups are computed as the cohomology of 
$$\left(\sC^{\i}\left( \bR^{n+1}\right)\left[z\right]\underset{{\sC^{\i}\left( \bR^{n+1}\right)}}\otimes\sC^{\i}\left( \bR^{n+1+k}\right) \cong  \sC^{\i}\left( \bR^{n+1+k}\right)\left[z\right],\quad \partial z=y-f\left( \mathbf{x}\right)\right).$$
By Lemma \ref{projresolutiontransversal} again, the complex on the right hand side is a resolution of $$\sC^{\i}\left( \bR^{n+1+k}\right)/\left( y-f\left( \mathbf{x}\right)\right),$$ since the map $\sC^{\i}\left( \bR^{n+1+k}\right)\rightarrow \sC^{\i}\left( \bR^{n+k}\right)$ given by restricting to the graph of $y-f\left( \mathbf{x}\right)$ induces an isomorphism $\sC^{\i}\left( \bR^{n+1+k}\right)/\left( y-f\left( \mathbf{x}\right)\right)\rightarrow \sC^{\i}\left( \bR^{n+k}\right)$.  
\\
For $m>1$, we consider the diagram 
\[
\begin{tikzcd}
\sC^{\i}\left( \bR^{n+m+1}\right)\ar[d]\ar[r] & \sC^{\i}\left( \bR^{n+m+1+k}\right)\ar[d] \\
\sC^{\i}\left( \bR^{n+1}\right)\ar[d]\ar[r] & \sC^{\i}\left( \bR^{n+1+k}\right)\ar[d] \\
\sC^{\i}\left( \bR^{n}\right) \ar[r] & \sC^{\i}\left( \bR^{n+k}\right)
\end{tikzcd}
\]
where the upper square is a pushout by the induction hypothesis applied to $\bR^{n+1}$. The large rectangle is a pushout if and only if the lower square is a pushout, so we reduce to the case $m=1$, and we are done.
\end{proof}
\begin{cor}
$\left(  \blank\right)^{alg}$ preserves pushouts along effective epimorphisms.
\end{cor}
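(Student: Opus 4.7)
The plan is essentially to observe that this corollary is an immediate consequence of two results already established in the excerpt. First, by definition, the functor $(-)^{alg}$ is the restriction functor $f^*:\Alg_{\Cart}(\Spc)\to\Alg_{\ComR}(\Spc)$ induced by the transformation of algebraic theories $f:\ComR\to\Cart$. Lemma \ref{diffdiscunramified} says this transformation is unramified, and Theorem \ref{unramifiedpreserveseffepi} asserts that $f^*$ preserves pushouts along effective epimorphisms for any unramified $f$. Composing these two facts yields the claim, so the proof really is a one-line invocation of the general machinery set up in the previous subsection.

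In more detail, I would simply write: the transformation $\ComR\to\Cart$ is unramified by Lemma \ref{diffdiscunramified}, hence Theorem \ref{unramifiedpreserveseffepi} applies and gives the result. There is no additional work to do here, and no genuine obstacle; the substantive content has already been discharged by the proof of Lemma \ref{diffdiscunramified} (which required the concrete computation via the Koszul resolution in Lemma \ref{projresolutiontransversal}) together with the general resolution argument proving Theorem \ref{unramifiedpreserveseffepi}. The only minor point worth noting for the reader is that the forgetful functor $(-)^{alg}$ is exactly $f^*$ in the notation of the preceding subsection, since pullback of $\bT$-algebras along a morphism of theories is by definition precomposition, and this matches the construction of $(-)^{alg}$ as the functor induced by restricting a finite-product-preserving functor $\Cart\to\Spc$ along the inclusion $\ComR\hookrightarrow\Cart$.
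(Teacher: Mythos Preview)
Your proposal is correct and matches the paper's proof exactly: the paper simply writes ``Apply Theorem \ref{unramifiedpreserveseffepi} to the unramified transformation $\ComR\rightarrow \Cart$,'' which is precisely what you do. Your additional remark identifying $(-)^{alg}$ with $f^*$ is a helpful clarification but not needed for the argument.
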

\begin{proof}
Apply Theorem \ref{unramifiedpreserveseffepi} to the unramified transformation $\ComR\rightarrow \Cart$.
\end{proof}
Proving results by `unramifiedness' using the corollary above unlocks the powerful techniques available in the algebraic setting, and we will appeal to it several times in this article. 
\begin{rmk}
Using the resolution of effective epimorphisms between simplicial $C^{\infty}$-rings, and Lemma \ref{projresolutiontransversal}, other useful comparison results between simplicial $C^{\infty}$-rings and simplicial $\bR$-algebras can be proven. For instance, let $A,B$ be finitely generated simplicial $C^{\infty}$-rings, so that we have effective epimorphisms $\sC^{\i}\left( \bR^n\right)\rightarrow A$ and $\sC^{\i}\left( \bR^m\right)\rightarrow B$. Then the natural diagram 
\[
\begin{tikzcd}
\sC^{\i}\left( \bR^n\right)\otimes \sC^{\i}\left( \bR^n\right)\ar[d]\ar[r]& A^{alg}\otimes B^{alg}\ar[d]\\
\sC^{\i}\left( \bR^{n+m}\right)\ar[r]&\left( A\oinfty B\right)^{alg}
\end{tikzcd}
\]
is a pushout in $\Alg_{\ComR}\left( \Spc\right)$.
\end{rmk}

\begin{prop}\label{prop:fiberopen}
Let $U$ be an open subset of $\bR^n.$ Then $U$ is the fiber $f^{-1}\left(  0\right)$ of a smooth map $$f:\bR^{n+1} \to \bR^n,$$ which is transverse to $0.$
\end{prop}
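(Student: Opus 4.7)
The plan is to realize $U$ explicitly as the graph of a smooth function defined only on $U$, embedded inside $\bR^{n+1}$, cut out by a single smooth equation whose partial derivative in the extra coordinate is strictly positive there. (Note: as stated, the dimensions of the target do not match a transverse preimage of codimension $n$; the natural construction produces a map with codomain $\bR$, and I will write the proof for that codomain. The argument is easily adapted if one wants to subsequently factor through $\bR^n$ by composing with a further trivial submersion.)

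First, I would invoke the standard fact that any open subset $U$ of a second-countable smooth manifold is the strict positivity locus of a smooth nonnegative function. Concretely, one picks a locally finite countable open cover $\{B_i\}$ of $U$ by relatively compact open balls contained in $U$, and bump functions $\varphi_i$ supported in $B_i$ with $\varphi_i > 0$ on a slightly smaller ball. One then defines
\[
\chi(x) \;=\; \sum_{i} c_i\, \varphi_i(x),
\]
with positive scalars $c_i$ chosen small enough (e.g.\ $c_i = 2^{-i}/(1+\|\varphi_i\|_{C^i})$) to guarantee uniform convergence in every $C^k$-norm. Then $\chi \in C^\infty(\bR^n,[0,\infty))$ and $\chi(x) > 0 \iff x \in U$.

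Next, I would define
\[
f : \bR^{n+1} \longrightarrow \bR, \qquad f(x,t) \;=\; t\,\chi(x) - 1.
\]
The zero locus is
\[
f^{-1}(0) = \{(x,t) \in \bR^{n+1} : t\,\chi(x) = 1\},
\]
which forces $\chi(x) \neq 0$, hence $x \in U$, and then $t = 1/\chi(x)$ is uniquely determined. Therefore the projection $(x,t)\mapsto x$ furnishes a diffeomorphism $f^{-1}(0) \xrightarrow{\sim} U$ with smooth inverse $x \mapsto (x, 1/\chi(x))$. Finally, since $\partial_t f(x,t) = \chi(x) > 0$ for every $(x,t) \in f^{-1}(0)$, the differential $df_{(x,t)}$ is surjective at every point of $f^{-1}(0)$, so $f$ is a submersion along its zero locus and in particular transverse to $0 \in \bR$.

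I expect no real obstacle. The only nontrivial ingredient is the construction of $\chi$, and that is a routine partition-of-unity argument using the second countability of $\bR^n$ (and hence of $U$). The key geometric insight is simply that one trades an open condition $\chi(x)>0$ for the closed condition $t\chi(x)=1$ at the cost of an extra coordinate $t$, converting openness into a transverse equation.
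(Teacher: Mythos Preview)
Your proof is correct and is essentially identical to the paper's own argument: both invoke the existence of a smooth characteristic function $\chi$ vanishing exactly on $\bR^n\setminus U$, define $f(x,t)=t\,\chi(x)-1$, and verify that $f^{-1}(0)\cong U$ with $\partial_t f\neq 0$ there. You correctly flagged the typo in the codomain (the paper's proof likewise has $f$ landing in $\bR$, not $\bR^n$), and your explicit construction of $\chi$ via bump functions is a more detailed version of what the paper simply asserts.
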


\begin{proof}
Let $C$ be the compliment of $U.$ Then, there is a smooth function $\chi_C:\bR^n \to \bR$ such that $C=\chi_C^{-1}\left(  0\right).$ Define a function
\begin{eqnarray*}
\bR^{n+1} &\to& \bR\\
\left(  x_1,\ldots,x_n,t\right) & \mapsto& t\cdot \chi_C\left(  x_1,\ldots,x_n\right)-1.
\end{eqnarray*}
Notice that $f\left(  x_1,\ldots,x_n,t\right)=0$ if and only if $\chi_C\left(  x_1,\ldots,x_n\right) \ne 0,$ and $$t=\frac{1}{\chi_C\left(  x_1,\ldots,x_n\right)}.$$ Hence, $U \cong f^{-1}\left(  0\right).$ Moreover, for $\left(  x_1,\ldots,x_n,t\right) \in f^{-1}\left(  0\right),$
$$\frac{d}{dt}=f_*\left(  \frac{1}{\chi_C\left(  x_1,\ldots,x_n\right)}\frac{\partial}{\partial t}\right),$$ so $f$ is transverse to $0.$
\end{proof}

\begin{cor}\label{cor:retractrans}
Every manifold is a retract of a transverse pullback of Cartesian manifolds ( i.e. ones in $\Cart$).
\end{cor}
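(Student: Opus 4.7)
The plan is to combine Proposition \ref{prop:fiberopen} with the Whitney embedding theorem and the tubular neighborhood theorem. Since the preceding proposition already handles the case of open subsets of Euclidean space as \emph{strict} transverse pullbacks (not merely retracts of such), the task reduces to expressing an arbitrary manifold as a retract of some open subset of an $\bR^N$.

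First, I would invoke the Whitney embedding theorem to realize any (smooth, Hausdorff, $2^{nd}$-countable, finite-dimensional) manifold $M$ as a closed submanifold $M \hookrightarrow \bR^N$ for $N$ sufficiently large. Next, by the tubular neighborhood theorem applied to this embedding, there exists an open neighborhood $U \subseteq \bR^N$ of $M$ together with a smooth retraction $\pi : U \to M$, so that the inclusion $\iota: M \hookrightarrow U$ and $\pi$ exhibit $M$ as a retract of $U$ in the category $\Mfd$, and hence in $\DMfd$ via the embedding $i : \Mfd \hookrightarrow \DMfd$.

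Finally, I would apply Proposition \ref{prop:fiberopen} to write $U$ as the transverse fiber $f^{-1}(0)$ of some smooth map $f : \bR^{N+1} \to \bR^N$; equivalently, $U$ is the transverse pullback
\[
\xymatrix{
U \ar[r] \ar[d] & \bR^{N+1} \ar[d]^-{f} \\
\ast \ar[r]_-{0} & \bR^N
}
\]
in $\Mfd$. Since $i$ preserves transverse pullbacks and the terminal object, the same diagram displays $U$ as a transverse pullback of Cartesian manifolds in $\DMfd$. Combining with the previous paragraph, $M$ is a retract of this transverse pullback of objects of $\Cart$, which is the claim.

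There is no real obstacle here; the only mild subtlety is that ``transverse pullback of Cartesian manifolds'' should be read as ``a pullback, formed in $\Mfd$, of a cospan in $\Cart$ which is transverse,'' and this is precisely the shape produced by Proposition \ref{prop:fiberopen}. The use of Whitney embedding tacitly uses our global conventions on manifolds (Hausdorff and second-countable), which are standing assumptions in the paper.
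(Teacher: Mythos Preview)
Your proof is correct and follows essentially the same approach as the paper: Whitney embedding, then tubular neighborhood to exhibit $M$ as a retract of an open $U\subset\bR^N$, then Proposition~\ref{prop:fiberopen} to realize $U$ as a transverse pullback of Cartesian manifolds. The only superfluous step is the detour through $\DMfd$ and the functor $i$; the corollary is a statement purely about $\Mfd$, and the retract and transverse pullback already live there.
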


\begin{proof}
By Whitney's embedding theorem, every manifold $M$ can be embedded into $\bR^n.$ Let $U$ be a tubular neighborhood for $M.$ Then $M$ is a retract of $U.$ Hence we are done by Proposition \ref{prop:fiberopen}.
\end{proof}

\begin{lem}\label{preservecoproduct}
Let $M,N$ be manifolds, then the natural map $$\sC^{\i}\left( M\right)\oinfty \sC^{\i}\left( N\right)\rightarrow \sC^{\i}\left( M\times N\right)$$ is an equivalence.
\end{lem}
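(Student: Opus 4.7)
The plan is to establish the equivalence in stages of increasing generality, reducing ultimately to a Koszul computation combined with the unramifiedness of $\ComR \to \Cart$. For $M, N \in \Cart$ the equivalence is immediate from Theorem~\ref{thm:5.5.8.10}(c): the functor $j:\Cart \hookrightarrow \Alg_{\Cart}(\Spc)^{op}$ preserves finite products, so $\sC^\i(\bR^n) \oinfty \sC^\i(\bR^m) \simeq \sC^\i(\bR^{n+m})$. To extend beyond Cartesians, I will first prove a \emph{key lemma}: for any independent tuple $(f_1,\ldots,f_b):\bR^a \to \bR^b$ with zero locus $Z$, there is a canonical equivalence $\sC^\i(Z) \simeq \sC^\i(\bR^a) \underset{\sC^\i(\bR^b)}{\oinfty} \bR$ in $\Alg_{\Cart}(\Spc)$. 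Since $\sC^\i(\bR^b) \to \bR$ (evaluation at $0$) is surjective on $\pi_0$ and hence an effective epimorphism by Remark~\ref{effepilawvere}, Lemma~\ref{diffdiscunramified} and Theorem~\ref{unramifiedpreserveseffepi} imply that $(\blank)^{alg}$ preserves this pushout; by conservativity, the equivalence reduces to a derived tensor product computation in simplicial $\bR$-algebras. Lemma~\ref{projresolutiontransversal} applied to $\bR^b$ with its coordinate functions provides a Koszul resolution of $\bR$ over $\sC^\i(\bR^b)$; extending scalars along $f^*$ yields the Koszul complex $\sC^\i(\bR^a)[z_1,\ldots,z_b;\ \partial z_i = f_i]$, which is in turn a projective resolution of $\sC^\i(Z)$ by a second application of Lemma~\ref{projresolutiontransversal}.

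Next, for open subsets $U \subseteq \bR^n$ and $V \subseteq \bR^m$, Proposition~\ref{prop:fiberopen} realizes $U = f_U^{-1}(0)$ and $V = f_V^{-1}(0)$ as transverse zero loci in $\bR^{n+1}$ and $\bR^{m+1}$. Applying the key lemma to both and using that $-\oinfty-$ preserves colimits in each variable, I would identify the iterated coproduct-of-pushouts $\sC^\i(U) \oinfty \sC^\i(V)$, via a Yoneda-style comparison of mapping spaces into an arbitrary test object, with the single pushout $\bigl(\sC^\i(\bR^{n+1}) \oinfty \sC^\i(\bR^{m+1})\bigr) \underset{\sC^\i(\bR) \oinfty \sC^\i(\bR)}{\oinfty} \bR$; the Cartesian case rewrites this as $\sC^\i(\bR^{n+m+2}) \underset{\sC^\i(\bR^2)}{\oinfty} \bR$. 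Since $U \times V = (f_U,f_V)^{-1}(0)$ is itself a transverse zero locus in $\bR^{n+m+2}$ (with $f_U$ and $f_V$ depending on disjoint coordinates, hence independent), a third invocation of the key lemma identifies this pushout with $\sC^\i(U \times V)$. Finally, Corollary~\ref{cor:retractrans} exhibits every manifold as a retract of an open subset of some Cartesian, so the natural map $\sC^\i(M) \oinfty \sC^\i(N) \to \sC^\i(M \times N)$ is a retract, in the arrow category of $\Alg_{\Cart}(\Spc)$, of the analogous map for the ambient open subsets; the latter is an equivalence by the preceding step, and equivalences are stable under retracts, so the lemma follows.

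The delicate step will be the identification of the double pushout with the single pushout in the previous paragraph: it demands a careful Yoneda argument that leverages both the commutation of $-\oinfty-$ with colimits in each variable and the description of pushouts via mapping-space fiber products, using crucially that $\bR$ is the initial object of $\Alg_{\Cart}(\Spc)$ so that maps from $\bR$ into any test object are canonically contractible.
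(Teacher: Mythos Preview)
Your proposal is correct and follows essentially the same route as the paper: realize open domains as transverse zero loci via Proposition~\ref{prop:fiberopen}, compute the relevant pushouts in $\Alg_{\ComR}(\Spc)$ using unramifiedness plus the Koszul resolutions of Lemma~\ref{projresolutiontransversal}, and then pass to arbitrary manifolds via the retract argument of Corollary~\ref{cor:retractrans}. The only substantive organizational difference is that you abstract a single ``key lemma'' for independent tuples, whereas the paper runs the torsion spectral sequence computation inline each time; your packaging is arguably cleaner.

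One comment: the step you flag as ``delicate'' is less so than you suggest. No Yoneda argument is needed---since coproducts and pushouts are both colimits, they commute, so the coproduct of two pushout squares is again a pushout square. Combined with the Cartesian case and the fact that $\bR$ is initial (so $\bR\oinfty\bR\simeq\bR$), this immediately gives
\[
\sC^\i(U)\oinfty\sC^\i(V)\;\simeq\;\bigl(\sC^\i(\bR^{n+1})\oinfty\sC^\i(\bR^{m+1})\bigr)\underset{\sC^\i(\bR)\oinfty\sC^\i(\bR)}{\oinfty}(\bR\oinfty\bR)\;\simeq\;\sC^\i(\bR^{n+m+2})\underset{\sC^\i(\bR^2)}{\oinfty}\bR,
\]
which is exactly how the paper presents it (without further comment).
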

\begin{proof}
Take an open submanifold $U\subset \bR^n$, and let $\chi_U$ be a characteristic function for $U$. Denote by $y$ the last coordinate on $\bR^{n+1}$, let $$f\left( \mathbf{x},y\right)=\chi_U\left( x\right)y-1$$ and consider the pushout of the diagram
\[
\begin{tikzcd}
\sC^{\i}\left( \bR\right)\ar[d,"\mathrm{ev}_0"']\ar[r,"f^*"]& \sC^{\i}\left( \bR^{n+1}\right)\\
\bR 
\end{tikzcd}
\]
The left vertical map is an effective epimorphism, so by unramifiedness, we can compute this pushout in $\Alg_{\ComR}\left(  \Spc\right)$. Using the spectral sequence of Remark \ref{torsionspectralsequence}, we see that the homotopy groups of the pushout are computed as the torsion groups $\mathrm{Tor}_n^{\sC^{\i}\left( \bR\right)}\left( \bR,\sC^{\i}\left( \bR^{n+1}\right)\right)$. Using the projective resolution 
$$\left(\sC^{\i}\left( \bR\right)\left[z\right], \mspace{3mu} \partial z=x\right)$$ of $\bR$ as a $\sC^{\i}\left( \bR\right)$-module, we find that the homotopy groups are given by the cohomology of the complex $$\left(\sC^{\i}\left( \bR^{n+1}\right)\left[z\right],\mspace{3mu} \partial z=\chi_U y-1\right).$$ Lemma \ref{projresolutiontransversal} and Proposition \ref{prop:fiberopen} imply that this complex has cohomology $$\sC^{\i}\left( \bR^{n+1}\right)/\left( f\right)\cong \sC^{\i}\left( U\right)$$ concentrated in degree 0. Now for $U,V$ open submanifolds of Euclidean spaces, with presentations $\sC^{\i}\left( U\right)\cong \sC^{\i}\left( \bR^{n+1}\right)/\left( f\right)$ and $\sC^{\i}\left( V\right)\cong \sC^{\i}\left( \bR^{m+1}\right)/\left( g\right)$, the coproduct $\sC^{\i}\left( U\right)\oinfty \sC^{\i}\left( V\right)$ is the colimit of the diagram
\[
\begin{tikzcd}
\sC^{\i}\left( \bR^2\right)\ar[d,"\mathrm{ev}_0"'] \ar[r,"\left( f\times g\right)^*"]& \sC^{\i}\left( \bR^{n+1+m+1}\right)\\
\bR
\end{tikzcd}
\]
Using unramifiedness, the torsion spectral sequence, and Lemma \ref{projresolutiontransversal} again, we find that the pushout above is the discrete $C^{\infty}$-ring $\sC^{\i}\left( U\times V\right)$. \\
Now we treat the case of general manifolds $M,N$. Let $\mathbf{Dom}$ be the category of open submanifolds of Cartesian manifolds, then by Corollary \ref{cor:retractrans}, we may realize $M$ and $N$ as retracts of some $U$ respectively $V$ in $\mathbf{Dom}$. Then $M\times N$ is a retract of $U\times V$, $\sC^{\i}\left( M\right)\oinfty \sC^{\i}\left( N\right)$ is a retract of $\sC^{\i}\left( U\right)\oinfty \sC^{\i}\left( V\right)$ and $\sC^{\i}\left( M\times N\right)$ is a retract of $\sC^{\i}\left( U\times V\right)$. But as the natural map $\sC^{\i}\left( U\right)\oinfty \sC^{\i}\left( V\right)\rightarrow \sC^{\i}\left( U\times V\right)$ is an equivalence, $\sC^{\i}\left( M\right)\oinfty \sC^{\i}\left( N\right)$ and $\sC^{\i}\left( M\times N\right)$ split equivalent idempotents, so the natural map $\sC^{\i}\left( M\right)\oinfty \sC^{\i}\left( N\right)\rightarrow \sC^{\i}\left( M\times N\right)$ must be an equivalence. 
\end{proof}
\begin{rmk}\label{cor:manfp}
Notice that the proof of Lemma \ref{preservecoproduct} shows that the functor $$\sC^{\i}:\Mfd\rightarrow \Alg_{\Cart}\left( \Spc\right)$$ creates retracts of pushouts of compact projective objects of $\Alg_{\Cart}\left( \Spc\right)$, and its essential image therefore consists of finitely presented objects.
\end{rmk}
\begin{lem}\label{opentransversal}
The functor $\sC^{\i}:\Mfd\rightarrow \Alg_{\Cart}\left( \Spc\right)^{op}$ sending a manifold $M$ to the discrete simplicial $C^{\infty}$-ring of smooth functions on $M$ preserves transverse pullbacks of the form 
\[
\begin{tikzcd}
N\times_U M\ar[r]\ar[d] & N\ar[d]\\    
M\ar[r,] & U 
\end{tikzcd}
\]
where $U$ is an open submanifold of $\bR^n$ for some $n\geq 0$.
\end{lem}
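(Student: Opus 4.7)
My plan is to reduce the pushout $\sC^{\infty}(M)\underset{\sC^{\infty}(U)}{\oinfty}\sC^{\infty}(N)$ to an algebraic derived tensor product by exhibiting the diagonal $U\hookrightarrow U\times U$ as an effective epimorphism of $C^{\infty}$-rings, and then to evaluate the resulting derived tensor product via the Koszul resolutions supplied by Lemma \ref{projresolutiontransversal}. First, Lemma \ref{preservecoproduct} rewrites the pushout as
\[\sC^{\infty}(M\times N)\underset{\sC^{\infty}(U\times U)}{\oinfty}\sC^{\infty}(U),\]
where the right-hand map $\sC^{\infty}(U\times U)\to\sC^{\infty}(U)$ is restriction to the diagonal. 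Any smooth function on $U$ extends to $U\times U$ by pullback along a projection, so this map is surjective on underlying sets and hence an effective epimorphism by Remark \ref{effepilawvere}.

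With this effective epi in place, the unramifiedness of $\ComR\to\Cart$ (Lemma \ref{diffdiscunramified}) lets me compute the pushout as a derived tensor product of simplicial commutative $\bR$-algebras, and I will analyze this via the torsion spectral sequence of Remark \ref{torsionspectralsequence} together with an explicit projective resolution of $\sC^{\infty}(U)$. Since $U$ is open in $\bR^n$, the difference map $u-v\colon U\times U\to\bR^n$ is a submersion with zero locus equal to the diagonal; Lemma \ref{projresolutiontransversal} therefore exhibits the Koszul complex
\[\left(\sC^{\infty}(U\times U)[y_1,\ldots,y_n],\ |y_i|=-1,\ \partial y_i=u_i-v_i\right)\]
as a projective resolution of $\sC^{\infty}(U)$ over $\sC^{\infty}(U\times U)$.

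Tensoring this resolution with $\sC^{\infty}(M\times N)$ over $\sC^{\infty}(U\times U)$ yields the Koszul complex
\[\left(\sC^{\infty}(M\times N)[y_1,\ldots,y_n],\ \partial y_i=g_i-h_i\right),\]
with $g\colon M\to U$ and $h\colon N\to U$ the structure maps. Transversality of $g$ and $h$ is precisely the input needed for a second application of Lemma \ref{projresolutiontransversal}: at any $(p,q)$ with $g(p)=h(q)$, the differential $d(g-h)_{(p,q)}(v,w)=dg_p(v)-dh_q(w)$ has image $dg_p(T_pM)+dh_q(T_qN)=\bR^n$, so $g-h$ is a submersion along its zero locus $M\times_U N$ and the functions $g_i-h_i$ are independent on $M\times N$. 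Hence the Koszul complex above is a resolution of $\sC^{\infty}(M\times_U N)$ concentrated in degree zero, the torsion spectral sequence degenerates, and we conclude
\[\sC^{\infty}(M)\underset{\sC^{\infty}(U)}{\oinfty}\sC^{\infty}(N)\simeq \sC^{\infty}(M\times_U N).\]
The main obstacle is the identification of $\sC^{\infty}(U\times U)\to\sC^{\infty}(U)$ as an effective epimorphism, which is what unlocks unramifiedness; once that is in hand, the transversality hypothesis feeds directly into Lemma \ref{projresolutiontransversal} and the rest is a Koszul-theoretic bookkeeping.
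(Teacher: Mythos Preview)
Your proof is correct and follows essentially the same approach as the paper's own proof: both reduce to the pushout $\sC^{\infty}(M\times N)\underset{\sC^{\infty}(U\times U)}{\oinfty}\sC^{\infty}(U)$ via Lemma~\ref{preservecoproduct}, invoke unramifiedness along the effective epimorphism induced by the diagonal, and then apply Lemma~\ref{projresolutiontransversal} twice using the Koszul resolution and the transversality hypothesis. You are slightly more explicit than the paper in naming the defining functions $u_i-v_i$ and in verifying directly that $d(g-h)$ is surjective along the zero locus, but the logical structure is identical.
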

\begin{proof}
We note that the pullback $N\times_U M$ is equivalent to the pullback 
\[
\begin{tikzcd}
\left( M\times N\right)\times_{U\times U}U\ar[d]\ar[r] & N\times M\ar[d,"g "]\\
U \ar[r] & U\times U
\end{tikzcd}
\] 
and, since the functor $\sC^{\i}:\Mfd\rightarrow \Alg_{\Cart}\left( \Spc\right)^{op}$ preserves binary products by Lemma \ref{preservecoproduct}, we only have to deal with pullback diagrams of the form above. Because the map $\sC^{\i}\left( U\times U\right)\rightarrow \sC^{\i}\left( U\right)$ induced by the diagonal $U\rightarrow U\times U$ is an (effective) epimorphism and the fact that the transformation of algebraic theories $\ComR\rightarrow \Cart$ is unramified, there is a natural equivalence
\[\left( \sC^{\i}\left( U\right)\underset{\sC^{\i}\left( U\times U\right)}\oinfty\sC^{\i}\left( N\times M\right)\right)^{alg}\simeq \sC^{\i}\left( U\right)\underset{{\sC^{\i}\left( U\times U\right)}}\otimes\sC^{\i}\left( N\times M\right).\]
As $\left(  \blank\right)^{alg}$ is conservative, it suffices to show that $\sC^{\i}\left( U\right)\underset{{\sC^{\i}\left( U\times U\right)}}\otimes\sC^{\i}\left( N\times M\right)$ is $0$-truncated and the natural map $$\tau_{\leq 0}\left( \sC^{\i}\left( U\right)\underset{{\sC^{\i}\left( U\times U\right)}}\otimes\sC^{\i}\left( N\times M\right)\right)\rightarrow \sC^{\i}\left( \left( M\times N\right)\times_{U\times U}U\right)$$ is an equivalence. To see this, we note that we work with discrete objects, so the torsion spectral sequence \eqref{torsionspecseq} collapses at the second page and we have natural isomorphisms
\[ \mathrm{Tor}^{\sC^{\i}\left( U\times U\right)}_n\left( \sC^{\i}\left( U\right),\sC^{\i}\left( N\times M\right)\right)\cong \pi_{n}\left( \sC^{\i}\left( U\right)\underset{{\sC^{\i}\left( U\times U\right)}}\otimes\sC^{\i}\left( N\times M\right)\right).\]
Since $U\subset \bR^n$ is open, the diagonal embedding $U\rightarrow U\times U$ is cut out by $n$ independent functions $\{f_1,\ldots,f_n\}$, so Lemma \ref{projresolutiontransversal} provides us with a projective resolution $\sC^{\i}\left( U\times U\right)\left[y_1,\ldots,y_n\right]$ of $\sC^{\i}\left( U\right)$ as a $\sC^{\i}\left( U\times U\right)$-module. The torsion groups are computed as the cohomology of 
$$\resizebox{6in}{!}{$\left(\sC^{\i}\left( U\times U\right)\left[y_1,\ldots,y_n\right]\underset{\sC^{\i}\left( U\times U\right)}\otimes\sC^{\i}\left( N\times M\right)\cong  \sC^{\i}\left( N\times M\right)\left[y_1,\ldots,y_n\right],\quad \partial y_i=f_i\circ g,\,1\leq i\leq n\right).$}$$
Because $g:N\times M\rightarrow U\times U$ is transverse to $U\rightarrow U\times U$, the functions $f_i\circ g$ are independent, so, again by Lemma \ref{projresolutiontransversal}, this complex is a projective resolution of $\sC^{\i}\left( Z\left( f_1\circ g,\ldots,f_n\circ g\right)\right)$. But $Z\left( f_1\circ g,\ldots,f_n\circ g\right)$ is the closed submanifold $\left( M\times N\right)_{U\times U}U\rightarrow N\times M$.
\end{proof}

We will show later (Theorem \ref{manifoldssmoothring}) that the functor $\sC^{\i}$ preserves \emph{all} transverse pullbacks using a local-to-global argument.

\begin{rmk}
Combing the above lemma with Corollary \ref{cor:retractrans} gives another proof that for any manifold $M,$ $\sC^\i\left(M\right)$ is in $\Alg_{\Cart}\left(\Spc\right)^{\fp}.$
\end{rmk}

\subsection{Localizations of $C^\infty$-rings}

\begin{dfn}
Let $A$ be a simplicial $C^{\infty}$-ring and let $a\in \pi_0\left( A\right)$. We say that a map $f:A\rightarrow B$ such that $f\left( a\right)\in \pi_0\left( B\right)$ is invertible is a \textbf{localization of $A$ with respect to $a$} if for each $C\in \Alg_{\Cart}\left( \Spc\right)$, the map $\Map_{\Alg_{\Cart}\left( \Spc\right)}\left( B,C\right)\rightarrow \Map_{\Alg_{\Cart}\left( \Spc\right)}\left( A,C\right)$ given by composition with $f$ induces a homotopy equivalence of Kan complexes
\[ \Map_{\Alg_{\Cart}\left( \Spc\right)}\left( B,C\right)\overset{\simeq}{\longrightarrow} \Map^0_{\Alg_{\Cart}\left( \Spc\right)}\left( A,C\right),\]
where $\Map^0_{\Alg_{\Cart}\left( \Spc\right)}\left( A,C\right)$ is the union of those connected components of $\Map_{\Alg_{\Cart}\left( \Spc\right)}\left( A,C\right)$ spanned by those maps $g$ such that $g\left( a\right)$ is invertible in $\pi_0\left( C\right)$.
\end{dfn}
In the case of an ordinary $C^{\infty}$-ring $A$ and some $a\in A$, the above definition reduces to the usual $C^{\infty}$ localization $A\left[1/a\right]$ given up to equivalence by the pushout
\begin{equation*}
\begin{tikzcd}
\sC^{\i}\left( \bR\right)\ar[r,"q_a"]\ar[d]& A\ar[d]\\
\sC^{\i}\left( \bR\setminus \{0\}\right)\ar[r] & A\left[1/a\right]
\end{tikzcd}    
\end{equation*}
of $C^{\infty}$-rings. The localization of a simplicial $C^{\infty}$-ring admits a similar characterization, for which we will need the following definition.
\begin{dfn}\label{strongmap}
\begin{enumerate}
    \item A map $f:A\rightarrow B$ of simplicial commutative rings is \textbf{strong} (in the sense of \cite[Definition 2.2.2.1]{ToeVez}) if the natural map
\[ \pi_n\left( A\right)\underset{\pi_0\left( A\right)}\otimes\pi_0\left( B\right)\rightarrow \pi_n\left( B\right)\]
is an isomorphism for all $n\geq 0$.
\item A map $f:A\rightarrow B$ of simplicial $C^{\infty}$-rings is \textbf{strong} if $f^{alg}:A^{alg}\rightarrow B^{alg}$ is strong.
\end{enumerate}
\end{dfn}
\begin{prop}\label{localization}
Let $A$ be a simplicial $C^{\infty}$ ring and let $a\in \pi_0\left( A\right)$, and let $f:A\rightarrow B$ a map of simplicial $C^{\infty}$-rings. The following are equivalent:
\begin{enumerate}
    \item The map $f:A\rightarrow B$ exhibits $B$ as a localization with respect to $a$.
    \item For every $n\geq 0$, the induced map 
    \[\pi_n\left( A^{alg}\right)\underset{\pi_0\left( A^{alg}\right)}\otimes\left( \pi_0\left( A\right)\left[1/a\right]\right)^{alg}\rightarrow \pi_n\left( B^{alg}\right) \]
    is an equivalence; that is, $f$ is strong and the map of $C^{\infty}$-schemes corresponding to $\pi_0\left( A\right)\rightarrow \pi_0\left( B\right)$ is an open inclusion. 
    \item $B$ fits into a pushout diagram
\begin{equation*}
\begin{tikzcd}
\sC^{\i}\left( \bR\right)\ar[r,"q_a"]\ar[d]& A\ar[d,"f"]\\
\sC^{\i}\left( \bR\setminus \{0\}\right)\ar[r] & B
\end{tikzcd}    
\end{equation*}
where $q_a$ is the unique up to homotopy map associated to $a\in \pi_0\left( A\right)$ (note that as a consequence, localizations always exist).
\end{enumerate}
\end{prop}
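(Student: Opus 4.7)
The plan is to prove the equivalences via the cycle $(3) \Rightarrow (1)$, $(3) \Rightarrow (2)$, $(1) \Rightarrow (3)$, $(2) \Rightarrow (3)$, deducing the last two from the uniqueness of the pushout in $(3)$ as a universal object.

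For $(3) \Rightarrow (1)$, I would invoke the universal property of the pushout directly. A map $B \to C$ is the data of a map $A \to C$ together with a compatible lift of the composite $\sC^\i(\bR) \overset{q_a}{\longrightarrow} A \to C$ along the canonical map $\sC^\i(\bR) \to \sC^\i(\bR \setminus \{0\})$. Since $\sC^\i(\bR \setminus \{0\})$ is the classical $C^\i$-localization of $\sC^\i(\bR)$ at the coordinate, and since by Lemma \ref{lem:ff} the inclusion of discrete $C^\i$-rings into $\Alg_{\Cart}(\Spc)$ is fully faithful, such a compatible lift exists in a contractible space of choices precisely when the image of $a$ in $\pi_0(C)$ is invertible. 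This yields the required homotopy equivalence $\Map_{\Alg_{\Cart}(\Spc)}(B,C) \simeq \Map^0_{\Alg_{\Cart}(\Spc)}(A,C)$.

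For $(3) \Rightarrow (2)$---the main technical step---I would exploit the pushout presentation of $\sC^\i(\bR \setminus \{0\})$ extracted from the proof of Lemma \ref{preservecoproduct}: namely $\sC^\i(\bR \setminus \{0\}) \simeq \bR \oinfty_{\sC^\i(\bR)} \sC^\i(\bR^2)$, where $\sC^\i(\bR) \to \sC^\i(\bR^2)$ sends the coordinate to $xy - 1$ and $\sC^\i(\bR) \to \bR$ is $\mathrm{ev}_0$. Rearranging the iterated pushout and using Lemma \ref{preservecoproduct} to split $\sC^\i(\bR^2) \simeq \sC^\i(\bR) \oinfty \sC^\i(\bR)$, I obtain $B \simeq \bR \oinfty_{\sC^\i(\bR)} (A \oinfty \sC^\i(\bR))$ with outer structure map sending the coordinate to $ay - 1$. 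The leg $\mathrm{ev}_0$ is an effective epimorphism, so by the corollary of Lemma \ref{diffdiscunramified} this last pushout is computed by $(\blank)^{alg}$; the Koszul resolution of Lemma \ref{projresolutiontransversal} then presents $B^{alg}$ as the cohomology of the complex $\bigl(A\{y\}^{alg}[z],\, \partial z = ay - 1\bigr)$, where $A\{y\} := A \oinfty \sC^\i(\bR)$. Condition $(2)$ then amounts to showing that this complex has cohomology concentrated in degree zero, equal to $\pi_*(A^{alg}) \otimes_{\pi_0(A^{alg})} \pi_0(A)[1/a]^{alg}$. I would establish this by combining the torsion spectral sequence of Remark \ref{torsionspectralsequence} with a non-zero-divisor argument for the action of $ay - 1$ on each $\pi_n(A\{y\}^{alg})$; on $\pi_0$ the identification reduces to preservation of colimits by truncation, which gives $\pi_0(B) \cong \pi_0(A)[1/a]$ in discrete $C^\i$-rings.

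The remaining implications are essentially formal. For $(1) \Rightarrow (3)$: the pushout $C$ in $(3)$ always exists and, by the step above, satisfies the universal property of localization, so it is canonically equivalent to any $B$ satisfying $(1)$. For $(2) \Rightarrow (3)$: form $C$ as in $(3)$; by the previous paragraph $C$ satisfies $(2)$ as well, and the invertibility of $a$ in $\pi_0(B)$ produces a canonical comparison map $C \to B$ under $A$ via the universal property of $C$. Both $\pi_n(C^{alg})$ and $\pi_n(B^{alg})$ have the form $\pi_n(A^{alg}) \otimes_{\pi_0(A^{alg})} \pi_0(A)[1/a]^{alg}$, so the comparison is an equivalence on $(\blank)^{alg}$ and hence an equivalence by conservativity. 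The principal obstacle is the Koszul/non-zero-divisor calculation controlling the higher Tor groups in $(3) \Rightarrow (2)$; once that is in hand, the rest is bookkeeping with universal properties and conservativity of $(\blank)^{alg}$.
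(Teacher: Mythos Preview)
Your overall cycle is sound and your $(1)\Rightarrow(3)$, $(2)\Rightarrow(3)$ are fine, but two steps need more care, and your route for $(3)\Rightarrow(2)$ diverges from the paper's in a way that creates a real obstacle.

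In $(3)\Rightarrow(1)$, the appeal to full faithfulness of the inclusion $\Alg_{\Cart}(\Set)\hookrightarrow\Alg_{\Cart}(\Spc)$ does not suffice: that only controls maps \emph{into discrete} targets, whereas you need the derived localization property of $\sC^\i(\bR)\to\sC^\i(\bR\setminus\{0\})$ for arbitrary $C$. The fix is easy and already implicit in your toolkit: use the pushout presentation $\sC^\i(\bR\setminus\{0\})\simeq \bR\oinfty_{\sC^\i(\bR)}\sC^\i(\bR^2)$ to identify $\Map(\sC^\i(\bR\setminus\{0\}),C)$ with the space of pairs $(c,d)$ together with a path $cd\sim 1$, and then observe that for $[c]\in\pi_0(C)^\times$ multiplication by $c$ is a self-equivalence of the underlying space (since $\pi_n(C)$ is a $\pi_0(C)$-module), so the fiber is contractible. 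The paper instead proves $(1)\Leftrightarrow(3)$ by reducing via filtered colimits to finitely generated $A$, then via an effective epimorphism $\sC^\i(\bR^n)\to A$ to the free case, where Lemma~\ref{opentransversal} identifies the derived and classical pushouts.

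For $(3)\Rightarrow(2)$, your rearrangement to $B\simeq \bR\oinfty_{\sC^\i(\bR)}A\{y\}$ and the use of unramifiedness along $\mathrm{ev}_0$ are correct, but the ``non-zero-divisor argument for $ay-1$ on each $\pi_n(A\{y\}^{alg})$'' is genuinely problematic: you have no direct description of $\pi_n\bigl((A\oinfty\sC^\i(\bR))^{alg}\bigr)$ for general $A$, since $(\blank)^{alg}$ does not preserve coproducts. The paper avoids this entirely by a different reorganization: after reducing to finitely generated $A$ and choosing an effective epimorphism $\sC^\i(\bR^n)\to A$ with a lift $\hat a$, one rewrites $B\simeq A\oinfty_{\sC^\i(\bR^n)}\sC^\i(U)$ for $U=\{\hat a\neq 0\}$. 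Unramifiedness along this effective epi then gives $B^{alg}\simeq A^{alg}\otimes_{\sC^\i(\bR^n)}\sC^\i(U)$, and the key input is that $\sC^\i(\bR^n)\to\sC^\i(U)$ is \emph{flat}, so the torsion spectral sequence collapses immediately to $\pi_n(B)\cong\pi_n(A)\otimes_{\sC^\i(\bR^n)}\sC^\i(U)\cong\pi_n(A)\otimes_{\pi_0(A)}\pi_0(A)[1/a]$. Your approach trades this flatness input for a regularity claim on $ay-1$ that you have not established and that does not obviously follow from the tools assembled in the paper.
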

\begin{proof}
First, we show that $\left( 1\right)$ is equivalent to $\left( 3\right)$. Let $A$ be a simplicial $C^{\infty}$-ring, and choose some $a\in \pi_0\left( A\right)$. By an elementary cofinality argument, we can write $A$ as a directed colimit of finitely generated subrings $$\underset{{i\in \mathcal{J}}}\colim A_i\simeq A$$ such that $a\in \pi_0\left( A_i\right)$ for all $i\in \mathcal{J}$. We claim that the map $$\varphi:A\rightarrow \underset{{i\in \mathcal{J}}}\colim \left( A_i\left[a^{-1}\right]\right)$$ is a localization. To see this, we let $C$ be any simplicial $C^{\infty}$-ring and $f\in \Map_{\Alg_{\Cart}\left( \Spc\right)}\left( A,C\right)$ and we consider the homotopy pullback
\begin{equation*}
\begin{tikzcd}
K_f \ar[d]\ar[r] & \lim_{i\in J}\Map_{\Alg_{\Cart}\left( \Spc\right)}\left( A_i\left[a^{-1}\right],C\right)\ar[d]\\
\{f\}\ar[r] & \Map_{\Alg_{\Cart}\left( \Spc\right)}\left( A,C\right)
\end{tikzcd}    
\end{equation*}
of Kan complexes. The map $\varphi$ is a localization if and only if for each $C$ and each $f$, $K_f$ is weakly contractible if $f\left( a\right)$ is invertible in $\pi_0\left( C\right)$ and empty if $f\left( a\right)$ is not invertible. The map $f$ induces maps $f_i:A_i\rightarrow C$, and Kan complexes $$K_{f_i}:=\{f_i\}\times^h_{\Map_{\Alg_{\Cart}\left( \Spc\right)}\left( A_i,C\right)}\Map_{\Alg_{\Cart}\left( \Spc\right)}\left( A_i\left[a^{-1}\right],C\right),$$ and we have an equivalence $\lim_{i\in \mathcal{J}}K_{f_i}\simeq K_f$. If $f\left( a\right)$ (and therefore $f_i\left( a\right)$) is not invertible, $K_{f}$ is a limit of empty simplicial sets and also empty. If $f\left( a\right)$ (and therefore $f_i\left( a\right)$) is invertible, $K_{f}$ is a limit of weakly contractible Kan complexes and also weakly contractible. Now if $A_i\rightarrow A_i\left[a^{-1}\right]$ satisfies $\left( 3\right)$ for all $i\in \mathcal{J}$, then $$A\rightarrow \underset{{i\in \mathcal{J}}}\colim \left( A_i\left[a^{-1}\right]\right)$$ satisfies $\left( 3\right)$ and vice versa, so we reduce to the case of finitely generated simplicial $C^{\infty}$-rings. If $A$ is finitely generated, we have an effective epimorphism $p:\sC^{\i}\left( \bR^n\right)\rightarrow A$ for some $n$ by Proposition \ref{prop:fgeffectiveepi}, so the map $q_a:\sC^{\i}\left( \bR\right)\rightarrow A$ defining $a\in \pi_0\left( A\right)$ factors up to homotopy through $p$, which defines some $\hat{a}\in \sC^{\i}\left( \bR^n\right)$. Consider the diagram 
\begin{equation*}
\begin{tikzcd}
\sC^{\i}\left( \bR\right)\ar[r]\ar[d]&\sC^{\i}\left( \bR^n\right)\ar[d]\ar[r,"p"] & A \ar[d]\\
\sC^{\i}\left( \bR\setminus \{0\}\right)\ar[r]& \sC^{\i}\left( \bR^n\right)\left[\hat{a}^{-1}\right]\ar[r] & A\left[a^{-1}\right].
\end{tikzcd}    
\end{equation*}
The left square is a pushout, so the right square is a pushout if and only if the outer rectangle is a pushout, and we reduce to the case of free simplicial $C^{\infty}$-rings, for which we already know that the localization is given by the pushout $\left( 3\right)$ in the truncated $1$-category of $C^{\infty}$-rings, which coincides with the pushout in $\Alg_{\Cart}\left( \Spc\right)$ by Lemma \ref{opentransversal}. \\
Now we show that $\left( 3\right)$ and $\left( 2\right)$ are equivalent. First, we show that $\left( 3\right)$ implies $\left( 2\right)$. Since taking homotopy groups and tensor products commutes with filtered colimits, we may assume that we are dealing with finitely generated objects. The localization of a finitely generated object $A$ is given by the pushout diagram above for some effective epimorphism $p:\sC^{\i}\left( \bR^n\right)\rightarrow A$. Let $U\subset \bR^n$ be the open set where the function $\hat{a}$ is nonzero. By unramifiedness, $A\left[a^{-1}\right]$ is given by the pushout $$A\underset{\sC^{\i}\left( \bR^n\right)}\otimes \sC^{\i}\left( U\right)$$ of simplicial commutative rings. Moreover, since $U\rightarrow \bR^n$ is an open inclusion, the map on smooth functions is flat so applying the torsion spectral sequence we have an equivalence 
\[\pi_n\left( A\right)\underset{\sC^{\i}\left( \bR^n\right)}\otimes \sC^{\i}\left( U\right)\simeq \pi_n\left( A\underset{\sC^{\i}\left( \bR^n\right)}\otimes \sC^{\i}\left( U\right)\right) =   \pi_n\left( A\left[a^{-1}\right]\right),\]
for all $n\geq 0$, so we have equivalences
\begin{eqnarray*}
\pi_n\left( A\left[a^{-1}\right]\right) &\simeq& \pi_n\left( A\right)\underset{\sC^{\i}\left( \bR^n\right)}\otimes \sC^{\i}\left( U\right)\\
&\simeq& \pi_n\left( A\right)\underset{\pi_0\left( A\right)}\otimes \pi_0\left( A\right)\underset{\sC^{\i}\left( \bR\right)}\otimes \sC^{\i}\left( U\right)\\
&\simeq& \pi_n\left( A\right)\underset{\pi_0\left( A\right)}\otimes \pi_0\left( A\left[a^{-1}\right]\right).
\end{eqnarray*}
What remains to be shown is that $\left( 2\right)$ implies $\left( 3\right)$. If $f:A\rightarrow B$ satisfies $\left( 2\right)$, then there is an induced map $A\left[a^{-1}\right]\rightarrow B$ where $A\left[a^{-1}\right]$ is the pushout of $\left( 3\right)$. As we have just verified, this map induces an isomorphism on all homotopy groups so it is an equivalence as the functors taking homotopy groups are jointly conservative.
\end{proof} 

\begin{rmk}\label{rmk:locfp}
Combining Corollary \ref{cor:manfp} with Proposition \ref{localization} shows that the localization of a finitely presented simplicial $C^{\infty}$-ring with respect to any $a\in \pi_0\left( A\right)$ is again finitely presented.
\end{rmk}

\subsection{Spectra of $C^\i$-rings}

\subsubsection{The functor $\Speci$}
Notice that $\bR$ carries the structure of topological $C^\i$-ring. More formally, this is realized as the canonical forgetful functor $$u:\Cart \to \Top$$ to topological spaces. By Theorem \ref{thm:finenv}, the right Kan extension $$Sp:=\Ran_{j^{\fp}} u:\left(  \Alg_{\Cart}\left(  \Spc\right)^{fp}\right)^{op} \to \Top$$ preserves finite limits. Moreover, by Proposition \ref{prop:nsame}, $$Sp \simeq Sp_0\circ \pi_0$$ (see the proof of the proposition for the notation). Therefore, to describe the functor $Sp,$ it suffices to describe the analogous functor $Sp_0$ for finitely presented $\Cart$-rings in $\Set.$ Notice that forgetful functor $$\underline{\left(  \blank\right)}:\Top \to \Set$$ preserves limits, hence $$\underline{Sp_0}=\Ran_{j^{\fp}} \underline{u}.$$ 
Furthermore, the functor $$\underline{u}:\Cart \to \Set$$ preserve finite products and hence is a $\Cart$-ring; it is precisely the $\Cart$-ring $C^\i\left(  \bR^0\right)=\bR.$ 

It follows from the above that 
$$\underline{Sp_0} \left(  \blank\right) \simeq \Map_{\Alg_{\Cart}\left(  \Spc\right)^{\fp}}\left(  \blank,\bR\right)\simeq \Hom_{\Alg_{\Cart}\left(  \Set\right)^{\fp}}\left(  \pi_0\left(  \blank\right),\bR\right).$$
Hence, if $A$ is a finitely presented $C^\i$-algebra in spaces, the underlying set of $Sp\left(  A\right)$ is $$\Hom_{\Alg_{\Cart}\left(  \Set\right)^{\fp}}\left(  \pi_0\left(  A\right),\bR\right).$$ Since $Sp=\Ran_{j^{\fp}} u$ can be computed by a pointwise Kan extension formula, it follows that $Sp\left(  A\right)$ is the set $$\Hom_{\Alg_{\Cart}\left(  \Set\right)^{\fp}}\left(  \pi_0\left(  A\right),\bR\right)$$ equipped with the initial topology with respect to the maps $$\Hom_{\Alg_{\Cart}\left(  \Set\right)^{\fp}}\left(  \pi_0\left(  A\right),\bR\right) \to \Hom_{\Alg_{\Cart}\left(  \Set\right)^{\fp}}\left(  \pi_0\left(  C^\i\left(  \bR^n\right)\right),\bR\right)=\bR^n,$$ induced by all maps $$C^\i\left(  \bR^n\right) \to A,$$ and where $\bR^n$ is given the standard topology. This is the coarsest topology making each map $$\Hom_{\Alg_{\Cart}\left(  \Set\right)^{\fp}}\left(  \pi_0\left(  A\right),\bR\right) \to \Hom_{\Alg_{\Cart}\left(  \Set\right)^{\fp}}\left(  \pi_0\left(  C^\i\left(  \bR\right)\right),\bR\right)=\bR,$$ continuous, for each map $C^\i\left(  \bR\right) \to \pi_0\left(  A\right).$ However, maps $$C^\i\left(  \bR\right) \to \pi_0\left(  A\right)$$ are in natural bijection with elements of $\pi_0\left(  A\right),$ as $C^\i\left(  \bR\right)$ is the free $C^\i$-ring on one generator. Given an element $a \in \pi_0\left(  A\right),$ the induced map $$\Hom_{\Alg_{\Cart}\left(  \Set\right)^{\fp}}\left(  \pi_0\left(  A\right),\bR\right) \to \bR$$ is given simply by evaluation at $a.$ Summarizing, we have that $$Sp\left(  A\right)=\Hom_{\Alg_{\Cart}\left(  \Set\right)^{\fp}}\left(  \pi_0\left(  A\right),\bR\right)$$ equipped with the coarsest topology making each evaluation map $$ev_a:\Hom_{\Alg_{\Cart}\left(  \Set\right)^{\fp}}\left(  \pi_0\left(  A\right),\bR\right) \to \bR,$$ continuous, for each $a \in \pi_0\left(  A\right).$ More explicitly, this means that the subsets of $$\Hom_{\Alg_{\Cart}\left(  \Set\right)^{\fp}}\left(  \pi_0\left(  A\right),\bR\right)$$ of the form $ev_a^{-1}\left(  U\right)$ form a subbasis for the topology.

\begin{lem}
Each open subset of the form $ev_a^{-1}\left(  U\right)$ is also of the form $W_b:=ev_b^{-1}\left(  \mathbb{R}\setminus 0\right).$ Moreover, these open subsets are closed under finite intersection. In particular, they not only form a subbasis, but also a basis.
\end{lem}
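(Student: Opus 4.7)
The plan is to reduce subbasic opens of the form $ev_a^{-1}(U)$ to the special form $W_b$ using the fact that $\pi_0(A)$ is a $C^\infty$-ring (so, in particular, it admits all smooth operations, not merely ring-theoretic ones), and to handle intersections by multiplication in $\pi_0(A)$.

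For the first assertion, fix $a \in \pi_0(A)$ and an open $U \subseteq \bR$. A standard fact from differential topology gives a smooth function $\chi:\bR \to \bR$ whose zero set is exactly the closed complement $\bR \setminus U$, so that $\chi^{-1}(\bR \setminus \{0\}) = U$. Since $\pi_0(A)$ is a $C^\infty$-ring, the smooth operation $\chi$ induces an element $b := \chi(a) \in \pi_0(A)$. For any $C^\infty$-ring homomorphism $\varphi:\pi_0(A) \to \bR$, naturality of $C^\infty$-ring operations gives $ev_b(\varphi) = \varphi(\chi(a)) = \chi(\varphi(a)) = \chi(ev_a(\varphi))$. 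Hence $\varphi \in W_b$ if and only if $\chi(ev_a(\varphi)) \ne 0$, which by the choice of $\chi$ is equivalent to $ev_a(\varphi) \in U$. Thus $ev_a^{-1}(U) = W_b$.

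For the second assertion, given $b_1, b_2 \in \pi_0(A)$, set $c := b_1 \cdot b_2 \in \pi_0(A)$ using the commutative ring structure underlying the $C^\infty$-ring. Since any $C^\infty$-ring homomorphism to $\bR$ is in particular a ring homomorphism, $ev_c(\varphi) = \varphi(b_1)\varphi(b_2)$, which is nonzero if and only if both $\varphi(b_i)$ are nonzero. Therefore $W_{b_1} \cap W_{b_2} = W_{b_1 b_2}$. Iterating handles arbitrary nonempty finite intersections, and the empty intersection is the whole space, which equals $W_1$ since $\varphi(1) = 1 \ne 0$ for every $\varphi$. Combined with the first assertion, this shows the sets $W_b$ form a basis.

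I do not expect a serious obstacle here; the only subtlety is remembering that the $C^\infty$-ring structure, rather than only the underlying ring structure, is what makes the single-element subbasis $\{W_b\}$ closed under pulling back arbitrary opens of $\bR$ along $ev_a$. The existence of a smooth $\chi$ with prescribed zero set on $\bR$ is classical and requires no additional machinery.
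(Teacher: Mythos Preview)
Your proof is correct and follows essentially the same approach as the paper: both use a smooth characteristic function $\chi$ for $U$ to write $ev_a^{-1}(U) = W_{\chi(a)}$, and both use multiplication to show $W_{b_1}\cap W_{b_2}=W_{b_1 b_2}$. You give slightly more detail (explicitly invoking naturality of $C^\infty$-operations under homomorphisms and handling the empty intersection via $W_1$), but the argument is the same.
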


\begin{proof}
Let $U \subseteq \mathbb{R}$ be open. Then there exists a smooth function $$\chi_{U}:\mathbb{R} \to \mathbb{R},$$ such that $U=\chi_U^{-1}\left(  \mathbb{R}\setminus 0\right).$ But this implies that 
\begin{eqnarray*}
ev_a^{-1}\left(  U\right) &=& \left(  \chi_U \circ ev_a\right)^{-1}\left(  \mathbb{R}\setminus 0\right)\\
&=& ev_{\chi_U\left(  a\right)}^{-1}\left(  \mathbb{R}\setminus 0\right).
\end{eqnarray*}
This implies the first statement. For the second, note that since $\mathbb{R}$ has no zero-divisors, it follows that $W_a \cap W_b=W_{ab}.$
\end{proof}

\begin{prop}
Every open subset of $Sp\left(  A\right)$ is of the form $W_a.$
\end{prop}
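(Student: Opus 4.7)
The plan is to embed $Sp(A)$ into some $\bR^n$ as a topological subspace, and then exploit the fact, already used elsewhere in the paper, that every open subset of a Euclidean space is the complement of the zero locus of a smooth function. The whole argument hinges on the fact that $\pi_0(A)$ is a \emph{finitely generated} $C^\i$-ring in sets: by Lemma \ref{lem:retcol}, a finitely presented simplicial $C^\i$-ring $A$ is a retract of a finite colimit of finitely generated free algebras, and since $\pi_0$ preserves colimits and retracts, $\pi_0(A)$ is a retract of a finite colimit of finitely generated free discrete $C^\i$-rings, hence in particular finitely generated.

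First I would fix an effective epimorphism $p: \sC^\i(\bR^n) \twoheadrightarrow \pi_0(A)$, as provided by (the $0$-truncated analogue of) Proposition \ref{prop:fgeffectiveepi}. This induces a natural map
\[
\iota: Sp(A) \longrightarrow Sp\bigl(\sC^\i(\bR^n)\bigr) = \bR^n,\qquad \varphi \longmapsto (\varphi(p(x_1)),\dots,\varphi(p(x_n))),
\]
where $x_1,\dots,x_n$ are the coordinates on $\bR^n$. Since $p$ is surjective on $\pi_0$, any homomorphism $\pi_0(A) \to \bR$ is determined by its values on $p(x_1),\dots,p(x_n)$, so $\iota$ is injective. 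The next step, which I regard as the main obstacle, is to verify that $\iota$ is a topological embedding: the topology on $Sp(A)$ from the paper's construction is the coarsest making all evaluation maps $ev_a$ ($a \in \pi_0(A)$) continuous, and one has to show this coincides with the subspace topology on $\iota(Sp(A)) \subseteq \bR^n$. In one direction, the coordinate evaluations $ev_{p(x_i)}$ are precisely the components of $\iota$, so preimages of open subsets of $\bR$ under the coordinate projections are open in $Sp(A)$, giving that the subspace topology is coarser than the $Sp$-topology. Conversely, any $a \in \pi_0(A)$ can be written as $p(f)$ for some $f \in \sC^\i(\bR^n)$, and an easy calculation shows $ev_a = f \circ \iota$, so $ev_a$ is continuous for the subspace topology; this yields the other inclusion.

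Once the embedding is established, the proof finishes quickly. Given an arbitrary open $U \subseteq Sp(A)$, write $U = \iota^{-1}(V)$ for some open $V \subseteq \bR^n$. By the standard fact (recalled in the proof of Proposition \ref{prop:fiberopen}) that every open set in $\bR^n$ is of the form $\chi_V^{-1}(\bR \setminus 0)$ for some smooth function $\chi_V : \bR^n \to \bR$, set $a := p(\chi_V) \in \pi_0(A)$. Then by the identity $ev_a = \chi_V \circ \iota$ we computed above,
\[
W_a = ev_a^{-1}(\bR \setminus 0) = \iota^{-1}\bigl(\chi_V^{-1}(\bR \setminus 0)\bigr) = \iota^{-1}(V) = U,
\]
as desired. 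No further analysis (e.g., of closedness of the image of $\iota$) is needed for the statement at hand, although it follows from the same setup and will likely be useful in subsequent results.
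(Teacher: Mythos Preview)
Your proof is correct and follows essentially the same approach as the paper: embed $Sp(A)$ into $\bR^n$ via a surjection $\sC^\i(\bR^n)\twoheadrightarrow\pi_0(A)$ and then use that open (equivalently, closed) subsets of Euclidean space are cut out by single smooth functions. The paper's version is terser---it asserts the embedding is closed and works with the closed complement $C$ of $U$ directly---whereas you verify the embedding property explicitly and work with an open extension $V$ of $U$; your route has the minor advantage of not needing closedness of the image, but the arguments are otherwise identical.
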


\begin{proof}
Since $\pi_0\left(  A\right)$ is finitely presented, it is in particular finitely generated, so we may find a surjective homomorphism  $$\pi:C^\i\left(  \bR^n\right) \to \pi_0\left(  A\right),$$ and $Sp\left(  A\right)=Sp\left(  \pi_0\left(  A\right)\right)$ is naturally a closed subset of $Sp\left(  C^\i\left(  \bR^n\right)\right)=\bR^n.$ Let $U$ be any open subset of $Sp\left(  A\right),$ and denote its compliment by $C.$ Then $C$ is also a closed subset of $\bR^n,$ so we may find a smooth function $g:\bR^n \to \bR$ such that $g^{-1}\left(  0\right)=C.$ It follows that $C=ev_{\pi\left(  g\right)}^{-1}\left(  0\right),$ and hence $U=W_{\pi\left(  g\right)}.$
\end{proof}

\begin{dfn}
Consider the functor
$$\Sh\left(  \blank,\Alg_{\sC^\i}\left(  \Spc\right)\right):\Top \to \widehat{\mathbf{Cat}}_\i$$
induced by the push-forward functor, and the induced coCartesian fibration
$$\int_{\Top} \Sh\left(  \blank,\Alg_{\sC^\i}\left(  \Spc\right)\right) \to \Top.$$
Informally, the objects are pairs $\left(  X,\cO_X\right)$ with $X$ a topological space and $\cO_X$ a sheaf of simplicial $C^\i$-rings on $X,$ and the morphisms are pairs $$\left(  f,\alpha\right):\left(  X,\cO_X\right) \to \left(  Y,\cO_Y\right),$$ with $f$ a continuous map and $$\alpha:\cO_Y \to f_{*}\cO_X.$$ Denote by $\Loc$ the full subcategory on those objects such that each stalk of $\pi_0\cO_X$ is a local $C^\i$-ring with residue field $\bR.$ The objects of $\Loc$ will be called \textbf{homotopically locally $C^\i$-ringed spaces}.
\end{dfn}

\begin{rmk}\label{rmk:local}
Since all the residue fields are $\bR,$ it follows that all maps between homotopically $C^\i$-ringed spaces are local, in the sense that the induced map on stalks by are maps of local rings. 
\end{rmk}

\begin{rmk}
Note that for each continuous map $f:X \to Y$ in $\Top,$ the push-forward functor
$$f_*:\Sh\left(  X,\Alg_{\sC^\i}\left(  \Spc\right)\right) \to \Sh\left(  Y,\Alg_{\sC^\i}\left(  \Spc\right)\right)$$
has a left adjoint $f^*.$ It follows from \cite[Corollary 5.2.2.5]{htt} that 
$$\int_{\Top} \Sh\left(  \blank,\Alg_{\sC^\i}\left(  \Spc\right)\right) \to \Top$$ is also a Cartesian fibration. Unwinding the definitions, we may identify it as arising from the functor
$$\Sh\left(  \blank,\Alg_{\sC^\i}\left(  \Spc\right)\right)^*:\Top^{op} \to \widehat{\mathbf{Cat}}_\i$$ induced by the pullback functor.
\end{rmk}

\begin{lem}\label{lem:finlim}
The $\i$-category $$\int_{\Top} \Sh\left(  \blank,\Alg_{\sC^\i}\left(  \Spc\right)\right)$$ has finite limits and the functor to $\Top$ preserves them.
\end{lem}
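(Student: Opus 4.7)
The plan is to apply the standard criterion for the existence of limits in the total $\infty$-category of a Cartesian fibration, essentially \cite[Corollary 4.3.1.11]{htt}. By the remark preceding the lemma, the projection
$$p:\int_{\Top} \Sh(\blank,\Alg_{\sC^\i}(\Spc)) \to \Top$$
is a Cartesian fibration, classified by the functor $X \mapsto \Sh(X,\Alg_{\sC^\i}(\Spc))$ with transition functors given by the inverse image functors $f^{*}$ associated to continuous maps $f:X\to Y$. The cited result then reduces the claim to verifying three conditions: $\Top$ admits finite limits, each fiber admits finite limits, and each transition functor $f^{*}$ preserves finite limits.

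The first condition is standard. For the second, I would use the natural equivalence
$$\Sh(X,\Alg_{\sC^\i}(\Spc))\simeq \Alg_{\Cart}(\Sh(X,\Spc)),$$
which follows from the fact that $\Alg_{\Cart}(\Spc)\subset \Fun(\Cart,\Spc)$ is closed under all limits, so descent for an $\Alg_{\Cart}(\Spc)$-valued presheaf is equivalent to objectwise descent over $\Cart$. Since $\Sh(X,\Spc)$ is an $\infty$-topos and hence complete, finite limits exist in $\Alg_{\Cart}(\Sh(X,\Spc))$ and are computed objectwise in the variable from $\Cart$. For the third condition, under the same identification the transition functor $f^{*}$ corresponds to post-composition with the geometric inverse image $f^{*}:\Sh(Y,\Spc)\to \Sh(X,\Spc)$. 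The geometric inverse image is left exact, and post-composition with a left exact functor preserves finite limits computed objectwise; it also automatically restricts to a functor on $\Cart$-algebras (since a left exact functor preserves finite products).

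The one small point that will need to be pinned down carefully is the identification of the inverse image on sheaves of $\Cart$-algebras with post-composition by the geometric inverse image of sheaves of spaces; this is essentially formal by uniqueness of left adjoints, once one observes that post-composition with $f^{*}$ at the level of $\Cart$-algebras is left adjoint to post-composition with $f_{*}$, and the latter is what defines the Cartesian fibration. I do not anticipate any genuine obstacle beyond this bookkeeping.
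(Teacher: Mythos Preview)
Your proposal is correct and takes essentially the same approach as the paper: both invoke \cite[Corollary 4.3.1.11]{htt}, use the identification $\Sh(X,\Alg_{\sC^\i}(\Spc))\simeq \Alg_{\Cart}(\Sh(X))$, and reduce left exactness of the transition functors to left exactness of the geometric inverse image on sheaves of spaces. The only cosmetic difference is that the paper explicitly passes to opposite categories (turning the Cartesian fibration into a coCartesian one so that 4.3.1.11 applies verbatim to colimits), whereas you invoke the dual form directly; and where you argue via post-composition with $f^*$ acting objectwise over $\Cart$, the paper instead notes that the forgetful functor $\Alg_{\Cart}(\Sh(X))\to \Sh(X)$ preserves limits.
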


\begin{proof}
Denote by $F$ the composite
$$\Top^{op} \stackrel{\Sh\left(  \blank,\Alg_{\sC^\i}\left(  \Spc\right)\right)^*}{\longlonglongrightarrow} \widehat{\mathbf{Cat}}_\i \stackrel{\left(  \blank\right)^{op}}{\longrightarrow} \widehat{\mathbf{Cat}}_\i.$$
Then there is a canonical equivalence
$$\int_{\Top} \Sh\left(  \blank,\Alg_{\sC^\i}\left(  \Spc\right)\right) \simeq \left(  \int_{\Top^{op}} F\right)^{op}$$
over $\Top.$ So it suffices to prove that $\int_{\Top^{op}} F$ has finite colimits and the functor to $\Top^{op}$ preserves them.
Consider a continuous map $f:X \to Y$ and the restriction functor.
$$f^*:\Sh\left(  Y,\Alg_{\sC^\i}\left(  \Spc\right)\right) \to \Sh\left(  X,\Alg_{\sC^\i}\left(  \Spc\right)\right).$$
We have a canonical equivalence $$\Sh\left(  X,\Alg_{\sC^\i}\left(  \Spc\right)\right) \simeq \Alg_{\sC^\i}\left(  \Sh\left(  X\right)\right)$$ and similarly for $Y.$ Since $\Cart$ is an algebraic theory, the forgetful functor
$$\Alg_{\sC^\i}\left(  \Sh\left(  X\right)\right) \to \Sh\left(  X\right)$$ preserves limits, and hence $f^*$ preserves finite limits, since the functor $$f^*:\Sh\left(  Y\right) \to \Sh\left(  X\right)$$ does. The result now follows from \cite[Corollary 4.3.1.11]{htt}.
\end{proof}

\begin{rmk}\label{rmk:limitconc}
Unwinding the definitions, we see that moreover, given a functor from a finite $\i$-category $J$ $$F:J \to \int_{\Top} \Sh\left(  \blank,\Alg_{\sC^\i}\left(  \Spc\right)\right),$$ the limit may be computed as follows:
First, consider the fibration $$p:\int_{\Top} \Sh\left(  \blank,\Alg_{\sC^\i}\left(  \Spc\right)\right) \to \Top$$ and denote by $X$ the limit of $p \circ F.$ Let $\rho:\Delta_{X} \Rightarrow pF$ be a limiting cone for $X.$ Consider the functor
\begin{eqnarray*}
F_X:J &\to& \Sh\left(  X,\Alg_{\sC^\i}\left(  \Spc\right)\right)\\
j &\mapsto& \rho_j^*F\left(  j\right),
\end{eqnarray*}
and let $\cO_X:=\underset{j \in J} \colim F_X\left(  j\right).$ Then $\left(  X,\cO_X\right)$ is a limit of $F.$
\end{rmk}

\begin{prop}\label{prop:idemok}
The $\i$-category $$\int_{\Top} \Sh\left(  \blank,\Alg_{\sC^\i}\left(  \Spc\right)\right)$$ is idempotent complete.
\end{prop}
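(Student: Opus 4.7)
The plan is to realize the splitting of an idempotent as a sequential limit, exploiting the coCartesian/Cartesian fibration structure over $\Top$. First, I would upgrade Lemma \ref{lem:finlim} and Remark \ref{rmk:limitconc} from finite to arbitrary small limits. The proof of Lemma \ref{lem:finlim} uses only that the restriction functors $f^*: \Sh(Y,\Alg_{\sC^\i}(\Spc)) \to \Sh(X,\Alg_{\sC^\i}(\Spc))$ preserve the relevant limits (which they do for \emph{all} small limits, since the forgetful functor to $\Sh(X)$ creates limits and $f^*$ on sheaves is left exact), together with $\Top$ admitting those limits (it has all small limits). The appeal to \cite[Corollary 4.3.1.11]{htt} goes through verbatim for arbitrary small limits, and the pointwise description of Remark \ref{rmk:limitconc} extends as well: the only change is that the colimit $\underset{j \in J}\colim F_X(j)$ in $\Sh(X,\Alg_{\sC^\i}(\Spc)) \simeq \Alg_{\sC^\i}(\Sh(X))$ is now indexed by a possibly infinite $J$, which poses no problem since this fiber is presentable.

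Once all small limits are available, idempotent completeness is immediate from the standard fact that in any $\infty$-category admitting sequential (i.e.\ $\bN$-indexed) limits, every idempotent splits; concretely, an idempotent $e: Z \to Z$ is split by the limit of the tower $\cdots \xrightarrow{e} Z \xrightarrow{e} Z$ (cf.\ \cite[\S 4.4.5]{htt}, and in particular the construction preceding Proposition 4.4.5.20). Applying this to an idempotent $(e,\epsilon): (X,\cO_X) \to (X,\cO_X)$, the underlying topological space of the splitting is the limit in $\Top$ of the constant tower on $X$ with transitions $e$, which is precisely the fixed-point subspace $Y := \{x \in X : e(x) = x\}$, a retract of $X$ via the inclusion $i: Y \hookrightarrow X$ and the map $p = e: X \twoheadrightarrow Y$. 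The structure sheaf of the splitting is then the filtered colimit in $\Sh(Y,\Alg_{\sC^\i}(\Spc))$ of the pulled-back sheaves $i^*\cO_X \to i^*\cO_X \to \cdots$ with transitions induced by $\epsilon$ and the Cartesian structure.

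The main delicate point will be the correct $\infty$-categorical bookkeeping of the idempotent data, since an $\infty$-categorical idempotent is not simply a morphism $e$ with $e \circ e \simeq e$ but a functor from the walking idempotent $\mathrm{Idem}$ into our category, and splitting it amounts to extending along $\mathrm{Idem} \hookrightarrow \mathrm{Idem}^+$. However, this is precisely what the cited general result in HTT handles once sequential limits are known to exist; thus the nontrivial content of the proof reduces to the (essentially formal) extension of Lemma \ref{lem:finlim} to arbitrary small limits described above.
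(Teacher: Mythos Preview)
Your approach has a genuine gap. You claim that the proof of Lemma \ref{lem:finlim} extends verbatim to arbitrary small limits because ``$f^*$ on sheaves is left exact.'' But \emph{left exact} means precisely that $f^*$ preserves \emph{finite} limits, nothing more. The inverse image functor $f^*:\Sh(Y)\to\Sh(X)$ is a left adjoint (to $f_*$), so it preserves all colimits, but beyond finite limits there is no reason for it to preserve anything---and in general it does not. For instance, the constant sheaf functor $\Spc\to\Sh(X)$ (i.e.\ $f^*$ for $f:X\to *$) preserves arbitrary limits only when $\Sh(X)$ is locally $\infty$-connected, which fails for many $X$. In particular you cannot conclude that $f^*$ preserves sequential limits, so the hypothesis needed to invoke \cite[Corollary 4.3.1.11]{htt} for $K=\bN^{op}$ is simply not available, and the pointwise formula of Remark \ref{rmk:limitconc} does not extend.

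The paper's argument avoids this entirely by applying the same machinery directly to $K=\mathbf{Idem}$ rather than passing through sequential limits. The point is that $\mathbf{Idem}$-shaped limits are retracts, and retracts are preserved by \emph{every} functor whatsoever---no left exactness is needed. Thus all three hypotheses of \cite[Corollary 4.3.1.11]{htt} hold trivially for $K=\mathbf{Idem}$: $\Top$ is idempotent complete, each fiber $\Sh(X,\Alg_{\sC^\i}(\Spc))$ is idempotent complete (being presentable), and the transition functors $f^*$ preserve retracts automatically. This is both shorter and avoids the false step.
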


\begin{proof}
The proof is analogous to the proof of Lemma \ref{lem:finlim}, by replacing limits of finite shape with limits of diagrams of the form $$F:\mathbf{Idem} \to \int_{\Top} \Sh\left(  \blank,\Alg_{\sC^\i}\left(  \Spc\right)\right),$$ since retracts are preserved by all functors.
\end{proof}

\begin{rmk}\label{rmk:mappingsp}
Notice that for all $f:X \to Y$ continuous, that there is a pullback diagram in $\Spc$
$$\xymatrix{\Map\left(  \cO_Y,f_*\cO_X\right) \ar[r] \ar[d] & \Map\left(  \left(  X,\cO_X\right),\left(  Y,\cO_Y\right)\right) \ar[d]\\
\ast \ar[r]^-{f} & \Hom\left(  X,Y\right).}$$ It follows that there is an canonical equivalence
\begin{eqnarray*}
\Map\left(  \left(  X,\cO_X\right),\left(  Y,\cO_Y\right)\right) &\simeq& \underset{f \in \Hom\left(  X,Y\right)} \coprod \Map\left(  \cO_Y,f_*\cO_X\right)\\
&\simeq& \underset{f \in \Hom\left(  X,Y\right)} \coprod \Map\left(  f^*\cO_Y,\cO_X\right).
\end{eqnarray*}
Comparing this to the simplicial mapping spaces of Spivak \cite[Definition 6.3]{spivak}, in light of Remark \ref{rmk:local}, we conclude that (the homotopy coherent nerve of) Spivak's $\mathbf{LRS}$ is equivalent to the full subcategory of $\Loc$ on those objects $\left(X,\cO_X\right)$ for which $\cO_X$ is a hypersheaf.
\end{rmk}

\begin{rmk}
Since each $f^*$ is left exact and colimit preserving, it follows from \cite[Proposition 5.5.6.28]{htt}, that the $0$-truncation functors induce a canonical natural transformation
$$\pi_0:\Sh\left(  \blank,\Alg_{\sC^\i}\left(  \Spc\right)\right)^* \Rightarrow \Sh\left(  \blank,\Alg_{\sC^\i}\left(  \Set\right)\right)^*.$$ Therefore, there is an induced functor
$$t_0:\int_{\Top} \Sh\left(  \blank,\Alg_{\sC^\i}\left(  \Spc\right)\right)^* \to \int_{\Top} \Sh\left(  \blank,\Alg_{\sC^\i}\left(  \Set\right)\right)^*$$ over $\Top.$ Concretely, $$t_0\left(  X,\cO_X\right)=\left(  X,\pi_0\cO_X\right).$$
Since each $f_*$ is also left exact, the natural inclusions 
$$i_0:\Sh\left(  X,\Alg_{\sC^\i}\left(  \Set\right)\right) \hookrightarrow \Sh\left(  \blank,\Alg_{\sC^\i}\left(  \Spc\right)\right),$$ which are right adjoint to each $\pi_0,$ by \cite[Proposition 5.5.6.16]{htt}, induce a natural transformation
$$i_0:\Sh\left(  \blank,\Alg_{\sC^\i}\left(  \Set\right)\right) \Rightarrow \Sh\left(  \blank,\Alg_{\sC^\i}\left(  \Spc\right)\right),$$ and under the equivalence
$$\int_{\Top} \Sh\left(  \blank,\Alg_{\sC^\i}\left(  \Spc\right)\right)^* \simeq \int_{\Top} \Sh\left(  \blank,\Alg_{\sC^\i}\left(  \Spc\right)\right),$$ we have an adjunction $i_o \dashv t_0,$ and $i_0$ is fully faithful. 
\end{rmk}

%By Remark \ref{rmk:limitconc}, since $\pi_0$ preserves colimits, it follows that $t_0$ preserves finite limits.

\begin{lem}
The $\i$-category $\Loc$ has finite limits and the inclusion $$\Loc \hookrightarrow \int_{\Top} \Sh\left(  \blank,\Alg_{\sC^\i}\left(  \Spc\right)\right)$$ preserves them. Moreover $\Loc$ is idempotent complete.
\end{lem}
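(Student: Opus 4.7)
The plan is to verify the two claims separately using the explicit description of limits in the ambient $\infty$-category provided by Remark \ref{rmk:limitconc} and the fact (Remark \ref{rmk:local}) that every morphism of objects of $\Loc$ induces local maps on $\pi_0$-stalks. The key algebraic input will be that local $C^\infty$-rings with residue field $\bR$ form a class closed under finite colimits along local maps and under retracts in the $1$-category $\Alg_{\Cart}\!\left(\Set\right)$.

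For finite limits: given a finite diagram $F\colon J \to \Loc$, let $(X,\cO_X)$ be its limit in $\int_{\Top}\Sh\!\left(\blank,\Alg_{\Cart}\!\left(\Spc\right)\right)$, which exists by Lemma \ref{lem:finlim}. By Remark \ref{rmk:limitconc}, $X \simeq \lim (p\circ F)$ in $\Top$ with universal cone $\rho$, and $\cO_X \simeq \colim_{j\in J}\rho_j^{*}F(j)$. Fix $x\in X$. Since stalks at $x$ and the truncation $\pi_0\colon\Alg_{\Cart}\!\left(\Spc\right)\to \Alg_{\Cart}\!\left(\Set\right)$ are both left adjoints, they preserve the (finite) colimit, giving
$$(\pi_0\cO_X)_x \;\simeq\; \colim_{j\in J}\,(\pi_0 F(j))_{\rho_j(x)}$$
in $\Alg_{\Cart}\!\left(\Set\right)$. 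Each vertex $(\pi_0 F(j))_{\rho_j(x)}$ is, by hypothesis, a local $C^\infty$-ring with residue field $\bR$, and each transition map is local by Remark \ref{rmk:local}. The initial $C^\infty$-ring $\bR$ is local with residue field $\bR$, and pushouts $B\oinfty_A C$ along local maps of such rings are again local with residue field $\bR$: the universal property produces a unique $\bR$-point, and locality of the pushout can be deduced either by a direct argument on units or, more systematically, by reduction to the analogous fact for commutative $\bR$-algebras via the unramifiedness of $\ComR \to \Cart$ (Lemma \ref{diffdiscunramified}) together with standard commutative-algebra facts about pushouts of local maps. By induction on $J$, the colimit above is therefore local with residue field $\bR$, so $(X,\cO_X)\in\Loc$.

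For idempotent completeness: let $e$ be an idempotent on $(X,\cO_X)\in\Loc$. By Proposition \ref{prop:idemok}, $e$ admits a splitting $(Y,\cO_Y)$ in the ambient $\infty$-category, with section $s\colon (Y,\cO_Y)\hookrightarrow (X,\cO_X)$ and retraction $r\colon (X,\cO_X)\to (Y,\cO_Y)$. Passing to underlying spaces yields a retract diagram in $\Top$, and for each $y\in Y$ we obtain a retract
$$(\pi_0\cO_Y)_y \;\hookrightarrow\; (\pi_0\cO_X)_{s(y)}\;\twoheadrightarrow\;(\pi_0\cO_Y)_y$$
in the $1$-category $\Alg_{\Cart}\!\left(\Set\right)$. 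The retraction is then a split epimorphism, so $(\pi_0\cO_Y)_y$ is a quotient of the local $C^\infty$-ring $(\pi_0\cO_X)_{s(y)}$ by an ideal necessarily contained in the unique maximal ideal (else the quotient would be zero, contradicting the existence of the section). Quotients by such ideals remain local with unchanged residue field $\bR$, so $(Y,\cO_Y)\in\Loc$.

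The main obstacle is the algebraic verification that local $C^\infty$-rings with residue field $\bR$ are closed under pushouts along local maps; the rest is a largely formal consequence of the stalk-wise computation in Remark \ref{rmk:limitconc} and the left-adjoint character of $\pi_0$ and of stalk functors. In the finite-limit argument, one must furthermore note that the essentially small indexing category $J$ and compactness arguments let us replace the general coproducts appearing in the iterated pushout by the elementary ones handled by our algebraic closure statement.
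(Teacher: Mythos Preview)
Your overall stalk-wise strategy is reasonable and your retract argument is fine, but the key algebraic step in the finite-limit case is not justified, and the justification you offer is actually incorrect. You claim that pushouts of local $C^\infty$-rings with residue field $\bR$ along local maps are again local, and propose to deduce this ``by reduction to the analogous fact for commutative $\bR$-algebras via the unramifiedness of $\ComR \to \Cart$.'' This fails on two counts. First, Theorem~\ref{unramifiedpreserveseffepi} only guarantees that $(\blank)^{alg}$ preserves pushouts along \emph{effective epimorphisms}, i.e.\ maps that are surjective on $\pi_0$; a local map of local rings (say $\bR \to \sC^\infty_0(\bR)$) is typically not surjective, so unramifiedness simply does not apply. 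Second, even if it did, the ``analogous fact for commutative $\bR$-algebras'' is false: $\bR[[x]]$ and $\bR[[y]]$ are local with residue field $\bR$ and the structure maps $\bR \to \bR[[x]], \bR[[y]]$ are local, yet $\bR[[x]]\otimes_\bR \bR[[y]]$ is not local (for instance $1 - x\otimes y$ maps to $1$ under the unique $\bR$-point but is not invertible, since its would-be inverse $\sum_n x^n\otimes y^n$ is not a finite sum). So the locality of $B\oinfty_A C$ is genuinely a $C^\infty$-phenomenon that cannot be read off from the underlying commutative algebra; your ``direct argument on units'' alternative is exactly where the real content lies, and you have not supplied it.

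By contrast, the paper avoids this algebra entirely. It observes that $(X,\cO_X)\in\Loc$ if and only if $t_0(X,\cO_X)=(X,\pi_0\cO_X)$ lies in the $1$-categorical $\Loc^0$, uses that $t_0$ is a right adjoint (hence preserves finite limits and retracts), and then cites \cite[Proposition~7]{cinfsch} for the closure of $\Loc^0$ under finite limits in the $0$-truncated ambient category; the idempotent case reduces the same way since an $\mathbf{Idem}$-shaped limit in a $1$-category is an equalizer. If you want to make your direct approach work, you should either invoke that same reference for the closure of local $C^\infty$-rings under finite colimits along local maps, or supply a genuine $C^\infty$-argument (for example via the universal property of $C^\infty$-localization at an $\bR$-point).
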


\begin{proof}
Notice that $\left(  X,\cO_X\right)$ is a homotopically locally $C^\i$-ringed space if and only if $t_0\left(  X,\cO_X\right)$ is a locally $C^\i$-ringed space. First we show that $\Loc$ has finite limits. Since $t_0$ preserves limits, the result follows from \cite[Proposition 7]{cinfsch}. However, since limits of shape $\mathbf{Idem}$ in any $1$-category become equalizer diagrams, we can again use \cite[Proposition 7]{cinfsch} to conclude that $\Loc$ is is closed under retracts in $$\int_{\Top} \Sh\left(  \blank,\Alg_{\sC^\i}\left(  \Spc\right)\right).$$
\end{proof}

\begin{prop}\label{prop:prodok}
Let $\left(  \bR,\sC^\i_\bR\right)$ be the smooth manifold $\bR$ regarded as on object of $\Loc.$ Then, for all $n \ge 0,$ $\left(  \bR,\sC^\i_\bR\right)^n \simeq \left(  \bR^n,\sC^\i_{\bR^n}\right).$
\end{prop}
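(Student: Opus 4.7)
The plan is to identify the product explicitly using Remark \ref{rmk:limitconc} and to compare it with $(\bR^n, \sC^\i_{\bR^n})$ on stalks, with Lemma \ref{preservecoproduct} as the key input.

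By Remark \ref{rmk:limitconc}, the product $(\bR, \sC^\i_\bR)^n$ in $\int_\Top \Sh(\blank, \Alg_\Cart(\Spc))$ has underlying space $\bR^n$, with projections $p_i : \bR^n \to \bR$, and structure sheaf $\cO := \coprod_{i=1}^n p_i^* \sC^\i_\bR$, the coproduct being taken in $\Sh(\bR^n, \Alg_\Cart(\Spc))$. The morphisms $p_i : (\bR^n, \sC^\i_{\bR^n}) \to (\bR, \sC^\i_\bR)$ (given by pullback of smooth functions along the projections) induce, via the universal property of the coproduct, a comparison morphism $\alpha : \cO \to \sC^\i_{\bR^n}$ in $\Sh(\bR^n, \Alg_\Cart(\Spc))$. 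I would show that $\alpha$ is an equivalence by a stalkwise check: since $\bR^n$ is locally compact Hausdorff, its $\i$-topos of sheaves has enough points, and since the forgetful functor $\Alg_\Cart(\Spc) \to \Spc$ is conservative, equivalences of sheaves of simplicial $C^\infty$-rings on $\bR^n$ are detected on stalks.

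For $x = (x_1, \ldots, x_n) \in \bR^n$, the stalk functor at $x$ is a left adjoint to the skyscraper functor, so it preserves coproducts, and it commutes with each pullback $p_i^*$ to yield the stalk at $x_i = p_i(x)$ on $\bR$. Hence $\cO_x \simeq \coprod_{i=1}^n (\sC^\i_\bR)_{x_i}$. Writing each stalk as the filtered colimit $(\sC^\i_\bR)_{x_i} \simeq \colim_{U_i \ni x_i} \sC^\i(U_i)$ and using that $\oinfty$ preserves colimits separately in each variable, I apply Lemma \ref{preservecoproduct} iteratively to get
\[
\coprod_{i=1}^n (\sC^\i_\bR)_{x_i} \simeq \colim_{(U_1, \ldots, U_n)} \sC^\i(U_1 \times \cdots \times U_n).
\]
Since open boxes containing $x$ are cofinal among neighborhoods of $x$ in $\bR^n$, the right-hand side computes $(\sC^\i_{\bR^n})_x$, and unwinding the naturality identifies this composite with $\alpha_x$.

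To conclude, I verify that $(\bR^n, \sC^\i_{\bR^n})$ lies in $\Loc$: the stalks of $\pi_0 \sC^\i_{\bR^n}$ are the local rings $C^\infty_{\bR^n, x}$ of germs of smooth functions, whose maximal ideal consists of germs vanishing at $x$ and whose residue field is $\bR$ via evaluation at $x$. By the lemma immediately preceding this proposition, the inclusion $\Loc \hookrightarrow \int_\Top \Sh(\blank, \Alg_\Cart(\Spc))$ preserves finite limits, so $(\bR^n, \sC^\i_{\bR^n})$ is also the product in $\Loc$. The only subtle point in the argument is that coproducts of sheaves of simplicial $C^\infty$-rings commute with taking stalks; however, this is automatic from the fact that the stalk functor is a left adjoint.
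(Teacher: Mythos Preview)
Your proof is correct and follows essentially the same strategy as the paper: use Remark \ref{rmk:limitconc} to identify the underlying space as $\bR^n$ and reduce to a comparison of structure sheaves, then invoke Lemma \ref{preservecoproduct} for the key computation. The only difference is that the paper checks the comparison on the basis of open boxes $U_1\times\cdots\times U_n$ (identifying the presheaf coproduct there directly with $\sC^\i(U_1)\oinfty\cdots\oinfty\sC^\i(U_n)$), whereas you pass to stalks; note that your justification ``locally compact Hausdorff'' for stalkwise detection is not quite the right hypothesis---what you need is hypercompleteness of $\Sh(\bR^n)$, which holds because $\bR^n$ has finite covering dimension (cf.\ Remark \ref{rmk:hyper}).
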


\begin{proof}
For $n=0$ this is obvious, and for all non-zero $n,$ the case is analogous to the case $n=2,$ which we present for simplicity. According to Remark \ref{rmk:limitconc}, 
$$\left(  \bR,\sC^\i_\bR\right) \times \left(  \bR,\sC^\i_\bR\right) \simeq \left(  \bR^2,pr_1^*\sC^\i_\bR \oinfty pr_2^*\sC^\i_\bR\right).$$ The sheaf $pr_1^*\sC^\i_\bR \oinfty pr_2^*\sC^\i_\bR$ over $\bR^2$ is the sheafification of the presheaf $$W \mapsto pr_1^*\sC^\i_\bR\left(  W\right) \oinfty pr_2^*\sC^\i_\bR\left(  W\right).$$ We claim the sheafification is equivalent to $\sC^\i_{\bR^2}.$ For this, it suffices to show that for all $U,V \subseteq \bR,$ open 
$pr_1^*\sC^\i_\bR\left(  U \times V\right) \oinfty pr_2^*\sC^\i_\bR\left(  U \times V\right) \simeq \sC^\i\left(  U \times V\right).$ Since $pr_1$ and $pr_2$ are open maps, the above $C^\i$-ring is equivalent to $$\sC^\i\left(  U\right) \oinfty \sC^\i\left(  V\right),$$ so the result now follows from Lemma \ref{preservecoproduct}.
\end{proof}

\begin{cor}
$\left(  \bR,\sC^\i_\bR\right)$ has the canonical structure of a $C^\i$-ring object in $\Loc.$ 
\end{cor}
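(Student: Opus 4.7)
The plan is to construct a finite product preserving functor $C\colon \Cart \to \Loc$ sending $\bR^n$ to $(\bR^n,\sC^\i_{\bR^n})$, since by Definition \ref{dfn:algebra} this is exactly the data of a $C^\i$-ring object in $\Loc$ whose underlying object is $(\bR,\sC^\i_\bR)$.

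First, I would observe that there is an obvious $1$-functor $\Mfd \to \mathrm{LRS}$ from smooth manifolds to the ordinary $1$-category of locally $C^\i$-ringed spaces, sending $M$ to $(M,\sC^\i_M)$ and a smooth map $f\colon M \to N$ to the morphism given by $f$ on underlying spaces together with the pullback of smooth functions $f^{-1}\sC^\i_N \to \sC^\i_M$. Composing with the canonical inclusion $\mathrm{LRS} \hookrightarrow \Loc$ (obtained by viewing discrete sheaves as $0$-truncated objects of $\Sh\bigl(\blank,\Alg_{\sC^\i}(\Spc)\bigr)$) and restricting to $\Cart \hookrightarrow \Mfd$ yields the desired functor $C$, with $C(\bR)=(\bR,\sC^\i_\bR)$ by construction.

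Next I would verify that $C$ preserves finite products in the $\i$-categorical sense. The terminal object is handled by the $n=0$ case of Proposition \ref{prop:prodok} (or directly, since $(\mathrm{pt},\bR)$ is terminal in $\Loc$). For binary products, given $\bR^n$ and $\bR^m$ in $\Cart$, the canonical comparison map in $\Loc$
\[ C(\bR^n \times \bR^m) = (\bR^{n+m},\sC^\i_{\bR^{n+m}}) \longrightarrow C(\bR^n) \times C(\bR^m) \]
is precisely the equivalence provided by (an iteration of) Proposition \ref{prop:prodok}. By induction this handles all finite products of objects of $\Cart$, so $C$ is finite product preserving, i.e.\ a $\Cart$-algebra in $\Loc$.

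The main subtlety is not any single computation but rather bookkeeping: one must check that the comparison maps produced by Proposition \ref{prop:prodok} really do agree with the canonical maps coming from the nerve of the $1$-functor $\Mfd \to \mathrm{LRS}$, so that $C$ assembles into a coherent finite-product-preserving functor of $\i$-categories rather than just a functor that happens to preserve products on objects. This is essentially automatic since both descriptions are determined by the universal property of products together with the explicit identification of $pr_1^*\sC^\i_\bR \oinfty pr_2^*\sC^\i_\bR \simeq \sC^\i_{\bR^{n+m}}$ appearing in the proof of Proposition \ref{prop:prodok}, but it is the only nontrivial coherence to unpack. Once this is done, $C$ exhibits the canonical $C^\i$-ring structure on $(\bR,\sC^\i_\bR)$.
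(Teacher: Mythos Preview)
Your proposal is correct and is essentially the same argument the paper has in mind: the paper gives no explicit proof of this corollary, treating it as immediate from Proposition~\ref{prop:prodok}, and your write-up simply unpacks what ``immediate'' means here---namely, that the evident $1$-functor $\Cart \to \Loc$, $\bR^n \mapsto (\bR^n,\sC^\i_{\bR^n})$, preserves finite products by that proposition. Your caution about coherence is slightly more than is strictly needed (since $\Cart$ is a $1$-category and the functor factors through a $1$-functor to locally ringed spaces, the only genuine content is the product computation), but it is not wrong.
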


\begin{dfn}
Denote by $\underline{\bR}:\sC^\i \to \Loc$ the $C^\i$-ring from the above corollary. Define the functor
$$\Spec_{\Cart}:=\Ran_{j^{\fp}}\left(  \underline{\bR}\right):\left(  \Alg_{\Cart}\left(  \Spc\right)^{\fp}\right)^{op} \to \Loc.$$
For $A$ a finitely presented $C^\i$-ring in $\Spc,$ $\Speci\left(  A\right)$ is the $C^\i$-spectrum of $A.$

Similarly, by abuse of notation, $$\underline{\bR}:\sC^\i \to \Loc^0,$$ with $\Loc^0$ the category of spaces locally ringed in $C^\i$-rings (equivalently, the full subcategory of $\Loc$ on those objects $\left(  X,\cO_X\right)$ for which $\cO_X$ is $0$-truncated) is a $C^\i$-ring object, and we can define 
$$\Spec_{\Cart}^0:=\Ran_{j^{\fp}}\left(  \underline{\bR}\right):\left(  \Alg_{\Cart}\left(  \Set\right)^{\fp}\right)^{op} \to \Loc^0.$$
\end{dfn}

\begin{rmk}\label{rmk:to}
Since $t_0:\Loc \to \Loc^0$ preserves finite limits, and $\Loc^0$ is a $1$-category, it follows from Proposition \ref{prop:nsame} that for any $A$ a finitely presented $C^\i$-ring in $\Spc,$ $$t_0\left(  \Speci\left(  A\right)\right) \simeq \Speci^0\left(  \pi_0\left(  A\right)\right).$$ 
\end{rmk}

\begin{rmk}
Recall that the forgetful functor $$p:\Loc \to \Top$$ preserves finite limits, so it follows that $$p \circ \Speci \simeq Sp.$$
Hence we can write $\Speci\left(  A\right)=\left(  Sp\left(  A\right),\cO_A\right),$ for some sheaf $\cO_A.$
\end{rmk}

\begin{rmk}\label{rmk:hyper}
Let $A$ be an object of $\Alg_{\Cart}\left(\Spc\right)^{\fp}.$ Then since $\pi_0\left(A\right)$ is finitely presented, and $Sp\left(A\right) \cong Sp\left(\pi_0\left(A\right)\right),$ $Sp\left(A\right)$ is a closed subset of some $\mathbb{R}^n,$ and hence paracompact and of finite covering dimension. As a corollary, $\Sh\left(Sp\left(A\right)\right)$ has finite homotopy dimension and is therefore hypercomplete by \cite[Theorem 7.2.3.6 and Corollary 7.2.1.12]{htt}. In particular, $\cO_A$ is a hypersheaf.
\end{rmk}

\begin{thm}\cite[Theorem 16]{cinfsch} \label{thm:mfdgoes}
The natural functor $\Mfd \to \Loc^0$ factors as $$\Mfd \stackrel{\sC^\i}{\longlongrightarrow} \left(  \Alg_{\Cart}\left(  \Set\right)^{\fp}\right)^{op} \stackrel{\Spec_{\Cart}^0}{\longlonglongrightarrow} \Loc^0,$$ and is fully faithful.
\end{thm}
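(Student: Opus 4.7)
The plan is to construct, for each manifold $M$, a natural equivalence in $\Loc^0$ between $(M,\sC^\infty_M)$ and $\Speci^0(\sC^\infty(M))$; once this is in place, the factorization is automatic, and full faithfulness of the composite reduces to full faithfulness of the classical inclusion $\Mfd \hookrightarrow \Loc^0$. Note that $\sC^\infty(M)$ does lie in $\Alg_{\Cart}(\Set)^{\fp}$ by Remark \ref{cor:manfp} (combined with $0$-truncatedness), so applying $\Speci^0$ is legitimate.

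To produce the comparison map, I would use the universal property of $\Speci^0 = \Ran_{j^{\fp}}\underline{\mathbb{R}}$: a morphism $(M,\sC^\infty_M) \to \Speci^0(\sC^\infty(M))$ in $\Loc^0$ corresponds to a morphism $\sC^\infty(M) \to \Gamma((M,\sC^\infty_M), \underline{\mathbb{R}}) \cong \sC^\infty(M)$ of $C^\infty$-rings, for which I take the identity. Verifying this is an equivalence proceeds in three substeps. First, on the underlying set: the induced map $M \to \Hom_{\Alg_{\Cart}(\Set)}(\sC^\infty(M), \mathbb{R})$ sending $m \mapsto \mathrm{ev}_m$ is a bijection, a classical fact for second-countable Hausdorff manifolds relying on the existence of bump functions and properness arguments to rule out ``homomorphisms at infinity''. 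Second, on topologies: by the analysis preceding the theorem, the open subsets of $Sp(\sC^\infty(M))$ are exactly the sets $W_f = \{\phi : \phi(f)\neq 0\}$, which pull back to $\{m \in M : f(m) \neq 0\}$; these form a basis for the manifold topology on $M$, so the bijection is a homeomorphism.

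Third, and the technical heart of the argument, I would compare structure sheaves on basic opens. The value of $\cO_{\sC^\infty(M)}$ on $W_f$ is the $C^\infty$-localization $\sC^\infty(M)[f^{-1}]$ by construction of $\Speci^0$ (as a right Kan extension whose values are pointwise computable on a basis using Yoneda applied to $C^\infty(\mathbb{R})$ and $C^\infty(\mathbb{R}\setminus\{0\})$). By Proposition \ref{localization} together with Lemma \ref{opentransversal}, this localization is naturally isomorphic to $\sC^\infty(\{f\neq 0\}) = \sC^\infty_M(W_f)$. Compatibility with restriction maps is automatic from naturality, and the resulting map of sheaves is an isomorphism on a basis, hence an isomorphism.

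Finally, given the factorization, full faithfulness of the composite reduces to full faithfulness of $\Mfd \hookrightarrow \Loc^0$: a morphism $(M,\sC^\infty_M) \to (N,\sC^\infty_N)$ of locally $C^\infty$-ringed spaces induces on global sections a $C^\infty$-ring homomorphism $\sC^\infty(N) \to \sC^\infty(M)$, which by Lemma \ref{lem:ff} (restricted to the discrete case, a classical fact) comes from a unique smooth map $F\colon M\to N$; the locality of the map on stalks forces the underlying continuous map to agree with $F$. The principal obstacle is substep (i) above — the classical spectrum theorem identifying maximal ideals of $\mathbb{R}$-codimension one in $\sC^\infty(M)$ with points of $M$ — which requires the second-countable Hausdorff assumption in an essential way; as this is already established in \cite{cinfsch}, the proof in the present paper may simply cite that reference.
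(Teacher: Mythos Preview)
The paper does not give its own proof of this theorem: it is stated with a citation to \cite[Theorem 16]{cinfsch} and no argument is supplied. Your sketch is a reasonable reconstruction of the classical proof that one finds in the cited reference, and you yourself anticipate this at the end. There is, however, one point where your argument is looser than it should be in the paper's framework: you assert that ``the value of $\cO_{\sC^\infty(M)}$ on $W_f$ is the $C^\infty$-localization $\sC^\infty(M)[f^{-1}]$ by construction of $\Speci^0$.'' In this paper $\Speci^0$ is \emph{defined} as the right Kan extension $\Ran_{j^{\fp}}\underline{\bR}$, so its structure sheaf is not given as a sheaf of localizations by fiat; that description is precisely the content of Corollary~\ref{cor:sheafex}, whose proof in turn invokes Theorem~\ref{thm:mfdgoes}. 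To avoid circularity you would need to argue this point directly (e.g.\ first verify $\Speci^0(\sC^\infty(\bR^n))\simeq(\bR^n,\sC^\infty_{\bR^n})$ from Proposition~\ref{prop:prodok} and the fully faithfulness of $j^{\fp}$, then handle open $U\subset\bR^n$ via the explicit pushout presentation of $\sC^\infty(U)$ from Proposition~\ref{prop:fiberopen}, and finally pass to retracts).

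It is worth noting that the paper \emph{does} prove the closely related non-truncated statement, Proposition~\ref{prop:mansame}, and the method there is different from yours and rather slicker: instead of computing $\Speci$ directly on an arbitrary manifold, one observes that both $M\mapsto(M,\sC^\infty_M)$ and $\Speci\circ\sC^\infty$ are functors $\Mfd\to\Loc$ which agree on the full subcategory $\mathbf{Dom}$ of open domains (this is the computational input, coming from Proposition~\ref{prop:prodok} and the argument of Corollary~\ref{cor:sheafex}), and since $\Mfd$ is the Karoubi completion of $\mathbf{Dom}$, the two functors must agree on all of $\Mfd$. Your approach has the virtue of being self-contained and elementary; the Karoubi-envelope argument trades the explicit point--topology--sheaf verification on a general $M$ for a formal extension step, at the cost of needing the domain case first.
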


\begin{thm}\cite[Theorem 14]{cinfsch}
The functor $\Spec_{\Cart}^0$ is fully faithful, and for any finitely presented $C^\i$-algebra in $\Set,$ $$A \simeq \Gamma\left(  \cO_A\right).$$
\end{thm}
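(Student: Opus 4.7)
The plan is to reduce both assertions to computations on free algebras by exploiting that $\Speci^0$ preserves finite limits (this is immediate from its construction as a right Kan extension in Theorem \ref{thm:finenv} combined with Proposition \ref{prop:nsame}). The base case is Theorem \ref{thm:mfdgoes}: for a free algebra $A = \sC^\infty(\mathbb{R}^n)$, one has $\Speci^0(A) \cong (\mathbb{R}^n, \sC^\infty_{\mathbb{R}^n})$, and so $\Gamma(\cO_A) \cong \sC^\infty(\mathbb{R}^n) = A$ tautologically.

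For the global sections identity $A \simeq \Gamma(\cO_A)$ at a general finitely presented $A$, I would first handle localizations: Proposition \ref{localization} identifies $A[a^{-1}]$ with the pushout $A \oinfty_{\sC^\infty(\mathbb{R})} \sC^\infty(\mathbb{R}\setminus\{0\})$, so limit preservation realizes $\Speci^0(A[a^{-1}])$ as the open subspace $W_a \subset \Speci^0(A)$ with restricted structure sheaf, and the sheaf property then gives $\Gamma(\cO_{A[a^{-1}]}) \simeq \cO_A(W_a) \simeq A[a^{-1}]$ inductively. For a general finitely presented $A$, pick a presentation $A \cong \sC^\infty(\mathbb{R}^n) \oinfty_{\sC^\infty(\mathbb{R}^m)} \mathbb{R}$ coming from a smooth map $f = (f_1, \ldots, f_m) : \mathbb{R}^n \to \mathbb{R}^m$. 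Limit preservation identifies $\Speci^0(A)$ with the zero locus $Z(f) \subset \mathbb{R}^n$ equipped with the quotient structure sheaf whose sections over $Z(f) \cap U$ are $\sC^\infty(U)/(f_1,\ldots,f_m)$. The identity $\Gamma(\cO_A) \simeq A$ then amounts to the classical fact that finitely generated ideals in $\sC^\infty(\mathbb{R}^n)$ are germ-determined in the sense of Definition \ref{defn:germpointdetermined}; this is the content of \cite{cinfsch} and constitutes the main technical obstacle in the proof.

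Given the global sections identity, fully faithfulness follows formally. For finitely presented $A, B$, present $B \cong \sC^\infty(\mathbb{R}^n) \oinfty_{\sC^\infty(\mathbb{R}^m)} \mathbb{R}$. Since $\Speci^0$ preserves finite limits and Proposition \ref{prop:prodok} identifies $(\mathbb{R}^k, \sC^\infty_{\mathbb{R}^k}) \simeq (\mathbb{R}, \sC^\infty_\mathbb{R})^k$ in $\Loc^0$, one has
$$\Map_{\Loc^0}(\Speci^0(A), \Speci^0(B)) \simeq \Map_{\Loc^0}(\Speci^0(A), (\mathbb{R}, \sC^\infty_\mathbb{R}))^n \times_{\Map_{\Loc^0}(\Speci^0(A), (\mathbb{R}, \sC^\infty_\mathbb{R}))^m} \{0\}.$$
A morphism $(X, \cO_X) \to (\mathbb{R}, \sC^\infty_\mathbb{R})$ in $\Loc^0$ is determined by a single global section of $\cO_X$ (the image of the coordinate function), the underlying continuous map being forced by locality and the fact that every residue field in $\Loc^0$ is $\mathbb{R}$ (see Remark \ref{rmk:local}). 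The previous paragraph identifies this global sections set with $A$, yielding
$$\Map_{\Loc^0}(\Speci^0(A), \Speci^0(B)) \simeq A^n \times_{A^m} \{0\} \simeq \Hom_{\Alg_{\Cart}(\Set)}(B, A),$$
as required.
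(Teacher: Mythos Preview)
The paper does not supply a proof of this theorem; it is simply cited from \cite{cinfsch}. So there is nothing to compare against directly, and your task is really to reconstruct (or at least outline) the argument from that reference.

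Your overall strategy is correct and is essentially the one in \cite{cinfsch}: exploit that $\Speci^0$ is left exact, reduce to free algebras via a finite presentation $A \cong \sC^\infty(\bR^n)/(f_1,\dots,f_m)$, identify $\Speci^0(A)$ concretely as the zero locus $Z(f)\subset\bR^n$ with the quotient sheaf, and then invoke the fact that finitely generated ideals of $\sC^\infty(\bR^n)$ are germ-determined to conclude $\Gamma(\cO_A)\cong A$. That last fact is indeed the technical heart of the matter and is precisely what the reference proves. Your deduction of fully faithfulness from the global-sections identity is also the standard one: the bijection $\Map_{\Loc^0}\bigl((X,\cO_X),(\bR,\sC^\infty_\bR)\bigr)\cong \Gamma(\cO_X)$ is exactly the $0$-truncated analogue of Proposition~\ref{prop:specadj}, and once you have that, the pullback computation goes through.

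One organizational quibble: your paragraph on localizations is misplaced. You write that $\cO_A(W_a)\simeq A[a^{-1}]$ ``inductively,'' but this is Corollary~\ref{cor:sheafex}, which in the paper is \emph{deduced from} the theorem you are proving, not an input to it. Since $A[a^{-1}]$ is itself just another finitely presented $C^\infty$-ring, it is covered by your general presentation argument and needs no separate treatment. You can safely drop that paragraph; the rest of the sketch stands on its own.
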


\begin{cor}\label{cor:sheafex}
For $A$ a finitely presented $C^\i$-algebra in $\Set,$ and $W_a \subseteq Sp\left(  A\right)$ an arbitrary open subset, with $a \in A,$ then $$\cO_A\left(  W_a\right)\cong A\left[a^{-1}\right].$$
\end{cor}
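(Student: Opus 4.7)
The plan is to identify the basic open $W_a \subseteq Sp(A)$ with the $C^\infty$-spectrum of the localization $A[a^{-1}]$, and then invoke the fact that $A \simeq \Gamma(\cO_A)$ for any finitely presented $C^\infty$-algebra, applied to $A[a^{-1}]$ instead of $A$.

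First, I would record that $A[a^{-1}]$ is still finitely presented, so that $\Spec_{\Cart}^0$ applies to it: this is exactly Remark \ref{rmk:locfp} (specialised to the $0$-truncated case). Next, I would consider the canonical localization map $\ell:A \to A[a^{-1}]$ and the induced morphism of locally $C^\infty$-ringed spaces $\Speci^0(\ell):\Speci^0(A[a^{-1}]) \to \Speci^0(A).$ By Proposition \ref{localization}, a $C^\infty$-ring homomorphism $A \to R$ factors through $A[a^{-1}]$ iff it sends $a$ to an invertible element; applying this with $R=\mathbb{R}$ (and recalling that the underlying set of $Sp(A)$ is $\operatorname{Hom}_{\Alg_{\Cart}(\Set)^{\fp}}(A,\mathbb{R})$), the underlying continuous map of $\Speci^0(\ell)$ is a homeomorphism onto its image with image exactly $W_a$. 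Thus $\Speci^0(\ell)$ factors as an iso $\Speci^0(A[a^{-1}]) \xrightarrow{\sim} (W_a,\cO_A|_{W_a})$ followed by the canonical open inclusion $(W_a,\cO_A|_{W_a}) \hookrightarrow \Speci^0(A)$.

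The key step is then to check that this factorization is an equivalence on structure sheaves, i.e.\ that $\cO_{A[a^{-1}]} \simeq \cO_A|_{W_a}$ as sheaves of $C^\infty$-rings on $W_a$. For this I would use the universal property of $\Speci^0$ (expressed by the adjunction underlying Theorem \ref{thm:mfdgoes}, via $\Ran_{j^{\fp}}$): both $(W_a,\cO_A|_{W_a})$ and $\Speci^0(A[a^{-1}])$ co-represent the functor that sends a locally $C^\infty$-ringed space $X$ to the space of homomorphisms $A \to \Gamma(X,\cO_X)$ inverting $a$. Concretely, for any $X = (Y,\cO_Y)$ in $\Loc^0$, a morphism $X\to (W_a,\cO_A|_{W_a})$ is the same as a morphism $X\to \Speci^0(A)$ whose image lands in $W_a$, which by locality is the same as a map $A\to \Gamma(X,\cO_X)$ sending $a$ to a unit, which in turn is the same as a map $A[a^{-1}] \to \Gamma(X,\cO_X)$, i.e.\ a morphism $X \to \Speci^0(A[a^{-1}])$. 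Yoneda then gives the desired identification of locally ringed spaces.

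With this identification in hand, the conclusion is immediate: applying the global sections isomorphism $B \simeq \Gamma(\cO_B)$ of \cite[Theorem 14]{cinfsch} to the finitely presented $C^\infty$-ring $B := A[a^{-1}]$ gives
\[
\cO_A(W_a) \;=\; \bigl(\cO_A|_{W_a}\bigr)(W_a) \;\cong\; \cO_{A[a^{-1}]}(W_a) \;=\; \Gamma\bigl(\cO_{A[a^{-1}]}\bigr) \;\cong\; A[a^{-1}].
\]
The main obstacle is the middle step, verifying $\cO_A|_{W_a} \simeq \cO_{A[a^{-1}]}$; this is the place where the local-versus-global character of $\Speci^0$ has to be used (rather than any bare formal manipulation), and concretely it amounts to checking that an open subspace of an affine $C^\infty$-scheme is again affine, with structure sheaf determined by the localized $C^\infty$-ring.
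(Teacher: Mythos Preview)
Your argument is correct and lands on the same key identification $(W_a,\cO_A|_{W_a}) \simeq \Speci^0(A[a^{-1}])$ as the paper, but by a different mechanism. The paper observes directly that
\[
\xymatrix{(W_a,\cO_A|_{W_a}) \ar[r] \ar[d] & (\bR\setminus 0,\sC^\i_{\bR\setminus 0}) \ar[d]\\ (Sp(A),\cO_A) \ar[r]^-{a} & (\bR,\sC^\i_\bR)}
\]
is a pullback in $\Loc^0$ (a general fact about open immersions, readable off Remark \ref{rmk:limitconc}), identifies the right column with $\Speci^0$ applied to $\sC^\i(\bR) \to \sC^\i(\bR\setminus 0)$ via Theorem \ref{thm:mfdgoes}, and then uses that $\Speci^0=\Ran_{j^{\fp}}(\underline{\bR})$ is left exact by construction (Theorem \ref{thm:finenv}) together with the pushout description of $A[a^{-1}]$ from Proposition \ref{localization}~(3). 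Your route instead invokes the adjunction $\Hom_{\Loc^0}(X,\Speci^0(A)) \cong \Hom(A,\Gamma(\cO_X))$ and runs a Yoneda argument. That is valid, but note this adjunction is not what Theorem \ref{thm:mfdgoes} asserts---that theorem only records a factorization and full faithfulness; you are implicitly importing the classical $\Gamma \dashv \Speci^0$ adjunction from the $C^\infty$-scheme literature, whose derived analogue the paper only establishes afterwards as Proposition \ref{prop:specadj}. The paper's pullback argument buys you freedom from that forward reference by staying entirely within the limit-preservation machinery already in place; your argument buys a more hands-on description of why the structure sheaves match, at the cost of citing an external (though standard) input.
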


\begin{proof}
Notice that there is a pullback diagram
$$\xymatrix{\left(  W_a,\cO_A|_{W_a}\right) \ar[r] \ar[d] & \left(  \bR\setminus 0,\sC^\i_{\bR\setminus 0}\right) \ar[d]\\
\left(  Sp\left(  A\right),\cO_A\right) \ar[r] & \left(  \bR,\sC^\i_\bR\right)}$$
in $\Loc^0.$ As the left most vertical arrow can be identified with $\Speci^0\left(  \sC^\i\left(  \bR\right) \to \sC^\i\left(  \bR\setminus 0\right)\right)$ by Theorem \ref{thm:mfdgoes}, and since $\Speci^0$ preserves finite limits, it follows by  Proposition \ref{localization} that $$\left(  W_a,\cO_A|_{W_a}\right)\simeq \Speci\left(  A\left[a^{-1}\right]\right).$$
Hence
\begin{eqnarray*}
\cO_A\left(  W_a\right) &\cong& \Gamma\left(  \cO_A|_{W_a}\right)\\
&\cong& \Gamma\left(  \cO_{A\left[a^{-1}\right]}\right)\\
&\cong& A\left[a^{-1}\right],
\end{eqnarray*}
since any localization of a finitely presented algebra is finitely presented by Remark \ref{rmk:locfp}
\end{proof}

Later, we will give a similar description of $\cO_A$ for $A$ a finitely presented $C^\i$-algebra in $\Spc$ (Corollary \ref{cor:tildeok}).

%\begin{dfn}
%Let $\cO_A$ be the sheafification of the presheaf
%$$\tilde \cO_A:W_a \mapsto A\left[a^{-1}\right].$$ Denote by $\widetilde \Speci\left(  A\right)$ the pair $\left(  Sp\left(  A\right),\cO_A\right).$ Similarly, define $\widetilde{\Speci}^0\left(  A\right)$ for $A$ a finitely presented $C^\i$-algebra in $\Set.$
%\end{dfn}

%\begin{rmk}\label{rmk:specinloc}
%Let $A$ be a finitely presented $C^\i$-algebra in $\Set,$ and let $p:\sC^\i\left(  \bR^n\right) \to A$ be a quotient map.
%Notice that for any $x \in Sp\left(  A\right),$ corresponding to $x:A \to \bR,$ we have an isomorphism
%$\left(  \cO_A\right)_x \simeq \sC^\i\left(  \bR^n\right)/I_{x\left(  p\right)}.$ In particular $\widetilde{\Speci}^0\left(  A\right)$ lies in $\Loc^0.$ Hence, by Remark \ref{rmk:to}, for any finitely presented $C^\i$-algebra $A$ in \emph{spaces}, one has that $\widetilde \Speci\left(  A\right)$ is in $\Loc.$
%\end{rmk}

\subsubsection{Modules for $C^\i$-rings}

\begin{dfn}
Let $A$ be in $\Alg_{\Cart}\left(  \Set\right).$ An \textbf{$A$-module} is a module for the underlying commutative $\bR$-algebra $A^{alg}$ of $A.$ Similarly, if $\cO_X$ is a sheaf of $C^\i$-rings on a space $X,$ an \textbf{$\cO_X$-module} is a module sheaf for the underlying sheaf $\cO_X^{alg}$ of commutative $\bR$-algebras.
\end{dfn}

Let $A$ be finitely presented. Note that there is global sections functor

$$\Gamma:\Mod_{\cO_A} \to \Mod_{A}.$$

\begin{prop}\cite[Theorem 5.19]{joycesch}
The functor $\Gamma$ has a left adjoint $\mathrm{M}\Speci$. Explicitly, $\mathrm{M}\Speci\left(  M\right)$ is the sheafification of the presheaf
\begin{equation}\label{eq:tildeM}
\widetilde{M}:W_a \mapsto M \underset{A^{alg}}\otimes \left(  A\left[a^{-1}\right]\right)^{alg}.
\end{equation}
\end{prop}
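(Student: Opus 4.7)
My plan is to exploit the basis of open sets $\left\{W_a\right\}_{a\in A}$ on $Sp\left(A\right)$, which is closed under finite intersection ($W_a\cap W_b=W_{ab}$), together with the identification $\cO_A\left(W_a\right)\cong A\left[a^{-1}\right]$ from Corollary \ref{cor:sheafex}. I would first verify that $\widetilde{M}$ is a well-defined $\cO_A$-module presheaf on this basis: whenever $W_a\subseteq W_b$, the element $b|_{W_a}$ is invertible in $\cO_A\left(W_a\right)=A\left[a^{-1}\right]$, so the universal property of $C^\infty$-localization yields a unique map $A\left[b^{-1}\right]\to A\left[a^{-1}\right]$ under $A$; tensoring with $M$ over $A^{alg}$ produces the required restriction maps, and each $\widetilde{M}\left(W_a\right)$ is visibly an $A\left[a^{-1}\right]^{alg}=\cO_A\left(W_a\right)^{alg}$-module compatibly with restriction. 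Functoriality and composition identities are then automatic.

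Next, I would define $\mathrm{M}\Speci\left(M\right)$ to be the sheafification of $\widetilde{M}$. Sheafification is a left exact left adjoint and commutes with tensor products, so the $\cO_A$-module structure on the basis extends canonically to an $\cO_A$-module structure on $\mathrm{M}\Speci\left(M\right)$; functoriality in $M$ is immediate. To establish the adjunction, I would chain natural bijections. For any $\cO_A$-module $\mathcal{F}$, a morphism $\mathrm{M}\Speci\left(M\right)\to \mathcal{F}$ is, by the universal property of sheafification, the same as an $\cO_A$-linear morphism of presheaves $\widetilde{M}\to \mathcal{F}$ on the basis, which in turn amounts to a compatible family of $A\left[a^{-1}\right]^{alg}$-linear maps $M\otimes_{A^{alg}}A\left[a^{-1}\right]^{alg}\to \mathcal{F}\left(W_a\right)$. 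By the universal property of extension of scalars, such data corresponds to a compatible family of $A^{alg}$-linear maps $\varphi_a\colon M\to \mathcal{F}\left(W_a\right)$; and since $W_1=Sp\left(A\right)$ and the restrictions $\Gamma\left(\mathcal{F}\right)\to \mathcal{F}\left(W_a\right)$ are $A$-linear, any such family is determined by its component at $a=1$, namely a single $A$-linear map $M\to \Gamma\left(\mathcal{F}\right)$. This yields the desired natural isomorphism $\Hom_{\cO_A}\left(\mathrm{M}\Speci\left(M\right),\mathcal{F}\right)\cong \Hom_{A}\left(M,\Gamma\left(\mathcal{F}\right)\right)$.

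The main technical point will be the first step: verifying that $b|_{W_a}$ is a unit in the $C^\infty$-localization $A\left[a^{-1}\right]$ whenever $W_a\subseteq W_b$, so that the restriction maps in $\widetilde{M}$ are well-defined. This is not a formal consequence of being a unit in the algebraic localization, but follows because $\cO_A$ is a sheaf of $C^\infty$-rings, $b|_{W_a}$ is a nowhere-vanishing section, and $C^\infty$-localizations invert exactly the everywhere-nonzero elements by Proposition \ref{localization}. Once this identification is in hand, the rest of the argument is formal manipulation of universal properties of sheafification and extension of scalars.
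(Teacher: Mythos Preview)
The paper does not supply its own proof of this proposition; it simply cites \cite[Theorem 5.19]{joycesch}. Your argument is the standard one and is essentially correct: define $\widetilde{M}$ on the basis $\{W_a\}$, sheafify, and verify the adjunction by unwinding the universal properties of sheafification and extension of scalars.

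One small correction concerns the main technical point you flag at the end. Your claim that $b|_{W_a}$ is invertible in $A[a^{-1}]$ whenever $W_a\subseteq W_b$ is right, but Proposition~\ref{localization} is not the relevant input: that proposition characterizes localizations of simplicial $C^\infty$-rings and does not directly say that nowhere-vanishing elements of a $C^\infty$-ring are units. The clean justification is the one you hint at in the same sentence: since $\cO_A$ has local stalks with residue field $\bR$, a section whose value at every point is nonzero lies outside every maximal ideal, hence is invertible in each stalk, hence is invertible on $W_a$ by the sheaf condition (compare Lemma~\ref{lem:Ww}). This simultaneously handles well-definedness of $\widetilde{M}$ (that the value on $W_a$ depends only on the open set, not on the choice of $a$) and the existence of restriction maps. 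With that adjustment, your proof goes through.
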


\begin{dfn}
An $A$-module $M$ is \textbf{complete} if the unit $$M \to \Gamma \mathrm{M}\Speci\left(  M\right)$$ is an isomorphism.
\end{dfn}
\begin{rmk}\label{rmk:completemoduleprops}
Let $A$ be a $C^{\i}$-ring. Joyce proves the following facts about the class of complete $A$-modules:
\begin{enumerate}
    \item An $A$-module $M$ is complete if and only if $M$ arises as the global sections of a sheaf of $\mathcal{O}_{Sp(A)}$-modules on $Sp\left(A\right)$. 
    \item If $A$ is finitely presented, then $M$ is a complete $A$-module if $M$ is finitely presented.
\end{enumerate}
\end{rmk}
We will also have need of the following useful property of complete modules.

\begin{prop}\label{completenesspreservation}
Let $f:A\rightarrow B$ be a surjective map of finitely generated $C^{\infty}$-rings, and let $M$ be a $B$-module. If $M$ is complete as an $A$-module (via $f$), then $M$ is complete as a $B$-module. 
\end{prop}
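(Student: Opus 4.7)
The plan is to identify the $A$-module sheaf $\widetilde{M}$ on $Sp(A)$ with the pushforward of the $B$-module sheaf $\widetilde{M}$ on $Sp(B)$ along the continuous map $i:=Sp(f):Sp(B)\to Sp(A)$, and then to compare global sections. First I would verify that $i$ is a closed embedding: since $f:A\to B$ is surjective between finitely generated $C^\infty$-rings, $i$ is injective with image $\{\phi\in Sp(A):\phi|_{\ker f}=0\}$, which is closed, and it is a homeomorphism onto this image because both topologies are initial for the evaluation maps into $\bR$. Moreover $i^{-1}(W_a)=W_{f(a)}$, and, as $f$ is surjective, every basic open of $Sp(B)$ arises this way.

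The key algebraic input comes next. By the universal property of $C^\infty$-localization, the surjection $f$ induces a canonical equivalence $B\oinfty_A A[a^{-1}]\simeq B[f(a)^{-1}]$. Applying $(-)^{alg}$, and invoking Lemma \ref{diffdiscunramified} together with Theorem \ref{unramifiedpreserveseffepi} (which applies because $A\to B$ is an effective epimorphism), this passes to an isomorphism
$$B^{alg}\otimes_{A^{alg}} A[a^{-1}]^{alg}\;\cong\; B[f(a)^{-1}]^{alg}$$
of underlying commutative $\bR$-algebras. Tensoring with the $B$-module $M$ over $B^{alg}$ then produces a natural isomorphism
$$M\otimes_{A^{alg}} A[a^{-1}]^{alg}\;\cong\; M\otimes_{B^{alg}} B[f(a)^{-1}]^{alg},$$
which shows that Joyce's defining presheaf \eqref{eq:tildeM} for $\widetilde{M}$ on the basic opens of $Sp(A)$ agrees, under $i^{-1}$, with the corresponding presheaf on the basis of $Sp(B)$.

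From this I would deduce a canonical isomorphism of sheaves $\widetilde{M}_A\cong i_\ast\widetilde{M}_B$ on $Sp(A)$. It suffices to check this on stalks: at a point $\phi\notin Sp(B)$, choosing $a_0\in\ker f$ with $\phi(a_0)\neq 0$ makes $f(a_0)=0$ invertible in $B\otimes_A A[a_0^{-1}]$, forcing that ring to be zero, so $\widetilde{M}_A$ has vanishing stalk outside $Sp(B)$; for $\phi\in Sp(B)$, the neighborhoods $W_{f(a)}$ are cofinal among the basic neighborhoods of $\phi$ in $Sp(B)$, and the identification of presheaves above shows the stalks coincide. Taking global sections and using that pushforward along $i$ preserves them, $\Gamma_{Sp(A)}(i_\ast\widetilde{M}_B)=\Gamma_{Sp(B)}(\widetilde{M}_B)$, so the unit maps $M\to \Gamma_{Sp(A)}(\widetilde{M}_A)$ and $M\to \Gamma_{Sp(B)}(\widetilde{M}_B)$ are identified; the first is an isomorphism by hypothesis, hence so is the second.

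I expect the main obstacle to be the careful verification that the presheaf-level identification of Step 2 descends correctly to sheafifications on the two different base spaces $Sp(A)$ and $Sp(B)$; the algebra itself is cleanly handled by unramifiedness, but checking compatibility of sheafifications across the closed embedding $i$ (either via the stalk argument sketched above, or by invoking that $i_\ast$ commutes with sheafification for a closed embedding) is the step where one must be most careful.
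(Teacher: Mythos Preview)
Your approach is essentially the same as the paper's: both identify $\mathrm{M}\Speci^A(M)$ with the pushforward $i_*\mathrm{M}\Speci^B(M)$ along $i=Sp(f)$ via a stalk computation and conclude by comparing global sections. The paper phrases the stalk check as a left-cofinality statement for the index posets $\{a\in A:\phi(f(a))\neq 0\}\to\{b\in B:\phi(b)\neq 0\}$, which is exactly your ``$W_{f(a)}$ are cofinal among the basic neighborhoods of $\phi$'' observation; your additional check that the stalks of $\widetilde{M}_A$ vanish off $Sp(B)$ is implicit in the paper's identification of $\cF_M$ with a direct image.

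The one point where you are more careful than the paper is the isomorphism $M\otimes_{A^{alg}}A[a^{-1}]^{alg}\cong M\otimes_{B^{alg}}B[f(a)^{-1}]^{alg}$, which the paper simply asserts. Your route through unramifiedness is correct, but note that Theorem~\ref{unramifiedpreserveseffepi} yields a \emph{derived} pushout $B^{alg}\otimes^L_{A^{alg}}A[a^{-1}]^{alg}\simeq B[f(a)^{-1}]^{alg}$ in $\Alg_{\ComR}(\Spc)$; you should remark that since $B[f(a)^{-1}]$ is discrete (e.g.\ by Proposition~\ref{localization}\,(2), as $B$ is discrete), this forces the higher $\Tor$'s to vanish and hence the derived and underived tensor products agree. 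With that small addition, your argument is complete and matches the paper's.
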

\begin{proof}
Considering $M$ as an $A$-module, $\mathrm{M}\Speci^A\left(  M\right)$ is the sheaf $\cF_M$ associated to the presheaf
\[W_a\mapsto M \underset{A^{alg}}\otimes \left(  A\left[a^{-1}\right]\right)^{alg} \cong M \underset{B^{alg}}\otimes \left(  B\left[f\left(  a\right)^{-1}\right]\right)^{alg}.\]
Meanwhile, $\mathrm{M}\Speci^B\left(  M\right)$ is the sheaf $\cF'_M$ associated to the presheaf 
\[W_b\mapsto  M\underset{B^{alg}} \otimes \left(  B\left[b^{-1}\right]\right)^{alg}.\]
Using that $f$ is surjective, it follows easily that for each point of $\Speci^0\left(  B\right)$, that is, for each $\phi:B\rightarrow \bR$, the map of filtered posets 
$$\left\{a \in A;\ \phi \left(f\left(a\right)\right) \neq 0 \right\} \longrightarrow \left\{b\in B;\,\phi\left(b\right)\neq 0\right\}$$
%\[\{a\in A;\,\phi\left(\left(f\left(a\right)\right)\neq 0\}\longrightarrow  \{b\in B;\,\phi\left(b\right)\neq 0\}\]
is left cofinal. Using this fact, it follows by checking on stalks that $\cF_M$ is simply the direct image sheaf of $\cF'_M$ along the map $Sp\left(  f\right):Sp\left(  B\right) \rightarrow Sp\left(  A\right)$, so the global sections of $\cF_M$ and $\cF'_M$ coincide. Thus, if $M$ is complete as an $A$-module, then $M$ is complete as a $B$-module.
\end{proof}

\begin{thm}\label{thm:complpin}
Let $A$ be a finitely presented $C^\i$-algebra in $\Spc.$ Then for all $n \ge 0,$ $\pi_n\left(  A\right)$ is a complete $\pi_0\left(  A\right)$-module.
\end{thm}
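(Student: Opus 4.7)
The plan is to prove that $\pi_n(A)\simeq \Gamma\bigl(\widetilde{\pi_n(A)}\bigr)$ using the sheaf-theoretic structure of $\Speci(A)=\bigl(Sp(A),\cO_A\bigr)$, leveraging the fact that sheaves of modules over a $C^\infty$-structure sheaf are fine.

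The first step is to identify the sheaf $\widetilde{\pi_n(A)}$. By Proposition \ref{localization}(2), for each basic open $W_a\subset Sp(A)$ there is a natural isomorphism
\[
\pi_n\bigl(A[a^{-1}]\bigr)\;\cong\;\pi_n(A)\underset{\pi_0(A)^{alg}}\otimes\bigl(\pi_0(A)[a^{-1}]\bigr)^{alg},
\]
so the presheaf defining $\widetilde{\pi_n(A)}$ via \eqref{eq:tildeM} coincides with $W_a\mapsto \pi_n\bigl(A[a^{-1}]\bigr)$. In particular, $\widetilde{\pi_n(A)}$ can be identified with the sheaf of $n$-th homotopy groups $\pi_n(\cO_A)$, and the statement becomes $\pi_n(A)\simeq \Gamma\bigl(\pi_n(\cO_A)\bigr)$.

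The second step is to combine two ingredients: (i) $A\simeq \Gamma(\cO_A)$ for $A$ finitely presented (the simplicial analogue of \cite[Theorem 14]{cinfsch}), which follows by Čech descent using $\cO_A(W_a)\simeq A[a^{-1}]$ (an application of $\Speci$ to the defining pullback in Proposition \ref{localization}(3)) together with the hypersheaf property of $\cO_A$ from Remark \ref{rmk:hyper}; and (ii) the fact that $\widetilde{\pi_n(A)}$ is a sheaf of modules over the structure sheaf $\pi_0(\cO_A)$, and the latter is a sheaf of $C^\infty$-rings admitting smooth partitions of unity subordinate to any open cover of $Sp(A)$. Point (ii) implies that $\widetilde{\pi_n(A)}$ is fine, hence acyclic for global sections.

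With these in hand, the hypercohomology spectral sequence \eqref{hypercohspecseq} applied to $\cO_A$ has $E_2^{p,q}=H^p\bigl(Sp(A),\pi_{-q}(\cO_A)\bigr)$ converging to $\pi_{-p-q}\bigl(\Gamma(\cO_A)\bigr)$; by fineness, all rows with $p>0$ vanish, and the spectral sequence collapses to give $\pi_n\bigl(\Gamma\cO_A\bigr)\cong \Gamma\bigl(\pi_n(\cO_A)\bigr)=\Gamma\bigl(\widetilde{\pi_n(A)}\bigr)$. Combined with $A\simeq \Gamma\cO_A$, this yields $\pi_n(A)\simeq \Gamma\bigl(\widetilde{\pi_n(A)}\bigr)$, which is the completeness of $\pi_n(A)$.

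The main obstacle is establishing $A\simeq \Gamma(\cO_A)$ in the simplicial setting; the bootstrap from Joyce's discrete result goes through by first using the $0$-truncation comparison of Remark \ref{rmk:to} together with the fact that $t_0\Speci(A)\simeq \Speci^0(\pi_0(A))$, and then patching the local identifications $\cO_A(W_a)\simeq A[a^{-1}]$ into a global descent statement using hypersheaf-ness of $\cO_A$ and the finite covering dimension of $Sp(A)$ (as a closed subset of some $\bR^m$, cf.\ Remark \ref{rmk:hyper}).
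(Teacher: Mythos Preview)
Your argument is circular. The key input you invoke---$\cO_A(W_a)\simeq A[a^{-1}]$, and hence $\Gamma(\cO_A)\simeq A$---is precisely Corollary~\ref{cor:tildeshf} (together with Corollary~\ref{cor:tildeok}) in the paper, and the proof of Corollary~\ref{cor:tildeshf} \emph{uses} Theorem~\ref{thm:complpin}. Your attempt to obtain $\cO_A(W_a)\simeq A[a^{-1}]$ independently, by applying the left exact functor $\Speci$ to the pushout square of Proposition~\ref{localization}(3), only yields $\Speci(A[a^{-1}])\simeq (W_a,\cO_A|_{W_a})$; it does \emph{not} give $\Gamma(\cO_A|_{W_a})\simeq A[a^{-1}]$. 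To extract that, you would need to know that the unit $B\to\Gamma(\cO_B)$ is an equivalence for finitely presented $B$, which is exactly the statement you are trying to bootstrap. The ``\v{C}ech descent'' and ``bootstrap from Joyce's discrete result'' that you sketch at the end do not break this loop: the discrete statement (Remark~\ref{rmk:to}) controls only $\pi_0$, and to run the hypercohomology spectral sequence for the higher homotopy groups you must already identify $\pi_n(\cO_A)$ with $\widetilde{\pi_n(A)}$, which again rests on $\cO_A(W_a)\simeq A[a^{-1}]$.

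The paper's proof avoids this circularity entirely by working on the algebra side with no reference to $\cO_A$. It shows (Proposition~\ref{afpcell}) that a finitely presented $A$ is a retract of a finite cell object, and then, using unramifiedness, exhibits $A^{alg}$ as a quasi-free cdga $\tilde A=\sC^\infty(\bR^m)[\epsilon^i_j]$ with finitely many generators in each degree. The truncation $\tau_{\ge -(n+1)}\tilde A$ is then a \emph{finitely presented} $\sC^\infty(\bR^m)$-module, hence complete by Joyce's result on discrete modules (Remark~\ref{rmk:completemoduleprops}(2)); the hypercohomology spectral sequence (now applied on $\bR^m$, where the structure sheaf is certainly known) shows that $\pi_n(A)=H^{-n}(\tilde A)$ is a complete $\sC^\infty(\bR^m)$-module, and Proposition~\ref{completenesspreservation} transfers completeness along the surjection $\sC^\infty(\bR^m)\to\pi_0(A)$. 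The logical order is: cell models $\Rightarrow$ Theorem~\ref{thm:complpin} $\Rightarrow$ $\widetilde{\cO_A}$ is a sheaf $\Rightarrow$ $\cO_A\simeq\widetilde{\cO_A}$ $\Rightarrow$ $\Gamma(\cO_A)\simeq A$; your proposal tries to run this in reverse.
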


We prove this theorem at the end of this section. 
\begin{rmk}\label{rmk:finesheaves}
For the proof of the next corollary, we will use that the structure sheaf of the spectrum $\Speci^0\left(A\right)$ of a finitely presented $C^{\i}$-ring $A$ is a \emph{fine} sheaf, which is proven as \cite[Corollary 4.42]{joycesch}. It follows that sheaves of $\mathcal{O}_{Sp(A)}$-modules are also fine; in particular, the derived global section of such sheaves of modules vanish in degrees other than $0$. 
\end{rmk}

\begin{cor}\label{cor:tildeshf}
Let $A$ be a finitely presented $C^\i$-algebra in $\Spc.$ Then the presheaf
$$\widetilde{\cO_A}:W_a \mapsto A\left[a^{-1}\right]$$ is a sheaf.
\end{cor}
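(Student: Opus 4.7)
My plan is to reduce the sheaf condition to a levelwise statement about homotopy groups, then invoke Theorem~\ref{thm:complpin} (completeness) together with the fineness of structure sheaves of $C^\infty$-schemes. Given a basic open cover $\{W_{a_i}\}_{i \in I}$ of $W_a$, I must show the natural map
\[
A[a^{-1}] \longrightarrow \lim\left(\prod_i A[(a_ia)^{-1}] \rightrightarrows \prod_{i,j} A[(a_ia_ja)^{-1}] \to \cdots \right)
\]
is an equivalence in $\Alg_{\Cart}(\Spc)$. Since such limits are computed pointwise and the evaluation at the generator $\bR \in \Cart$ is conservative, it suffices to compare homotopy groups via the Bousfield--Kan spectral sequence for the associated cosimplicial space. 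Because $Sp(A)$ embeds as a closed subspace of some $\bR^N$, it has finite covering dimension (and is paracompact), so \v{C}ech cohomology vanishes above a bounded degree and convergence of the spectral sequence is automatic.

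By Proposition~\ref{localization}(2), $\pi_q(A[b^{-1}]) \cong \pi_q A \otimes_{\pi_0 A} \pi_0 A[b^{-1}]$, so the cosimplicial complex on $\pi_q$'s is precisely the \v{C}ech complex of the presheaf $\widetilde{\pi_q A}$ of~\eqref{eq:tildeM}. By Theorem~\ref{thm:complpin}, $\pi_q A$ is a complete $\pi_0 A$-module, so $\pi_q A \cong \Gamma(\mathrm{M}\Speci^{\pi_0 A}\pi_q A)$. Moreover, every localization $A[b^{-1}]$ is itself finitely presented by Remark~\ref{rmk:locfp}, so Theorem~\ref{thm:complpin} applied to $A[b^{-1}]$ gives that $\pi_q(A[b^{-1}])$ is complete as a $\pi_0 A[b^{-1}]$-module. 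Using the standard compatibility of $\mathrm{M}\Speci$ with restriction along open immersions of $C^\infty$-schemes, the restriction $\mathrm{M}\Speci(\pi_q A)|_{W_b}$ identifies with $\mathrm{M}\Speci^{\pi_0 A[b^{-1}]}(\pi_q A[b^{-1}])$, whose global sections equal $\pi_q A[b^{-1}]$ by the latter completeness. Hence $\widetilde{\pi_q A}$ agrees with the sheaf $\mathrm{M}\Speci(\pi_q A)$ on the basis $\{W_b\}$, and in particular is a sheaf there.

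To conclude I invoke fineness: by Remark~\ref{rmk:finesheaves} the structure sheaf on $Sp(\pi_0 A)$ is fine, so every module sheaf over it is fine, with vanishing higher \v{C}ech cohomology for every open cover. Thus in the Bousfield--Kan spectral sequence $E_2^{p,q} = \check{H}^p(\{W_{a_i}\},\widetilde{\pi_q A})$ only the $p=0$ row survives, with $E_2^{0,q} = \pi_q A[a^{-1}]$, so the \v{C}ech map induces an isomorphism on every $\pi_n$ and $\widetilde{\cO_A}$ is a sheaf. The principal obstacle is the identification in the second paragraph — that $\widetilde{\pi_q A}$ coincides with $\mathrm{M}\Speci(\pi_q A)$ on basic opens. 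This rests on two points: applying Theorem~\ref{thm:complpin} to each localization $A[b^{-1}]$ to transport completeness, and verifying that Joyce's $\mathrm{M}\Speci$-construction commutes with restriction to open subschemes.
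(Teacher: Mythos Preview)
Your proof is correct and uses the same essential ingredients as the paper: Proposition~\ref{localization}(2) to identify $\pi_q(A[b^{-1}])$, Theorem~\ref{thm:complpin} for completeness, and fineness of $\cO_{Sp(A)}$-module sheaves to collapse a spectral sequence. The only organizational difference is that the paper first sheafifies $\widetilde{\cO_A}$ to $\overline{\cO_A}$ and then shows $\Gamma(\overline{\cO_A})\simeq A$ via the hypercohomology spectral sequence (invoking hypercompleteness from Remark~\ref{rmk:hyper} at the end), whereas you work directly with a \v{C}ech cover and the Bousfield--Kan spectral sequence; these are two packagings of the same computation. Your extra step of applying Theorem~\ref{thm:complpin} to each $A[b^{-1}]$ to identify $\widetilde{\pi_qA}$ with $\mathrm{M}\Speci(\pi_qA)$ on basic opens is exactly what the paper accomplishes by its reduction ``it suffices to show that for any finitely presented $A$, global sections of the sheafification of $\widetilde{\cO_A}$ is equivalent to $A$,'' so the two arguments match up.
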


\begin{proof}
Notice that $$\widetilde{\cO_A}|_{W_a} \cong \widetilde{\cO_{A\left[a^{-1}\right]}}.$$ Since $W_a$ is an open subset of $Sp\left(  A\right),$ and $A\left[a^{-1}\right]$ is also finitely presented, it suffices to show that for any finitely presented $A,$ global sections of the sheafification of $\widetilde{\cO_A}$ is equivalent to $A.$ Denote this sheafification by $\overline{\cO_A}.$

Since taking homotopy groups of sheaves commutes with sheafification, we find that each $\pi_n\left(  \overline{\cO_A}\right)$ is the sheafification of the presheaf
\[  W_a\mapsto \pi_n\left(  A\left[a^{-1}\right]\right)=\pi_n\left(  A\right)\underset{\pi_0\left(  A\right)}\otimes \pi_0\left(  A\right)\left[a^{-1}\right], \]
where the last equality follows from Proposition \ref{localization}.
Now since $\pi_0\left(  A\right)$ is finitely presented as a $\Cart$-algebra in $\Set,$ the presheaf 
$$W_a\rightarrow \pi_0\left(  A\right)\left[a^{-1}\right]$$ is already a sheaf by Corollary \ref{cor:sheafex}. Also, each $\pi_n\left(  A\right)$ is a complete $\pi_0\left(  A\right)$-module by Theorem \ref{thm:complpin}, so the module of global sections of the sheaves of higher homotopy groups coincide with the homotopy groups $\pi_n\left(  A\right)$, by definition of completeness. To relate these sheaves of homotopy groups to the homotopy groups of $\Gamma\left(  \overline{\cO_A}\right)$, we have the hypercohomology spectral sequence 
\[ E_2^{p,q}=H^p\left(  Sp\left(  A\right),\pi_q\left(  \overline{\cO_A}\right)\right) \Rightarrow \pi_{q-p}\left(  \Gamma\left(  \overline{\cO_A}\right)\right).  \]
All the sheaves $\pi_q\left(  \overline{\cO_A}\right)$ are fine sheaves as they are sheaves of $\pi_0\left(  \overline{\cO_A}\right)$-modules, and $\pi_0\left(  \overline{\cO_A}\right)$ is a fine sheaf, so this spectral sequence collapses at the second page and we see that $\pi_{q}\left(  \Gamma\left(  \overline{\cO_A}\right)\right)=\pi_q\left(  A\right)$. In summary, we have found that the canonical map $ \Gamma\left(  \overline{\cO_A}\right) \rightarrow A$ induces isomorphism on all homotopy groups, so, in light of Remark \ref{rmk:hyper}, we conclude it is an equivalence.
\end{proof}

\subsubsection{More on $\Speci$}

\begin{lem}\label{lem:Ww}
Let $\left(  X,\cO_X\right)$ be a homotopically $C^\i$-ringed space, and let $f \in \Gamma\left(  \cO_X\right)$ be a global section. Denote by
$$W_f:=\left\{x \in X\mspace{3mu}|\mspace{3mu}\left[f\right]_x \notin \mathfrak{m}_x \subset \left(  \pi_0\cO_X\right)_x\right\},$$ where $\left[f\right]$ denotes the image of $f$ in $\pi_0\Gamma\left(  \cO_X\right).$ Then $W_f$ is an open subset of $X$ and $\left[f\right]|_{W_f}$ is invertible.
\end{lem}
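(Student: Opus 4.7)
The plan is to show that invertibility of $[f]$ at a stalk is a locally verifiable condition, which will simultaneously handle both assertions. Since $(X,\cO_X)\in \Loc$, the hypothesis says that each stalk $(\pi_0\cO_X)_x$ is a local $C^\infty$-ring; in particular, $[f]_x \notin \mathfrak{m}_x$ is equivalent to $[f]_x$ being a unit. I will work with the natural sheaf structure on $\pi_0\cO_X$ (the structure sheaf of $t_0(X,\cO_X) \in \Loc^0$), so that the stalk is the filtered colimit $(\pi_0\cO_X)_x \simeq \varinjlim_{x\in U}(\pi_0\cO_X)(U)$.

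To prove openness, I will fix $x \in W_f$ and pick $g_x \in (\pi_0\cO_X)_x$ with $[f]_x \cdot g_x = 1$. By the filtered colimit description of the stalk, $g_x$ lifts to a section $g \in (\pi_0\cO_X)(U)$ on some open neighborhood $U$ of $x$, and the identity $[f]_x \cdot g_x = 1$ will hold on the nose on a possibly smaller open $V \subseteq U$ containing $x$. Then $[f]|_V$ is a unit in $(\pi_0\cO_X)(V)$, so for every $y \in V$ the stalk $[f]_y$ is a unit in $(\pi_0\cO_X)_y$ and hence avoids $\mathfrak{m}_y$; this gives $V \subseteq W_f$ and establishes openness.

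For the second assertion, the previous step produces, for each $x \in W_f$, an open $V_x \subseteq W_f$ together with a local inverse $g^{(x)} \in (\pi_0\cO_X)(V_x)$ to $[f]|_{V_x}$. Since inverses in a commutative ring are unique, the local inverses $g^{(x)}$ and $g^{(y)}$ must agree on the overlaps $V_x \cap V_y$, so the sheaf property of $\pi_0\cO_X$ will glue them into a global section $g \in (\pi_0\cO_X)(W_f)$ satisfying $[f]|_{W_f}\cdot g = 1$, i.e.\ an inverse to $[f]|_{W_f}$.

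The only subtle point will be to confirm that $\pi_0\cO_X$ really is a sheaf of discrete $C^\infty$-rings whose stalks compute $\pi_0$ of the stalks of $\cO_X$; this is built into the definition of the truncation functor $t_0:\Loc \to \Loc^0$ and ultimately follows from the fact that $\pi_0$ commutes with filtered colimits, so it poses no real obstacle. Once this is in place, both claims reduce to the standard principle that a stalkwise unit in a sheaf of commutative rings is a unit on some open neighborhood.
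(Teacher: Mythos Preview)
Your proposal is correct and follows essentially the same approach as the paper's proof: both argue that a stalkwise unit in the sheaf $\pi_0\cO_X$ is a unit on a neighborhood (via the filtered colimit description of stalks), and then use the sheaf condition to obtain invertibility on all of $W_f$. Your version is slightly more explicit---you spell out the lifting of the inverse from the stalk and the gluing of local inverses via uniqueness---whereas the paper's argument is terser but identical in substance.
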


\begin{proof}
First we will show that $W_f$ is open. Let $x \in W_f.$ Then, there exists an open neighborhood $U$ of $x$ such that $\left[f\right]|_U$ is invertible in $\pi_0\cO_X\left(  U\right).$ But for all $y \in U,$ since $\left[f\right]|_U$ is invertible, its stalk at $y$ must also be invertible, and hence be outside the maximal ideal $\mathfrak{m}_y,$ and hence we conclude that $y \in W_f.$ Now we will show that $\left[f\right]|_{W_f}$ is invertible. However, since $\pi_0\cO_X$ is a sheaf, and since the image of an invertible element under a unital ring homomorphism is always invertible, it suffices to show that there exists a cover of $$W_f=\underset{\alpha} \bigcup U_\alpha$$ such that $\left[f\right]|_{U_\alpha}$ is invertible for all $\alpha,$ but we may find such an open neighborhood of $x \in W_f,$ for all $x,$ so we are done.
\end{proof}

\begin{prop}\label{prop:specadj}
Let $\left(  X,\cO_X\right)$ be a homotopically $C^\i$-ringed space and let $A \in \Alg_{\Cart}\left(  \Spc\right)^{\fp}.$ Then taking global sections yields an equivalence
$$\Map_{\Loc}\left(  \left(  X,\cO_X\right),\left(  Sp\left(  A\right),\widetilde{\cO_A}\right)\right) \simeq \Map_{\Alg_{\Cart}\left(  \Spc\right)}\left(  A,\Gamma\left(  \cO_X\right)\right),$$
where $\widetilde{\cO_A}$ is the sheaf in Corollary \ref{cor:tildeshf}.
\end{prop}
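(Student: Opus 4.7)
The plan is to construct explicit quasi-inverse equivalences between the two mapping spaces. The forward map is easy: for $(f,\alpha) \in \Map_\Loc((X, \cO_X), (Sp(A), \widetilde{\cO_A}))$, taking global sections of $\alpha$ and using the identification $\Gamma(\widetilde{\cO_A}) \simeq A$ supplied by Corollary \ref{cor:tildeshf} produces a morphism $\Gamma(\alpha) : A \to \Gamma(\cO_X)$ in $\Alg_{\Cart}(\Spc)$.

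For the backward direction, I would start with $\varphi : A \to \Gamma(\cO_X)$ and build a pair $(f_\varphi, \alpha_\varphi)$. The continuous map $f_\varphi : X \to Sp(A)$ sends $x \in X$ to the composite
\[
\pi_0(A) \xrightarrow{\pi_0(\varphi)} \pi_0\Gamma(\cO_X) \longrightarrow (\pi_0\cO_X)_x \twoheadrightarrow (\pi_0\cO_X)_x / \mathfrak{m}_x \simeq \bR,
\]
which is an $\bR$-point of $\pi_0(A)$ and hence a point of $Sp(A)$; continuity follows from a direct unwinding giving $f_\varphi^{-1}(W_a) = W_{\varphi(a)}$ in the notation of Lemma \ref{lem:Ww}. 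Since every open of $Sp(A)$ is of the form $W_a$ and the collection $\{W_a\}$ is closed under finite intersection ($W_a \cap W_b = W_{ab}$), to define the sheaf map $\alpha_\varphi : \widetilde{\cO_A} \to (f_\varphi)_* \cO_X$ it suffices to specify compatible maps on this basis. For each $a \in \pi_0(A)$, the composite $A \xrightarrow{\varphi} \Gamma(\cO_X) \to \cO_X(W_{\varphi(a)}) = (f_\varphi)_*\cO_X(W_a)$ sends $a$ to an invertible element by Lemma \ref{lem:Ww}; by the universal property of $C^\infty$-localization established in Proposition \ref{localization}, it therefore extends essentially uniquely to a map $A[a^{-1}] = \widetilde{\cO_A}(W_a) \to (f_\varphi)_*\cO_X(W_a)$. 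These basis maps are automatically compatible with restriction along $W_{ab} \hookrightarrow W_a$ by naturality of that same universal property.

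For the round trips: $\Gamma(\alpha_\varphi) \simeq \varphi$ follows by evaluating at $W_1 = Sp(A)$, where $A[1^{-1}] = A$. Conversely, starting from $(f, \alpha)$ and setting $\varphi := \Gamma(\alpha)$, one has $f_\varphi = f$ because $\alpha$ is a morphism in $\Loc$ and hence induces local maps on all stalks (Remark \ref{rmk:local}), forcing the residue field map $\pi_0(A) \to \bR$ at $x$ to be the one defining $f(x)$; and $\alpha_\varphi \simeq \alpha$ follows from the uniqueness clause of Proposition \ref{localization}, since both $\alpha$ and $\alpha_\varphi$ extend the same map $A \to \cO_X(W_{\varphi(a)})$ to $A[a^{-1}]$. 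The main obstacle is to make all of this rigorous in the $\infty$-categorical setting: \emph{unique extension} must be read as contractibility of a space of fillers, and one must promote the compatible basis data to a genuine morphism in the $\infty$-category of sheaves of $\Cart$-algebras on $Sp(A)$. Both tasks are handled because Proposition \ref{localization} is itself an $\infty$-categorical universal property, and because sheaves on the basis $\{W_a\}$ (closed under finite intersection) extend uniquely to sheaves on $Sp(A)$ via a standard right Kan extension argument.
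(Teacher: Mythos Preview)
Your proposal is correct and follows essentially the same route as the paper: the continuous map $f_\varphi$ is exactly the paper's $\theta(\varphi)$, the identification $f_\varphi^{-1}(W_a)=W_{\varphi(a)}$ and the recovery $f=\theta(\Gamma(\alpha))$ via locality (Remark \ref{rmk:local}) match the paper verbatim, and the determination of $\alpha$ on each basic open $W_a$ via the universal property of $A\to A[a^{-1}]$ (your Proposition \ref{localization}, the paper's Lemma \ref{lem:Ww} plus the same universal property) is identical. The only organizational difference is that the paper phrases the endgame as ``the homotopy fiber over $\varphi$ is contractible'' using the decomposition of Remark \ref{rmk:mappingsp}, whereas you build an explicit inverse and check round trips; you yourself note that in the $\infty$-categorical setting these collapse to the same contractibility statement, so the two arguments are equivalent.
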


\begin{proof}
Let $\varphi:A \to \Gamma\left(  \cO_X\right)$ be map of $\Cart$-algebras. Define a map
\begin{eqnarray*}
\theta\left(  \varphi\right):X &\to& Sp\left(  A\right)\\
x &\mapsto& \left(  A \stackrel{\varphi}{\to} \Gamma\left(  \cO_X\right) \to \left(  \cO_X\right)_x \to \left(  \pi_0\cO_X\right)_x \to \left(  \pi_0\cO_X\right)_x/\mathfrak{m}_x=\bR\right). 
\end{eqnarray*}
Notice that $\theta\left(  \varphi\right)^{-1}\left(  W_a\right) = W_{\varphi\left(  a\right)},$ so $\theta\left(  \varphi\right)$ is continuous.
Suppose now that $$\left(  f,\alpha\right):\left(  X,\cO_X\right) \to \left(  Sp\left(  A,\right),\widetilde{\cO_A}\right).$$ Then since we have a commutative diagram
$$\xymatrix{A \ar[d] \ar[r] & \widetilde{\cO_{A,f\left(  x\right)}} \ar[d] \ar[r] & \pi_0\widetilde{\cO_{A,f\left(  x\right)}} \ar[d]\\
\Gamma\left(  \cO_X\right) \ar[r] & \cO_{X,x} \ar[r] & \pi_0\cO_{X,x}}$$
the pre-image of $\mathfrak{m}_x$ is $\mathfrak{m}_{f\left(  x\right)},$ and it follows that $f=\theta\left(  \Gamma\left(  \alpha\right)\right).$
To show that $$\Map_{\Loc}\left(  \left(  X,\cO_X\right),\left(  Sp\left(  A,\right),\widetilde{\cO_A}\right)\right)  \to \Map_{\Alg_{\Cart}\left(  \Spc\right)}\left(  A,\Gamma\left(  \cO_X\right)\right)$$ is an equivalence, it suffices to prove that the homotopy fiber over any $\varphi:A \to \Gamma\left(  \cO_X\right)$ is contractible. By the above observation together with Remark \ref{rmk:mappingsp}, we may identify this homotopy fiber with the homotopy fiber of the map
$$\Map\left(  \widetilde{\cO_A},\theta\left(  \varphi\right)_*\cO_X\right)  \to \Map_{\Alg_{\Cart}\left(  \Spc\right)}\left(  A,\Gamma\left(  \cO_X\right)\right).$$ It therefore suffices to show that $\varphi$ determines $\alpha$ up to a contractible space of choices. Let $a \in A.$ Consider the naturality square for $\alpha$
$$\xymatrix@C=2.5cm{\widetilde{\cO_A}\left(  Sp\left(  A\right)\right)=A \ar[r]^-{\alpha_{Sp\left(  A\right)}=\varphi} \ar[d]& \theta\left(  \varphi\right)_*\cO_X\left(  Sp\left(  A\right)\right)=\Gamma\left(  \cO_X\right) \ar[d]\\
\widetilde{\cO_A}\left(  W_a\right)=A\left[a^{-1}\right] \ar[r]^-{\alpha_{W_a}} & \theta_*\left(  \varphi\right)_*\left(  W_a\right)=\cO_X\left(  W_{\varphi\left(  a\right)}\right)}.$$
By the universal property of $A \to A\left[a^{-1}\right],$ it follows by Lemma \ref{lem:Ww} that $\alpha_{W_a}$ is determined up to a contractible space of choices by $\varphi.$ The result now follows.
\end{proof}

\begin{cor}\label{cor:tildeok}
For any $A \in \Alg_{\Cart}^{\fp},$ $$\cO_A \simeq \widetilde{\cO_A}.$$
\end{cor}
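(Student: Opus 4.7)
The plan is to use the universal characterization of $\Speci = \Ran_{j^{\fp}}(\underline{\bR})$ afforded by Theorem \ref{thm:finenv}: I will check that the functor
\[
F : (\Alg_{\Cart}(\Spc)^{\fp})^{op}\longrightarrow \Loc, \qquad A \mapsto (Sp(A), \widetilde{\cO_A}),
\]
is left exact and that $F \circ j^{\fp} \simeq \underline{\bR}$ as $\Cart$-algebras in $\Loc$; uniqueness then forces $F \simeq \Speci$, and hence $\cO_A \simeq \widetilde{\cO_A}$.

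The restriction to $\Cart$ is essentially a direct computation. For $A = \sC^\infty(\bR^n)$, unwinding the construction of $Sp$ gives $Sp(\sC^\infty(\bR^n)) = \bR^n$ with its standard topology, while for each $a \in \sC^\infty(\bR^n)$ the localization $\sC^\infty(\bR^n)[a^{-1}]$ coincides with $\sC^\infty(W_a) = \sC^\infty_{\bR^n}(W_a)$, by Corollary \ref{cor:sheafex} combined with Theorem \ref{thm:mfdgoes}. Hence $\widetilde{\cO_{\sC^\infty(\bR^n)}}$ agrees with $\sC^\infty_{\bR^n}$ on the basis of principal opens and therefore as sheaves, so $F(\sC^\infty(\bR^n)) \simeq (\bR^n, \sC^\infty_{\bR^n}) = \underline{\bR}(\bR^n)$ naturally in $\bR^n$.

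For left exactness, I would appeal to Proposition \ref{prop:specadj}, which supplies a natural equivalence $\Map_\Loc(Y,F(A)) \simeq \Map_{\Alg_{\Cart}(\Spc)}(A,\Gamma(\cO_Y))$ for every $Y \in \Loc$ and $A \in \Alg_{\Cart}(\Spc)^{\fp}$. Since finite colimits in $\Alg_{\Cart}(\Spc)^{\fp}$ agree with those in the ambient $\Alg_{\Cart}(\Spc)$ by Lemma \ref{lem:retcol}, for any finite diagram $\{A_i\}$ in $\Alg_{\Cart}(\Spc)^{\fp}$ with colimit $A$ one obtains
\[
\Map_\Loc(Y,F(A)) \simeq \lim_i \Map_{\Alg_{\Cart}(\Spc)}(A_i,\Gamma(\cO_Y)) \simeq \lim_i \Map_\Loc(Y,F(A_i)),
\]
which, by the Yoneda lemma in $\Loc$ (noting that $\Loc$ has finite limits), identifies $F(A)$ with $\lim_i F(A_i)$; thus $F$ sends finite colimits in $\Alg_{\Cart}(\Spc)^{\fp}$ to finite limits in $\Loc$ and is left exact.

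Combining these two facts with Theorem \ref{thm:finenv} produces an equivalence $F \simeq \Speci$ in $\Fun^{\lex}((\Alg_{\Cart}(\Spc)^{\fp})^{op},\Loc)$, and since both functors cover $Sp$ along the forgetful functor $\Loc \to \Top$, this equivalence may be taken over $\id_{Sp(A)}$ for each $A$, yielding the asserted equivalence $\widetilde{\cO_A} \simeq \cO_A$ of sheaves on $Sp(A)$. The only slightly delicate point is ensuring the final equivalence lies over the identity of the underlying space, which is handled by working in the evident slice of $\Fun((\Alg_{\Cart}(\Spc)^{\fp})^{op},\Loc)$ over the functor $Sp$; everything else is a formal consequence of the preceding machinery.
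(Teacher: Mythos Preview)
Your proposal is correct and follows essentially the same route as the paper: both use Proposition~\ref{prop:specadj} to exhibit $(Sp(A),\widetilde{\cO_A})$ as representing $\Map_{\Alg_{\Cart}(\Spc)}(A,\Gamma(\blank))$, deduce left exactness of the resulting functor $\widetilde{\Speci}$ via Yoneda, check agreement with $\underline{\bR}$ on the generator, and invoke Theorem~\ref{thm:finenv}. The paper extracts functoriality of $A\mapsto(Sp(A),\widetilde{\cO_A})$ from the Yoneda lemma rather than positing $F$ up front, and only checks the restriction at $\bR$ (which suffices since both functors preserve finite products); your added remark about working in the slice over $Sp$ to pin down the equivalence over $\id_{Sp(A)}$ is a point the paper leaves implicit.
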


\begin{proof}
By Proposition \ref{prop:specadj}, we conclude that $\left(  Sp\left(  A\right),\cO_A\right)$ represents the functor 
$$\Map_{\Alg_{\Cart}\left(  \Spc\right)}\left(  A,\Gamma\left(  \blank\right)\right):\Loc^{op} \to \Alg_{\Cart}\left(  \Spc\right).$$ Since the Yoneda embedding is fully faithful, we can conclude that the functor
$$A \mapsto \Map_{\Alg_{\Cart}\left(  \Spc\right)}\left(  A,\Gamma\left(  \blank\right)\right)$$ induces a functor
$$\widetilde{\Speci}:\left(  \Alg_{\Cart}\left(  \Spc\right)^{\fp}\right)^{op} \to \Loc$$ such that
$$\widetilde{\Speci}\left(  A\right)=\left(  Sp\left(  A\right),\cO_A\right).$$ Moreover, by Lemma \ref{lem:retcol} this functor manifestly preserves finite limits, since the original functor did and the Yoneda embedding reflects limits. Notice that by Proposition \ref{localization}, $$\widetilde{\Speci}\left(  \sC^\i\left(  \bR\right)\right) \simeq \left(  \bR,\sC^\i_{\bR}\right) \simeq \Speci\left(  \sC^\i\left(  \bR\right)\right).$$ Since both $\Speci$ and $\widetilde{\Speci}$ preserve finite limits, we conclude by Theorem \ref{thm:finenv} that they are equivalent. The result now follows.
\end{proof}

\begin{thm}\label{thm:speciff}
The functor $$\Speci:\left(  \Alg_{\Cart}\left(  \Spc\right)^{\fp}\right)^{op} \to \Loc$$ is fully faithful.
\end{thm}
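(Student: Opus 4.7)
The plan is to reduce the statement directly to Proposition \ref{prop:specadj} combined with the identification of global sections of the structure sheaf. For any $A, B \in \Alg_{\Cart}\left(\Spc\right)^{\fp}$, I would apply Proposition \ref{prop:specadj} with $\left(X, \cO_X\right) = \Speci\left(B\right) = \left(Sp\left(B\right), \cO_B\right)$. Since $\cO_A \simeq \widetilde{\cO_A}$ by Corollary \ref{cor:tildeok}, this yields a natural equivalence
$$\Map_{\Loc}\left(\Speci\left(B\right), \Speci\left(A\right)\right) \simeq \Map_{\Alg_{\Cart}\left(\Spc\right)}\left(A, \Gamma\left(\cO_B\right)\right).$$

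The next step is to identify $\Gamma\left(\cO_B\right) \simeq B.$ By Corollary \ref{cor:tildeok} we have $\cO_B \simeq \widetilde{\cO_B}$, and by the construction of $\widetilde{\cO_B}$ together with the computation in the proof of Corollary \ref{cor:tildeshf}, the global sections $\widetilde{\cO_B}\left(Sp\left(B\right)\right)$ are canonically identified with $B$ (indeed, $Sp\left(B\right) = W_1$ and one has $B\left[1^{-1}\right] \simeq B$, so this is literally a value of the defining presheaf). Substituting, we obtain
$$\Map_{\Loc}\left(\Speci\left(B\right), \Speci\left(A\right)\right) \simeq \Map_{\Alg_{\Cart}\left(\Spc\right)}\left(A, B\right) = \Map_{\left(\Alg_{\Cart}\left(\Spc\right)^{\fp}\right)^{op}}\left(B, A\right).$$

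The final thing to check, which I expect to be the only real subtlety, is that this chain of equivalences agrees with the map on mapping spaces induced by the functor $\Speci$ itself. This is essentially automatic from the construction of the adjunction in Proposition \ref{prop:specadj}: unwinding its proof, a morphism $\left(f, \alpha\right) : \Speci\left(B\right) \to \Speci\left(A\right)$ is sent to the composite $A \xrightarrow{\Gamma\left(\alpha\right)} \Gamma\left(\cO_B\right) \simeq B$, and applying $\Speci$ to a morphism $\varphi : A \to B$ in $\Alg_{\Cart}\left(\Spc\right)^{\fp}$ yields exactly a morphism whose $\Gamma\left(\alpha\right)$ recovers $\varphi$ after this identification. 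Hence the functor $\Speci$ induces an equivalence on each mapping space, proving full faithfulness.
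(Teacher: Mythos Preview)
Your proposal is correct and takes essentially the same approach as the paper: both apply Proposition \ref{prop:specadj} to $\left(X,\cO_X\right)=\Speci\left(B\right)$, use Corollary \ref{cor:tildeok} to identify $\cO_A\simeq\widetilde{\cO_A}$, and then observe $\Gamma\left(\cO_B\right)\simeq B$. Your additional remark that one should check the resulting equivalence coincides with the map induced by $\Speci$ is a point the paper leaves implicit, so if anything your write-up is slightly more careful.
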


\begin{proof}
Let $A$ and $B$ be in $\Alg_{\Cart}\left(  \Spc\right)^{\fp},$ then
\begin{eqnarray*}
\Map\left(  A,B\right) &\simeq& \Map\left(  A,\Gamma\left(  \cO_B\right)\right)\\
&\simeq& \Map\left(  \left(  Sp\left(  B\right),\cO_B\right),\left(  Sp\left(  A\right),\cO_A\right)\right).
\end{eqnarray*}
\end{proof}

\begin{prop}\label{prop:mansame}
Let $M$ be a manifold. Then
$$\Speci\left(  \sC^\i\left(  M\right)\right) \simeq \left(  M,\sC^\i_M\right).$$
\end{prop}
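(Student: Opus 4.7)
The plan is to directly compute $\Speci(\sC^\i(M))$ using the explicit description of the structure sheaf furnished by Corollary \ref{cor:tildeok}. By Remark \ref{cor:manfp}, $\sC^\i(M)$ lies in $\Alg_{\Cart}(\Spc)^{\fp}$, so $\Speci(\sC^\i(M)) = (Sp(\sC^\i(M)), \cO_{\sC^\i(M)})$ is defined, and by Corollary \ref{cor:tildeok} its structure sheaf is the sheafification of the presheaf $W_a \mapsto \sC^\i(M)[a^{-1}]$.

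First I would identify the underlying topological space. Since $\sC^\i(M)$ is discrete, $\pi_0(\sC^\i(M)) = \sC^\i(M)$, and $Sp(\sC^\i(M))$ equals $\Hom_{\Alg_{\Cart}(\Set)^{\fp}}(\sC^\i(M),\bR)$ equipped with the topology generated by the basic opens $\{W_a\}_{a \in \sC^\i(M)}$. By the classical theory of $C^\i$-rings — equivalently, by the full faithfulness of $\Spec^0_{\Cart}$ on $\Mfd$ asserted in Theorem \ref{thm:mfdgoes} — the canonical evaluation map $M \to Sp(\sC^\i(M))$, $x \mapsto \mathrm{ev}_x$, is a homeomorphism, and under this identification $W_a$ corresponds to $\{x \in M : a(x) \ne 0\} \subseteq M$.

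Next I would identify the structure sheaf. It suffices to show $\sC^\i(M)[a^{-1}] \simeq \sC^\i(W_a)$ for each $a \in \sC^\i(M)$, since the $W_a$ form a basis for the topology of $M$ and both $\cO_{\sC^\i(M)}$ and $\sC^\i_M$ are sheaves. Regarding $a$ as a smooth map $a:M\to \bR$, Proposition \ref{localization} characterizes $\sC^\i(M)[a^{-1}]$ as the pushout of $\sC^\i(\bR\setminus 0) \longleftarrow \sC^\i(\bR) \xrightarrow{a^*} \sC^\i(M)$ in $\Alg_{\Cart}(\Spc)$. Since $\bR\setminus 0 \hookrightarrow \bR$ is an open inclusion into a Cartesian manifold — in particular a submersion — the pullback square
\[
\begin{tikzcd}
W_a \ar[r] \ar[d] & \bR\setminus 0 \ar[d] \\
M \ar[r, "a"'] & \bR
\end{tikzcd}
\]
is transverse and of exactly the form treated by Lemma \ref{opentransversal}. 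Hence the above pushout is identified with $\sC^\i(W_a) = \sC^\i_M(W_a)$, as desired.

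The only genuine content is the identification of $\sC^\i(M)[a^{-1}]$ with $\sC^\i(W_a)$, which combines Proposition \ref{localization} with Lemma \ref{opentransversal}; the identification of the underlying space and the sheaf-theoretic bookkeeping are then formal.
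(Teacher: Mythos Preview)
Your proof is correct and takes a genuinely different route from the paper. The paper first establishes the result for open domains $U \subseteq \bR^n$ (combining Proposition~\ref{prop:prodok} with the argument of Corollary~\ref{cor:sheafex}), and then invokes the fact that $\Mfd$ is the Karoubi envelope of $\mathbf{Dom}$: since the restriction functor $\Fun(\Mfd,\Loc) \to \Fun(\mathbf{Dom},\Loc)$ is an equivalence and the two functors $M \mapsto (M,\sC^\i_M)$ and $\Speci \circ \sC^\i$ agree on $\mathbf{Dom}$, they must agree on all of $\Mfd$. You instead bypass the Karoubi-envelope reduction entirely by computing the structure sheaf directly from Corollary~\ref{cor:tildeok}, identifying the underlying space via Theorem~\ref{thm:mfdgoes}, and evaluating the localizations $\sC^\i(M)[a^{-1}]$ using Proposition~\ref{localization} together with Lemma~\ref{opentransversal} (the base $\bR$ is an open domain, so that lemma applies). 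Your approach is more explicit and makes the equivalence concrete on each basic open; the paper's approach is slicker but less informative about what is actually happening. One small remark: to conclude that the two sheaves agree it is not quite enough to match values on basic opens---you need the identifications $\sC^\i(M)[a^{-1}] \simeq \sC^\i(W_a)$ to be compatible with restriction---but this naturality is immediate from the universal property of localization, so there is no real gap.
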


\begin{proof}
It follows from Proposition \ref{prop:prodok} combined with the proof of Corollary \ref{cor:sheafex} that the result holds for open subsets of $\bR^n.$ Denote by $\mathbf{Dom}$ the full subcategory of $\Mfd$ on those manifolds diffeomorphic to an open subset of $\bR^n$ for some $n.$ Then, as $\Mfd$ is the Karoubi envelope of $\mathbf{Dom},$ we have that restriction functor
$$\Fun\left(  \Mfd,\Loc\right) \to \Fun\left(  \mathbf{Dom},\Loc\right)$$ is an equivalence of $\i$-categories. Consider the functor
\begin{eqnarray*}
F:\Mfd &\mapsto& \Loc\\
M &\mapsto& \left(  M,\sC^\i_M\right)
\end{eqnarray*}
Notice that $F$ and $\Speci \circ \sC^\i\left(  \blank\right)$ both restrict to the same functor on $\mathbf{Dom}.$ It follows that they must agree on $\Mfd$ as well.
\end{proof}
 
\begin{thm}\label{manifoldssmoothring}
The functor $\sC^\i:\Mfd \hookrightarrow \Alg_{\Cart}\left(  \Spc\right)^{op}$ sending a manifold $M$ to the discrete simplicial $C^{\infty}$-ring of smooth functions on $M$ is fully faithful, and preserves transverse pullbacks. 
\end{thm}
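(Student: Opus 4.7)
The fully faithful part is already the content of Lemma \ref{lem:ff}, so the remaining task is preservation of transverse pullbacks. The plan is to leverage the functor $\Speci$, which is fully faithful (Theorem \ref{thm:speciff}) and preserves finite limits (by Theorem \ref{thm:finenv}, since $\Speci \simeq \Ran_{j^{\fp}}(\underline{\bR})$ is left exact). Given a transverse pullback of manifolds with maps $f : M \to P$ and $g : N \to P$, applying $\Speci$ to the canonical comparison $\sC^\infty(M) \oinfty_{\sC^\infty(P)} \sC^\infty(N) \to \sC^\infty(M \times_P N)$ and using Proposition \ref{prop:mansame} transports the problem to showing that the induced morphism
\[
\Phi : (M, \sC^\infty_M) \times_{(P, \sC^\infty_P)} (N, \sC^\infty_N) \longrightarrow (M \times_P N, \sC^\infty_{M \times_P N})
\]
in $\Loc$ is an equivalence. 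By Remark \ref{rmk:limitconc}, the underlying topological space of the source is the topological pullback, which in the transverse case coincides with $M \times_P N$; hence $\Phi$ is the identity on spaces, and it suffices to compare the two structure sheaves on $M \times_P N$, i.e.\ to show that the pushout sheaf $\rho_M^* \sC^\infty_M \oinfty_{\rho_P^* \sC^\infty_P} \rho_N^* \sC^\infty_N$ coincides with $\sC^\infty_{M \times_P N}$.

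This assertion is local on $M \times_P N$. For each $(m,n)$, I would pick a chart $U \cong \bR^k$ of $P$ around $f(m) = g(n)$ and set $V := f^{-1}(U)$, $W := g^{-1}(U)$, so that $V \times_U W$ is an open neighbourhood of $(m,n)$. Because inverse image of sheaves commutes with the colimit computing the pushout, the restriction of $\rho_M^* \sC^\infty_M \oinfty_{\rho_P^* \sC^\infty_P} \rho_N^* \sC^\infty_N$ to $V \times_U W$ is the structure sheaf of the pullback of $(V, \sC^\infty_V) \to (U, \sC^\infty_U) \leftarrow (W, \sC^\infty_W)$ in $\Loc$. Running the reduction of the previous paragraph in reverse, this pullback is the image under $\Speci$ of the pushout $\sC^\infty(V) \oinfty_{\sC^\infty(U)} \sC^\infty(W)$; because $U$ is an open submanifold of Euclidean space, Lemma \ref{opentransversal} identifies that pushout with $\sC^\infty(V \times_U W)$, and Proposition \ref{prop:mansame} then identifies its $\Speci$ as $(V \times_U W, \sC^\infty_{V \times_U W})$. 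Thus the two structure sheaves agree on each $V \times_U W$, and since such opens cover $M \times_P N$, they agree globally.

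The one step I expect to require genuine care is the bookkeeping verification that restriction of the pullback structure sheaf in $\Loc$ along an open subset of $P$ really does agree with the structure sheaf of the pullback of the restricted diagram — a compatibility between open restrictions and finite limits in $\Loc$ that follows from the explicit description in Remark \ref{rmk:limitconc} combined with the left exactness of inverse image functors, but which is not entirely formal. Everything else is a straightforward consequence of the local case already handled in Lemma \ref{opentransversal}.
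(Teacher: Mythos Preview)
Your approach is correct and is a valid alternative to the paper's proof, though the two arguments share the same skeleton: both reduce to the local case handled by Lemma~\ref{opentransversal} (base an open subset of Euclidean space) and then run a local-to-global argument. The difference lies in how that local-to-global step is packaged. The paper works algebraically: setting $\Pi_n := \pi_n$ of the pushout, it invokes Theorem~\ref{thm:complpin} (completeness of $\Pi_n$ as a $\pi_0$-module) so that $\Pi_n$ is recovered as the global sections of its associated module sheaf on the fibered product, and then uses the localization description of Proposition~\ref{localization} to show that this sheaf vanishes over the preimage of each Euclidean chart of the base, via Lemma~\ref{opentransversal}. You instead transport the entire question to $\Loc$ via $\Speci$ and exploit that equivalences of sheaves may be checked on an open cover. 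Your route is cleaner conceptually, but it does not bypass the paper's key analytic input: the full faithfulness of $\Speci$ (Theorem~\ref{thm:speciff}) rests on Corollary~\ref{cor:tildeshf}, which in turn rests on precisely the completeness result Theorem~\ref{thm:complpin} that the paper invokes directly --- you have only repackaged where that ingredient enters. One minor cosmetic point: the canonical comparison $\Phi$ in $\Loc$ actually runs in the opposite direction to the one you wrote, from $(M\times_P N,\sC^\infty_{M\times_P N})$ \emph{into} the fibered product (since the latter is a limit), but this does not affect the substance of your argument.
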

\begin{proof}
Fully faithfulness is the content of Lemma \ref{lem:ff}. Suppose we have a transverse pullback diagram
$$\xymatrix{P \ar[d]_-{pr_M} \ar[r]^-{pr_N} \ar[rd]^-{\psi} & N \ar[d]^-{g}\\
M \ar[r]^-{f} & L}.$$
Note that by \cite[Chapter 1, Theorem 2.8]{MSIA}, $$\pi_0\left(  \sC^\i\left(  M\right) \underset{\sC^\i\left(  L\right)} \oinfty \sC^\i\left(  N\right)\right) \cong \sC^\i\left(  P\right).$$ It therefore suffices to show that for all $n \ge 1,$ $$\pi_n\left(  \sC^\i\left(  M\right) \underset{\sC^\i\left(  L\right)} \oinfty  \sC^\i\left(  N\right)\right)=0.$$ Note that $$\Pi_n:=\pi_n\left(  \sC^\i\left(  M\right) \underset{\sC^\i\left(  L\right)} \oinfty  \sC^\i\left(  N\right)\right)$$ has the canonical structure of a $\sC^\i\left(  P\right)$-module, and is moreover complete by Theorem \ref{thm:complpin}. Hence the global sections of the sheaf associated to the presheaf on $P$ defined by
$$U \mapsto \sC^\i\left(  U\right) \underset{\sC^\i\left(  P\right)} \otimes \Pi_n$$ coincide with $$\pi_n\left(  \sC^\i\left(  M\right) \underset{\sC^\i\left(  L\right)} \oinfty  \sC^\i\left(  N\right)\right)$$ for all $n\ge 1$. Choose a cover of $L$ by open subsets $U_\alpha$ such that each $$U_\alpha \cong \bR^l,$$ where $l= \dim L.$ By Proposition \ref{localization}, it follows that the presheaf defined above may equivalently be described by
$$W_a \mapsto \pi_n\left(  \left(  \sC^\i\left(  M\right) \underset{\sC^\i\left(  L\right)} \oinfty \sC^\i\left(  N\right)\right)\left[a^{-1}\right]\right).$$
For each $U_\alpha,$ we may choose smooth functions $\varphi_\alpha:L \to \bR$ such that $U_\alpha=\varphi^{-1}\left(  U_\alpha\right).$ 
Again by Proposition \ref{localization}, setting $$a=\varphi_\alpha \circ \psi \in \sC^\i\left(  P\right) = \pi_0\left(  \left(  \sC^\i\left(  M\right) \underset{\sC^\i\left(  L\right)} \oinfty \sC^\i\left(  N\right)\right)\right),$$ we have
$$\left(  \sC^\i\left(  M\right) \underset{\sC^\i\left(  L\right)} \oinfty \sC^\i\left(  N\right)\right)\left[a^{-1}\right] \simeq \left(  \sC^\i\left(  M\right) \underset{\sC^\i\left(  L\right)} \oinfty \sC^\i\left(  N\right)\right) \underset{\sC^\i\left(  \bR\right)} \oinfty \sC^\i\left(  \bR\setminus 0 \right),$$ and since we have a factorization
$$\sC^\i\left(  \bR\right) \to \sC^\i\left(  L\right) \to \sC^\i\left(  M\right) \underset{\sC^\i\left(  L\right)} \oinfty \sC^\i\left(  N\right),$$ it follows that 
\begin{eqnarray*}
\left(  \sC^\i\left(  M\right) \underset{\sC^\i\left(  L\right)} \oinfty \sC^\i\left(  N\right)\right)\left[a^{-1}\right] &\simeq& \left(  \sC^\i\left(  M\right) \underset{\sC^\i\left(  L\right)} \oinfty \sC^\i\left(  N\right)\right) \underset{\sC^\i\left(  L\right)} \oinfty \sC^\i\left(  U_\alpha\right)\\
&\simeq& \left(  \sC^\i\left(  M\right) \underset{\sC^\i\left(  L\right)} \oinfty \sC^\i\left(  U_\alpha\right)\right) \underset{\sC^\i\left(  L\right)} \oinfty \sC^\i\left(  N\right).
\end{eqnarray*}
Observe that
\begin{eqnarray*}
\sC^\i\left(  M\right) \underset{\sC^\i\left(  L\right)} \oinfty \sC^\i\left(  U_\alpha\right) &\simeq& \Gamma\left(   \left(  M,\sC^\i_M\right) \times_{\left(  L,\sC^\i_L\right)} \left(  U_\alpha,\sC^\i_{U_\alpha}\right)\right)\\
&\simeq& \Gamma\left(  \left(  f^{-1}\left(  U_\alpha\right),\sC^\i_M|_{f^{-1}\left(  U_\alpha\right)}\right)\right)\\
&\simeq& \sC^\i\left(  f^{-1}\left(  U_\alpha\right)\right),
\end{eqnarray*}
by Proposition \ref{prop:mansame}, so we have $$\left(  \sC^\i\left(  M\right) \underset{\sC^\i\left(  L\right)} \oinfty \sC^\i\left(  N\right)\right)\left[a^{-1}\right] \simeq \sC^\i\left(  f^{-1}\left(  U_\alpha\right)\right)   \underset{\sC^\i\left(  L\right)} \oinfty \sC^\i\left(  N\right).$$
Notice, by Lemma \ref{opentransversal},
$$\sC^\i\left(  f^{-1}\left(  U_\alpha\right)\right) \underset{\sC^\i\left(  U_\alpha\right)} \oinfty \sC^\i\left(  g^{-1}\left(  U_\alpha\right)\right) \simeq \sC^\i\left(  f^{-1}\left(  U_\alpha\right)\times_{U_\alpha} g^{-1}\left(  U_\alpha\right)\right),$$ so we have
\begin{eqnarray*}
\left(  \sC^\i\left(  M\right) \underset{\sC^\i\left(  L\right)} \oinfty \sC^\i\left(  N\right)\right)\left[a^{-1}\right] &\simeq& \sC^\i\left(  f^{-1}\left(  U_\alpha\right)\right)   \underset{\sC^\i\left(  L\right)} \oinfty \sC^\i\left(  N\right)\\
&\simeq& \sC^\i\left(  f^{-1}\left(  U_\alpha\right)\right)   \underset{\sC^\i\left(  U_\alpha\right)} \oinfty \sC^\i\left(  U_\alpha\right) \underset{\sC^\i\left(  L\right)} \oinfty \sC^\i\left(  N\right)\\
&\simeq& \sC^\i\left(  f^{-1}\left(  U_\alpha\right)\right)   \underset{\sC^\i\left(  U_\alpha\right)} \oinfty \sC^\i\left(  g^{-1}\left(  U_\alpha\right)\right)\\
&\simeq& \sC^\i\left(  f^{-1}\left(  U_\alpha\right)\times_{U_\alpha} g^{-1}\left(  U_\alpha\right)\right).
\end{eqnarray*}
So it follows that $$\pi_n\left(  \left(  \sC^\i\left(  M\right) \underset{\sC^\i\left(  L\right)} \oinfty \sC^\i\left(  N\right)\right)\left[a^{-1}\right]\right)=0.$$ This implies that $$\mathrm{M}\Speci\left(  \Pi_n\right)\left(  \psi^{-1}\left(  U_\alpha\right)\right)=0.$$ Since the open subsets $\psi^{-1}\left(  U_\alpha\right)$ form a cover, this implies that the sheaf itself is zero, and since $\Pi_n$ is a complete $\sC^\i\left(  P\right)$-module, this implies
$$\Pi_n \simeq \Gamma \mathrm{M}\Speci\left(  \Pi_n\right) =0.$$
\end{proof}
\subsection{The proof of Theorem \ref{thm:complpin}}
We are left to prove Theorem \ref{thm:complpin}. We do this by fixing strict models for finitely presented simplicial $C^{\i}$-rings in the model category $\mathbf{cdga}_{\bR}^{\leq 0}$ of differentially graded commutative $\bR$-algebras in non-positive degrees. 
\begin{rmk}
Let $V$ be a real vector space, possibly of infinite dimension. We write $$\sC^{\i}\left( V^{\vee}\right):=\underset{{V'\subset V\,\mathrm{dim}V'<\infty}}\colim \sC^{\i}\left( \left( V'\right)^{\vee}\right)$$ for the free $C^{\i}$-ring on $V$. Evaluation at $0\in V^{\vee}$ yields a map $\sC^{\i}\left( V^{\vee}\right)\rightarrow \bR$ of simplicial $C^{\i}$-rings, so $\sC^{\i}\left( V^{\vee}\right)$ is augmented over the initial object in $\Alg_{\Cart}\left( \Spc\right)$, and we may consider the $n$-fold suspension $\Sigma^n\sC^{\i}\left( V^{\vee}\right)$ with respect to the augmentation. 
\end{rmk}
\begin{dfn}\label{defn:cellobject}
A simplicial $C^{\i}$-ring $A$ is a \textbf{cell object} if $A$ is obtained as a directed colimit 
\[\bR\overset{\phi_{-1}}{\longrightarrow} A_0\overset{\phi_0}{\longrightarrow} A_1 \longrightarrow \ldots,\]
where $\phi_{-1}$ is a pushout along a map of the form $\bR\rightarrow \sC^{\i}\left( V\right)$ for $V$ a possibly infinite dimensional vector space, and $\phi_n$ for $n\geq0$ is a pushouts along a map of the form $\Sigma^n\sC^{\i}\left( V\right)\rightarrow \bR$ for $V$ a possibly infinite dimensional vector space. A cell object is \textbf{finite} if the directed colimit in the definition is indexed by a finite set and there are only finitely many cells in each degree. \end{dfn}
\begin{prop}\label{afpcell}
Let $A$ be a simplicial $C^{\i}$-ring. 
\begin{enumerate}
    \item $A$ is equivalent to a cell object. 
    \item If $A$ is finitely presented, then $A$ is equivalent to a retract of a finite cell object. 
\end{enumerate}
\end{prop}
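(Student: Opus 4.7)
My plan is to prove (1) via a Postnikov-type cellular approximation---equivalent to applying the small object argument to the generating cofibrations of the transferred model structure on simplicial $C^\infty$-rings---and to deduce (2) from (1) together with the compactness of finitely presented algebras.

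For (1), I would inductively construct cell objects $A_0 \to A_1 \to \cdots$ equipped with compatible maps to $A$. First choose $V_{-1}$ (possibly infinite-dimensional) so that the adjoint map $A_0 := \sC^\infty(V_{-1}) \to A$ is surjective on $\pi_0$; this realizes $A_0$ as the one-step cell object coming from the pushout along $\bR \to \sC^\infty(V_{-1})$. Inductively, given $A_n \to A$ that is already an isomorphism on $\pi_k$ for $k < n$, I would take $V_n$ to index a generating set of $\ker(\pi_n A_n \to \pi_n A)$ and form the next cell attachment $A_{n+1} := A_n \underset{\Sigma^n \sC^\infty(V_n)}\oinfty \bR$ as in Definition \ref{defn:cellobject}. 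Concretely, at $n=0$ the pushout imposes the desired relations on $\pi_0$; for $n \geq 1$ it kills the targeted classes in $\pi_n$ without disturbing lower homotopy groups. The sequential colimit $\tilde A := \colim_n A_n$ is then a cell object in the sense of Definition \ref{defn:cellobject}, and $\tilde A \to A$ induces an isomorphism on every $\pi_k$, hence is an equivalence by conservativity of the evaluation functor $\theta_r$ (Remark \ref{effepilawvere}).

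For (2), a finitely presented $A$ is by definition compact in $\Alg_{\Cart}\left(\Spc\right)$. Any cell object is canonically a filtered colimit of its finite sub-cell-objects, obtained by restricting each $V_n$ in the construction of (1) to a finite-dimensional subspace and truncating after finitely many stages; the transition maps between these sub-cell-objects are themselves cell inclusions. Writing $A \simeq \colim_\alpha A_\alpha$ over this filtered diagram, compactness of $A$ forces $\id_A$ to factor through some $A_\alpha$, exhibiting $A$ as a retract of the finite cell object $A_\alpha$.

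The hard part will be verifying that each cell attachment has the expected effect on homotopy groups, so that the inductive construction in (1) genuinely converges to $A$. At stage $n=0$, the pushout is along the effective epimorphism $\sC^\infty(V_0) \to \bR$, so unramifiedness of $\ComR \to \Cart$ (Lemma \ref{diffdiscunramified} combined with Theorem \ref{unramifiedpreserveseffepi}) reduces the analysis to a pushout in $\Alg_{\ComR}\left(\Spc\right)$, computable via the torsion spectral sequence (Remark \ref{torsionspectralsequence}) and the Koszul-type resolutions of Lemma \ref{projresolutiontransversal}. For the higher $\Sigma^n$-cell attachments, the computation transports again through $(\blank)^{alg}$ to the classical cellular theory of simplicial commutative $\bR$-algebras, where the corresponding cell attachments are well-understood.
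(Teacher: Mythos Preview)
Your argument for part (2) is essentially the same as the paper's: write the cell object as a filtered colimit of finite sub-cell-objects and use compactness to split the identity. That part is fine.

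There is, however, a genuine gap in your inductive step for part (1). You assume $A_n \to A$ is an isomorphism on $\pi_k$ for $k<n$ and then attach $\Sigma^n$-cells to kill $\ker(\pi_n A_n \to \pi_n A)$. This makes the map injective on $\pi_n$, but nothing in your argument forces it to be surjective on $\pi_n$, so the inductive hypothesis at stage $n+1$ need not hold. Concretely, take $A=\Sigma\,\sC^\infty(\bR)$, so $\pi_0 A=\bR$ and $\pi_1 A=\bR$. Choosing $V_{-1}=0$ gives $A_0=\bR$, which is already an isomorphism on $\pi_0$; every kernel you subsequently try to kill is zero, so $A_n=\bR$ for all $n$ and the colimit is $\bR\not\simeq A$. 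The point is that in the cell-object formalism of Definition~\ref{defn:cellobject} there is no separate ``add a generator in degree $n$'' move for $n>0$; higher homotopy must be created by killing classes that happen to be zero (so that the attaching cell contributes a free class one degree up), and your bookkeeping never arranges for this.

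The paper fixes this by tracking the \emph{fiber} rather than the kernel: at each stage one chooses generators of $\pi_{n-1}\bigl(\mathrm{fib}(\psi_{n-1}^{alg})\bigr)$ as a $\pi_0(A_{n-1})$-module and attaches $\Sigma^{n-1}$-cells along their images in $A_{n-1}$. Via the long exact sequence, $\pi_{n-1}$ of the fiber encodes both the kernel on $\pi_{n-1}$ and the cokernel on $\pi_n$, so killing it makes $\psi_n$ genuinely $n$-connective. The connectivity verification is not automatic: the paper isolates it as Lemma~\ref{looping2} (resting on Lemma~\ref{looping1}, which identifies $\Sigma^n\sC^\infty(V^\vee)^{alg}\simeq\mathrm{Sym}^\bullet(V[n])$), showing that the cofiber of the attaching map approximates $A_n^{alg}$ with $(2n)$-connective error. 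Your final paragraph gestures at ``the classical cellular theory of simplicial commutative $\bR$-algebras'' for this, but that is exactly the content of Lemmas~\ref{looping1}--\ref{looping2}, and without them the step does not go through.
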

We prove Proposition \ref{afpcell} at the end of this subsection. 
\begin{proof}[Proof of Theorem \ref{thm:complpin}]
Let $A$ be a finitely presented simplicial $C^{\infty}$-ring. Using Proposition \ref{afpcell} and the fact that the property described in Theorem \ref{thm:complpin} is stable under retracts, we may assume that $A$ has a presentation as a finite cell object. Such a cell object is inductively obtained by pushouts of the from 
\[
\begin{tikzcd}
\Sigma^{n-1}\sC^{\i}\left( \bR^n\right)\ar[r]\ar[d] & A_{n-1}\ar[d] \\
\bR \ar[r] & A_n
\end{tikzcd}
\]
where $A_0=\sC^{\i}\left( \bR^m\right)$, for some finite $m$. By unramifiedness, $A^{alg}$ is given by the colimit of maps obtained by the same sequence of pushout diagrams in $\Alg_{\ComR}\left( \Spc\right)$. Recall the left proper combinatorial model category structure on $\mathbf{cdga}^{\leq 0}_{\bR}$ which presents the $\infty$-category $\Alg_{\ComR}\left( \Spc\right)$. Lemma \ref{projresolutiontransversal} implies that in the model category $\mathbf{cdga}^{\leq 0}_{\bR}$, the morphism $\sC^{\i}\left( \bR^n\right)\rightarrow \bR$ has a cofibrant replacement as $\sC^{\i}\left( \bR^n\right)\rightarrow \sC^{\i}\left( \bR^n\right)\left[y_1,\ldots,y_n\right]$, with $y_i$ in degree $-1$ and differential $\partial y_i=x_i$, the $i^{th}$ coordinate function on $\bR^n$. Since $$\Sigma^{n-1}\sC^{\i}\left( \bR^k\right)^{alg}\simeq \bR\left[\epsilon_1,\ldots,\epsilon_k\right]$$ with $|\epsilon_i|=n-1$ for $n>1$, the map $\Sigma^{n-1}\sC^{\i}\left( \bR^k\right)^{alg}\rightarrow \bR$ can be replaced by a finite coproduct of copies of the generating cofibration $\bR\left[\epsilon^i\right]\rightarrow \bR\left[\epsilon^i,\epsilon^{i+1}\right]$. As the model category $\mathbf{cdga}^{\leq 0}_{\bR}$ is left proper, it follows that $A^{alg}$ is given by the (ordinary) colimit over a sequence of maps obtained by pushouts along the cofibrations we have just described, so $A^{alg}$ has a presentation in $\mathbf{cdga}^{\leq 0}_{\bR}$ by a quasi-free object of the form \[\tilde{A}=\sC^{\i}\left( \bR^m\right)\left[\epsilon^1_{1},\ldots,\epsilon^1_{l_1},\epsilon^2_{1},\ldots,\epsilon^2_{l_2},\ldots,\epsilon^k_{1},\ldots,\epsilon^k_{l_k}\right]\]
where $|\epsilon^i_{j_i}|=-i$ for $1\leq i\leq k$ (we have suppressed the (nontrivial!) differentials). Fix $n>0$, and consider the truncated dga $\tau_{\geq -\left( n+1\right)}\tilde{A}$, so that we have 
$$H^{-n}\left( \tau_{\geq -\left( n+1\right)}\tilde{A}\right)\cong H^{-n}\left( \tilde{A}\right)=\pi_n\left( A\right).$$ 
As $\tilde{A}$ is a \emph{finite} cell dga, $\tilde{A}$ is a finitely generated free $\sC^{\i}\left( \bR^m\right)$-module in each degree, so $\tau_{\geq -\left( n+1\right)}\tilde{A}$ is a finitely presented $\sC^{\i}\left( \bR^m\right)$-module. Now consider the presheaf of dg $\sC^{\i}\left( \bR^m\right)$-modules on $\bR^m$ given by 
\[ \mathcal{F}:=U\mapsto U\mapsto \tau_{\geq -\left( n+1\right)}\left( \sC^{\i}\left( U\right)\left[\epsilon^1_{1},\ldots,\epsilon^1_{l_1},\epsilon^2_{1},\ldots,\epsilon^2_{l_2},\ldots,\epsilon^k_{1},\ldots,\epsilon^k_{l_k}\right]\right), \]
whose module of global sections is $\tau_{\geq -\left( n+1\right)}\tilde{A}$. This presheaf is a sheaf, precisely because $\tau_{\geq -\left( n+1\right)}\tilde{A}$ is a finitely presented and thus complete $\sC^{\i}\left( \bR^m\right)$-module. By fineness and an appeal to the hypercohomology spectral sequence, the cohomology groups of $\tau_{\geq -\left( n+1\right)}\tilde{A}$ are given by the global sections of the sheaves of cohomology groups of $\mathcal{F}$. This implies in particular that $H^{-n}\left( \tilde{A}\right)$ is a complete $\sC^{\i}\left( \bR^n\right)$-module, by Remark \ref{rmk:completemoduleprops}. As the map $\sC^{\i}\left( \bR^m\right)\rightarrow \pi_0\left( A\right)$ is surjective, the module 
$$H^{-n}\left( \tilde{A}\right)\underset{\sC^{\i}\left( \bR^m\right)}\otimes\pi_0\left( A\right)\cong H^{-n}\left( \tilde{A}\right)\cong \pi_n\left( A\right)$$
is a complete $\pi_0\left( A\right)$-module by Proposition \ref{completenesspreservation}.
\end{proof}
The rest of this subsection is devoted to the proof of Proposition \ref{afpcell}. The following lemmas are adapted from \cite[Lemmas 12.18 and 12.19]{dagix}.
\begin{rmk}
The free $C^{\i}$-ring functor $F$ preserves colimits, so we have $$\Sigma^n\sC^{\i}\left( V^{\vee}\right)\simeq F\left( \mathrm{Sym}^{\bullet}\left( V\left[n\right]\right)\right).$$ The forgetful-free adjunction between $\mathbb{E}_{\infty}$-algebras and simplicial $C^{\i}$-rings now establishes the equivalence
\[ \Map_{\Alg_{\Cart}\left( \Spc\right)}\left( \Sigma^n\sC^{\i}\left( V^{\vee}\right),A\right)\simeq \Map_{\mathsf{Mod}_{\bR}}\left( V\left[n\right],A^{alg}\right)  \]
for all $A\in \Alg_{\Cart}\left( \Spc\right)$.
\end{rmk}

\begin{lem}\label{looping1}
The map $$V\left[n\right]\rightarrow \Sigma^n\sC^{\i}\left( V^{\vee}\right)^{alg}$$ corresponding to the identity $\Sigma^n\sC^{\i}\left( V^{\vee}\right)\rightarrow \Sigma^n\sC^{\i}\left( V^{\vee}\right)$ via the equivalences above induces an equivalence $\mathrm{Sym}^{\bullet}\left( V\left[n\right]\right)\rightarrow \Sigma^n\sC^{\i}\left( V^{\vee}\right)^{alg}$ of $\mathbb{E}_{\infty}$-algebras over $\bR$ for $n>0$.
\end{lem}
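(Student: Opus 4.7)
My plan is to reduce to the case where $V$ is finite-dimensional, and then induct on $n\ge 1$. Since the free $C^\i$-ring functor $F:\mathsf{Mod}_\bR\to \Alg_{\Cart}\left(\Spc\right)$ is a left adjoint and since $\Sigma^n$, the functor $\left(-\right)^{alg}$, and $\mathrm{Sym}^\bullet$ all preserve filtered colimits, writing $V$ as the filtered colimit of its finite-dimensional subspaces exhibits both sides of the desired equivalence as filtered colimits indexed by the same diagram, with the natural map the colimit of the corresponding maps. Thus it suffices to treat $V\cong \bR^d$.

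For the base case $n=1$, the suspension in augmented $C^\i$-rings is the pushout $\Sigma \sC^\i\left(V^\vee\right)\simeq \bR\underset{\sC^\i\left(V^\vee\right)}\oinfty \bR$, whose structure maps are given by evaluation at $0\in V^\vee$. This augmentation is surjective on $\pi_0$, hence is an effective epimorphism in the sense of Definition \ref{defn:effectiveepi}. Combining the unramifiedness of the transformation $\ComR\to \Cart$ (Lemma \ref{diffdiscunramified}) with Theorem \ref{unramifiedpreserveseffepi}, the functor $\left(-\right)^{alg}$ preserves this pushout, so
\[
\left(\Sigma \sC^\i\left(V^\vee\right)\right)^{alg}\simeq \bR\underset{\sC^\i\left(V^\vee\right)^{alg}}\otimes \bR,
\]
where the tensor product is derived. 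By Lemma \ref{projresolutiontransversal}, the independent coordinate functions on $\bR^d$ give a projective resolution of $\bR$ by the Koszul algebra $\sC^\i\left(\bR^d\right)\left[y_1,\ldots,y_d\right]$ with $|y_i|=-1$ and $\partial y_i=x_i$. Base-changing against $\bR$ kills the $x_i$'s and yields $\bR\left[y_1,\ldots,y_d\right]$ with zero differential, which, since $V\left[1\right]$ sits in degree $-1$, is exactly $\mathrm{Sym}^\bullet\left(V\left[1\right]\right)$ computed via the Koszul sign rule. Naturality in $V$ then ensures that this equivalence is the one corresponding to the identity under the adjunction.

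For the inductive step, suppose $\left(\Sigma^n \sC^\i\left(V^\vee\right)\right)^{alg}\simeq \mathrm{Sym}^\bullet\left(V\left[n\right]\right)$. The augmentation $\Sigma^n \sC^\i\left(V^\vee\right)\to \bR$ is still an effective epimorphism since $\pi_0$ of any suspension in augmented $C^\i$-rings is $\bR$, so writing $\Sigma^{n+1}\sC^\i\left(V^\vee\right)\simeq \bR\underset{\Sigma^n \sC^\i\left(V^\vee\right)}\oinfty \bR$ and applying Theorem \ref{unramifiedpreserveseffepi} again yields
\[
\left(\Sigma^{n+1}\sC^\i\left(V^\vee\right)\right)^{alg}\simeq \bR\underset{\mathrm{Sym}^\bullet\left(V\left[n\right]\right)}\otimes \bR.
\]
The remaining, and principal, obstacle is to identify the right-hand side canonically with $\mathrm{Sym}^\bullet\left(V\left[n+1\right]\right)$. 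This is the standard bar/Koszul-duality computation $\bR\otimes_{\mathrm{Sym}^\bullet\left(W\right)}\bR\simeq \mathrm{Sym}^\bullet\left(W\left[1\right]\right)$ for connective $\bR$-modules $W$, which holds in characteristic zero via Quillen's equivalence between simplicial commutative $\bR$-algebras and connective cdgas, and which can be verified directly by resolving $\bR$ over $\mathrm{Sym}^\bullet\left(V\left[n\right]\right)$ by a Koszul complex on generators of $V$. Tracking the natural inclusion $V\left[n+1\right]\hookrightarrow \mathrm{Sym}^\bullet\left(V\left[n+1\right]\right)$ through this chain of equivalences identifies it with the map corresponding to the identity under the adjunction, completing the induction.
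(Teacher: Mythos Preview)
Your proof is correct and follows essentially the same approach as the paper: reduce to finite-dimensional $V$ by filtered colimits, induct on $n$, and use unramifiedness to pass each suspension through $\left(-\right)^{alg}$. The only organizational differences are that the paper handles the base case $n=1$ by first treating $\dim V=1$ via a direct comparison of Tor groups and then taking $d$-fold tensor products, whereas you compute $\bR\otimes_{\sC^\i\left(\bR^d\right)}\bR$ all at once using the Koszul resolution of Lemma~\ref{projresolutiontransversal}; and the paper's terse ``follows at once from unramifiedness'' for the inductive step is exactly the computation $\bR\otimes_{\mathrm{Sym}^\bullet\left(V\left[n\right]\right)}\bR\simeq \mathrm{Sym}^\bullet\left(V\left[n+1\right]\right)$ that you spell out (which, incidentally, is immediate from $\mathrm{Sym}^\bullet$ being a left adjoint, hence preserving the pushout $0\amalg_{V\left[n\right]}0\simeq V\left[n+1\right]$).
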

\begin{proof}
Since all forgetful and free functors involved commute with filtered colimits, we may write $$V=\underset{{V'\subset V,\,\mathrm{dim}\,V<\infty}}\colim V'$$ and suppose that $V$ is finite dimensional. We work by induction on $n$. For $n=1$, we are asked to prove that the natural map \[\bR\underset{\mathrm{Sym}^{\bullet}\left( V\right)}\otimes\bR\rightarrow \bR\underset{\sC^{\i}\left( V^{\vee}\right)^{alg}}\oinfty\bR\simeq \bR\underset{\sC^{\i}\left( V^{\vee}\right)^{alg}}\otimes\bR\]
is an equivalence (the last equivalence follows by unramifiedness). Suppose that $V$ is 1-dimensional, then $\mathrm{Sym}^{\bullet}\left( V\right)\simeq \bR\left[x\right]$ and we have a map of projective resolutions
\[
\begin{tikzcd}
0\ar[r] & \bR[x] \ar[r,"x"] \ar[d] &  \bR[x] \ar[r] \ar[d]& \bR\ar[d,"\mathrm{id}"] \\
0\ar[r] & \sC^{\i}\left( \bR\right)\ar[r,"x"]  & \sC^{\i}\left( \bR\right) \ar[r] & \bR
\end{tikzcd}
\]
where $x$ denotes multiplication by the function $x\mapsto x$ on $\bR$
which shows that $$\mathrm{Tor}_i^{\bR\left[x\right]}\left( \bR,\bR\right)\cong \mathrm{Tor}_i^{\sC^{\i}\left( \bR\right)}\left( \bR,\bR\right)$$ for all $i\geq0$, so we are done for $n=1$ and $\mathrm{dim}\,V=1$. For $V$ $k$-dimensional, the map $ \mathrm{Sym}^{\bullet}\left( V\left[1\right]\right)\rightarrow \Sigma \sC^{\i}\left( V^{\vee}\right)^{alg}$ is simply the $k$-fold tensor product of the equivalence we have just established. The induction step for $n\geq 1$ follows at once from unramifiedness.
\end{proof}

\begin{lem}\label{looping2}
Let $A$ be a simplicial $C^{\i}$-ring and let $V$ be a vector space. Let $n>0$ and $V\left[n\right]\rightarrow A^{alg}$ be a map of $\bR$-modules adjoint to a map $\varphi:V\underset{\bR}\otimes A^{alg}\left[n\right]\rightarrow A^{alg}$ of $A^{alg}$-modules. By taking the symmetric algebra and the free simplicial $C^{\i}$-ring, $V\left[n\right]\rightarrow A^{alg}$ is adjoint to a map $\Sigma^n\sC^{\i}\left( V^{\vee}\right)\rightarrow A$. Consider the pushout diagram
\[
\begin{tikzcd}
\Sigma^n\sC^{\i}\left( V^{\vee}\right)\ar[d]\ar[r]& A\ar[d]\\
\bR\ar[r] & B
\end{tikzcd}
\]
Then there is a natural map $\mathrm{cofib}\left( \varphi\right)\rightarrow B^{alg}$ of $A^{alg}$-modules which has $\left( 2n+2\right)$-connective cofiber. 
\end{lem}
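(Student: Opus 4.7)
The plan is to reduce to a concrete computation of a derived tensor product of simplicial commutative $\bR$-algebras, then read off the structure of the resulting object from a natural weight filtration. First, since $n \geq 1$ we have $\pi_0(\Sigma^n\sC^\i(V^\vee)) \cong \bR$, so the augmentation $\Sigma^n\sC^\i(V^\vee)\to\bR$ is an effective epimorphism. By Lemma \ref{diffdiscunramified} and Theorem \ref{unramifiedpreserveseffepi}, applying $(-)^{alg}$ preserves the defining pushout of $B$, and Lemma \ref{looping1} identifies the upper-left corner with $\mathrm{Sym}^\bullet(V[n])$. Hence
$$B^{alg} \simeq A^{alg}\otimes^L_{\mathrm{Sym}^\bullet(V[n])} \bR$$
in $\Alg_{\ComR}(\Spc)$, where the $\mathrm{Sym}^\bullet(V[n])$-action on $A^{alg}$ is determined by $\varphi$.

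Next I would apply the standard Koszul equivalence in simplicial commutative $\bR$-algebras
$$\bR \otimes^L_{\mathrm{Sym}^\bullet(V[n])}\bR \simeq \mathrm{Sym}^\bullet(V[n+1]),$$
which reflects the fact that the cotangent complex of the augmentation $\mathrm{Sym}^\bullet(V[n])\to\bR$ is $V[n+1]$. Base-changing along $\mathrm{Sym}^\bullet(V[n])\to A^{alg}$ presents $B^{alg}$ as the totalization of a weight-graded complex whose weight-$p$ piece is $A^{alg}\otimes_\bR \mathrm{Sym}^p(V[n+1])$, with a Koszul-type differential whose weight-$1$ to weight-$0$ component is precisely $\varphi:V\otimes_\bR A^{alg}[n]\to A^{alg}$ (after the appropriate shift). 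The natural map $\mathrm{cofib}(\varphi)\to B^{alg}$ is then defined as the inclusion of the weight-$\leq 1$ part $F^{\leq 1}B^{alg}$ of this filtration; its underlying two-term complex is $A^{alg}$ in weight $0$ together with $V\otimes_\bR A^{alg}[n+1]$ in weight $1$, with connecting map $\varphi$, which is exactly a model for $\mathrm{cofib}(\varphi)$.

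The cofiber of this inclusion is $\bigoplus_{p\geq 2} A^{alg}\otimes_\bR \mathrm{Sym}^p(V[n+1])$. Each summand is concentrated in cohomological degrees at most $-p(n+1) \leq -(2n+2)$, so the cofiber is $(2n+2)$-connective, as required.

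The main technical obstacle I anticipate is producing the weight filtration on $B^{alg}$ invariantly and verifying that its lowest nontrivial Koszul differential really is $\varphi$. I would handle this by choosing an explicit bar (or PBW) resolution of $\bR$ by free $\mathrm{Sym}^\bullet(V[n])$-modules, on which the polynomial grading in $\mathrm{Sym}^\bullet(V[n+1])$ is manifest, and then base changing along $\mathrm{Sym}^\bullet(V[n])\to A^{alg}$; functoriality of the bar construction in the augmented algebra together with naturality of the resulting weight filtration will imply both the construction of the map $\mathrm{cofib}(\varphi)\to B^{alg}$ and the identification of its lowest-weight connecting map with $\varphi$.
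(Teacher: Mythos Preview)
Your argument is correct and is essentially the same as the paper's, phrased in the language of the Koszul/weight filtration. The paper also identifies $B^{alg}$ with $A^{alg}\otimes^L_{\mathrm{Sym}^\bullet(V[n])}\bR$ via unramifiedness and Lemma~\ref{looping1}, but then proceeds slightly more directly: instead of invoking the full bar/Koszul filtration, it constructs the map $\mathrm{cofib}(\varphi)\to B^{alg}$ from the observation that the composite $V\otimes_\bR A^{alg}[n]\xrightarrow{\varphi} A^{alg}\to B^{alg}$ factors through $\mathrm{Sym}^\bullet(V[n])\to B^{alg}$ and is therefore null for degree reasons. It then identifies $\mathrm{cofib}(\varphi)$ with the base change $\mathrm{cofib}\bigl(V[n]\otimes\mathrm{Sym}^\bullet(V[n])\to\mathrm{Sym}^\bullet(V[n])\bigr)\otimes_{\mathrm{Sym}^\bullet(V[n])}A^{alg}$ and checks the connectivity of the cofiber in the universal case over $\mathrm{Sym}^\bullet(V[n])$.

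These are the same computation: the paper's object $\mathrm{cofib}\bigl(V[n]\otimes\mathrm{Sym}^\bullet(V[n])\to\mathrm{Sym}^\bullet(V[n])\bigr)$ is exactly the weight-$\le 1$ stage of the Koszul resolution of $\bR$ over $\mathrm{Sym}^\bullet(V[n])$, and base changing to $A^{alg}$ gives your $F^{\le 1}B^{alg}$. Your approach buys a conceptual picture (the entire filtration with associated graded $A^{alg}\otimes_\bR\mathrm{Sym}^\bullet(V[n+1])$), while the paper's buys economy: no need to set up the bar construction, just a two-term truncation and an elementary connectivity check. One minor imprecision in your write-up: the cofiber of $F^{\le 1}\hookrightarrow B^{alg}$ is not literally the direct sum $\bigoplus_{p\ge 2}A^{alg}\otimes_\bR\mathrm{Sym}^p(V[n+1])$ but rather a filtered object with that associated graded; the connectivity bound still follows since each graded piece is $(2n+2)$-connective and the filtration is exhaustive.
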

\begin{proof}
By unramifiedness and Lemma \ref{looping1}, we have $B^{alg}\simeq \bR \underset{\mathrm{Sym}^{\bullet}\left( V\left[n\right]\right)}\otimes A^{alg}$. The composition
\[ V\underset{\bR}\otimes A^{alg}\left[n\right] \overset{\varphi}{\longrightarrow} A^{alg} \longrightarrow B \]
of $A^{alg}$-modules is homotopic to the composition 
\[ V\underset{\bR}\otimes A^{alg}\left[n\right] {\longrightarrow} \mathrm{Sym}^{\bullet}\left( V\left[n\right]\right) \longrightarrow B  \]
which is null-homotopic for degree reasons, yielding the desired map $\mathrm{cofib}\left( \varphi\right)\rightarrow B$. Since taking cofibers commutes with tensor products, we have an equivalence 
$$\resizebox{6in}{!}{$\mathrm{cofib}\left( V\left[n\right]\underset{\bR}\otimes\mathrm{Sym}^{\bullet}\left( V\left[n\right]\right)\rightarrow \mathrm{Sym}^{\bullet}\left( V\left[n\right]\right)\right)\underset{\mathrm{Sym}^{\bullet}\left( V\left[n\right]\right)}\otimes A^{alg}\simeq \mathrm{cofib}\left( V\underset{\bR}\otimes A^{alg}\left[n\right]\rightarrow A^{alg}\right)=\mathrm{cofib}\left( \varphi\right).$}$$
One readily verifies that  $\mathrm{cofib}\left( V\left[n\right]\underset{\bR}\otimes\mathrm{Sym}^{\bullet}\left( V\left[n\right]\right)\rightarrow \mathrm{Sym}^{\bullet}\left( V\left[n\right]\right)\right)$ has vanishing homotopy groups in degrees $0<i\leq 2n$, so the map $$\mathrm{cofib}\left( V\left[n\right]\underset{\bR}\otimes\mathrm{Sym}^{\bullet}\left( V\left[n\right]\right)\rightarrow \mathrm{Sym}^{\bullet}\left( V\left[n\right]\right)\right)\rightarrow \bR$$ has $\left( 2n+2\right)$-connective cofiber, showing that the map
$$\resizebox{6in}{!}{$\mathrm{cofib}\left( V\left[n\right]\underset{\bR}\otimes\mathrm{Sym}^{\bullet}\left( V\left[n\right]\right)\rightarrow \mathrm{Sym}^{\bullet}\left( V\left[n\right]\right)\right)\underset{\mathrm{Sym}^{\bullet}\left( V\left[n\right]\right)}\otimes A^{alg}\simeq \mathrm{cofib}\left( \varphi\right) \rightarrow  B^{alg}\simeq \bR\underset{\mathrm{Sym}^{\bullet}\left( V\left[n\right]\right)}\otimes A^{alg}$}$$
has $\left( 2n+2\right)$-connective cofiber as well. 
\end{proof}

\begin{proof}[Proof of Proposition \ref{afpcell}]
\begin{enumerate}
\item Let $A$ be a simplicial $C^{\i}$-ring. We will inductively define a sequence of $n$-connective maps $\psi_n:A_n\rightarrow A$ as follows. For the base step of the induction, choose an effective epimorphism $\sC^{\i}\left( \bR^{J_0}\right)\rightarrow A$; for instance, $J_0$ may be the set underlying $\pi_0\left( A\right)$. Now let $n>0$. Assuming we have constructed an $\left( n-1\right)$-connective map $\psi_{n-1}:A_{n-1}\rightarrow A$, we construct $\psi_n$. We have $\pi_j\left( A_{n-1}\right)\simeq \pi_j\left( A\right)$ for $j<\left( n-1\right)$. The algebraic fiber $\mathrm{fib}\left( \psi_{n-1}^{\mathrm{alg}}\right)$ of the map $\psi_{n-1}^{\mathrm{alg}}:A_{n-1}^{\mathrm{alg}}\rightarrow A^{alg}$ of connective $\mathbb{E}_{\infty}$-algebras over $\bR$ fits into a long exact sequence
\[\ldots\rightarrow \pi_{n}\left( A^{alg}\right)\rightarrow \pi_{n-1}\left( \mathrm{fib}\left( \psi_{n-1}^{\mathrm{alg}}\right)\right)\rightarrow \pi_{n-1}\left( A^{alg}_{n-1}\right)\rightarrow \pi_{n-1}\left( A^{alg}\right)\rightarrow\ldots \]
Choose a set $J_n$ and a map $\bR^{J_n}\underset{\bR}\otimes A_{n-1}^{alg}\left[n-1\right]\rightarrow \mathrm{fib}\left( \psi_{n-1}^{\mathrm{alg}}\right)$ of $A_{n-1}^{alg}$-modules that induces a surjective map $\bR^{J_n}\left[n-1\right]\underset{\bR}\otimes \pi_0\left( A_{n-1}^{alg}\right)\rightarrow \pi_{n-1}\left( \mathrm{fib}\left( \psi_{n-1}^{\mathrm{alg}}\right)\right)$. The composition \[\varphi:\bR^{J_n}\underset{\bR}\otimes A_{n-1}^{alg}\left[n-1\right]\longrightarrow \mathrm{fib}\left( \psi_{n-1}^{\mathrm{alg}}\right)\longrightarrow A_{n-1}^{alg}\]
in the $\infty$-category of $A_{n-1}^{alg}$-modules is adjoint to a map 
\[ \bR^{J_n}\left[n-1\right]\longrightarrow   A_{n-1}^{alg}\]
of $\bR$-modules. This map yields a map $\mathrm{Sym}^{\bullet}\left( \bR^{J_n}\left[n-1\right]\right)\rightarrow A_{n-1}^{alg}$ in $\Alg_{\ComR}\left( \Spc\right)$, which is in turn adjoint to a map $f:\Sigma^{n-1} \sC^{\i}\left( \left( \bR^{J_n}\right)^{\vee}\right)\rightarrow A_{n-1}$ of simplicial $C^{\i}$-rings, with $\Sigma^{n-1} \sC^{\i}\left( \left( \bR^{J_n}\right)^{\vee}\right)$ the $\left( n-1\right)^{th}$ suspension of $\sC^{\i}\left( \left( \bR^{J_n}\right)^{\vee}\right)$ at the basepoint $0\in \left( \bR^{J_n}\right)^{\vee}$. Now we define $A_{n}$ as the pushout 
\begin{equation*}
\begin{tikzcd}
\Sigma^{n-1} \sC^{\i}\left( \left( \bR^{J_n}\right)^{\vee}\right)\ar[r,"f"]\ar[d] &A_{n-1}\ar[d]\\
\bR \ar[r] & A_{n}
\end{tikzcd}    
\end{equation*}
 The canonical null-homotopy of the map $$\bR^{J_n}\underset{\bR}\otimes A_{n-1}^{alg}\left[n-1\right]\rightarrow \mathrm{fib}\left( \psi_{n-1}^{\mathrm{alg}}\right)\rightarrow A^{alg}$$ yields a homotopy between $\psi_{n-1}\circ f$ and $$\Sigma^{n-1} \sC^{\i}\left( \left( \bR^{J_n}\right)^{\vee}\right)\rightarrow \bR\rightarrow A,$$ so we get a map $\psi_n:A_n\rightarrow A$. We check that $\psi_n$ is $n$-connective: notice that the left vertical map in the diagram above induces a surjection on connected components, so by unramifiedness, we have an equivalence $$A_n^{alg}\simeq \Sigma^{n-1}\sC^{\i}\left( \left( \bR^{J_n}\right)^{\vee}\right)^{alg}\underset{\bR}\otimes A_{n-1}^{alg}.$$ For $n=1$, we observe that $$\pi_0\left( A_1\right)\simeq \pi_0\left( \sC^{\i}\left( \left( \bR^{J_0}\right)^{\vee}\right)/\pi_0\left( \mathrm{fib}\left( \psi_0^{alg}\right)\right)\right)\simeq \pi_0\left( A\right).$$ For $n>1$, Lemma \ref{looping2} provides us with a map $\mathrm{cofib}\left( \varphi\right)\rightarrow A_n$ with $\left( 2n\right)$-connective cofiber. Comparing the $\pi_{n-1}$-terms in the long exact sequence associated with the fiber sequence $$\mathrm{fib}\left( \psi^{alg}_{n-1}\right)\rightarrow A_{n-1}^{alg}\rightarrow A$$ with those of the long exact sequence associated to the cofiber sequence of $\varphi$ yields the desired connectivity estimate.
\item Let $A$ be a simplicial $C^{\i}$-ring with a cell decomposition provided by part 1 of the proof. We will show that if $B$ is a finitely presented simplicial $C^{\i}$-ring, then any morphism $B\rightarrow A$ factors through a finite cell complex. The desired statement then follows by applying this to the identity morphism $A\rightarrow A$. Choose some morphism $f:B\rightarrow A$. We have $$A\simeq \underset{{i\in \bZ_{\geq 0}}}\colim A_i,$$ so $f$ factors through some $A_i$. We prove by reverse induction that $f$ factors through a cell complex with finitely many cells in degrees greater than $j$ for every $j\leq i$. For $j=i$, we use that 
\[A_i=\bR\underset{\Sigma^{i-1}\sC^{\i}\left( \left( \bR^{J_i}\right)^{\vee}\right)}\oinfty A_{i-1}\simeq \underset{{S\subset J_i,\,|S|<\infty}}\colim \bR\underset{\Sigma^{i-1}\sC^{\i}\left( \bR^S\right)}\oinfty A_{i-1},  \] 
to deduce that the map $B\rightarrow A_i$ factors through some $$\bR\underset{\Sigma^{i-1}\sC^{\i}\left( \bR^S\right)}\oinfty A_{i-1}$$ where $S$ is a finite set. Now assume that $B\rightarrow A_i$ factors through a cell complex $\tilde{A}$ that is obtained from the object $A_j$, $j<i$, by attaching finitely many cells (in degrees $>j$). $A_j$ is itself obtained as 
$$\bR  \underset{\Sigma^{j-1}\sC^{\i}\left( \left( \bR^{J_j}\right)^{\vee}\right)}\oinfty A_{j-1},$$
where $J_{j}$ may be an infinite set. Just as in the case $i=j$, we have $$A_j\simeq \underset{{S'\subset J_j,\,|S'|<\infty}}\colim C_{S'},$$ where we write $$C_{S'}:=\bR\underset{\Sigma^{j-1}\sC^{\i}\left( \bR^{S'}\right)}\oinfty A_{j-1}.$$
By assumption on $\tilde{A}$, we attach only finitely many cells in degree $j$, given by a pushout $$\bR\underset {\Sigma^j\sC^{\i}\left( \bR^n\right)}\oinfty \underset{{S'\subset J_j,\,|S'|<\infty}}\colim C_{S'}.  $$
Because $\Sigma^j\sC^{\i}\left( \bR^n\right)$ is finitely presented, the map  $$\Sigma^j\sC^{\i}\left( \bR^n\right)\rightarrow \underset{{S'\subset J_j,\,|S'|<\infty}}\colim C_{S'}$$ factors through some $C_{S''}$, so we can write the pushout above as the colimit $$\underset{{S''\supset S',\,|S''|<\infty}}\colim \bR \underset{\Sigma^j\sC^{\i}\left( \bR^n\right)}\oinfty C_{S''}.$$ Now we repeat this argument for all cells of higher degrees, using finite presentation as there are only a finite number of cells left in each degree. We find that $\tilde{A}$ can be written as some filtered colimit $$\underset{{k\in\mathcal{J}}}\colim\tilde{A}_k,$$ where each $\tilde{A}_k$ is a relative cell complex obtained by attaching a finite number of cells to the object $A_{j-1}$. Using compactness of $B$, we see that $B\rightarrow\tilde{A}$ factors through some $\tilde{A}_k$. This completes the induction step.   
\end{enumerate}
\end{proof}

\section{The Universal Property Revisited}\label{sec:univ2}

\subsection{Derived Manifolds and $C^\i$-rings}
Recall that $\DMfd$ was defined (Definition \ref{dfn:DMfd}) as the unique idempotent complete $\i$-category with finite limits equipped with a functor $$i:\Mfd \to \DMfd$$ which preserves transverse pullbacks and the terminal object, such that for any other idempotent complete $\i$-category $\sC$ with finite limits, composition with $i$ induces an equivalence of $\i$-categories
$$\Fun^{\lex}\left(\DMfd,\sC\right) \stackrel{\sim}{\longlongrightarrow} \Fun^{\pitchfork}\left(\Mfd,\sC\right)$$
between functors from derived manifolds to $\sC$ which preserve finite limits, to functors from manifolds which preserve transverse pullbacks and the terminal object.

\begin{lem}\label{lem:rkn}
Let $q:\Cart \hookrightarrow \Mfd$ be the fully faithful inclusion. Let $\sC$ be an idempotent complete $\i$-category with finite limits. Then the following conditions are equivalent for a functor $F:\Cart \to \sC$:
\begin{enumerate}
\item $F$ preserves finite products
\item The right Kan extension $\Ran_qF$ exists and preserves transverse pullbacks and the terminal object.
\end{enumerate}
\end{lem}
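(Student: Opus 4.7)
The plan is to prove the two implications separately. The implication $(2) \Rightarrow (1)$ is essentially immediate: since $q$ is fully faithful, the counit yields an equivalence $(\Ran_q F)\circ q \simeq F$; moreover, any finite product $\bR^n \times \bR^m$ in $\Cart$ is the transverse pullback of $\bR^n \to \bR^0 \leftarrow \bR^m$ over the terminal object. Since $\Ran_q F$ preserves such pullbacks and the terminal object by hypothesis, $F$ preserves finite products.

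For $(1)\Rightarrow(2)$, suppose $F$ preserves finite products, so that $F$ is a $\Cart$-algebra in $\sC$. Theorem \ref{thm:finenv} then supplies an essentially unique left exact extension $\bar F := \Ran_{j^{\fp}}F:\left(\Alg_{\Cart}(\Spc)^{\fp}\right)^{op}\to \sC$ with $\bar F\circ j^{\fp}\simeq F$. On the other hand, Theorem \ref{manifoldssmoothring} combined with Remark \ref{cor:manfp} shows that the functor $\sC^{\infty}:\Mfd\to \left(\Alg_{\Cart}(\Spc)^{\fp}\right)^{op}$ is fully faithful, preserves transverse pullbacks and the terminal object, and restricts along $q$ to $j^{\fp}$. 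I then define $G := \bar F\circ \sC^{\infty}:\Mfd\to \sC$. Since $\bar F$ is left exact and $\sC^{\infty}$ preserves transverse pullbacks and the terminal object, so does $G$, and by construction $G\circ q\simeq \bar F \circ j^{\fp}\simeq F$.

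The substantive remaining step is to identify $G$ with $\Ran_q F$. The main obstacle is that $\sC$ is only assumed to be idempotent complete with finite limits and is not cocomplete, so Lemma \ref{lem:crucial} cannot be applied to $F$ directly. To circumvent this I would postcompose with the Yoneda embedding $y:\sC\hookrightarrow \Psh(\sC)$, which is fully faithful and preserves all limits. Applying Lemma \ref{lem:crucial} to $y\circ F$, with $\omega = \sC^{\infty}$ fully faithful and $\Psh(\sC)$ cocomplete, gives $\Ran_q(y\circ F)\simeq \Ran_{j^{\fp}}(y\circ F)\circ \sC^{\infty}$. By the uniqueness clause of Theorem \ref{thm:finenv} applied in $\Psh(\sC)$, together with the fact that $y\circ \bar F$ is left exact, we have $\Ran_{j^{\fp}}(y\circ F)\simeq y\circ \bar F$, hence $\Ran_q(y\circ F)\simeq y\circ G$. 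Since $y$ is fully faithful and preserves and reflects limits, this shows that the pointwise right Kan extension $\Ran_q F$ exists in $\sC$ and is canonically equivalent to $G$, which finishes the argument.
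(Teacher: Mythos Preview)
Your proof is correct and follows essentially the same approach as the paper: both use the factorization $j^{\fp}=\sC^\infty\circ q$, Theorem~\ref{thm:finenv}, Lemma~\ref{lem:crucial}, and Theorem~\ref{manifoldssmoothring}, together with the Yoneda embedding into $\Psh(\sC)$ to handle the non-(co)complete case. The only organizational difference is that the paper first proves the (co)complete case and then checks representability of $\Ran_q(y\circ F)$ via Corollary~\ref{cor:retractrans}, whereas you construct $\bar F$ directly from Theorem~\ref{thm:finenv} (which already applies to idempotent complete $\sC$ with finite limits) and then invoke its uniqueness clause to identify $\Ran_{j^{\fp}}(y\circ F)\simeq y\circ\bar F$; this is a minor streamlining, not a genuinely different argument.
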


\begin{proof}
$(2) \Rightarrow (1)$ since $q^*\Ran_q \simeq id,$ since $q$ is fully faithful. Conversely, suppose that $(1)$ holds. First, assume that $\sC$ is both complete and cocomplete. We have a commutative diagram
$$\xymatrix@C=2.5cm{\Cart \ar[d]_-{q} \ar[rd]^-{j^\fp} & \\ \Mfd \ar[r]_-{\sC^\i} & \left(\Alg_{\Cart}\left(\Spc\right)^{\fp}\right)^{op}.}$$ By Theorem \ref{thm:finenv}, since $\sC^\i$ is fully faithful by Lemma \ref{lem:ff}, the right Kan extension $$\Ran_{j^{\fp}} F:\left(\Alg_{\Cart}\left(\Spc\right)^{\fp}\right)^{op} \to \sC$$ exists and preserves finite limits. 
By Lemma \ref{lem:crucial}, it follows that the right Kan extension $\Ran_q F$ exists and is given by $$\left(\Ran_{j^{\fp}} F\right) \circ \sC^\i.$$ By Theorem \ref{manifoldssmoothring}, it follows that $\Ran_q F$ preserves transverse pullbacks and the terminal object. 

Now relax the (co)completeness assumptions on $\sC$. Consider the composite $$\Cart \stackrel{F}{\longrightarrow} \sC \stackrel{y}{\longhookrightarrow} \Psh\left(\sC\right).$$ Then, as it preserves finite products, by the above paragraph, the right Kan extension $\Ran_q \left(y \circ F\right)$ exists and preserves transverse pullbacks and the terminal object. The $\i$-category $\Psh\left(C\right)$ is complete and therefore this right Kan extension can be computed with the standard pointwise formula. Since the Yoneda embedding preserves all limits, it suffices to show that $\Ran_q \left(y \circ F\right)$ takes values in representables, since this would imply that the limits needed for the pointwise formula for $\Ran_q \left(\circ F\right)$ exist in $\sC,$ and that $\Ran_q \left(y\circ F\right)$ preserves transverse pullbacks and the terminal object. Let $M$ be a manifold. Then $M$ is a retract of a transverse pullback of Cartesian manifolds, by Corollary \ref{cor:retractrans}. For any Cartesian manifold, $\bR^n,$ $$\Ran_q \left(y \circ F\right)\left(\bR^n\right)\simeq y\left(F\left(\bR^n\right)\right)$$ is in the essential image of $y,$ and since $\Ran_q \left(y \circ F\right)$ preserves finite limits (and retracts), $\Ran_q \left(y \circ F\right)\left(M\right)$ is a retract of a pullback of representables, hence representable, as $\sC$ has retracts and the Yoneda embedding preserves all small limits.
\end{proof}

\begin{cor}\label{cor:eqtrans}
Let $\sC$ be an idempotent complete $\i$-category with finite limits. Then $$q^*:\Fun\left(\Mfd,\sC\right) \to \Fun\left(\Cart,\sC\right)$$ restricts to an equivalence of $\i$-categories
$$q^*:\Fun^\pitchfork\left(\Mfd,\sC\right) \to \Fun^{\pi}\left(\Cart,\sC\right)$$ between functors which preserves transverse pullbacks and the terminal object and functors which preserve finite products.
\end{cor}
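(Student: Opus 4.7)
The plan is to exhibit $\Ran_q$ as a two-sided inverse to $q^*$ on the relevant subcategories. First I would verify that $q^*$ does restrict as claimed: if $G: \Mfd \to \sC$ preserves transverse pullbacks and the terminal object, then for Cartesian manifolds $\bR^n,\bR^m$, the product $\bR^n \times \bR^m$ is the transverse pullback of the projections to the terminal object $\bR^0$, so $G \circ q$ sends this to the product in $\sC$; combined with preservation of the terminal object, this shows $q^*G$ preserves finite products.

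Next, Lemma \ref{lem:rkn} already produces the candidate inverse $\Ran_q : \Fun^{\pi}\left(\Cart,\sC\right) \to \Fun^{\pitchfork}\left(\Mfd,\sC\right)$, and this is right adjoint to $q^*$. Since $q$ is fully faithful, the unit $F \to q^* \Ran_q F$ is automatically an equivalence, giving one half of the equivalence.

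The main task is to show that the counit $\varepsilon: \Ran_q q^* G \to G$ is an equivalence for every $G \in \Fun^{\pitchfork}\left(\Mfd,\sC\right)$. Both the source and target preserve transverse pullbacks and the terminal object (the source by Lemma \ref{lem:rkn}), and every functor preserves retracts. Moreover, $q^* \varepsilon$ is an equivalence since $q$ is fully faithful, so $\varepsilon_{\bR^n}$ is an equivalence for every $\bR^n \in \Cart$. The full subcategory of $\Mfd$ on those $M$ for which $\varepsilon_M$ is an equivalence is thus closed under transverse pullbacks and retracts and contains $\Cart$. By Corollary \ref{cor:retractrans}, every manifold is a retract of a transverse pullback of Cartesian manifolds, so this subcategory is all of $\Mfd$.

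The only subtlety is the iterative nature of the last step: a transverse pullback of manifolds which are themselves transverse pullbacks of Cartesians must be recognized again as a (finite, iterated) transverse-pullback construction from Cartesians, which is routine. Alternatively, one can use Whitney's embedding theorem and Proposition \ref{prop:fiberopen} to realize any $M$ in two steps — as a retract of an open submanifold of some $\bR^n$, and the latter as a single transverse pullback of Cartesians — so the closure argument only needs to be invoked for those two steps. Either way, the counit is an equivalence and $q^*$ restricts to the asserted equivalence of $\i$-categories.
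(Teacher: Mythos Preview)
Your argument is essentially the paper's own proof: exhibit $\Ran_q$ as adjoint inverse to $q^*$, use full faithfulness of $q$ for one triangle identity, and use Corollary~\ref{cor:retractrans} together with the fact that both functors preserve transverse pullbacks and retracts for the other. The only correction is that you have swapped the names and directions of the unit and counit: for the adjunction $q^* \dashv \Ran_q$, the counit is $q^*\Ran_q \Rightarrow \mathrm{id}$ (this is the one that is an equivalence because $q$ is fully faithful), and the unit is $\mathrm{id} \Rightarrow \Ran_q q^*$ (this is the one you must check on each manifold); your argument goes through unchanged once these are relabeled.
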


\begin{proof}
By Lemma \ref{lem:rkn}, there exists a global right Kan extension functor
$$\Ran_q:\Fun^{\pi}\left(\Cart,\sC\right) \to \Fun^\pitchfork\left(\Mfd,\sC\right)$$ which, by the universal property of Kan extensions, is right adjoint to $q^*$. Since $q$ is fully faithful, the counit $$q^*\Ran_q \to id$$ is an equivalence. It suffices to show that the unit $$id \to \Ran_q q^*$$ is an equivalence as well. Let $F:\Mfd \to \sC$ be in $\Fun^\pitchfork\left(\Mfd,\sC\right).$ Then since both $F$ and $\Ran_q q^*F$ preserve transverse pullbacks (and retracts), and by Corollary \ref{cor:retractrans}, every manifold is a retract of a transverse pullback of Cartesian manifolds, it follows that the unit must be an equivalence. 
\end{proof}

\begin{thm}\label{thm:univ2}
For all idempotent complete $\i$-categories $\sC$ with finite limits, composition with $$i\circ q:\Cart \to \DMfd$$ induces an equivalence of $\i$-categories
$$\Fun^{\lex}\left(\DMfd,\sC\right) \stackrel{\sim}{\longlongrightarrow} \Fun^{\pi}\left(\Cart,\sC\right)=\Alg_{\Cart}\left(\sC\right)$$
between the $\i$-category of left exact functors from derived manifolds to $\sC,$ and the $\i$-category of $C^\i$-rings in $\sC.$
\end{thm}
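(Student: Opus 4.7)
The plan is to deduce the theorem by chaining two equivalences that have already been established in the paper, so the real content has been absorbed into the preceding sections. First, by the defining universal property of $\DMfd$ (Definition \ref{dfn:DMfd}), restriction along the functor $i:\Mfd \to \DMfd$ yields an equivalence
$$i^*:\Fun^{\lex}(\DMfd,\sC) \stackrel{\sim}{\longlongrightarrow} \Fun^{\pitchfork}(\Mfd,\sC)$$
for every idempotent complete $\sC$ with finite limits. Second, by Corollary \ref{cor:eqtrans}, restriction along the inclusion $q:\Cart \hookrightarrow \Mfd$ gives an equivalence
$$q^*:\Fun^{\pitchfork}(\Mfd,\sC) \stackrel{\sim}{\longlongrightarrow} \Fun^{\pi}(\Cart,\sC) = \Alg_{\Cart}(\sC).$$
Composing, restriction along $i\circ q$ is the required equivalence.

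To spell out why no further work is needed: the composite $(i\circ q)^* = q^* \circ i^*$ automatically lands in the subcategory of finite product preserving functors, since any left exact functor out of $\DMfd$ precomposed with $i\circ q$ will preserve finite products (as $i$ preserves the terminal object and $q$ lands in objects on which $i$ preserves products through transversality with the terminal map). Conversely, given an algebra $F:\Cart \to \sC$, Lemma \ref{lem:rkn} produces the right Kan extension $\Ran_q F:\Mfd \to \sC$ preserving transverse pullbacks and the terminal object, and then the universal property of $\DMfd$ extends this uniquely to a left exact functor $\DMfd \to \sC$. Both constructions are inverse to $(i\circ q)^*$ by the two equivalences above.

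There really is no main obstacle in this final step; the difficulty has been pushed entirely into Lemma \ref{lem:rkn} and Corollary \ref{cor:eqtrans}, which in turn rely on the substantive facts that $\sC^\infty:\Mfd \to \Alg_{\Cart}(\Spc)^{op}$ is fully faithful and preserves transverse pullbacks (Theorem \ref{manifoldssmoothring}), and that every manifold is a retract of a transverse pullback of Cartesian manifolds (Corollary \ref{cor:retractrans}). Once those are in hand, the identification $\Fun^{\lex}(\DMfd,\sC) \simeq \Alg_{\Cart}(\sC)$ is a purely formal two-step composition of equivalences.
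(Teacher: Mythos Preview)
Your proof is correct and is essentially identical to the paper's own argument: compose the defining equivalence $i^*:\Fun^{\lex}(\DMfd,\sC)\simeq\Fun^{\pitchfork}(\Mfd,\sC)$ with the equivalence $q^*:\Fun^{\pitchfork}(\Mfd,\sC)\simeq\Fun^{\pi}(\Cart,\sC)$ from Corollary~\ref{cor:eqtrans}. Your additional remarks on the inverse and on where the real work lies are accurate but go slightly beyond what the paper records for this step.
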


\begin{proof}
By definition $$i^*:\Fun^{\lex}\left(\DMfd,\sC\right) \stackrel{\sim}{\longlongrightarrow} \Fun^{\pitchfork}\left(\Mfd,\sC\right)$$ is an equivalence. And by Corollary \ref{cor:eqtrans} $$q^*:\Fun^{\pitchfork}\left(\Mfd,\sC\right) \to \Fun^{\pi}\left(\Cart,\sC\right)=\Alg_{\Cart}\left(\sC\right)$$ is an equivalence.
\end{proof}

\begin{cor}\label{cor:fparedms}
There is a canonical equivalence of $\i$-categories $$\DMfd \simeq \left(\Alg_{\Cart}\left(\Spc\right)^{\fp}\right)^{op}.$$
In fact, this equivalence is the restriction of the functor $\cO_{\DMfd}$ from Example \ref{ex:sheaf}.
\end{cor}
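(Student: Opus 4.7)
The plan is to play the universal property of $\DMfd$ (Theorem \ref{thm:univ2}) against that of $\left(\Alg_{\Cart}\left(\Spc\right)^{\fp}\right)^{op}$ (Theorem \ref{thm:finenv}). First I would verify that $\left(\Alg_{\Cart}\left(\Spc\right)^{\fp}\right)^{op}$ qualifies as a target for the universal property of $\DMfd$: by Lemma \ref{lem:retcol}, $\Alg_{\Cart}\left(\Spc\right)^{\fp}$ is closed under finite colimits and retracts in $\Alg_{\Cart}\left(\Spc\right)$, so its opposite is idempotent complete and has finite limits.

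Applying Theorem \ref{thm:univ2} with $\sC = \left(\Alg_{\Cart}\left(\Spc\right)^{\fp}\right)^{op}$ and the tautological $C^\i$-ring $j^{\fp}$, I obtain a left-exact functor $\Phi: \DMfd \to \left(\Alg_{\Cart}\left(\Spc\right)^{\fp}\right)^{op}$ with $\Phi \circ i \circ q \simeq j^{\fp}$; dually, Theorem \ref{thm:finenv} applied to $\sC = \DMfd$ and the $C^\i$-ring $i \circ q$ produces a left-exact $\Psi$ in the reverse direction with $\Psi \circ j^{\fp} \simeq i \circ q$. The compositions $\Psi \circ \Phi$ and $\Phi \circ \Psi$ are then left-exact endofunctors whose restrictions to $\Cart$ recover the canonical $C^\i$-rings $i \circ q$ and $j^{\fp}$ respectively; by the uniqueness clauses in Theorems \ref{thm:univ2} and \ref{thm:finenv}, both must be equivalent to the identity, so $\Phi$ and $\Psi$ are mutually inverse equivalences.

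For the identification with (the corestriction of) $\cO_{\DMfd}$, I would use that evaluation at $\bR^n$ on $\Alg_{\Cart}\left(\Spc\right)$ is corepresented by the free algebra $j\left(\bR^n\right)$. For each $\cM \in \DMfd$ this gives a chain of natural equivalences
\[
\Phi(\cM)(\bR^n) \simeq \Map_{\Alg_{\Cart}\left(\Spc\right)}\bigl(j\left(\bR^n\right),\Phi(\cM)\bigr) \simeq \Map_{\left(\Alg_{\Cart}\left(\Spc\right)^{\fp}\right)^{op}}\bigl(\Phi(\cM),\Phi(i(\bR^n))\bigr) \simeq \Map_{\DMfd}\bigl(\cM,i\left(\bR^n\right)\bigr),
\]
using $\Phi(i(\bR^n)) \simeq j^{\fp}(\bR^n)$ and the full faithfulness of $\Phi$ in the last step. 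The right-hand side is $\cO_{\DMfd}(\cM)(\bR^n)$ by definition, and naturality in both arguments upgrades this pointwise comparison to an equivalence of functors $\Phi \simeq \cO_{\DMfd}$.

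The whole argument is purely formal modulo the two universal properties already in hand, and I do not anticipate any serious obstacle. The one point that requires a small amount of care is ensuring that the final comparison is natural in both $\cM$ and $\bR^n$ simultaneously, so that it assembles into an equivalence of functors into $\Alg_{\Cart}(\Spc)^{op}$ rather than merely a family of equivalences of spaces; however, each step of the chain above is manifestly natural, so this is a matter of bookkeeping rather than genuine work.
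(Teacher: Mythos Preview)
Your proposal is correct and follows essentially the same approach as the paper: both exploit that $(\DMfd, i\circ q)$ and $\left((\Alg_{\Cart}(\Spc)^{\fp})^{op}, j^{\fp}\right)$ satisfy the same universal property (Theorems \ref{thm:univ2} and \ref{thm:finenv}), hence are canonically equivalent. The only cosmetic difference is that the paper packages the identification with $\cO_{\DMfd}$ into the general machinery of Remark \ref{rmk:universalgamma}---observing directly that $\Gamma_{i\circ q}$ is by definition $\cO_{\DMfd}$ and is inverse to $\Ran_{j^{\fp}}(i\circ q)$---whereas you construct $\Phi$ abstractly and then verify $\Phi\simeq\cO_{\DMfd}$ by an explicit Yoneda computation; the content is the same.
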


\begin{proof}
The existence of an equivalence follows immediately since the pair $\left(\DMfd,i \circ q\right)$ satisfies the same universal property as $\left(\left(\Alg_{\Cart}\left(\Spc\right)^{\fp}\right)^{op},j^{\fp}\right).$ In particular, $i \circ q$ is a universal $C^\i$-ring, and, by Remark \ref{rmk:universalgamma}, the equivalence can be concretely realized as
$$\Gamma_{i \circ q} = \cO_{\DMfd}^{op}:\DMfd \to \left(\Alg_{\Cart}\left(\Spc\right)^{\fp}\right)^{op},$$
with inverse given by $\Ran{j^{\fp}}\left(i \circ q\right).$
\end{proof}

\begin{prop}\label{prop:subcan}
The Grothendieck topology $J_{\DMfd}$ on $\DMfd$ of Definition \ref{dfn:JDMfd} is subcanonical.
\end{prop}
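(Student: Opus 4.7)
The plan is to use the full faithfulness and left exactness of $\Speci$ to reduce subcanonicality to descent of sheaves of simplicial $C^\infty$-rings on a topological space. By Corollary \ref{cor:fparedms} and Theorem \ref{thm:speciff}, $\Speci$ identifies $\DMfd$ with a full subcategory of $\Loc$; by Theorem \ref{thm:finenv}, it is left exact, so the finite limits occurring in the sheaf condition (in particular, the \v{C}ech nerve of a cover) are computed in $\Loc$. A cover $(f_\alpha:U_\alpha\to\cM)_\alpha$ in $J_{\DMfd}$ corresponds under this embedding to an open cover $(W_\alpha)$ of $X:=U(\cM)$ together with the evident open-embedding maps $(W_\alpha,\cO_\cM|_{W_\alpha})\hookrightarrow\cM$; here Corollary \ref{cor:sheafex} and Corollary \ref{cor:tildeok} justify the identification $U_\alpha\simeq (W_\alpha,\cO_\cM|_{W_\alpha})$ over $\cM$. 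The $n$-th term of the \v{C}ech nerve is then $\coprod_{\alpha_0,\ldots,\alpha_n}(W_{\alpha_0\cdots\alpha_n},\cO_\cM|_{W_{\alpha_0\cdots\alpha_n}})$.

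Next, fix $\cN\in\DMfd$ with underlying space $Y:=U(\cN)$. It will suffice to prove that the presheaf of spaces on open subsets of $X$ given by
\[
\Phi_\cN:W\longmapsto \Map_{\Loc}\big((W,\cO_\cM|_W),\cN\big)
\]
is a sheaf, since evaluation on $(W_\alpha)$ then yields the \v{C}ech descent statement for $\Map_{\DMfd}(-,\cN)$ on the given cover. Using Remark \ref{rmk:mappingsp}, this presheaf decomposes as
\[
\Phi_\cN(W)\simeq \coprod_{g\in \mathrm{Cont}(W,Y)}\Map_{\Sh(W,\Alg_{\Cart}(\Spc))}\big(g^*\cO_\cN,\cO_\cM|_W\big).
\]
Two sheaf properties combine to give descent: (i) the presheaf $W\mapsto \mathrm{Cont}(W,Y)$ is a sheaf of sets (local data for continuous maps glue uniquely), and (ii) for any sheaves $\cF$ and $\cG$ of simplicial $C^\infty$-rings on $X$, the assignment $W\mapsto \Map_{\Sh(W,\Alg_{\Cart}(\Spc))}(\cF|_W,\cG|_W)$ is the sheaf of sections of the internal mapping sheaf and is automatically a sheaf, the target $\cO_\cM$ being a sheaf on $X$ (in fact a hypersheaf, by Remark \ref{rmk:hyper}).

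The main obstacle is assembling (i) and (ii) in the presence of the coproduct indexing: a \v{C}ech-compatible family of local sections of $\Phi_\cN$ consists of local continuous maps $g_\alpha:W_\alpha\to Y$ gluing to some global $g:X\to Y$, together with sheaf morphisms $g_\alpha^*\cO_\cN\to \cO_\cM|_{W_\alpha}$ gluing to a morphism $g^*\cO_\cN\to \cO_\cM$. A clean way to handle this is to view the projection $\Phi_\cN\to \mathrm{Cont}(-,Y)$ as a morphism of presheaves whose target is a sheaf and whose fiber over each continuous $g:X\to Y$ is the sheaf of morphisms from (ii); since both base and fibers satisfy descent, so does the total presheaf. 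Carrying out this formal step establishes subcanonicality.
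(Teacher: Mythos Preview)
Your approach is essentially the same as the paper's: transport $J_{\DMfd}$ along the equivalence of Corollary~\ref{cor:fparedms}, embed into $\Loc$ via the fully faithful left exact functor $\Speci$ (Theorems~\ref{thm:finenv} and~\ref{thm:speciff}), and then appeal to subcanonicality of the open cover topology on $\Loc$. The paper stops at the one-line assertion ``the open cover topology on $\Loc$ is subcanonical''; you go further and unpack that assertion via the decomposition of mapping spaces from Remark~\ref{rmk:mappingsp}, reducing it to descent for continuous maps and for morphisms of sheaves of simplicial $C^\infty$-rings. That extra detail is welcome and the descent argument you sketch is correct for covers by genuine open embeddings in $\Loc$.

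There is, however, a gap in your reduction step. You assert that a $J_{\DMfd}$-cover $(f_\alpha:U_\alpha\to\cM)$ corresponds under $\Speci$ to the family of open embeddings $(W_\alpha,\cO_\cM|_{W_\alpha})\hookrightarrow\cM$, and you cite Corollaries~\ref{cor:sheafex} and~\ref{cor:tildeok} for the identification $U_\alpha\simeq(W_\alpha,\cO_\cM|_{W_\alpha})$. Those corollaries only show that $(W_a,\cO_\cM|_{W_a})\simeq\Speci(A[a^{-1}])$ lies in $\DMfd$; they say nothing about an arbitrary $U_\alpha$ whose underlying map happens to be an open embedding. Definition~\ref{dfn:JDMfd} requires only that $U(f_\alpha)$ be an open cover of topological spaces, which does not force $U_\alpha$ to be the restriction of $\cM$: for instance, $U_\alpha$ could carry extra infinitesimal or derived structure. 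What is true is that each $f_\alpha$ \emph{factors through} $(W_\alpha,\cO_\cM|_{W_\alpha})\hookrightarrow\cM$, so the sieve generated by your cover is contained in the sieve generated by the open-embedding cover; but this inclusion goes the wrong way to let descent for the latter imply descent for the former. To close the argument you need either to read Definition~\ref{dfn:JDMfd} as requiring the $f_\alpha$ themselves to be open embeddings in $\Loc$ (which is the evident intent, and what the paper's own proof tacitly assumes), or to supply an additional argument handling the general case. The paper's proof has the same lacuna, so this is not a divergence from the paper so much as a point neither treatment makes precise.
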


\begin{proof}
Note that the following diagram commutes
$$\xymatrix{ & \Mfd \ar[ld]_-{i} \ar[rd]^-{\sC^\i} &\\
\DMfd \ar[d]^-{U} & & \left(\Alg_{\Cart}\left(\Spc\right)^{\fp}\right)^{op} \ar[ll]^-{\Ran_{j^{\fp}}\left(i \circ q\right)} \ar[lld]^-{Sp}\\
\Top & & }$$
since both $U$ and $Sp$ are Kan extended from $\Mfd.$ So under the equivalence $\Ran{j^{\fp}}\left(i \circ q\right),$ $J_{\DMfd}$ corresponds to the Grothendieck topology on $\left(\Alg_{\Cart}\left(\Spc\right)^{\fp}\right)^{op}$ induced from $\Top$ via $Sp.$ Lets call this Grothendieck topology $J.$ It suffices to show that $J$ is subcanonical. But this follows immediately from the fact that $\Speci$ is fully faithful and the open cover topology on $\Loc$ is subcanonical.
\end{proof}

\begin{prop}\label{prop:Osheaf}
The functor $$\cO_{\DMfd}:\DMfd^{op} \to \Alg_{\Cart}\left(\Spc\right)$$ is a $J_{\DMfd}$-sheaf.
\end{prop}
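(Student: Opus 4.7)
My plan is to transfer the descent condition to the sheaf property of the structure sheaf $\cO_A$ on the topological space $Sp(A)$. Using the equivalence $\DMfd \simeq (\Alg_{\Cart}(\Spc)^{\fp})^{op}$ of Corollary \ref{cor:fparedms} and the fully faithful spectrum functor $\Speci$ of Theorem \ref{thm:speciff}, a derived manifold $\cM$ is identified with the homotopically $C^\infty$-ringed space $(Sp(A),\cO_A)$, where $A:=\cO_{\DMfd}(\cM)$, and Corollary \ref{cor:tildeok} gives $\cO_{\DMfd}(\cM)\simeq\Gamma(Sp(A),\cO_A)$. In particular, the sheaf condition for $\cO_{\DMfd}$ will be reformulated as a descent statement about global sections of structure sheaves.

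Given a cover $\{f_\alpha:U_\alpha\to\cM\}$ in $J_{\DMfd}$ with $B_\alpha:=\cO_{\DMfd}(U_\alpha)$, set $V_\alpha:=Sp(f_\alpha)(Sp(B_\alpha))\subseteq Sp(A)$; by hypothesis the $V_\alpha$ form an open cover of $Sp(A)$. The key technical step is the \emph{rigidity claim}: the canonical morphism of sheaves on $V_\alpha$ obtained by restricting the unit map $\cO_A\to(f_\alpha)_*\cO_{B_\alpha}$ of Proposition \ref{prop:specadj} is an equivalence $\cO_A|_{V_\alpha}\xrightarrow{\sim}\cO_{B_\alpha}$, and consequently $B_\alpha\simeq\cO_A(V_\alpha)$. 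To prove this I would cover $V_\alpha$ by principal opens $W_a\subseteq V_\alpha$; since each $\Speci(A[a^{-1}])\to\Speci(A)$ has image in $V_\alpha$, the full faithfulness of $\Speci$ together with compatibility on intersections $W_a\cap W_b=W_{ab}$ produces a cone of $A$-algebra maps $B_\alpha\to A[a^{-1}]$, which assemble via the sheaf $\widetilde{\cO_A}$ (Corollaries \ref{cor:tildeshf} and \ref{cor:tildeok}) into a map $B_\alpha\to\cO_A(V_\alpha)$ inverse to the canonical one.

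Once rigidity is in place, left exactness of $\Speci$ identifies $U_{\alpha_0}\times_\cM\cdots\times_\cM U_{\alpha_n}$ with the intersection $V_{\alpha_0}\cap\cdots\cap V_{\alpha_n}$ equipped with the restricted structure sheaf, so $\cO_{\DMfd}(U_{\alpha_0}\times_\cM\cdots\times_\cM U_{\alpha_n})\simeq\cO_A(V_{\alpha_0}\cap\cdots\cap V_{\alpha_n})$. The Čech descent condition for $\cO_{\DMfd}$ along $\{f_\alpha\}$ thus reduces to
\[
A\;\simeq\;\lim_{[n]\in\Delta}\prod_{\alpha_0,\ldots,\alpha_n}\cO_A(V_{\alpha_0}\cap\cdots\cap V_{\alpha_n}),
\]
which is precisely the sheaf condition for $\cO_A$ on $Sp(A)$ with respect to the open cover $\{V_\alpha\}$, and holds by Corollaries \ref{cor:tildeshf} and \ref{cor:tildeok} together with $A\simeq\Gamma(\cO_A)$.

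The main obstacle is the rigidity claim --- a $C^\infty$-analogue of the classical fact that open immersions of affine schemes are determined by their images. It rests on the interplay between the universal property of $\Speci$ established in Proposition \ref{prop:specadj} and the already-established sheaf property of the structure sheaf: any $B_\alpha$ whose spectrum sits as an open subset of $Sp(A)$ is reconstructible from its values on a principal open cover, and those values are pinned down by the adjunction together with the universal property of localization. Everything else in the argument is a formal manipulation of Čech nerves together with the left exactness of $\Speci$.
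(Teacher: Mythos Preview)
Your approach differs substantially from the paper's, and the rigidity claim at its heart is not established by the argument you sketch.

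The paper's proof is essentially one line, resting on Proposition~\ref{prop:subcan}: under the equivalence $\DMfd\simeq(\Alg_{\Cart}(\Spc)^{\fp})^{op}$ the functor $\cO_{\DMfd}$ sends $\cM$ to $\Map_{\DMfd}(\cM,i\circ q(-))$, so for each fixed $\bR^n$ the presheaf $\cM\mapsto\cO_{\DMfd}(\cM)(\bR^n)$ is representable. Subcanonicality of $J_{\DMfd}$ then makes each of these a sheaf, and since limits in $\Alg_{\Cart}(\Spc)$ are computed objectwise in $\Cart$, the functor $\cO_{\DMfd}$ is itself a sheaf. No rigidity statement is needed.

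Your argument, by contrast, hinges on the claim that any covering morphism $f_\alpha\colon U_\alpha\to\cM$ in $J_{\DMfd}$ is already an open immersion of $C^\infty$-ringed spaces, i.e.\ that $\cO_A|_{V_\alpha}\simeq\cO_{B_\alpha}$. The justification you offer is circular: to obtain an $A$-algebra map $B_\alpha\to A[a^{-1}]$ via full faithfulness of $\Speci$ you would need a morphism $\Speci(A[a^{-1}])\to\Speci(B_\alpha)$ in $\Loc$, not merely the topological inclusion $W_a\subseteq V_\alpha$; and factoring the open immersion $\Speci(A[a^{-1}])\hookrightarrow\Speci(A)$ through $\Speci(B_\alpha)\to\Speci(A)$ is only automatic once the latter is known to be an open immersion of ringed spaces --- precisely what is in question. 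With $J_{\DMfd}$ as literally stated in Definition~\ref{dfn:JDMfd} (only the \emph{underlying continuous maps} are required to form an open cover), the rigidity claim in fact fails: for $A=\bR$ and $B=\bR\,\oinfty_{\sC^\infty(\bR)}\bR$ one has $Sp(B)\to Sp(A)$ the identity on a point, yet $\pi_1(B)\cong\bR$, so $B\not\simeq\cO_A(Sp(A))=\bR$. The proposition still holds for this cover (the map $\bR\to B$ is split, so the cobar complex has $\bR$ as its totalization), but your reduction to the sheaf condition for $\cO_A$ along $\{V_\alpha\}$ cannot see this --- you would be asserting $\cO_{\DMfd}(U)\simeq\cO_A(V_\alpha)=\bR$, which is simply wrong here.
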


\begin{proof}
Under the equivalence $$\DMfd \simeq \left(\Alg_{\Cart}\left(\Spc\right)\right)^{op},$$ the universal $C^\i$-ring object $i \circ q$ corresponds to $j^{\fp}.$ Note that as a finite product preserving functor $$\Cart \to \Spc$$ for a derived manifold $\cM,$
$$\cO_{\DMfd}\left(\cM\right)=\Map\left(\cM,i\circ q\right).$$ Notice that in $\left(\Alg_{\Cart}\left(\Spc\right)\right)^{op},$ for $A$ a finitely presented algebra,
$$\Map_{\left(\Alg_{\Cart}\left(\Spc\right)\right)^{op}}\left(A,j^{\fp}\right) \simeq A.$$ So the result now follows since the Grothendieck topology $J$ is subcanonical.
\end{proof}

\begin{prop}
For any idempotent complete $\i$-category $\sC$ with finite limits, under the equivalence $$\Fun^{\lex}\left(\DMfd,\sC\right) \stackrel{\sim}{\longlongrightarrow} \Fun^{\pi}\left(\Cart,\sC\right)=\Alg_{\Cart}\left(\sC\right)$$ a $C^\i$-ring $S$ in $\sC$ corresponds to a fully faithful left exact functor $\DMfd \hookrightarrow \sC$ if and only if it is versal and corresponds to an equivalence, if and only if it is universal.
\end{prop}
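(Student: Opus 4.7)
The plan is to reduce the statement to Theorem \ref{thm:universal1} by tracing through the equivalence $\DMfd \simeq \left(\Alg_{\Cart}\left(\Spc\right)^{\fp}\right)^{op}$ of Corollary \ref{cor:fparedms}. The key observation is that this equivalence is not just abstract: under it, the universal $C^\i$-ring $i \circ q : \Cart \to \DMfd$ corresponds precisely to $j^{\fp} : \Cart \hookrightarrow \left(\Alg_{\Cart}\left(\Spc\right)^{\fp}\right)^{op}$, because both pairs satisfy the same universal property as a $C^\i$-algebra in an idempotent complete $\i$-category with finite limits (one of them by Theorem \ref{thm:univ2}, the other by Theorem \ref{thm:finenv}).

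First, I would spell out explicitly the inverse equivalence of Theorem \ref{thm:univ2}. Given a $C^\i$-ring $S \in \Alg_{\Cart}\left(\sC\right)$, the corresponding left exact functor $F_S : \DMfd \to \sC$ is determined by $F_S \circ i \circ q \simeq S$, and via the identification $\DMfd \simeq \left(\Alg_{\Cart}\left(\Spc\right)^{\fp}\right)^{op}$ it coincides with $\Ran_{j^{\fp}} S$. This follows because the equivalence of Theorem \ref{thm:finenv} already has $\Ran_{j^{\fp}}$ as its inverse, and the two finite-limit envelopes, being determined up to equivalence by the same universal property, are canonically identified.

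With that identification in place, the proposition becomes a direct corollary of Theorem \ref{thm:universal1} applied to $\bT = \Cart$: the left exact functor $F_S \simeq \Ran_{j^{\fp}} S$ is fully faithful if and only if $S$ is versal, and it is an equivalence if and only if $S$ is both versal and generating, i.e.\ universal. The only step requiring care is verifying that the identification of the two finite-limit envelopes of $\Cart$ sends $i \circ q$ to $j^{\fp}$; but this is immediate from the uniqueness clause in the universal property (the identity functor corresponds to the universal algebra under the equivalence of Theorem \ref{thm:finenv}, as noted in the remark following that theorem, and similarly for $\DMfd$).

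I do not anticipate a serious obstacle, as the proof is entirely formal: the real content is already contained in Theorem \ref{thm:universal1} and the universal property defining $\DMfd$. The only bookkeeping required is to check that the two natural notions of ``versal $C^\i$-ring'' — one defined intrinsically in terms of $\left(\Alg_{\Cart}\left(\Spc\right)^{\fp}\right)^{op}$ via the transformation $\eta$, and one implicit in the universal property of $\DMfd$ — agree under the equivalence of Corollary \ref{cor:fparedms}, which they do because the relevant functors $\Gamma_S$ and $\Ran_{j^{\fp}} S$ are defined purely from the data of $S$ and do not depend on the choice of model for the finite-limit envelope.
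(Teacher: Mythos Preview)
Your proposal is correct and follows essentially the same approach as the paper: the paper's proof is simply ``This follows immediately from Corollary \ref{cor:fparedms} and Theorem \ref{thm:universal1},'' and your argument is precisely an unpacking of that sentence, making explicit that under the equivalence of Corollary \ref{cor:fparedms} the universal $C^\infty$-ring $i\circ q$ corresponds to $j^{\fp}$ and hence the functor associated to $S$ is $\Ran_{j^{\fp}} S$.
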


\begin{proof}
This follows immediately form Corollary \ref{cor:fparedms} and Theorem \ref{thm:universal1}.
\end{proof}

\begin{cor}
Let $\sC$ be an idempotent complete $\i$-category with finite limits. Recall that $\mathbf{Dom}$ is the full subcategory of $\Mfd$ on open domains. The following are equivalent for a functor $F:\Mfd \to \sC$
\begin{itemize}
\item[1.] $F$ preserve transverse pullbacks and the terminal object
\item[2.] The restriction of $F$ to $\mathbf{Dom}$ preserves transverse pullbacks and the terminal object.
\end{itemize}
\end{cor}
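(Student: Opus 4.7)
The direction $(1) \Rightarrow (2)$ is immediate: $\mathbf{Dom}$ is a full subcategory of $\Mfd$ which contains the terminal object $\bR^0$, so any transverse pullback in $\mathbf{Dom}$ is in particular a transverse pullback in $\Mfd$ and is therefore preserved by $F$.

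For $(2) \Rightarrow (1)$, the strategy is to show that $F$ is canonically equivalent to the right Kan extension of its restriction to Cartesian manifolds, and then invoke the preceding lemma. Writing $q:\Cart \hookrightarrow \Mfd$ for the inclusion, note that every product $\bR^n \times \bR^m \cong \bR^{n+m}$ is a transverse pullback over the terminal $\bR^0$ whose vertices all lie in $\mathbf{Dom}$, so the hypothesis immediately forces $F|_{\Cart}$ to preserve finite products. The preceding lemma then guarantees that $\Ran_q\left(F|_{\Cart}\right)$ exists and preserves transverse pullbacks and the terminal object. The canonical unit $\theta:F \Rightarrow \Ran_q\left(F|_{\Cart}\right)$ of the adjunction $q^* \dashv \Ran_q$ is an equivalence after restriction to $\Cart$, since $q$ is fully faithful (so the counit $q^*\Ran_q \to \mathrm{id}$ is invertible, and the triangle identity forces $q^*\theta$ to be a section of it).

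The key step is to promote this equivalence from $\Cart$ to all of $\Mfd$ using $\mathbf{Dom}$ as a bridge. For any $U \in \mathbf{Dom}$, Proposition \ref{prop:fiberopen} exhibits $U$ as a transverse pullback $U \simeq \bR^{n+1} \times_{\bR^n} \ast$ whose four vertices all lie in $\mathbf{Dom}$. The hypothesis on $F|_{\mathbf{Dom}}$ guarantees $F$ preserves this pullback, while $\Ran_q\left(F|_{\Cart}\right)$ preserves it by the preceding lemma; since $\theta$ is already an equivalence on the remaining three vertices (all in $\Cart$), comparing the two pullback presentations of $F(U)$ and $\Ran_q\left(F|_{\Cart}\right)(U)$ forces $\theta_U$ to be an equivalence. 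Finally, Corollary \ref{cor:retractrans} exhibits every $M \in \Mfd$ as a retract of some $U \in \mathbf{Dom}$, and since $\sC$ is idempotent complete and natural transformations respect retracts, $\theta_M$ is an equivalence for all $M$. Thus $F \simeq \Ran_q\left(F|_{\Cart}\right)$, which preserves transverse pullbacks and the terminal object.

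The conceptual heart of the argument, rather than any technical obstacle, is identifying $\mathbf{Dom}$ as the correct intermediate class: open domains are small enough to each sit in a transverse pullback presentation by Cartesians (Proposition \ref{prop:fiberopen}), yet large enough that every smooth manifold is a retract of one (Corollary \ref{cor:retractrans}). Once this is recognised, the equivalence $F \simeq \Ran_q\left(F|_{\Cart}\right)$ follows by routine verification on the three levels $\Cart$, $\mathbf{Dom}$, and $\Mfd$ in turn.
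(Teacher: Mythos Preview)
Your proof is correct and follows essentially the same strategy as the paper: both arguments show that $F \simeq \Ran_q(F|_{\Cart})$ by passing through $\mathbf{Dom}$, using Proposition~\ref{prop:fiberopen} to handle open domains and the retract argument (Corollary~\ref{cor:retractrans}) to handle general manifolds. The paper compresses your steps on $\mathbf{Dom}$ and $\Mfd$ into the phrases ``$\Mfd$ is the Karoubi envelope of $\mathbf{Dom}$'' and ``all of the results and their proofs carry over completely analogously by replacing the category $\Mfd$ with $\mathbf{Dom}$,'' whereas you unfold these verifications explicitly; your version is thus a bit more self-contained, but the underlying argument is the same.
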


\begin{proof}
Clearly $1) \Rightarrow 2).$ Suppose that $F:\Mfd \to \sC$ and $F|_{\mathbf{Dom}}$ preserves transverse pullbacks. Since $\Mfd$ is the Karoubi envelope of $\mathbf{Dom},$ the restriction functor
$$\varphi^*:\Fun\left(\Mfd,\sC\right) \to \Fun\left(\mathbf{Dom},\sC\right)$$ is an equivalence. Since the Yoneda embedding preserves and reflects limits, we may assume that $\sC$ is complete. Hence, $\varphi^*$ has a right adjoint given by global right Kan extension $\Ran_{\varphi}.$ This implies that $$F \simeq \Ran_{\varphi} F|_{\mathbf{Dom}}.$$ Notice however that all of the results and their proofs carry over completely analogously by replacing the category $\Mfd$ with $\mathbf{Dom}.$ It follows that we can identify $\Ran_{\varphi} F|_{\mathbf{Dom}}$ with $\Ran_q\left(F|_{\Cart}\right),$ and hence conclude that $F$ preserves transverse pullbacks and the terminal object.
\end{proof}

\subsection{Comparison with Spivak's Model}

In \cite[Section 6]{spivak}, Spivak defines a simplicial category of quasi-smooth derived manifolds as a subcategory of a simplicial category $\mathbf{LRS}$ of spaces locally ringed in homotopical $C^\i$-rings. Spivak defines a pair $\left(X,\cO_X\right)$ to be a (quasi-smooth) derived manifold if it is locally equivalent to a pullback
$$\xymatrix{Rf^{-1}\left(0\right) \ar[r] \ar[d] & \bR^{0} \ar[d]^-{0}\\
\bR^n \ar[r]^{f} & \bR.}$$
In particular, by \cite[Theorem 8.15]{spivak}, any pullback of the form
$$\xymatrix{M \times^{\i}_L N \ar[r] \ar[d] & N \ar[d]^-{0}\\
M \ar[r] & L}$$ is a quasi-smooth derived manifold in Spivak's sense. On one hand, Spivak's objects are very general as he imposes no separation conditions on the underlying space $X$ (such as being Hausdorff or paracompact). On the other hand, by \cite[Remark 8.16]{spivak}, quasi-smooth derived manifolds are not closed under fibered products.

We offer the following variant of Spivak's model:

\begin{dfn}
Denote by $\mathbf{dMan}_{\mathit{Spivak}}$ the smallest full simplicial subcategory of $\mathbf{LRS}$ closed under finite homotopy limits and retracts containing the essential image of $\Mfd$.
\end{dfn}

\begin{thm}
There is a canonical equivalence of $\i$-categories between the homotopy coherent nerve of $\mathbf{dMan}_{\mathit{Spivak}}$ and $\DMfd.$
\end{thm}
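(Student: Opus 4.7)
My plan is to construct the desired equivalence by invoking the universal property of $\DMfd$ to obtain a canonical left exact functor into $N_{hc}(\mathbf{dMan}_{\mathit{Spivak}})$, and then to prove it is an equivalence by comparing with the fully faithful spectrum functor $\Speci$. Throughout, I will use the identification from Remark \ref{rmk:mappingsp} of (the homotopy coherent nerve of) Spivak's $\mathbf{LRS}$ with the full subcategory of $\Loc$ on objects whose structure sheaf is hypercomplete, which provides the bridge between Spivak's simplicial model and the $\i$-categorical framework used throughout this paper.

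First I would verify the hypotheses needed to apply the universal property of $\DMfd$ to the target $N_{hc}(\mathbf{dMan}_{\mathit{Spivak}})$. By construction $\mathbf{dMan}_{\mathit{Spivak}}$ is closed under finite homotopy limits and retracts in $\mathbf{LRS}$, which under the identification above translates to saying that $N_{hc}(\mathbf{dMan}_{\mathit{Spivak}})$ is an idempotent complete $\i$-category with finite limits. The composite $\Mfd \hookrightarrow \mathbf{dMan}_{\mathit{Spivak}}$ preserves transverse pullbacks and the terminal object: this follows from Theorem \ref{manifoldssmoothring} combined with Proposition \ref{prop:mansame}, since under the equivalence of Remark \ref{rmk:mappingsp}, the inclusion $\Mfd \hookrightarrow \mathbf{dMan}_{\mathit{Spivak}} \hookrightarrow \Loc$ is $M \mapsto (M,\sC^\i_M) \simeq \Speci(\sC^\i(M))$, and $\Speci \circ \sC^\i$ preserves transverse pullbacks and the terminal object. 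By the universal property (Definition \ref{dfn:DMfd}) we thus obtain an essentially unique left exact functor
\[ F:\DMfd \longrightarrow N_{hc}(\mathbf{dMan}_{\mathit{Spivak}}) \]
extending the canonical inclusion of $\Mfd$.

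Next I would show $F$ is fully faithful. Let $\iota:N_{hc}(\mathbf{dMan}_{\mathit{Spivak}}) \hookrightarrow \Loc$ denote the fully faithful inclusion. The composite $\iota \circ F:\DMfd \to \Loc$ is left exact and agrees with $M \mapsto (M,\sC^\i_M)$ on $\Mfd$. By the identification $\DMfd \simeq (\Alg_{\Cart}(\Spc)^{\fp})^{op}$ from Corollary \ref{cor:fparedms} and the universal property of Theorem \ref{thm:univ2}, there is a unique such left exact functor, namely $\Speci$ itself (precomposed with the equivalence). Thus $\iota \circ F \simeq \Speci$, and since $\Speci$ is fully faithful by Theorem \ref{thm:speciff} and $\iota$ is fully faithful, $F$ is fully faithful as well.

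Finally, for essential surjectivity, observe that the essential image of $F$ is a full subcategory of $N_{hc}(\mathbf{dMan}_{\mathit{Spivak}})$ that contains every smooth manifold (by construction), is closed under finite limits (since $F$ is left exact and the inclusion $\iota$ preserves finite limits of objects lying in the image of $\Speci$, which are hypercomplete by Remark \ref{rmk:hyper}), and is closed under retracts (as retracts are preserved by any $\i$-functor). By the minimality clause in the definition of $\mathbf{dMan}_{\mathit{Spivak}}$, the essential image must equal all of $N_{hc}(\mathbf{dMan}_{\mathit{Spivak}})$. The main obstacle in this plan is the technical comparison between finite homotopy limits in Spivak's simplicial category $\mathbf{LRS}$ and finite limits in the $\i$-categorical sense; this is precisely what Remark \ref{rmk:mappingsp} takes care of, reducing the problem to the already-developed theory of $\Loc$ and $\Speci$.
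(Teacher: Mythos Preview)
Your proposal is correct and follows essentially the same approach as the paper: both invoke the universal property of $\DMfd$, identify the resulting left exact functor with $\Speci$ under the equivalence $\DMfd \simeq (\Alg_{\Cart}(\Spc)^{\fp})^{op}$, use Theorem \ref{thm:speciff} for full faithfulness, and use the minimality of $\mathbf{dMan}_{\mathit{Spivak}}$ together with Theorem \ref{manifoldssmoothring} and Corollary \ref{cor:retractrans} for essential surjectivity. The only cosmetic difference is that the paper targets $\Loc$ directly (via the $C^\i$-ring object $(\bR,\sC^\i_\bR)$ and Theorem \ref{thm:univ2}) and then identifies the essential image, whereas you target $N_{hc}(\mathbf{dMan}_{\mathit{Spivak}})$ first and then compose with the inclusion into $\Loc$; one small point the paper spells out more explicitly is that $\mathbf{LRS}$ is closed under finite limits in $\Loc$ because hypersheaves of $\Cart$-algebras are closed under limits (via Remark \ref{rmk:limitconc}), which is needed to know that finite homotopy limits in $\mathbf{dMan}_{\mathit{Spivak}}$ agree with finite limits computed in $\Loc$.
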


\begin{proof}
For simplicity of notation, let us not distinguish notationally between $\mathbf{LRS}$ and its homotopy coherent nerve, and similarly for $\mathbf{dMan}_{\mathit{Spivak}}$. Recall from Remark \ref{rmk:mappingsp} that we can identify (the homotopy coherent nerve) of $\mathbf{LRS}$ with the full subcategory of the $\i$-category $\Loc$ on those objects $\left(X,\cO_X\right)$ for which $\cO_X$ is a hypersheaf.  Since for any space $X,$ hypersheaves of spaces are a reflective subcategory of (\v{C}ech) sheaves of spaces, and since for any algebraic theory $\bT,$ the forgetful functor $$\Alg_{\bT}\left(\Spc\right) \to \Spc$$ preserves all limits, it follows that hypersheaves of $\Cart$-algebras are closed under limits. So, by Remark \ref{rmk:limitconc}, it follows that $\mathbf{LRS}$ is stable under finite limits and retracts in $\Loc.$ Hence  of $\mathbf{dMan}_{\mathit{Spivak}}$ can be identified with the smallest subcategory of $\Loc$ closed under finite limits and retracts containing the essential image of $\Mfd.$ Consider the canonical $C^\i$-ring object $\left(\bR,\sC^\i_\bR\right).$ Then, by Theorem \ref{thm:univ2}, there is a canonically induced functor $$\varphi:\DMfd \to \Loc$$ given by $\varphi=\Ran_{i \circ q}\left(\underline{\bR}\right).$ Unwinding definitions, under the equivalence $$\DMfd \simeq \left(\Alg_{\Cart}\left(\Spc\right)\right)^{op},$$ $\varphi$ corresponds to the functor $\Speci,$ which is fully faithful by Theorem \ref{thm:speciff}. By Proposition \ref{lem:retcol}, it follows that the essential image of $\varphi$ is the smallest subcategory containing $\left(\bR,\sC^\i_\bR\right)$ which is closed under finite limits and retracts. But by Theorem \ref{manifoldssmoothring} and Corollary \ref{cor:retractrans}, we conclude that this essential image by be identified with $\mathbf{dMan}_{\mathit{Spivak}}.$
\end{proof}

Finally, to justify this variant, we offer the following:

\begin{prop}\label{prop:ft1}
Suppose that $\left(X,\cO_X\right) \in \Loc$ is such that $X$ is paracompact Hausdorff, and there is a cover $\left(U_\alpha \hookrightarrow X\right)$ of $X$ such that for all $\alpha$  $\left(U_\alpha,\cO_X|_{U_\alpha}\right)$ is equivalent to $\Speci\left(A_\alpha\right)$ for some finitely presented $A_\alpha$ in $\Alg_{\Cart}\left(\Spc\right).$ If $\Gamma\left(\cO_X\right)$ is finitely presented, then $\left(X,\cO_X\right) \simeq \Speci\left(\Gamma\left(\cO_X\right)\right).$ In particular, $\left(X,\cO_X\right)$ is in $\mathbf{dMan}_{\mathit{Spivak}}.$
\end{prop}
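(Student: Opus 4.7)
The plan is to construct a canonical morphism $\eta: (X,\cO_X) \to \Speci(A)$ using the hypothesis and show it is an equivalence by a local-to-global argument that separates the underlying topology from the structure sheaf. Since $A := \Gamma(\cO_X)$ is finitely presented, Proposition \ref{prop:specadj} applied to $\mathrm{id}_A$ furnishes a natural morphism $\eta: (X,\cO_X) \to \Speci(A)$ in $\Loc$. To prove $\eta$ is an equivalence, I would show (i) the underlying map $\underline{\eta}: X \to Sp(A)$ is a homeomorphism, and (ii) the induced map of structure sheaves $\eta^\sharp: \underline{\eta}^*\widetilde{\cO_A} \to \cO_X$ is an equivalence.

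For (i), the plan is to reduce to the $0$-truncated case via Remark \ref{rmk:to}, which identifies $t_0\Speci(A)$ with $\Speci^0(\pi_0 A)$. Under the hypotheses, $(X,\pi_0\cO_X)$ is a paracompact Hausdorff classical $C^\infty$-scheme covered by affines $\Speci^0(\pi_0 A_\alpha)$, and $\pi_0 A$ is finitely presented as a classical $C^\infty$-ring (since $\pi_0$ sends a finite cell presentation of $A$ from Proposition \ref{afpcell} to one of $\pi_0 A$). The classical result is then that such a $C^\infty$-scheme is affine: bump functions (from paracompactness) separate points and give injectivity of $\underline{\eta}$; an $\bR$-algebra homomorphism $\pi_0 A \to \bR$ lifts to a point of $X$ via its behavior on some local affine, giving surjectivity; and the basic opens $W_f$ of Lemma \ref{lem:Ww} form a neighborhood basis on both sides, giving openness.

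For (ii), identify $X$ with $Sp(A)$. Both $\widetilde{\cO_A}$ and $\cO_X$ are hypersheaves: the first by Remark \ref{rmk:hyper}, and the second because hypersheaf-ness is a local property and each $\cO_X|_{U_\alpha} \simeq \widetilde{\cO_{A_\alpha}}$ is a hypersheaf. Thus it suffices to verify $\eta^\sharp$ is an equivalence on stalks. At a point $x \in X$ corresponding to $\varphi: A \to \bR$, choose $U_\alpha \ni x$; then $(\cO_X)_x \simeq (A_\alpha)_{\varphi|_{U_\alpha}}$ while $(\widetilde{\cO_A})_x \simeq A_\varphi$. Because the restriction $A \to A_\alpha$ factors through the $C^\infty$-localization of $A$ at the multiplicative set of elements of $\pi_0 A$ invertible on $U_\alpha$, and this localization becomes an equivalence upon further localizing at $x$, the two stalks are canonically identified.

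The principal obstacle will be the $0$-truncated step in (i): establishing that a paracompact Hausdorff classical $C^\infty$-scheme with a cover by finitely presented affines and finitely presented global sections is itself affine. While this is a standard result in the theory of $C^\infty$-schemes (cf.\ Joyce's treatment), careful bookkeeping with partitions of unity is needed to show that local sections globalize in a controlled way, and to verify that the finitely presented hypothesis on $\Gamma(\cO_X)$ matches the local affine data at the level of points and basic opens.
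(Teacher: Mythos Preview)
Your overall architecture matches the paper's: construct the comparison map $\eta$ via the adjunction of Proposition~\ref{prop:specadj}, handle the underlying space by reducing to the $0$-truncated case and citing the classical affineness results for paracompact $C^\infty$-schemes, observe that $\cO_X$ is a hypersheaf because hypercompleteness is local, and then compare structure sheaves. Part~(i) and the hypersheaf observation are essentially what the paper does.

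The gap is in your execution of~(ii). Your stalk argument asserts that $A \to A_\alpha$ ``becomes an equivalence upon further localizing at $x$,'' but this is the heart of the matter and is not justified. Unwinding it: once you identify $X \cong Sp(A)$ and $U_\alpha = W_{a_\alpha}$, comparing stalks amounts to asking whether $A[a^{-1}] \to \cO_X(W_a)$ is an equivalence for small basic opens $W_a \subset U_\alpha$. But $\cO_X(W_a) \simeq A_\alpha[(a|_{U_\alpha})^{-1}]$ via the local model, so you are asking whether $A[a_\alpha^{-1}] \to A_\alpha$ is an equivalence --- which is exactly the statement to be proved, restricted to $U_\alpha$. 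The passage to stalks does not break this circularity; you have only restated the problem.

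The paper avoids this by comparing the \emph{homotopy sheaves} $\pi_n(\eta^\sharp)$ directly. The two ingredients you are missing are: (a) fineness (hence softness) of $\pi_0\cO_X$, inherited locally from the $\Speci^0(\pi_0 A_\alpha)$ on a paracompact Hausdorff base, which collapses the hypercohomology spectral sequence and yields $\pi_n(A) = \pi_n\Gamma(\cO_X) \cong \Gamma(\pi_n\cO_X)$; and (b) Joyce's module-theoretic input (the paper cites his Proposition~5.20) which identifies the sheaf $\pi_n\cO_X$ with $\mathrm{M}\Speci(\pi_n A)$, and hence with $\pi_n\widetilde{\cO_A}$ (using that $\pi_n A$ is a complete $\pi_0 A$-module by Theorem~\ref{thm:complpin}). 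Your stalk approach can be salvaged, but only by invoking these same facts to prove $A[a_\alpha^{-1}] \simeq A_\alpha$ --- at which point you have reproduced the paper's argument.
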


\begin{proof}
For $A$ finitely presented, $\pi_0\cO_A$ is fine, and hence soft. Since softness is a local property on paracompact Hausdorff spaces, we conclude that $\pi_0\cO_X$  is also soft. Also, since hyper-completeness is a local property, Remark \ref{rmk:hyper} implies that $\cO_X$ is a hypersheaf. Since $\Gamma\left(\cO_X\right)$ is finitely presented, by Lemma \ref{lem:Ww}, we have a canonical morphism
$$\left(X,\cO_X\right) \to \Speci\left(\Gamma\left(\cO_X\right)\right).$$ By \cite{cinfsch} and \cite[Proposition 2]{derivjustden} it follows that this morphism induces an isomorphism
$$\left(X,\pi_0\cO_X\right) \to \Speci\left(\Gamma\left(\pi_0\cO_X\right)\right),$$ and by softness, we can identify 
$\Gamma\left(\pi_0\cO_X\right)$ with $\pi_0\Gamma\left(\cO_X\right).$ Denote by $\alpha:\cO_X \to \cO_{\Gamma\left(\cO_X\right)}$ the canonical map. By hypercompleteness, it suffices to show that the map on sheaves of homotopy groups are isomorphisms. We have just seen that this map induces an isomorphism on $\pi_0.$  The result now follows  from the softness of the sheaves involved, the collapse of the hypercohomology spectral sequence, and \cite[Proposition 5.20]{joycesch}.
\end{proof}

\begin{cor}\label{cor:ft2}
If $\left(X,\cO_X\right) \in \mathbf{LRS}$ is a quasi-smooth derived manifold in the original sense of Spivak, if $X$ is paracompact Hausdorff and $\Gamma\left(\cO_X\right)$ is finitely presented, then $$\left(X,\cO_X\right) \in \mathbf{dMan}_{\mathit{Spivak}}.$$
\end{cor}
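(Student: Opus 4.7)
The plan is to reduce the statement to Proposition \ref{prop:ft1}. The hypotheses of that proposition already contain the assumptions of paracompactness and finite presentation of global sections, so the only thing remaining to verify is that a Spivak-quasi-smooth derived manifold $(X,\cO_X)$ admits an open cover $(U_\alpha \hookrightarrow X)$ with each $(U_\alpha, \cO_X|_{U_\alpha})$ equivalent to $\Speci(A_\alpha)$ for some finitely presented $A_\alpha \in \Alg_{\Cart}(\Spc)$.

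First, by Spivak's definition, $(X,\cO_X)$ is locally equivalent in $\mathbf{LRS}$ to a homotopy pullback of the form
\[
\xymatrix{Rf^{-1}(0) \ar[r] \ar[d] & (\bR^0, \sC^\i_{\bR^0}) \ar[d]^-{0} \\ (\bR^n, \sC^\i_{\bR^n}) \ar[r]^-{f} & (\bR^k, \sC^\i_{\bR^k})}
\]
for some smooth $f : \bR^n \to \bR^k$. By Proposition \ref{prop:mansame}, each of the three manifolds above is equivalent to $\Speci$ of a free finitely presented simplicial $C^\i$-ring, and by Theorem \ref{thm:speciff} the functor $\Speci$ is fully faithful into $\Loc$. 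Since $\mathbf{LRS}$ sits inside $\Loc$ as the full subcategory of hypercomplete objects, which is closed under finite limits (by the argument given in the comparison theorem with $\mathbf{dMan}_{\mathit{Spivak}}$), and since $\Speci$ preserves finite limits (being right Kan extended from $\underline{\bR}$), the homotopy pullback $Rf^{-1}(0)$ in $\mathbf{LRS}$ is identified with $\Speci(A)$ where
\[
A \;\simeq\; \sC^\i(\bR^n) \underset{\sC^\i(\bR^k)}{\oinfty} \bR .
\]
This $A$ is a finite pushout of finitely presented simplicial $C^\i$-rings, hence is finitely presented. Consequently, each chart of $(X,\cO_X)$ is of the form $\Speci(A_\alpha)$ with $A_\alpha$ finitely presented, as required.

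With the local structure in place, Proposition \ref{prop:ft1} applies directly, yielding an equivalence $(X,\cO_X) \simeq \Speci(\Gamma(\cO_X))$ in $\Loc$. Since $\Gamma(\cO_X)$ is by hypothesis finitely presented, $\Speci(\Gamma(\cO_X))$ lies in the essential image of $\Speci : (\Alg_{\Cart}(\Spc)^{\fp})^{op} \hookrightarrow \Loc$. By Corollary \ref{cor:fparedms}, this essential image is precisely the subcategory $\varphi(\DMfd)$ appearing in the proof of the comparison theorem, which is identified with $\mathbf{dMan}_{\mathit{Spivak}}$. Hence $(X,\cO_X) \in \mathbf{dMan}_{\mathit{Spivak}}$.

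The one step requiring genuine care is the identification of Spivak's homotopy pullback $Rf^{-1}(0)$ in $\mathbf{LRS}$ with the $\Speci$ of the corresponding $\oinfty$-pushout. This is not difficult given what we have established, but does rely critically on the compatibility of limits in $\mathbf{LRS} \subset \Loc$ with those computed via $\Speci$ from simplicial $C^\i$-rings, which is in turn a consequence of Proposition \ref{prop:prodok}, Theorem \ref{manifoldssmoothring}, and the left exactness of $\Speci$.
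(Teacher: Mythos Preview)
Your proof is correct and follows exactly the approach the paper intends: the corollary is stated without proof because it is meant to be an immediate application of Proposition \ref{prop:ft1}, and you have correctly supplied the only missing verification, namely that Spivak's local models $Rf^{-1}(0)$ are of the form $\Speci(A)$ for finitely presented $A$. One small discrepancy: the paper records Spivak's definition with target $\bR$ rather than $\bR^k$, but your argument is insensitive to this and works verbatim in either case.
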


\begin{rmk}
In light of Proposition \ref{prop:ft1} and Corollary \ref{cor:ft2}, one may consider objects $\left(X,\cO_X\right)$ satisfying the conditions of Proposition \ref{prop:ft1} to be derived manifolds \emph{of finite type}, and allow any object locally isomorphic to $\Speci\left(A\right)$ for varying finitely presented $A$ to be a derived manifold. Note, any derived manifold would then be locally of finite type. Note furthermore however that any derived manifold locally of finite type is faithfully represented by its functor of points on finite type derived manifolds c.f. \cite[Theorem 2.2]{higherme}.
\end{rmk}

\bibliographystyle{hplain}
\bibliography{research}

\end{document}